\documentclass[hidelinks, reqno]{amsart}

\usepackage{amsmath}
\usepackage{amsthm}
\usepackage{amssymb}
\usepackage{amsfonts} %Already loaded by amssymb.
\usepackage{amscd, dsfont}
\usepackage{thmtools}
\usepackage{mathtools}
\usepackage{graphicx}
\usepackage[all,arc]{xy}
\usepackage{enumerate}
\usepackage[inline]{enumitem}
\usepackage{array}
\usepackage{multirow}
\usepackage[scr]{rsfso} %Better than mathrsfs. Removed euscript. https://sites.math.washington.edu/~lee/Writing/typesetting-script.pdf 
\usepackage{caption}
\usepackage{subcaption}
\usepackage[pagebackref = true, colorlinks, linkcolor = Red, citecolor = Green, bookmarksdepth=3, unicode, linktocpage=true, hypertexnames=false]{hyperref}
\usepackage[capitalize]{cleveref}
\usepackage{comment}
\usepackage{color}
\usepackage[usenames,dvipsnames]{xcolor}
\usepackage{tikz-cd, tikz}% for drawing graph
\usepackage{pgfplots}
\usetikzlibrary{patterns}
\usetikzlibrary{decorations.markings}
\usepackage{bm} %For bold math (keeping italics). For bold upright, use mathbf.
\usepackage{bbm} %Alternative style to mathbb. https://ctan.math.illinois.edu/macros/latex/contrib/bbm/bbm.pdf
\usepackage[utf8]{inputenc}

\newtheorem{theorem}{Theorem}[section]
\newtheorem{proposition}[theorem]{Proposition}
\newtheorem{lemma}[theorem]{Lemma}
\newtheorem{corollary}[theorem]{Corollary}

\newtheorem{assumption}[theorem]{Assumption}

\theoremstyle{definition}
\newtheorem{definition}[theorem]{Definition}

\theoremstyle{remark}
\newtheorem{remark}[theorem]{Remark}
\makeatletter
\renewcommand\subsubsection{\@startsection{subsubsection}{3}%
\z@{.5\linespacing\@plus.7\linespacing}{-.5em}%
{\normalfont\bfseries\itshape}} %Changed \normalfont\itshape to \normalfont\bfseries\itshape
\makeatother

\DeclareMathOperator{\SO}{SO}
\DeclareMathOperator{\Id}{Id}

\DeclareMathOperator{\GL}{GL}
\DeclareMathOperator{\SL}{SL}

\DeclareMathOperator{\Ad}{Ad}
\DeclareMathOperator{\BP}{BP}

\DeclareMathOperator{\Stab}{Stab}
\DeclareMathOperator{\diam}{diam}
\let\U\relax %Because arXiv's compiler complained
\DeclareMathOperator{\U}{U}

\DeclareMathOperator{\supp}{supp}
\DeclareMathOperator{\Lip}{Lip}

\DeclareMathOperator{\T}{T}
\DeclareMathOperator{\F}{F}
\DeclareMathOperator{\tr}{tr}

\DeclareMathOperator{\Isom}{Isom}
\DeclareMathOperator{\Core}{Core}
\DeclareMathOperator{\Hull}{Hull}
\DeclareMathOperator{\interior}{int}

\DeclareMathOperator{\Cay}{Cay}
\DeclareMathOperator{\vis}{vis}

\let\C\relax %Because arXiv's compiler complained
\newcommand{\C}{\mathbb{C}}
\newcommand{\N}{\mathbb{N}}
\newcommand{\Q}{\mathbb{Q}}
\newcommand{\R}{\mathbb{R}}
\newcommand{\Z}{\mathbb{Z}}
\renewcommand{\H}{\mathbb{H}}
\newcommand{\K}{\mathbb{K}}

\newcommand{\cal}[1]{\mathcal{#1}}

\newcommand{\calA}{\cal A}
\newcommand{\gen}{\Gamma_\calA}

\newcommand{\calO}{\cal O}

\newcommand{\calM}{\cal M}

\newcommand{\calL}{\cal L}

\newcommand{\sg}{\mathrm{sg}}

\newcommand{\LieA}{\mathfrak{a}}

\newcommand{\LieM}{\mathfrak{m}}

\newcommand{\HS}{\mathbb{H}^n}
\newcommand{\Ret}{\mathscr{R}}
\newcommand{\FRet}{\mathscr{F}}
\newcommand{\Hol}{\mathscr{H}}
\newcommand{\GHol}{\mathscr{G}}
\newcommand{\m}{m^{\mathrm{BMS}}}

\newcommand{\Leb}{m^{\mathrm{Leb}}}
\newcommand{\jj}{j_0}

\newcommand{\bigcdot}{\boldsymbol{\cdot}}

\DeclareFontFamily{U}{mathb}{\hyphenchar\font45}
\DeclareFontShape{U}{mathb}{m}{n}{
	<5> <6> <7> <8> <9> <10> gen * mathb
	<10.95> mathb10 <12> <14.4> <17.28> <20.74> <24.88> mathb12
}{}
\DeclareSymbolFont{mathb}{U}{mathb}{m}{n}
\DeclareMathSymbol{\bigast}{2}{mathb}{"06}

\Crefname{subsection}{Subsection}{Subsections}

\title[Generalization of Selberg's $3/16$ theorem]{Generalization of Selberg's $3/16$ theorem for geometrically finite thin subgroups of $\SO(n, 1)$}

\author{Pratyush Sarkar}
\address{Simons Laufer Mathematical Sciences Institute (SLMath), 17 Gauss Way, Berkeley, CA 94720}
\curraddr{Department of Mathematics, University of Utah, 155 South 1400 East, Salt Lake City, UT 84112}
\email{p.sarkar@utah.edu}

\date{\today}

\begin{document}

\begin{abstract}
Let $\Gamma$ be a geometrically finite thin subgroup of an arithmetic lattice $\Gamma_0 < G \coloneq \SO(n, 1)$ and consider the congruence covers of $\Gamma \backslash G$. In the breakthrough work of Bourgain--Gamburd--Sarnak, the expansion machinery was used to establish a uniform spectral gap in the setting $(G, \Gamma_0) = (\SL_2(\R), \SL_2(\Z))$ when the critical exponent satisfies $\delta_\Gamma > \frac{1}{2}$. The main applications are affine sieve for $\Gamma$-orbits and uniform resonance-free half-planes for the resolvent of the Laplacian. These results were generalized in subsequent works by Mohammadi--Oh, Oh--Winter, the author, and Edwards--Oh.
Yet, the region $\delta_\Gamma \in \bigl(\frac{1}{2}, n - 2\bigr]$ for $n \geq 3$ remains to be treated when there are cusps. The purpose of this paper is to fill in this gap in the literature. The difficulty lies in working with a countably infinite coding due to the presence of cusps. In particular, we incorporate new tools to prove the Zariski density and full trace field properties of the return trajectory subgroups.
\end{abstract}

\maketitle

\setcounter{tocdepth}{1}
\tableofcontents

\section{Introduction}
\label{sec:Introduction}
Mixing properties of the geodesic and frame flows for hyperbolic manifolds of finite volume, corresponding to lattices in $G = \Isom^+(\H^n) \cong \SO(n, 1)^\circ$, have been studied extensively. For instance, exponential mixing is known by the independent works of Ratner and Moore \cite{Rat87,Moo87} using spectral gap and representation theory. Moreover, exponential mixing with a \emph{uniform rate} over a family of congruence covers can be derived from property $(\tau)$ with respect to the corresponding family of congruence subgroups---such properties were initially obtained from Selberg's $\frac{3}{16}$ theorem \cite{Sel65}, and studied further by Burger--Sarnak \cite{BS91}, Sarnak--Xue \cite{SX91}, Clozel \cite{Clo03}, and Kelmer--Silberman \cite{KS13}, to name a few.

In contrast this paper is concerned with such uniform exponential mixing of the frame flow for thin subgroups. We focus on the class of \emph{geometrically finite} subgroups which contains both the sub-classes of \emph{convex cocompact} subgroups and lattices. Recall that a torsion-free discrete subgroup $\Gamma < G$ and its corresponding hyperbolic manifold are said to be convex cocompact (resp. geometrically finite) if the core---the smallest closed convex subset containing all closed geodesics---(resp. the $1$-neighborhood of the core) is compact (resp. has finite volume). Also recall that the corresponding frame bundle is given by $\Gamma \backslash G$ and the frame flow is given by the right translation action on $\Gamma \backslash G$ by a one-parameter subgroup of semisimple elements $A = \{a_t\}_{t \in \R}$. We endow $\Gamma \backslash G$ with the Bowen--Margulis--Sullivan (BMS) probability measure $m^{\mathrm{BMS}}$; it is frame flow invariant, supported on the non-wandering set, and the unique probability measure of maximal entropy.
The entropy coincides with the critical exponent $\delta_\Gamma \in [0, n - 1]$ of $\Gamma$. When $\Gamma$ is a lattice, the BMS measure coincides with the Haar probability measure.

To this end, we assume that the torsion-free discrete subgroup $\Gamma < G$ is Zariski dense, geometrically finite, and contained in an arithmetic lattice. Note that the first condition is necessary because otherwise the frame flow is not even ergodic. For a brief survey of mixing and related problems in infinite volume homogeneous dynamics, we refer the reader to the article of Oh \cite[\S 7--8]{Oh26} which will appear in the Proceedings of the ICM 2026. In \cite{BGS11} (see also \cite{BGS10}), Bourgain--Gamburd--Sarnak treated the surface setting, i.e., $n = 2$, where they obtained an $L^2$ spectral gap for the Laplace--Beltrami operator for the congruence family---using superstrong approximation---provided $\delta_\Gamma > \frac{1}{2}$. Such spectral gaps can be converted to uniform exponential mixing results by the work of Mohammadi--Oh \cite{MO15}. For $\delta_\Gamma \in (0, \frac{1}{2}]$, it is well-known that $\Gamma$ is convex cocompact and Oh--Winter \cite{OW16} proved uniform exponential mixing for the congruence family. See \cite{MOW19} for related results for Schottky and continued fractions semigroups. Thus, the picture is complete in the $n = 2$ case.

When $n \geq 3$, there are two distinct flows to study: the geodesic flow and the frame flow. In \cite{MO15}, Mohammadi--Oh also proved uniform exponential mixing of frame flows for the congruence family provided
\begin{align*}
\delta_\Gamma > \max\left\{\frac{n - 1}{2}, n - 2\right\};
\end{align*}
note that $\frac{n - 1}{2} = n - 2$ for $n = 3$. Similarly, uniform exponential mixing of \emph{geodesic} flows for the congruence family holds provided $\delta_\Gamma > \frac{n - 1}{2}$ due to Edwards--Oh \cite{EO21}. In \cite{Sar22a} (see also \cite{Sar22b}), the author proved uniform exponential mixing of frame flows for congruence families (over arbitrary totally real number fields) when $\Gamma$ is convex cocompact. See \cite{Sar25} for the generalization of \cite{MOW19}. Unlike the $n = 2$ case, the picture is not yet complete---in higher dimensions, we have a gap for the critical exponent, namely
\begin{align*}
\delta_\Gamma \in \left(\frac{1}{2}, n - 2\right],
\end{align*}
for which the desired result is yet to be established in full generality for $\Gamma$ geometrically finite with parabolic elements. In this paper, we fill in this gap in the literature using various techniques that has been developed previously. We also show how to treat congruence families over arbitrary totally real number fields strictly larger than $\Q$ by incorporating new tools. In this case, due to certain limitations in the symbolic coding that we use, we impose a certain assumption regarding trace fields and explain what is required to remove it. For this reason and simplicity in the setup, we first state our main theorem over $\Q$, separately.

\subsection{Main theorem over $\Q$}
Suppose $G = \mathbf{G}(\R)^\circ$ for some linear algebraic group $\mathbf{G}$ defined over $\Q$. Let $\tilde{\pi}: \tilde{\mathbf{G}} \to \mathbf{G}$ be a simply connected cover defined over $\Q$. For all $q \in \Z$, let
\begin{align*}
\pi_q: \tilde{\mathbf{G}}(\Z) \to \tilde{\mathbf{G}}(\Z/q\Z)
\end{align*}
be the reduction map. Let $\Gamma < \mathbf{G}(\Q)$ be a Zariski dense torsion-free geometrically finite subgroup such that $\tilde{\pi}^{-1}(\Gamma)$ is contained in $\tilde{\mathbf{G}}(\Z)$. Then, $\Gamma$ satisfies the strong approximation property. For all $q \in \N$, let $\Gamma_q < \Gamma$ be the congruence subgroup of level $q$, meaning that
\begin{align*}
\Gamma_q = \Gamma \cap \tilde{\pi}(\ker(\pi_q));
\end{align*}
and let $m^{\mathrm{BMS}}_q$ be the Bowen--Margulis--Sullivan measure on $\Gamma_q \backslash G$ induced from the one on $\Gamma \backslash G$. We prove the following theorem when $\Gamma$ has parabolic elements; otherwise, it was obtained in \cite{Sar22a,Sar22b}).

\begin{theorem}
\label{thm:UniformExponentialMixingOverQ}
There exist $\eta > 0$, $C > 0$, and $q_0 \in \Z_{>1}$ such that for all (square-free if $n = 3$) $q \in \N$ coprime to $q_0$, and $\phi, \psi \in C^1(\Gamma_q \backslash G)$, and $t > 0$, we have
\begin{multline*}
\left|\int_{\Gamma_q \backslash G} \phi(xa_t) \psi(x) \, dm_q^{\mathrm{BMS}}(x) - \frac{1}{m^{\mathrm{BMS}}_q(\Gamma_q \backslash G)} m_q^{\mathrm{BMS}}(\phi) \cdot m_q^{\mathrm{BMS}}(\psi)\right| \\
\leq Cq^C e^{-\eta t} \|\phi\|_{C^1} \|\psi\|_{C^1}.
\end{multline*}
\end{theorem}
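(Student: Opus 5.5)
The plan is to follow the Dolgopyat-type transfer operator framework of \cite{Sar22a,Sar22b}, adapted to the geometrically finite setting with cusps. The non-uniform part of that framework, built on the countably infinite coding, is already available, and the genuinely new input concerns the \emph{uniform} (congruence) part.

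\textbf{Step 1 (reduction).} If $\delta_\Gamma > \max\{\frac{n-1}{2}, n-2\}$, the theorem is \cite{MO15}, and if $\Gamma$ is convex cocompact it is \cite{Sar22a}. So assume $\Gamma$ has parabolics and $\delta_\Gamma \in (\frac12, n-2]$. In this range we can still use the fact that a geometrically finite $\Gamma$ with cusps has $\delta_\Gamma > k_{\max}/2$, where $k_{\max}$ is the maximal cusp rank. This gives the tail estimates needed for the coding.

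\textbf{Step 2 (coding and congruence transfer operators).} Take the countably infinite Markov section for the geodesic flow on $\Gamma\backslash \H^n$ constructed via Stratmann--Urbański / Li--Pan type coding near cusps. Lift it to the frame bundle through the holonomy cocycle $\Hol \colon \text{(symbolic space)} \to M = \SO(n-1)$. Lift it further to the congruence cover through a cocycle valued in $M \times \tilde{\mathbf G}(\Z/q\Z)$.

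For each $q$, define transfer operators $\mathcal M_{\xi,\rho,q}$ twisted by irreducible representations $\rho$ of $M \times \tilde{\mathbf G}(\Z/q\Z)$. Via the standard Laplace transform/Pollicott argument, reduce the theorem to the following bound, valid for $|a|$ small and all $b$, and uniform in $q$ coprime to $q_0$ up to polynomial loss $q^C$:
\begin{align*}
\|\mathcal M^k_{a+ib,\rho,q}\| \le C q^C e^{-\eta k}.
\end{align*}
The region of large $|b|$ or large $\rho|_M$ is handled by the uniform local non-integrability/Dolgopyat argument of \cite{Sar22a}. That argument goes through with countably many branches because we have uniform contraction and the tail summability from Step 1. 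What remains is the case of bounded $b$ and $\rho$ that is nontrivial on the finite group $\tilde{\mathbf G}(\Z/q\Z)$.

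\textbf{Step 3 (expansion).} For bounded frequencies, we argue as in Bourgain--Gamburd--Sarnak and Golsefidy--Varjú. Consider the return trajectory subgroups generated by holonomies of closed cycles in the coding, and apply superstrong approximation: the Cayley graphs of these subgroups modulo $q$ form an expander family (square-free $q$ when $n=3$, where only the square-free result is available). This yields an $L^2$ flattening of the operator's convolution structure, which we convert into the desired decay using the $\ell^2$-flattening lemma argument of \cite{OW16,Sar22a}.

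\textbf{Main obstacle.} Superstrong approximation requires that the return trajectory subgroup be Zariski dense in $\mathbf G$ (with full trace field). With countably infinite coding, this subgroup is generated by infinitely many words, each passing through cusps. The convex cocompact argument, which relies on finitely many generators and a connected limit set of the coding, does not apply directly.

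My first attempt would be the following. Show that the return trajectory subgroup contains a finite-index subgroup of $\Gamma$ after conjugation, by exhibiting both loxodromic and parabolic elements realized as return words: high powers of a cusp excursion realize parabolics. Then use that the Zariski closure $H$ is normalized by enough of $\Gamma$ so that $H$ is normal in $\mathbf G$, hence $H = \mathbf G$ because $\mathbf G$ is simple. Finally, obtain the trace field claim from the trace field of $\Gamma$, which equals $\Q$ here, via Vinberg's theorem on the field generated by traces of $\Ad$. Establishing this Zariski density for the infinite coding is where I expect most of the work to lie.
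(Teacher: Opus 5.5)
\textbf{Overall structure.} Your architecture is the paper's. It uses congruence transfer operators over the Li--Pan coding and applies Dolgopyat's method uniformly in $q$ for $|b|>b_0$ or $\rho|_M\neq 1$. For the remaining low frequencies, with $H$ of mean zero along the $F_q$-fibres, it uses an $L^2$-flattening argument driven by expansion of return trajectory subgroups. One justification is off. The uniform Dolgopyat bound does not follow from \cite{Sar22a} plus uniform contraction and tail summability. Near the cusp, non-concentration only holds at points whose geodesic lies in a compact region at scale $\log\|\rho_b\|$. The paper therefore imports the cusp-adapted argument of \cite{LPS25} (LNIC, NCP and the large deviation property). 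It then only checks that the congruence cocycle changes nothing, because it is unitary and constant on cylinders.

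\textbf{Zariski density.} As written, this step has a non sequitur. Producing loxodromic and parabolic elements in $\Omega$ does not yield a finite-index subgroup of $\Gamma$, since infinite-index normal subgroups can contain both kinds. Nothing in the coding guarantees finite index. Your normality idea does work, however, once you name the normalizing elements:
\begin{itemize}
\item The coding is full-branched, so $\gamma_j\gamma(\alpha)=\gamma(j\alpha)$, hence $\gamma_j S\gamma_j^{-1}\subseteq S$ for every letter $j$.
\item Let $H$ be the Zariski closure of $\Omega$. Then $\gamma_jH\gamma_j^{-1}\subseteq H$, with equality by comparing dimension and components. So $N_{\mathbf G}(H)$ contains the Zariski closure of $\langle\Gamma_{\mathcal A}\rangle$.
\item That closure is $\mathbf G$: the limit set of $\langle\Gamma_{\mathcal A}\rangle$ contains $\Lambda_+$, which is dense in $\Lambda(\Gamma)\cap\Delta_0$ and hence not in any $(n-2)$-sphere.
\item Since $\gamma_j\gamma_k^{-1}\neq e$ and $\Gamma$ is torsion-free, $\dim H>0$. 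As $H$ is defined over $\Q$ and $\mathbf G$ is $\R$-simple (also for $n=3$), $H=\mathbf G$.
\end{itemize}
Over $\Q$ the trace field is automatic, so Vinberg is not needed. This is a genuinely shorter route than the paper's, which proves $\Lambda_\infty(\langle\Gamma_{\mathcal A}\rangle_{\mathrm{sg}})\subset\Lambda_{\mathrm r}(\Omega)$ by a geometric cone construction and reads Zariski density off the limit set. The paper needs that stronger statement only for $\K\neq\Q$. Via Susskind--Swarup it gives $\gamma^m\in\Omega$ for every $\gamma\in\langle\Gamma_{\mathcal A}\rangle_{\mathrm{sg}}$, which feeds the weak-commensurability trace-field argument.

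\textbf{The genuine gap.} What you never do is the step the paper calls its main obstacle. Golsefidy--Varj\'u and He--de Saxc\'e apply to finite symmetric generating sets, but $\Omega$ is infinitely generated. So ``the Cayley graphs of these subgroups mod $q$'' is undefined, and the expansion constant $\epsilon$ feeding the flattening lemma cannot be made uniform over a countable alphabet. The repair has three parts:
\begin{itemize}
\item Extract a finite symmetric $S_{\mathrm{fin}}\subset S$ with $\langle S_{\mathrm{fin}}\rangle$ still Zariski dense, for instance from $n+1$ loxodromics of $\Omega$ whose attracting points lie on no $(n-2)$-sphere.
\item Pad its elements to a common prefix length $p$, using $\gamma(\alpha)\gamma(\tilde\alpha)^{-1}=\gamma(\alpha\omega)\gamma(\tilde\alpha\omega)^{-1}$.
\item Run the block decomposition $r=r'l$ with $l>p$, so that each block measure contains these finitely many differences of equal-length inverse branches with a common suffix.
\end{itemize}
Note that these differences are the relevant elements, not ``holonomies of closed cycles''. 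Only after this reduction do strong approximation and the expander bound give constants independent of $\mathcal A$.
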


\begin{remark}
\label{rem:HdS_AbsolutelySimpleHypothesis}
The theorem of He--de Saxc\'{e} \cite[Theorem 6.1]{HdS22} requires that the algebraic group be absolutely simple but $\SO_{3, 1}$ is not. In this case, we are forced to use the theorem of Golsefidy--Varj\'{u} \cite{GV12} and hence require the square-free assumption for $n = 3$. As soon as one proves \cite[Theorem 6.1]{HdS22} for \emph{semisimple} algebraic groups, the assumption for $n = 3$ can also be removed.
\end{remark}

\subsection{Main theorem over other number fields}
Suppose $G = \mathbf{G}(\R)^\circ$ for some linear algebraic group $\mathbf{G}$ defined over a totally real number field $\K$ such that $\mathbf{G}(\R) \cong \SO(n, 1)$ and $\mathbf{G}^\sigma(\R)$ is compact for all nontrivial embeddings $\sigma: \K \hookrightarrow \R$. Let $\tilde{\pi}: \tilde{\mathbf{G}} \to \mathbf{G}$ be a simply connected cover defined over $\K$. For all ideals $\mathfrak{q} \subset \calO_\K$, let
\begin{align*}
\pi_{\mathfrak{q}}: \tilde{\mathbf{G}}(\calO_\K) \to \tilde{\mathbf{G}}(\calO_\K/\mathfrak{q})
\end{align*}
be the reduction map. Let $\Gamma < \mathbf{G}(\K)$ be a Zariski dense torsion-free geometrically finite subgroup such that $\tilde{\pi}^{-1}(\Gamma)$ is contained in $\tilde{\mathbf{G}}(\calO_\K)$ with trace field $\mathbb Q(\tr(\Ad(\tilde{\pi}^{-1}(\Gamma)))) = \K$. We impose these conditions so that $\Gamma$ satisfies the strong approximation property. For all nontrivial ideals $\mathfrak{q} \subset \calO_\K$, let $\Gamma_\mathfrak{q} < \Gamma$ be the congruence subgroup of level $\mathfrak{q}$, meaning that $\Gamma_\mathfrak{q} = \Gamma \cap \tilde{\pi}(\ker(\pi_\mathfrak{q}))$. For all nontrivial ideals $\mathfrak{q} \subset \calO_\K$, let $N_\K(\mathfrak{q})$ be the norm of the ideal $\mathfrak{q}$ and $m^{\mathrm{BMS}}_\mathfrak{q}$ be the Bowen--Margulis--Sullivan measure on $\Gamma_\mathfrak{q} \backslash G$ induced from the one on $\Gamma \backslash G$. We prove the more general \cref{thm:UniformExponentialMixing} under the following assumption.

\begin{assumption}
\label{assumption}
The semigroup $\langle\gen\rangle_\sg$ generated by the set of generators $\gen$, associated to the alphabet $\calA$ of the symbolic coding, has full trace field:
\begin{align*}
\mathbb Q(\tr(\Ad(\langle\gen\rangle_\sg))) = \K.
\end{align*}
\end{assumption}

The assumption is mild when the degree of the number field is small. For instance, for $\K = \Q(\sqrt{2})$, only one of the hyperbolic elements which form $\langle\gen\rangle_\sg$ needs to have an irrational trace, under the adjoint representation.

We also prove the following theorem when $\Gamma$ has parabolic elements; otherwise, it was obtained without assumptions in \cite{Sar22a,Sar22b}).

\begin{theorem}
\label{thm:UniformExponentialMixing}
If $n = 3$, assume $\K = \Q$; otherwise, impose \cref{assumption}. There exist $\eta > 0$, $C > 0$, and a nontrivial proper ideal $\mathfrak{q}_0 \subset \calO_\K$ such that for all (square-free if $n = 3$) ideals $\mathfrak{q} \subset \calO_\K$ coprime to $\mathfrak{q}_0$ and $\phi, \psi \in C^1(\Gamma_\mathfrak{q} \backslash G)$ and $t > 0$, we have
\begin{multline*}
\left|\int_{\Gamma_\mathfrak{q} \backslash G} \phi(xa_t) \psi(x) \, dm_{\mathfrak{q}}^{\mathrm{BMS}}(x) - \frac{1}{m^{\mathrm{BMS}}_\mathfrak{q}(\Gamma_\mathfrak{q} \backslash G)} m_{\mathfrak{q}}^{\mathrm{BMS}}(\phi) \cdot m_{\mathfrak{q}}^{\mathrm{BMS}}(\psi)\right| \\
\leq CN_\K(\mathfrak{q})^C e^{-\eta t} \|\phi\|_{C^1} \|\psi\|_{C^1}.
\end{multline*}
\end{theorem}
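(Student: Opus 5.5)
The approach follows the Dolgopyat method with congruence-uniform transfer operators, as in \cite{Sar22a,Sar22b}, adapted to a countable coding. First, I would code the geodesic flow on $\Gamma \backslash G$ by a countable Markov section. Stratmann--Urbański or Li--Pan style coding gives a countable alphabet $\calA$ with generators $\gen$, where parabolic cusp excursions correspond to infinitely many symbols. The roof function $\tau$ has exponential tails and is uniformly non-integrable, and the holonomy is valued in $M \cong \SO(n-1)$. The frame flow on $\Gamma_\mathfrak{q}\backslash G$ then becomes a suspension over a skew product with fiber $M \times \Gamma/\Gamma_\mathfrak{q} \cong M \times \tilde{\mathbf{G}}(\calO_\K/\mathfrak{q})$, modulo the usual finite index issues. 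The associated transfer operators $\mathcal{L}_{\xi,\rho,\mathfrak{q}}$ are twisted by the holonomy representation $\rho$ of $M$ and by the permutation representation on $\Gamma/\Gamma_\mathfrak{q}$.

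Second, I would reduce \cref{thm:UniformExponentialMixing} to a spectral bound. The target is $\|\mathcal{L}^k_{\xi,\rho,\mathfrak{q}}\| \leq C N_\K(\mathfrak{q})^C e^{-\eta k}$ on functions orthogonal to $\Gamma_\mathfrak{q}$-invariants, uniformly for $\operatorname{Re}\xi$ near $\delta_\Gamma$. By the Laplace transform and Paley--Wiener argument for exponentially decaying roof tails, as in Pollicott and Avila--Gouëzel--Yoccoz, together with the mixing-to-correlation bookkeeping for $C^1$ test functions, this bound gives the theorem. The bound itself splits into two regimes. For large frequency ($|\operatorname{Im}\xi|$ or the $M$-weight of $\rho$ large), I would run Dolgopyat's cancellation argument uniformly in $\mathfrak{q}$, which works because the congruence factor is a unitary twist. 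This needs local non-integrability and non-concentration, which hold from Zariski density of $\Gamma$ as in \cite{Sar22a}. The countable alphabet is handled by the exponential tail estimates. For small frequency, I would use expansion: the random walk on $\tilde{\mathbf{G}}(\calO_\K/\mathfrak{q})$ driven by the return trajectory semigroup $\langle\gen\rangle_\sg$, truncated to finitely many symbols, should have a uniform $\ell^2$ flattening. I would obtain this from the Bourgain--Gamburd machine, using He--de Saxcé \cite[Theorem 6.1]{HdS22} for absolutely simple groups and Golsefidy--Varjú \cite{GV12} with $\mathfrak{q}$ square-free when $n = 3$, as in \cref{rem:HdS_AbsolutelySimpleHypothesis}.

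The main obstacle is verifying the hypotheses of these expansion theorems for the return trajectory subgroups. These subgroups are generated by finitely many words in the countable generating set, and they must be shown to be Zariski dense in $\mathbf{G}$ and to have full trace field $\K$, which gives strong approximation. For $\K = \Q$ the trace field is automatic. In general this is exactly \cref{assumption}, with finitely many generators chosen to realize the field. Zariski density is the genuine difficulty, since the semigroup omits some of $\Gamma$ near the cusps. I would first show that the Zariski closure $H$ of the semigroup generated by long enough admissible words has limit set of full Hausdorff dimension $\delta_\Gamma$ inside $\Lambda_\Gamma$. If $H$ were proper, it would preserve a proper sphere or point, forcing the limit set into a lower-dimensional sphere and contradicting the Zariski density of $\Gamma$. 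I would then pass to a finite subset of generators whose Zariski closure is already $H$, by Noetherianity.
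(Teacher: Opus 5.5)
The analytic part of your outline matches the paper: reduction to spectral bounds for congruence transfer operators, Paley--Wiener, Dolgopyat's method uniformly in $\mathfrak{q}$ using that the congruence cocycle is unitary and locally constant on cylinders, and expansion after truncating to finitely many letters. The gap is in the algebraic input for the expansion step. The expanders are not built from $\langle\gen\rangle_\sg$. The nearly flat measures in the flattening argument lead to convolutions by $\delta_{\gamma(\beta)\gamma(\tilde\beta)^{-1}}$ with $\beta,\tilde\beta$ of equal length (see \cref{lem:GV_Expander}). Equivalently, a non-symmetric walk along the semigroup must be symmetrized, and the symmetrized step is supported on quotients of equal-length words. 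So Zariski density and full trace field are needed for the return trajectory subgroup $\Omega=\langle\gamma(\alpha)\gamma(\tilde\alpha)^{-1}\rangle$, more precisely for a finite symmetric subset of its generators.

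This group need not contain $\gen$ and need not have finite index in the group generated by $\gen$. So commensurability invariance of the adjoint trace field is unavailable, and your claim that the trace field condition ``is exactly \cref{assumption}'' fails: \cref{assumption} only gives $\K_{\langle\gen\rangle_\sg}=\K$. Transferring this to $\Omega$ is the main new ingredient of the paper. Every nontrivial $\gamma\in\langle\gen\rangle_\sg$ is hyperbolic with attracting fixed point $\gamma^+=\zeta(\alpha)$ for a periodic $\alpha\in\Sigma^+$. Hence $\gamma^+\in\Lambda_\infty(\langle\gen\rangle_\sg)\subset\Lambda_{\mathrm{r}}(\Omega)$, and since $\langle\Omega,\gamma\rangle<\Gamma$ is discrete, Susskind--Swarup (\cref{lem:SS92}) gives $\gamma^k\in\Omega$ for some $k$. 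Knowing $\tr\Ad(\gamma^k)\in\K_\Omega$ does not give $\tr\Ad(\gamma)\in\K_\Omega$. So the paper instead observes that $\langle\gen\rangle_\sg$ is weakly commensurable to $\Omega$ and applies a semigroup version of Prasad--Rapinchuk (\cref{thm:PR}) to get $\K=\K_{\langle\gen\rangle_\sg}\subset\K_\Omega$. That theorem needs absolute simplicity, which is why $\K=\Q$ is imposed for $n=3$. Only after this is the finite symmetric set $\tilde S_{\mathrm{fin}}\subset\tilde S$ extracted.

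Your Zariski density step has a related flaw. Besides being applied to a semigroup rather than to $\Omega$, a limit set of Hausdorff dimension $\delta_\Gamma$ gives no contradiction by dimension count when $\delta_\Gamma\le n-2$, which is exactly the new range here, since such a set fits inside an $(n-2)$-sphere. What you need is a subset of $\Lambda(\Omega)$ lying in no sphere. For example, $\Lambda_+$ works, since it is dense in the relatively open piece $\Lambda(\Gamma)\cap\Delta_0$ of the limit set of the Zariski dense group $\Gamma$; then \cref{lem: Zariski dense condition} applies. The paper gets this from \cref{pro:LimitSetH(yy)RadialEqualsLimitSetH(yy)EqualsLimitSetGamma}, namely $\Lambda_\infty(\langle\gen\rangle_\sg)\subset\Lambda_{\mathrm{r}}(\Omega)$. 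It proves this by using the Poincar\'e map and \cref{lem:ProducingElementsofH} to produce elements of $S$ whose orbit points converge conically to the given point. The \emph{radial} conclusion is exactly what the Susskind--Swarup step above needs.

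For Zariski density alone, once you work with $\Omega$, there is a shortcut: $g\Omega g^{-1}\subset\Omega$ for $g\in\langle\gen\rangle_\sg$, so the Zariski closure of $\Omega$ is normalized by that of the semigroup. This does not control the trace field.

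A smaller point: with cusps, exponential tails of $\Ret$ alone do not make Dolgopyat's method work. Non-concentration holds only at points whose geodesics lie in a compact region at the relevant time. The large deviation property (\cref{prop:LDP}) is what guarantees this for a positive proportion of iterates.
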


\begin{remark}
\Cref{thm:UniformExponentialMixing} contains \cref{thm:UniformExponentialMixingOverQ} because \cref{assumption} holds trivially for $\K = \Q$. This assumption can be removed if one generalizes/modifies the symbolic coding of \cite{LP23} to allow finitely many elements of $\Gamma$ of any desired choice in the set of generators $\gen$ (including parabolic elements). We do not treat general number fields for $n = 3$ because (similar to \cref{rem:HdS_AbsolutelySimpleHypothesis}) the techniques employed to deal with trace fields require that the algebraic group be absolutely simple but $\SO_{3, 1}$ is not.
\end{remark}

\subsection{Applications to affine sieve and resonance-free regions}
As in \cite{Sar22a}, there are several theorems we can derive as applications. Namely, \cite[Theorems 1.4--1.7]{Sar22a} have the obvious generalizations thanks to the works of Mohammadi--Oh \cite{MO15} and Margulis--Mohammadi--Oh \cite{MMO14}. For the convenience of the reader, we present here two of them where the congruence setting is vital. Of course the main point here is that $\Gamma$ is geometrically finite and not necessarily convex cocompact.

\Cref{thm:AffineSieveTheorem} is the application to affine sieve as in \cite{BGS11}. It follows from \cref{thm:UniformExponentialMixingOverQ} via the work of \cite{MO15}. We specialize to the case $\mathbf{G} = \SO_Q$ for some integral quadratic form $Q$ of signature $(n, 1)$ and $\Gamma < \mathbf{G}(\mathbb Z)$. Equip $\mathbb R^{n + 1}$ with \emph{any} norm $\|\bigcdot\|$ and let $\mathbf{G}(\mathbb R)$ act on it by the standard representation.

\begin{theorem}
	\label{thm:AffineSieveTheorem}
	Let $w_0 \in \mathbb Z^{n + 1} \smallsetminus \{0\}$. If $n = 2$ and $Q(w_0) > 0$, we further assume that $w_0$ is not externally $\Gamma$-parabolic (see \cite{MO15} for details). Let $F = F_1 F_2 \dotsb F_r \in \mathbb Q[x_1, x_2, \dotsc, x_{n + 1}]$ for some $r \in \mathbb N$ be a factorization over $\mathbb Q$ into irreducible polynomials over $\mathbb C$ which are $\Z$-valued on $\Gamma w_0$. Then, there exist constants $C_1 > 0$, $C_2 > 0$, and $R \in \mathbb N$ such that for all $T > 0$, we have
	\begin{align*}
		\#\{x \in \Gamma w_0: \|x\| \leq T,  F_j(x) \text{ is prime for all } j \in \{1, 2, \dotsc, r\}\} &\leq C_1\frac{T^{\delta_\Gamma}}{\log(T)^r}, \\
		\#\{x \in \Gamma w_0: \|x\| \leq T, F(x) \text{ has at most $R$ prime factors}\} &\geq C_2\frac{T^{\delta_\Gamma}}{\log(T)^r}.
	\end{align*}
\end{theorem}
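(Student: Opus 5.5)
The plan is to derive \cref{thm:AffineSieveTheorem} from \cref{thm:UniformExponentialMixingOverQ} by running the counting-and-sieve scheme of Bourgain--Gamburd--Sarnak \cite{BGS11} as carried out for $\SO(n,1)$ by Mohammadi--Oh \cite{MO15}. The sieve needs one input: a level-$q$ orbit count with a power-saving error term that is uniform in $q$. Concretely, for square-free $q$ coprime to a fixed $q_0$ and every $q$-residue class $w \in \Gamma w_0 \bmod q$, we want
\begin{align*}
\#\{x \in \Gamma w_0 : \|x\| \leq T,\ x \equiv w \bmod q\} = \frac{c(w_0)}{[\Gamma : \Gamma_q] \cdot \#\mathrm{Stab}}\, T^{\delta_\Gamma} + O\bigl(q^{C} T^{\delta_\Gamma - \epsilon}\bigr),
\end{align*}
with $c(w_0) > 0$ and $\epsilon > 0$ independent of $q$. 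The main term is controlled by strong approximation, which is available here because $\tilde{\pi}^{-1}(\Gamma) \subset \tilde{\mathbf{G}}(\Z)$ and $\Gamma$ is Zariski dense. The sieve only needs square-free moduli, so the square-free restriction for $n=3$ in \cref{thm:UniformExponentialMixingOverQ} costs nothing.

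\textbf{Step 1.} Turn uniform mixing into uniform equidistribution of translates. The stabilizer $H = \Stab_G(w_0)$ is symmetric: it is $\SO(n-1,1)$ if $Q(w_0)>0$, $\SO(n)$ if $Q(w_0)<0$, or horospherical (times compact) if $Q(w_0)=0$. Following \cite{MO15} and its geometrically finite extension in \cite{MMO14}, we thicken $[e]H \subset \Gamma_q\backslash G$ and apply \cref{thm:UniformExponentialMixingOverQ} to the thickening. This gives equidistribution of $\Gamma_q\backslash \Gamma_q H a_t$ toward the BMS/Burger--Roblin-type measures, with an error of size $q^C e^{-\eta' t}$.

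\textbf{Step 2.} Check that the relevant skinning measure $\mu^{\mathrm{PS}}_H$ is finite. We then run the wavefront-and-smoothing argument of \cite{MO15} to convert equidistribution of translates into the displayed uniform count. This step also produces the exponent $T^{\delta_\Gamma}$ for an arbitrary norm $\|\cdot\|$.

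\textbf{Step 3.} Feed the level-$q$ counts into the combinatorial sieve. Using Brun's upper-bound sieve together with the fundamental lemma for the lower bound, exactly as in \cite{BGS11,MO15}, we obtain both inequalities. The local densities are computed through the strong approximation isomorphism $\Gamma/\Gamma_q \cong \prod_{p\mid q}\tilde{\mathbf{G}}(\mathbb{F}_p)$ (up to the finite cover), and the irreducibility of the $F_j$ over $\C$ gives sieve dimension $r$. Primes dividing $q_0$ are absorbed by a fixed finite adjustment, which only changes the constants $C_1$, $C_2$, and $R$.

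\textbf{Main obstacle.} The hardest point is Step 2 in the presence of cusps. One must verify that $\mu^{\mathrm{PS}}_H$ is finite and that the orbit $[e]H$ does not escape into cusps too quickly. For $n=2$, $Q(w_0)>0$, this is precisely where the hypothesis that $w_0$ is not externally $\Gamma$-parabolic is needed. I would first try to cite \cite{MO15} verbatim, since its counting theorem already handles geometrically finite $\Gamma$ and requires only uniform exponential mixing as input. The only new step is then to check that \cref{thm:UniformExponentialMixingOverQ}'s dependence $q^C$ and its $C^1$-norms are compatible with their hypotheses; this works because their smoothing uses Sobolev norms of bounded order and so tolerates polynomial losses in $q$.
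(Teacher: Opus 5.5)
Your proposal is correct and follows the paper's route. The paper gives no separate argument: it deduces \cref{thm:AffineSieveTheorem} from \cref{thm:UniformExponentialMixingOverQ} via the Bourgain--Gamburd--Sarnak sieve framework as carried out by Mohammadi--Oh \cite{BGS11,MO15}, which is exactly your chain (uniform mixing $\Rightarrow$ uniform effective equidistribution of $H$-orbit translates $\Rightarrow$ level-$q$ orbit counts with power saving $\Rightarrow$ combinatorial sieve), and your remarks that the sieve only needs square-free moduli coprime to $q_0$ and that the non-externally-$\Gamma$-parabolic hypothesis secures finiteness of the skinning measure are consistent with that machinery.
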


\Cref{thm:ResonanceFreeRegions} on resonance-free regions for the resolvent of the Laplacian can be deduced from \cref{thm:UniformExponentialMixing} exactly as in Li--Pan \cite[\S 9]{LP23}. We introduce some notation and background. Let $\mathfrak{q} \subset \mathcal{O}_{\mathbb K}$ be an ideal. If $\delta_\Gamma \in (\frac{n - 1}{2}, n - 1]$, then \cref{thm:ResonanceFreeRegions} follows from the spectral gap result in Step II of the proof of \cite[Theorem 11.1]{Kim15} which generalizes \cite[Theorem 1.2]{BGS11}. Thus, we may restrict to $\delta_\Gamma \in (0, \frac{n - 1}{2}]$. In this case, the Laplacian
\begin{align*}
\Delta_\mathfrak{q}: L^2(\Gamma_\mathfrak{q} \backslash \mathbb H^n, \mathbb C) \to L^2(\Gamma_\mathfrak{q} \backslash \mathbb H^n, \mathbb C)
\end{align*}
has a purely continuous spectrum, $\bigl[\left(\frac{n - 1}{2}\right)^2, \infty\bigr)$ \cite{LP82,Pat76,Sul79,Sul87}. It follows that the resolvent
\begin{align*}
\xi \mapsto R_\mathfrak{q}(\xi) = \bigl(\Delta_\mathfrak{q} - \xi(n - 1 - \xi)\Id\bigr)^{-1}
\end{align*}
is holomorphic on the half plane $\bigl(\frac{n - 1}{2}, +\infty\bigr) + i\R$. Guillarmou--Mazzeo \cite{GM12} showed that viewing the resolvent $R_\mathfrak{q}$ as a map whose values are $C_{\mathrm{c}}^\infty(\Gamma_\mathfrak{q} \backslash \mathbb H^n, \mathbb C) \to C^\infty(\Gamma_\mathfrak{q} \backslash \mathbb H^n, \mathbb C)$ bounded operators, it has a meromorphic extension to the \emph{entire} complex plane---its poles are called \emph{resonances}. Moreover, Patterson \cite{Pat88} showed that the map $\xi \mapsto \Gamma\bigl(\xi - \frac{n - 1}{2} + 1\bigr) R_\mathfrak{q}(\xi)$ has exactly one simple pole at $\delta_\Gamma$ on the line $\delta_\Gamma + i\R$.

\begin{theorem}
\label{thm:ResonanceFreeRegions}
If $n = 3$, assume $\K = \Q$; otherwise, impose \cref{assumption}. There exist $\eta > 0$ and a nontrivial proper ideal $\mathfrak{q}_0 \subset \mathcal{O}_{\mathbb K}$ such that for all (square-free if $n = 3$) ideals $\mathfrak{q} \subset \mathcal{O}_{\mathbb K}$ coprime to $\mathfrak{q}_0$, we have that
\begin{align*}
(\delta_\Gamma - \eta, +\infty) + i\R
\end{align*}
is a resonance-free half plane for $R_\mathfrak{q}$ if $\delta_\Gamma \in \bigl\{\frac{n - 1}{2} - k: k \in \mathbb N\bigr\}$ but with an exception of a simple pole at $\delta_\Gamma$ if $\delta_\Gamma \notin \bigl\{\frac{n - 1}{2} - k: k \in \mathbb N\bigr\}$.
\end{theorem}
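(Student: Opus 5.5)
The plan is to deduce the statement from \cref{thm:UniformExponentialMixing} by the Laplace-transform argument of Li--Pan \cite[\S 9]{LP23}, building on Oh--Winter \cite{OW16} and Guillarmou--Mazzeo. First I will split by the size of $\delta_\Gamma$. When $\delta_\Gamma \in \bigl(\frac{n-1}{2}, n-1\bigr]$, the statement follows from the uniform $L^2$ spectral gap in Step II of the proof of \cite[Theorem 11.1]{Kim15}. The small eigenvalues $\lambda = s(n-1-s)$ with $s$ near $\delta_\Gamma$ are then uniformly separated from the base eigenvalue $\delta_\Gamma(n-1-\delta_\Gamma)$, which gives a resonance-free strip apart from the simple pole at $\delta_\Gamma$. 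So I only need to treat $\delta_\Gamma \le \frac{n-1}{2}$. There the spectrum of $\Delta_\mathfrak{q}$ is purely continuous, and $R_\mathfrak{q}(\xi)$ is holomorphic for $\operatorname{Re}\xi > \frac{n-1}{2}$.

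Fix compactly supported $f_1, f_2 \in C_{\mathrm{c}}^\infty(\Gamma_\mathfrak{q}\backslash\mathbb H^n)$. The key identity expresses the matrix coefficient $\langle R_\mathfrak{q}(\xi) f_1, f_2\rangle$, for $\operatorname{Re}\xi$ large, as a Laplace transform in $t$ of correlations $\int f_1(xa_t) f_2(x)\,dm^{\mathrm{Haar}}$ of the geodesic flow on the unit tangent bundle. This comes from writing the resolvent kernel as an integral over geodesic spheres, that is, as a function of $\cosh d(x,y)$ via hypergeometric functions. Next I will use Roblin's transverse-measure comparison, as in \cite{OW16} and \cite{MO15}, to convert Haar correlations into BMS correlations. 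The conversion takes the form
\begin{align*}
\int f_1(xa_t) f_2(x)\,dm^{\mathrm{Haar}} = e^{(\delta_\Gamma - (n-1))t}\bigl(c_\mathfrak{q}\, m^{\mathrm{BR}}(f_1)\, m^{\mathrm{BR}_*}(f_2) + O_{f_1,f_2}(N_\K(\mathfrak{q})^C e^{-\eta' t})\bigr),
\end{align*}
with $\eta'>0$ independent of $\mathfrak{q}$. This conversion step, deducing uniform Haar mixing from uniform BMS mixing, is where \cref{thm:UniformExponentialMixing} enters. I expect it to be the main obstacle, because in the presence of cusps the thickening of horospheres and the control of $f_i$ near cusps must be done uniformly in $\mathfrak{q}$. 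I would follow Li--Pan's treatment, using compact support of $f_i$ and the fact that the covers only unfold the finite-volume part.

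Inserting the asymptotic into the Laplace transform, the main term yields $\frac{c}{\xi - \delta_\Gamma}$ times an entire factor, and the error term is holomorphic on $\operatorname{Re}\xi > \delta_\Gamma - \eta'$. Together these give a meromorphic continuation of $\langle R_\mathfrak{q}(\xi)f_1,f_2\rangle$ to that half plane, with at most a simple pole at $\delta_\Gamma$. Since $f_1, f_2$ are arbitrary and resonances are poles of the Guillarmou--Mazzeo continuation, uniqueness of meromorphic continuation shows there are no resonances in $(\delta_\Gamma - \eta, \infty) + i\R$ other than possibly $\delta_\Gamma$, with $\eta = \eta'$. Finally, the Gamma-factor normalization from Patterson \cite{Pat88} determines whether the pole survives. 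The factor $\Gamma(\xi - \frac{n-1}{2} + 1)$ has poles exactly at $\xi = \frac{n-1}{2} - k$, and these cancel the pole at $\delta_\Gamma$ precisely when $\delta_\Gamma \in \{\frac{n-1}{2} - k : k\in\N\}$. This gives the two stated cases, with $\mathfrak{q}_0$ and the assumptions inherited from \cref{thm:UniformExponentialMixing}.
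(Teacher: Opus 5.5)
Your proposal is correct and follows the same route as the paper, which handles $\delta_\Gamma > \frac{n-1}{2}$ via the uniform spectral gap in \cite[Theorem 11.1]{Kim15} and otherwise deduces the statement from \cref{thm:UniformExponentialMixing} exactly as in \cite[\S 9]{LP23}, using \cite{GM12} for the meromorphic continuation and \cite{Pat88} for the pole at $\delta_\Gamma$. One precision on the last step: the Gamma poles do not ``cancel'' a pole of $R_\mathfrak{q}$; rather, $\Gamma(\xi - \frac{n-1}{2} + 1) R_\mathfrak{q}(\xi)$ has only a simple pole at $\delta_\Gamma$, so when the Gamma factor itself has a pole there $R_\mathfrak{q}$ must be regular at $\delta_\Gamma$, and otherwise (the Gamma factor being finite and nonzero) $R_\mathfrak{q}$ has a simple pole there.
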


\subsection{Proof outline and organization}
In the past decade or two, there have been several works to develop tools related to mixing, exponential mixing, and superstrong approximation. In this paper, we need to combine a variety of such tools. In doing so, the main obstacle that arises is the seeming incompatibility between the fact that the \emph{return trajectory subgroups} are infinitely generated and the fact that the associated expander graphs would be well-defined only for subgroups that are finitely generated. The return trajectory subgroups were introduced in \cite{Sar22a} and in the geometrically finite case, they are defined from the free subsemigroup generated by the countably infinite alphabet and hence infinitely generated. We detail the plan of the paper below.

\begin{itemize}
\item In \cref{sec:Preliminaries}, we cover the basic setup and objects related to geometrically finite subgroups. In \cref{sec:SymbolicModel}, we use the countably infinite coding constructed by Li--Pan \cite{LP23} to obtain a symbolic model which handles the frame flow simultaneously for all the congruence covers. Then, in \cref{sec:TransferOperators}, we introduce the \emph{congruence} transfer operators with holonomy
\begin{align*}
\mathcal{M}_{\xi\Ret, \mathfrak{q}, \rho}: C_{\mathrm{b}}\bigl(\Lambda_+, L^2(F_{\mathfrak{q}}, \C) \otimes V_\rho^{\oplus \dim(\rho)}\bigr) \to C_{\mathrm{b}}\bigl(\Lambda_+, L^2(F_{\mathfrak{q}}, \C) \otimes V_\rho^{\oplus \dim(\rho)}\bigr)
\end{align*}
for $\xi = a + ib \in \C$, a nontrivial ideal $\mathfrak{q} \subset \mathcal{O}_\mathbb{K}$, and an irreducible representation $\rho: M \to \U(V_\rho)$, defined by
\begin{align*}
	\mathcal{M}_{\xi\Ret, \mathfrak{q}, \rho}(H)(x) &= \sum_{\gamma \in \gen} e^{-(a + \delta_\Gamma -ib)\Ret(\gamma x)} \bigl(\gamma^{-1} \otimes \rho(\Hol(\gamma x)^{-1})\bigr) H(\gamma x).
\end{align*}
Here, $\Lambda_+$ is some subset of the limit set of $\Gamma$ which admits a countably infinite coding by $\gen$, and $\Ret$ is the \emph{return time map}, and $\Hol$ is the $M$-valued \emph{holonomy} which ``keeps track of the coordinate in the fibers'' of the principal $M$-bundle $\Gamma_\mathfrak{q} \backslash G \to \Gamma_\mathfrak{q} \backslash G/M$. Also, we refer to $\gamma^{-1}$ appearing as a tensor factor as the \emph{congruence cocycle} which ``keeps track of the coordinate in the fibers'' of the congruence cover $\Gamma_\mathfrak{q} \backslash G/M \to \Gamma \backslash G/M$. Now, the Laplace transform of the correlation functions for frame flows can be written in terms of the Neumann series of the transfer operators with holonomy. We seek certain spectral bounds for these operators which are uniform in the ideals $\mathfrak{q} \subset \mathcal{O}_\mathbb{K}$. Then, such bounds can be converted to uniform exponential mixing by a standard Payley--Wiener type of analysis involving the inverse Laplace transform.
\item For the uniform spectral bounds for large frequencies, i.e., $|b| \gg 1$ or $\rho$ nontrivial, we use a frame flow version of Dolgopyat's method together with the feature that the congruence cocycles are locally constant on cylinders. This simple but crucial fact was first observed and used in \cite{OW16} and later in \cite{MOW19,Sar22a}. In \cref{sec:LNIC_NCP_LDP,sec:Dolgopyat'sMethod}, we provide the key ingredients and the key theorem from which the uniform spectral bounds are deduced. We refer the reader to \cite{LPS25} for the full proofs but include the parts where there are differences due to the congruence cocycle.
\item In \cref{sec:Reduction}, we outline a standard reduction and in \cref{sec:ProofOfUniformSpectralBoundIReducedViaExpansionMachinery}, we outline how to prove the reduced theorem. We refer the reader to \cite{Sar22a} for the full proofs but include the parts where there are differences due to the countably infinite coding. The idea (from prior works) is to mimic a random walk process and derive an $L^2$-flattening lemma using the expansion machinery. The main difference arises in the proof of \cref{lem:GV_Expander} which uses the expansion machinery. The arguments from the prequel \cite{Sar22a} does not work directly. The issue is that the semigroup and the associated return trajectory subgroup is infinitely generated---this gives rise to two interlinked problems:
\begin{enumerate*}
\item there is now no well-defined notion of associated expander graphs; and
\item the $\epsilon > 0$ of the lemma which depend on the letters of the alphabet cannot na\"ively be chosen uniformly in the letters as the alphabet is now countably infinite.
\end{enumerate*}
We address these problems simultaneously. The idea is that the existing argument from \cite{Sar22a} works if we restrict to a suitable finite subset of the alphabet while the corresponding return trajectory subgroups still have the desired properties to form expanders.
\item \Cref{sec:ZariskiDensityAndTraceFieldOfTheReturnTrajectorySubgroups} is the key section where we show the above desired properties. We wish to transfer the Zariski density and full trace field properties to the return trajectory subgroups. Here, we generalize the arguments from \cite{Sar22a} in a suitable fashion. The lemma that the limit set of the return trajectory subgroups coincides with that of $\Gamma$ no longer holds true. However, one can still prove sufficiently strong properties for the \emph{radial} limit set. This suffices when we are working over $\Q$. For other number fields, transferring the full trace field property is harder. First, due to certain limitations of the construction of the coding from \cite{LP23}, we impose an assumption---it is mild if the degree of the number field is small. Even then, the transferring argument requires new tools related to the concept of weak commensurability. We show that the return trajectory subgroups are weakly commensurable to $\Gamma$ which ensures that the trace fields coincide.
\end{itemize}

\subsection*{Acknowledgements}
Part of this research was completed during the author's time at ETH Z\"urich; he acknowledges support under SNSF grant 10003145. Part of this research was also completed while the author was visiting the Mathematical Sciences Research Institute (MSRI), now becoming the Simons Laufer Mathematical Sciences Institute (SLMath), which is supported by the National Science Foundation (Grant No. DMS-2424139).

\section{Preliminaries}
\label{sec:Preliminaries}
We first introduce the setting and the main objects for the rest of the paper.

Let $\HS$ be the $n$-dimensional hyperbolic space for $n \geq 2$. We often use the upper half space model:
\begin{align*}
\HS = \{(x_1, x_2, \dotsc, x_n) \in \R^n: x_n > 0\}.
\end{align*}
Let $\K$ be a totally real number field and $\calO_\K$ be its ring of integers. Let $\mathbf{G} < \GL_N$ for some $N \in \mathbb N$ be a linear algebraic group defined over $\K$ such that $\mathbf{G}(\R) \cong \SO(n, 1)$ and $\mathbf{G}^\sigma(\R)$ is compact for all nontrivial embeddings $\sigma: \K \hookrightarrow \R$. Let $G \coloneq \mathbf{G}(\R)^\circ$, the connected component containing the identity element $e \in \mathbf{G}(\R)$. We can identify it with the group of orientation preserving isometries of $\HS$. Let $o \in \HS$ be a reference point and $v_o \in \T^1(\HS)$ be a reference tangent vector based at $o$. Define the stabilizer subgroups $K = \Stab_G(o)$ which is isomorphic to $\SO(n)$ and $M = \Stab_G(v_o) < K$ which is isomorphic to $\SO(n - 1)$. Let $\Gamma < G$ be a Zariski dense torsion-free discrete subgroup. Our base hyperbolic manifold is $X = \Gamma \backslash \HS \cong \Gamma \backslash G/K$, its unit tangent bundle is $\T^1(X) \cong \Gamma \backslash G/M$, and its (oriented orthonormal) frame bundle is $\F(X) \cong \Gamma \backslash G$ which is a principal $\SO(n)$-bundle over $X$ and a principal $\SO(n - 1)$-bundle over $\T^1(X)$. Let $A = \{a_t\}_{t \in \R} < G$ be a one-parameter subgroup of semisimple elements parametrized such that its canonical right translation action on $G/M$ and $G$ corresponds to the geodesic flow and the frame flow respectively. Note that $C_G(A) = AM$. We fix a left $G$-invariant and right $K$-invariant Riemannian metric on $G$ which descends to the hyperbolic metric on $\HS \cong G/K$. We denote by $\langle \bigcdot, \bigcdot\rangle$ and $\|\bigcdot\|$ the inner product and norm respectively on any tangent space of $G$ or its quotient spaces. We also denote by $d$ the distance function on $G$ or its quotient spaces.

We will need to use the strong approximation theorem of Weisfeiler \cite{Wei84} later on. Consequently, we need to work on the simply connected cover $\tilde{\mathbf G}$ endowed with the covering map $\pi: \tilde{\mathbf G} \to \mathbf G$ defined over $\K$ (see \cite{PR94,Mil17}). Let $\tilde{G} = \tilde{\mathbf G}(\R)$ which is connected and projects down to $\pi(\tilde{G}) = G$ \cite[Proposition 1.5.5 and Theorem 2.3.1]{Mar91}. Let $\tilde{\Gamma} < \tilde{G}$ be a Zariski dense subgroup containing the finite central subgroup $\ker(\pi) = \{e, -e\}$ as the only torsion elements. In order to have the notion of congruence subgroups, let us suppose that $\tilde{\Gamma} < \tilde{\mathbf G}(\calO_\K)$. For notational convenience, we denote the trace field of any subgroup $\Gamma' < \mathbf{G}(\R)$ or $\Gamma' < \tilde{\mathbf{G}}(\R)$ by
\begin{align*}
\K_{\Gamma'} \coloneq \mathbb Q(\tr(\Ad(\Gamma'))).
\end{align*}
To use the strong approximation theorem, we also need to make the technical assumption that $\tilde{\Gamma}$ has trace field $\K_{\tilde{\Gamma}} = \K$. Then we take $\Gamma = \pi(\tilde{\Gamma})$ which also has trace field $\K_\Gamma = \K$ \cite[Corollary 1.4.8]{Mar91}.

\subsection{Geometrically finite subgroups}
We now introduce some basic concepts related to geometrically finite subgroups. Let $(e_1, e_2, \dotsc, e_n)$ be the standard basis of $\R^n$. We identify $\R^{n - 1} = \langle e_1, e_2, \dotsc, e_{n - 1}\rangle$ and view it as a subspace of $\R^n$. Denote by $\partial_\infty\HS = \R^{n - 1} \cup \{\infty\}$ the boundary at infinity and by $\overline{\HS} = \HS \cup \partial_\infty\HS$ the compactification of $\HS$.

\begin{definition}[Limit set]
The \emph{limit set} of $\Gamma$ is the set of limit points $\Lambda(\Gamma) = \lim(\Gamma o) \subset \partial_\infty\HS \subset \overline{\HS}$.
\end{definition}

\begin{definition}[Radial limit set]
\label{def:RadialLimitSet}
A point $\xi \in \Lambda(\Gamma)$ is called a \emph{radial limit point} of $\Gamma$ if for some geodesic $\xi: \R \to \HS$ with $\lim_{t \to \infty} \xi(t) = \xi$, there exists $r > 0$ and sequences $\{\gamma_k\}_{k \in \mathbb N} \subset \Gamma$ and $\{t_k\}_{k \in \mathbb N} \subset \R$ such that $d(\gamma_k o, \xi(t_k)) < r$ and $\lim_{k \to \infty} \gamma_k o = \xi$. The \emph{radial limit set} is the set of radial limit points $\Lambda_{\mathrm{r}}(\Gamma) \subset \Lambda(\Gamma)$.
\end{definition}

\begin{definition}[Parabolic limit set]
\label{def:ParabolicLimitSet}
A point $\xi \in \Lambda(\Gamma)$ is called a \emph{parabolic limit point} of $\Gamma$ if it is the (unique) fixed point of a parabolic element in $\Gamma$. Furthermore, it is said to be \emph{bounded} if $\Stab_{\Gamma}(\xi)\backslash (\Lambda(\Gamma) \smallsetminus \{\xi\})$ is compact. The \emph{parabolic (resp. bounded parabolic) limit set} is the set of parabolic (resp. bounded parabolic) limit points $\Lambda_{\mathrm{p}}(\Gamma) \subset \Lambda(\Gamma)$ (resp. $\Lambda_{\mathrm{bp}}(\Gamma) \subset \Lambda(\Gamma)$).
\end{definition}

\begin{definition}[Critical exponent]
The \emph{critical exponent} $\delta_\Gamma$ of $\Gamma$ is the abscissa of convergence of the Poincar\'{e} series $\mathscr{P}_\Gamma(s) = \sum_{\gamma \in \Gamma} e^{-s d(o, \gamma o)}$.
\end{definition}

\begin{remark}
The above definitions are independent of the choice of $o \in \HS$.
\end{remark}

\begin{definition}[Geometrically finite]
The subgroup $\Gamma < G$ is said to be \emph{geometrically finite} if the $1$-neighborhood of the \emph{convex core} $\Core(X) = \Gamma \backslash \Hull(\Lambda(\Gamma)) \subset X$, where $\Hull$ denotes the convex hull, is of finite volume.
\end{definition}

We have the following theorem due to Bowditch \cite{Bow93} (see also \cite{KL19}).

\begin{theorem}[Geometrically finite]
The following are equivalent:
\begin{enumerate}
\item $\Gamma < G$ is geometrically finite;
\item $\Lambda(\Gamma) = \Lambda_{\mathrm{r}}(\Gamma) \sqcup \Lambda_{\mathrm{p}}(\Gamma)$;
\item $\Lambda(\Gamma) = \Lambda_{\mathrm{r}}(\Gamma) \sqcup \Lambda_{\mathrm{bp}}(\Gamma)$.
\end{enumerate}
\end{theorem}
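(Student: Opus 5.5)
The plan is to prove the cycle of implications (1) $\Rightarrow$ (3) $\Rightarrow$ (2) $\Rightarrow$ (1), using the thick--thin decomposition of $X$ together with the structure of maximal parabolic subgroups given by Bieberbach's theorem. The middle implication (3) $\Rightarrow$ (2) is immediate: $\Lambda_{\mathrm{bp}}(\Gamma) \subset \Lambda_{\mathrm{p}}(\Gamma)$, and a parabolic fixed point is never radial. The last fact holds because if $\xi$ is fixed by a parabolic $p \in \Gamma$, then by discreteness its stabilizer is virtually abelian. One can then find horoballs at $\xi$, precisely invariant under $\Stab_\Gamma(\xi)$, that meet $\Gamma o$ only finitely often modulo the stabilizer. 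A geodesic ray to $\xi$ enters ever deeper horoballs, so it stays at distance tending to infinity from $\Gamma o$. Hence the union in (2) is disjoint.

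For (1) $\Rightarrow$ (3), apply the Margulis lemma. The thin part of $N_1(\Core(X))$ consists of finitely many cusp regions, since finite volume excludes infinitely many of them and Margulis tubes lie in the thick part. Compactness of the thick part of $N_1(\Core(X))$ then follows from finite volume plus a uniform lower bound on injectivity radius. Take $\xi \in \Lambda(\Gamma)$ and the ray $[o,\xi)$, which lies in $\Hull(\Lambda(\Gamma))$ up to bounded distance. Either the projection of the ray returns to the compact thick part infinitely often, and then $\xi$ is radial, or the ray eventually stays in a single cusp component. In the second case the ray lies in a horoball $H$ based at a parabolic point $\eta$, and since a geodesic ray stays inside a horoball only if it converges to its center, $\xi = \eta$. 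Boundedness of $\eta$ comes from the fact that $\Stab_\Gamma(\eta)$ acts cocompactly on $\partial H \cap N_1(\Hull(\Lambda(\Gamma)))$, because that region is a compact piece of the cusp boundary. Radially projecting from $\eta$ identifies this with a cocompact action on $\Lambda(\Gamma) \smallsetminus \{\eta\}$.

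For (2) $\Rightarrow$ (1), argue by contradiction, and I expect this to be the main obstacle. The first sub-step shows that $\Gamma$ has only finitely many orbits of parabolic points whose cusp horoballs meet $N_1(\Hull)$, and that each of them is bounded. Use Bowditch's approach: $\Lambda(\Gamma)$ is compact, and the action on the space of distinct triples $\Theta(\Lambda)$ is properly discontinuous. Condition (2) then forces the action to be cocompact, since a sequence of triples escaping every compact modulo $\Gamma$ would produce a limit point that is neither conical nor bounded parabolic. From cocompactness, remove disjoint invariant horoballs at the parabolic points. The remainder of $N_1(\Hull(\Lambda(\Gamma)))$ projects to a compact set, and each cusp piece has finite volume by the Bieberbach rank computation. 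The delicate point is showing that a parabolic point which is not assumed bounded is in fact bounded, for which I would use the triple space argument. If that becomes unwieldy, I would instead cite \cite{Bow93} for this step, since the statement is attributed there.
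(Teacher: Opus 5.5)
The paper gives no proof of this statement; it quotes it from Bowditch \cite{Bow93} (see also \cite{KL19}). Your implications (1)$\Rightarrow$(3) and (3)$\Rightarrow$(2) are the standard ones and can be completed. However, (2)$\Rightarrow$(1), which is the real content of the theorem, has a genuine gap.

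Your key step is that condition (2) forces $\Gamma$ to act cocompactly on the space $\Theta(\Lambda)$ of distinct triples. This is false whenever $\Gamma$ has parabolic elements, which is the paper's standing assumption. Cocompactness on triples forces every limit point to be conical, and a parabolic fixed point is never conical. Concretely, take a nonuniform lattice with a cusp at $\infty$ and the triples $(0, Le_1, 2Le_1)$. These are dilates by $L$ of a fixed triple, so the centers of the ideal triangles they span sit at height $\asymp L$. Their horizontal positions are bounded modulo $\Stab_\Gamma(\infty)$, so the triples leave every compact set modulo $\Gamma$ as $L \to \infty$. Yet every limit point of a lattice is radial or bounded parabolic, so your justification (an escaping sequence of triples yields a limit point that is neither) also fails. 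The correct variants of this idea are cocompactness on triples away from horoball neighbourhoods of the parabolic points, or cocompactness on pairs. Both are themselves equivalent to (3), so deriving them from (2) is the original problem again.

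What (2) leaves open, and what you defer, is showing that every parabolic fixed point is automatically bounded and that the part of $N_1(\Core(X))$ outside the cusps is compact. The missing idea is a construction for the case where $\Gamma$ is not geometrically finite. For example, $\Lambda(\Gamma)\smallsetminus\{\eta\}$ might not lie within bounded distance of the invariant subspace $Z$ modulo $\Stab_\Gamma(\eta)$, or the thick part of $N_1(\Core(X))$ might be noncompact. In that case one must produce a geodesic ray in $\Hull(\Lambda(\Gamma))$ whose projection to $X$ eventually leaves every compact set but which does not converge to a parabolic fixed point. Its endpoint is then neither radial nor parabolic, contradicting (2). This is the nontrivial part of Bowditch's theorem in constant curvature, and of the Bonahon-type argument in \cite{KL19}. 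Citing \cite{Bow93} for it is exactly what the paper does, but then your proposal only proves the easy directions.

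Two smaller repairs are also needed.
\begin{enumerate}
\item For disjointness you assume precisely invariant horoballs at every parabolic point. Discreteness alone does not give these when $n \geq 4$. Non-radiality still holds by a different argument. Suppose $\gamma_k o$ tends radially to the fixed point of a parabolic $p$. Then for each $M$ and all large $k$, the $M$ distinct elements $\gamma_k^{-1}p^m\gamma_k$, $1 \leq m \leq M$, move $o$ by an amount bounded independently of $M$. This contradicts discreteness.
\item In (1)$\Rightarrow$(3), the cusp components of the thin part are not horoball quotients when $n \geq 4$. The projection argument should therefore use the boundary of the lifted Margulis region instead of a horosphere. That boundary is precisely invariant, and each geodesic ending at $\eta$ meets it exactly once.
\end{enumerate}
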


We assume that $\Gamma$ is \emph{geometrically finite with parabolic elements} throughout the paper.

\begin{remark}
In our case, $\delta_\Gamma \in (0, n - 1]$ and coincides with the Hausdorff dimension of $\Lambda(\Gamma)$.
\end{remark}

\subsection{Structure of cusps}
\label{subsec:StructureOfCusps}
Assume henceforth (by modifying the action on $\HS$ using a conjugation if necessary) that $\infty \in \partial_\infty\HS$ is a parabolic fixed point of the geometrically finite subgroup $\Gamma < G$. Let $\Gamma'_{\infty}=\Stab_{\Gamma}(\infty) <\Gamma$ be the parabolic subgroup fixing $\infty$. Then $\Gamma'_{\infty}$ acts on $\R^{n - 1} \subset \partial_\infty\HS$ by Euclidean isometries. In fact, a theorem of Bieberbach (see \cite[\S 7.5]{Rat19}) states the following: there exist a maximal normal abelian subgroup $\Gamma_{\infty} < \Gamma'_{\infty}$ of finite index and a $\Gamma'_{\infty}$-invariant affine subspace $Z\subset \R^{n - 1}$ of dimension $k \in \N$ such that $\Gamma_{\infty}$ acts on $Z$ as a group of translations of rank $k$. Consequently, decomposing $\R^{n - 1} = Y \oplus Z$ into orthogonal affine subspaces and viewing the latter as vector spaces in their own right, we can write the $\Gamma'_{\infty}$-action on $\R^{n - 1}$ in the following form: for all $\gamma \in \Gamma'_{\infty}$, there exist
\begin{align*}
A_{\gamma} &\in \operatorname{O}(Y), & R_{\gamma} &\in \operatorname{O}(Z), & b_{\gamma} &\in Z,
\end{align*}
with $R_{\gamma} = \Id$ if $\gamma \in \Gamma_{\infty}$, such that
\begin{align*}
\gamma(y,z)=(A_{\gamma} y, R_{\gamma}z+b_{\gamma}) \qquad \text{for all $(y, z) \in Y \oplus Z$}.
\end{align*}
Here, the dimension $k$ is called the \emph{rank} of the parabolic fixed point $\infty$. The $\Gamma_{\infty}$ action on $Z$ admits a fundamental domain $\Delta'_{\infty} \subset Z$ which is an open $k$-dimensional parallelotope. Since $\Gamma$ is geometrically finite, $\infty$ is a bounded parabolic fixed point, implying that $\Gamma_{\infty}\backslash (\Lambda(\Gamma) \smallsetminus \{\infty\})$ is compact. Therefore, there exists a constant $C_\infty>0$ such that the set $B_{Y}\coloneq\{y\in Y: \|y\|<C_\infty\}$ has the property that 
\begin{align}
\label{limit set inclusion}
\Lambda(\Gamma) \smallsetminus \{\infty\} \subset \bigcup_{\gamma\in \Gamma_{\infty}} \gamma \bigl(\overline{B_{Y}\times \Delta_{\infty}'}\bigr).
\end{align}
We call the open set $\Delta_{\infty} \coloneq B_Y \times \Delta'_{\infty}$ a {\it{fundamental domain}} for the parabolic fixed point $\infty$.

\subsection{Patterson--Sullivan density}
\label{subsec:Patterson--SullivanDensity}
In \cite{Pat76,Sul79}, a unique (when $\Gamma$ is of divergence type as in the geometrically finite case) $\Gamma$-invariant conformal density of dimension $\delta_\Gamma$ was constructed by Patterson and Sullivan, called the \emph{Patterson--Sullivan (PS) density}, which we characterize below. Let $\beta_{\xi}$ denote the \emph{Busemann function} at $\xi \in \partial_\infty\HS$ defined by $\beta_{\xi}(y, x) = \lim_{t \to \infty} (d(\xi(t), y) - d(\xi(t), x))$, where $\xi: \R \to \HS$ is any geodesic such that $\lim_{t \to \infty} \xi(t) = \xi$. We also allow tangent vector arguments for the Busemann function in which case we will use their basepoints in the definition. The PS density is the set $\bigl\{\mu^{\mathrm{PS}}_x\bigr\}_{x \in \HS}$ of finite Borel measures on $\partial_\infty\HS$ supported on $\Lambda(\Gamma)$ such that
\begin{enumerate}
\item	$\gamma_*\mu^{\mathrm{PS}}_x = \mu^{\mathrm{PS}}_{\gamma x}$ for all $\gamma \in \Gamma$ and $x \in \HS$,
\item	$\frac{d\mu^{\mathrm{PS}}_x}{d\mu^{\mathrm{PS}}_y}(\xi) = e^{\delta_\Gamma \beta_{\xi}(y, x)}$ for all $\xi \in \partial_\infty\HS$ and $x, y \in \HS$.
\end{enumerate}
Since the PS measures are absolutely continuous with respect to each other, it is convenient to fix $\mu \coloneq \mu_o^{\mathrm{PS}}$ for the rest of the paper.

\subsection{Bowen--Margulis--Sullivan measure}
\label{subsec:BMS_Measure}
For all $u \in \T^1(\HS)$, let $u^+$ and $u^-$ denote its forward and backward limit points. Using the Hopf parametrization via the homeomorphism $G/M \cong \T^1(\HS) \to \{(u^+, u^-) \in \partial_\infty\HS \times \partial_\infty\HS: u^+ \neq u^-\} \times \R$ defined by $u \mapsto (u^+, u^-, t = \beta_{u^-}(o, u))$, we define the \emph{Bowen--Margulis--Sullivan (BMS) measure} $m^{\mathrm{BMS}}$ on $G/M$ \cite{Mar04,Bow71,Kai90} by
\begin{align*}
dm^{\mathrm{BMS}}(u) = e^{\delta_\Gamma \beta_{u^+}(o, u)} e^{\delta_\Gamma \beta_{u^-}(o, u)} \, d\mu^{\mathrm{PS}}_o(u^+) \, d\mu^{\mathrm{PS}}_o(u^-) \, dt.
\end{align*}
Note that this definition only depends on $\Gamma$, and that $m^{\mathrm{BMS}}$ is left $\Gamma$-invariant and right $A$-invariant. We now define induced measures on other spaces all of which we call the BMS measures and denote by $m^{\mathrm{BMS}}$ by abuse of notation. Since $M$ is compact, we can use the Haar probability measure on $M$ to lift $m^{\mathrm{BMS}}$ to a right $M$-invariant measure on $G$. By left $\Gamma$-invariance, $m^{\mathrm{BMS}}$ now descends to measures on $\Gamma \backslash G$ and $\Gamma \backslash G/M$. Since $\Gamma$ is geometrically finite, $m^{\mathrm{BMS}}$ is a finite measure on $\Gamma \backslash G$ which we may normalize so that $m^{\mathrm{BMS}}(\Gamma \backslash G) = 1$.

\section{Symbolic model for the frame flow for congruence covers}
\label{sec:SymbolicModel}
Recall that we assume $\Gamma < G$ has parabolic elements so that $\Gamma\backslash G$ has cusps. In this section, we first recall the basic symbolic model for the geodesic flow constructed by Li--Pan \cite[Proposition 4.1]{LP23}. Using it as a starting point, it was extended to the frame flow in \cite[\S 3.3]{LPS25}. Here, we extend it further to congruence covers.

\subsection{Expanding map on the boundary}
\label{subsec:expanding map}
Fix $\Delta_0 \coloneq \Delta_\infty$ to be the fundamental domain for the parabolic fixed point $\infty \in \partial_\infty\HS$. We write $\|\bigcdot\|_{\mathrm{op}}$ for the operator norm in the upper half space model.

\begin{proposition}[{\cite[Proposition 4.1]{LP23}}]
\label{prop:Coding}
There exist $C_1 > 0$, $\theta \in (0, 1)$, $\epsilon_0 \in (0, 1)$, and a set of generators $\gen=\{\gamma_j\}_{j\in \calA}$ associated to a countably infinite alphabet $\calA$ such that the following holds. Fix:
\begin{itemize}
\item $\Delta_j \coloneq \gamma_j \Delta_0$ for all $j \in \calA$, and $\Delta_{\sqcup} \coloneq \bigsqcup_{j \in \calA} \Delta_j$;
\item an expanding map $T: \Delta_{\sqcup} \to \Delta_0$ defined by
\begin{align*}
T|_{\Delta_j} = \gamma_j^{-1} \qquad \text{for all $j \in \calA$};
\end{align*}
\item a return time map $\Ret: \Delta_{\sqcup} \to \R$ defined by
\begin{align*}
\Ret(x) = \log\|(dT)_x\|_{\mathrm{op}} \qquad \text{for all $x \in \Delta_{\sqcup}$}.
\end{align*}
\end{itemize}
We have:
\begin{enumerate}
\item $\mu(\Delta_0) = \sum_{j \in \calA} \mu(\Delta_j)$;
\item for all $j \in \calA$, the element $\gamma_j$ acts by a uniform contraction: we have $\|(d\gamma_j)_x\|_{\mathrm{op}} \leq \theta$ for all $x \in \Delta_0$;
\item for all $j \in \calA$, we have $\|d(\log\|(d\gamma_j)_{\bigcdot}\|_{\mathrm{op}})_x\|_{\mathrm{op}}\leq C_1$ for all $x \in \Delta_0$;
\item $\Ret$ satisfies the exponential tail property: $e^{\epsilon_0 \Ret} \in L^1(\Delta_{\sqcup}, \mu)$.
\end{enumerate}
\end{proposition}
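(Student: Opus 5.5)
This proposition is quoted from Li--Pan \cite[Proposition 4.1]{LP23}, so the plan is to reconstruct their construction: build a Markov-type partition of the fundamental domain $\Delta_0$ of the cusp at $\infty$, of full Patterson--Sullivan measure, by translates $\gamma_j\Delta_0$. The approach is an inducing scheme. Starting from $\Delta_0$, we pick, for $\mu$-almost every $x \in \Delta_0$, the first time along the geodesic ray from $o$ (or from a point high in the cusp) towards $x$ at which the ray enters a $\Gamma$-translate $\gamma\mathcal{H}_\infty$ of the horoball at $\infty$ that is ``deep enough''. We set $\Delta_\gamma := \gamma\Delta_0$ for those $\gamma$ selected this way. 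Because the horoballs $\gamma\mathcal{H}_\infty$ are disjoint for a precisely invariant horoball, the corresponding shadows are disjoint or nested. Choosing a maximal family of shadows (first entry) gives disjointness of the $\Delta_j$.

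For the measure statement (1), we use the decomposition $\Lambda(\Gamma) = \Lambda_{\mathrm r}(\Gamma) \sqcup \Lambda_{\mathrm{bp}}(\Gamma)$ from Bowditch's theorem, and the fact that $\mu$ has no atoms at parabolic points (here $\delta_\Gamma > k/2$). Every radial limit point in $\Delta_0$ is then approached by a ray that re-enters cusp neighbourhoods infinitely often. Hence almost every point lies in some first-entry shadow, and via \eqref{limit set inclusion} it lies in $\gamma\overline{\Delta_0}$. The boundaries $\partial\Delta_0$ are $\mu$-null by conformality, so (1) follows. The set of such $\gamma$ is infinite, since parabolic translates accumulate, which makes $\calA$ countably infinite.

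Properties (2) and (3) come from explicit Möbius geometry in the upper half space. Each $\gamma_j$ maps $\Delta_0$ into a horoball shadow of Euclidean size comparable to $e^{-d(o,\gamma_j o)}$, up to constants. By requiring the entry depth to exceed a fixed large constant, $\|(d\gamma_j)_x\|_{\mathrm{op}} \le \theta < 1$ uniformly on the bounded set $\Delta_0$. The conformal factor of $\gamma_j$ is $|x - \gamma_j^{-1}\infty|^{-2}$ times a constant. The point $\gamma_j^{-1}\infty$ stays a bounded-below distance from $\Delta_0$ by construction, so the derivative of its logarithm is bounded by a uniform $C_1$.

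The main obstacle is (4), the exponential tail. We have $\Ret \approx 2\log(\text{Euclidean scale of } \Delta_j)^{-1} \approx d(o,\gamma_j o)$ up to bounded error. So $\int e^{\epsilon_0\Ret}\,d\mu \asymp \sum_j e^{(\epsilon_0-\delta_\Gamma) d(o,\gamma_j o)}$, using the shadow lemma $\mu(\Delta_j) \asymp e^{-\delta_\Gamma d(o,\gamma_j o)}$. This needs care near cusps, through the Stratmann--Velani global measure formula. The selected $\gamma_j$ are essentially first returns. One must show that the number of them with $d(o,\gamma_j o) \approx T$ grows at rate strictly less than $e^{\delta_\Gamma T}$. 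The first attempt would be to split each $\gamma_j$ as a parabolic excursion $p \in \Gamma_\infty$ followed by a bounded piece. Summing over parabolic elements, $\sum_{p} e^{-s\, d(o,po)} \asymp \sum_{v\in\Z^k}|v|^{-2s}$ converges for $s > k/2$. Since $\delta_\Gamma > k/2$ by Beardon's inequality, we gain a little room $\epsilon_0 < \delta_\Gamma - k/2$. Iterated excursions without intermediate radial returns are excluded by the first-entry choice, giving summability.
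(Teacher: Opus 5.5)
The paper gives no proof of this statement; it imports it from \cite[Proposition 4.1]{LP23}. So your sketch has to stand on its own, and it has a genuine gap at the construction step: the first-deep-entry rule does not produce a partition of $\Delta_0$ by translates $\gamma_j\Delta_0$.

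A horoball $g\mathcal{H}_\infty$ determines $g$ only up to right multiplication by $\Gamma_\infty$. The translates $gp\Delta_0$, $p\in\Gamma_\infty$, cover all of $\Lambda(\Gamma)\smallsetminus\{g\infty\}$, and they are not adapted to the shadow of $g\mathcal{H}_\infty$, to your depth threshold, or to $\partial\Delta_0$. The set of points whose ray penetrates $g\mathcal{H}_\infty$ exactly to the threshold depth is a sphere around $g\infty$, and in general it cuts through some translate $gp\Delta_0$. That translate is selected for points on the deep side. Points $x'$ on the shallow side are assigned to some other translate $g'p'\Delta_0\ni x'$ coming from a different horoball. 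The two selected sets then intersect without coinciding, so the $\Delta_j$ are not pairwise disjoint and $T$ is not well defined. Likewise, a translate selected near $\partial\Delta_0$ need not lie in $\Delta_0$.

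Your justification that shadows of disjoint horoballs are disjoint or nested is also false. Viewed from $\infty$, horoballs at $\xi_1,\xi_2$ of Euclidean diameters $D_1,D_2$ are disjoint iff $|\xi_1-\xi_2|^2\ge D_1D_2$. Take $D_1=1$, $D_2=\epsilon\le 1/4$ and $|\xi_1-\xi_2|=1/2$: the horoballs are disjoint, but the shadows $B^{\mathrm{E}}_{1/2}(\xi_1)$ and $B^{\mathrm{E}}_{\epsilon/2}(\xi_2)$ overlap partially.

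A valid construction must select the translates themselves rather than assign them pointwise. One natural route is a multi-scale greedy selection of disjoint $\gamma\overline{\Delta_0}$ inside the not-yet-covered region. This needs two inputs:
\begin{enumerate}
\item every ball centred on $\Lambda(\Gamma)$ contains a translate $\gamma\Delta_0$ of comparable $\mu$-measure;
\item a quantitative bound $\mu(\epsilon\text{-neighbourhood of }\partial\Delta_0)\lesssim\epsilon^{c}$, not merely $\mu(\partial\Delta_0)=0$, and hence the same bound for the boundaries of all translates.
\end{enumerate}
Both come from the global measure formula together with $\delta_\Gamma>k/2$.

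Your tail argument has a second gap. The decomposition of $\gamma_j$ into a bounded piece followed by a parabolic excursion fails for a first-deep-entry rule. Before its first deep excursion, the ray can spend arbitrarily long in the thick part and in shallow horoball visits, for example near endpoints of closed geodesics in the thick part. So the prefix ranges over an exponentially growing set, and the convergence of $\sum_{p}e^{-s\,d(o,po)}$ for $s>k/2$ says nothing about it. To get $\mu\{\Ret>T\}\lesssim e^{-cT}$ you need an escape-rate estimate: a uniformly positive conditional $\mu$-probability of a deep excursion within bounded time after each visit to the thick part, or an entropy gap for orbits avoiding the deep cusp region. The proposal supplies neither.

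In a greedy scheme of the type above, properties (1) and (4) come out together. If a fixed fraction $c$ of the remaining mass is captured at each stage, and pieces chosen at stage $N$ have diameter $\gtrsim\lambda^N$ (so $\Ret\le CN$ on them by bounded distortion), then $e^{\epsilon_0\Ret}\in L^1(\Delta_\sqcup,\mu)$ for small $\epsilon_0$.

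Finally, it is not true that every radial limit point's ray re-enters cusp neighbourhoods infinitely often; endpoints of closed geodesics in the thick part do not. This holds only $\mu$-almost everywhere, by ergodicity of the geodesic flow with respect to $m^{\mathrm{BMS}}$.
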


By a \emph{sequence} $\alpha = (\alpha_1, \dotsc, \alpha_k)$ of \emph{length} $k \in \Z_{\geq 0}$, we shall mean a sequence in the letters of $\calA$; we follow the convention that $k = 0$ gives the empty sequence. Correspondingly, we have an inverse branch
\begin{align}
	\label{eqn: cocycle for a sequence}
	\gamma(\alpha) := \gamma_{\alpha_1} \dotsb \gamma_{\alpha_k} \in \Gamma
\end{align}
of length $k$. Denote by
\begin{equation*}
\gen^{(k)}\coloneq\{\gamma(\alpha): \alpha \text{ is a sequence of length } k\} \subset \Gamma
\end{equation*}
the set of inverse branches of length $k \in \Z_{\geq 0}$; note that $\gen^{(0)} \coloneq \{e\}$ and $\gen^{(1)} \coloneq \gen$. We denote by
\begin{align*}
\langle \gen\rangle_\sg \coloneq \bigsqcup_{k \in \Z_{\geq 0}}\gen^{(k)} < \Gamma
\end{align*}
the sub\emph{semi}group generated by $\gen$. For the rest of the paper, $\Gamma$ will be under \cref{assumption}, i.e.,
\begin{align*}
\mathbb Q(\tr(\Ad(\langle\gen\rangle_\sg))) = \K
\end{align*}
which holds trivially for $\K = \Q$.

For any $\gamma\in \langle \gen\rangle_\sg$, set
\begin{align*}
\|d\gamma\|\coloneq\sup_{x\in \Delta_0} \|(d\gamma)_x\|_{\mathrm{op}}.
\end{align*}
Define the following, which is the space that admits a countably infinite coding:
\begin{align}
\nonumber
\Lambda_+ &:= \bigcap_{k \in \N} T^{-k}(\Delta_0) = \{x\in \Lambda(\Gamma): T^k \text{ is defined at } x \text{ for all } k \in \mathbb{N}\} \\
\label{eqn: + limit set}
&\subsetneq \Lambda(\langle \gen\rangle_\sg) \subsetneq \Lambda(\Gamma).
\end{align}

The expanding map $T$ gives a contraction action in a neighborhood of $\infty$. We denote by $d_{\mathbb{S}^{n - 1}}$ the spherical metric on $\partial_\infty\HS \cong \mathbb S^{n - 1}$ in the ball model centered at $o \in \HS$.
\begin{lemma}[{\cite[Lemma 4.8]{LP23}}]
\label{lambda-}
There exist $\theta \in (0, 1)$ and an open neighborhood $\Lambda_- \subset \Lambda(\Gamma)$ of $\infty$ such that $\Lambda_{-}$ is disjoint from $\overline{\Delta_0}$ and $\gamma^{-1}(\Lambda_-) \subset \Lambda_-$ with
\begin{align*}
d_{\mathbb{S}^{n - 1}}(\gamma^{-1}y, \gamma^{-1}y') \leq \theta d_{\mathbb{S}^{n - 1}}(y,y') \qquad \text{for all $y,y'\in \Lambda_-$ and $\gamma\in \gen$}.
\end{align*}
\end{lemma}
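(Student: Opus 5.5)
The plan is to use the fact that every generator $\gamma_j$ maps $\Delta_0$ into a set bounded away from $\infty$, so that $\gamma_j^{-1}$ maps a neighborhood of $\infty$ into a smaller neighborhood of $\infty$. In the Li--Pan construction, each $\Delta_j = \gamma_j\Delta_0$ lies in $\Delta_0$, at least up to a $\Gamma_\infty$-translate that has been absorbed, and $\overline{\Delta_0}$ is a bounded subset of $\R^{n-1}$. Because $\gamma_j$ is a uniform contraction on $\Delta_0$ (item (2) of \cref{prop:Coding}), $\gamma_j$ does not fix $\infty$. Equivalently, $\gamma_j^{-1}\infty = c_j \in \R^{n-1}$ is finite, and $\gamma_j\infty$ is finite as well. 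We conjugate by the inversion $\iota$ in the unit sphere, i.e. we pass to the chart at $\infty$. In the upper half space model, an element $g$ not fixing $\infty$ acts by $g(x) = \lambda A\,\iota(x - g^{-1}\infty) + g\infty$, where the isometric sphere has radius $r_g = \lambda^{-1/2}$. Then $\|(dg)_x\|_{\mathrm{op}} = r_g^2/|x - g^{-1}\infty|^2$.

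The first step is to translate the contraction hypothesis into control of isometric spheres. The hypothesis $\|(d\gamma_j)_x\|_{\mathrm{op}} \le \theta$ on $\Delta_0$ says that $\Delta_0$ lies outside the ball $B(\gamma_j^{-1}\infty, \theta^{-1/2} r_j)$. For $\gamma_j^{-1}$, the relevant pole is $\gamma_j\infty \in \overline{\Delta_j} \subset \overline{\Delta_0}$, which is bounded, and the isometric sphere of $\gamma_j^{-1}$ has the same radius $r_j$.

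The second step is to choose $\Lambda_- := \Lambda(\Gamma) \cap \{|x| > R\} \cup \{\infty\}$ for large $R$. Since $\gamma_j^{-1}$ maps the exterior of its isometric sphere into the interior of the isometric sphere of $\gamma_j$, we get $\gamma_j^{-1}(\{|x|>R\}) \subset B(\gamma_j^{-1}\infty, r_j^2/(R - |\gamma_j\infty|))$. We need this ball to sit inside $\{|x|>R\}$, which requires $\gamma_j^{-1}\infty$ to be far out. This is where the countable structure enters. The points $\gamma_j^{-1}\infty$ range over finitely many cusp-type families, namely $\Gamma_\infty$-translates, and $\gamma_j^{-1}\infty$ has to escape to $\infty$ along the sequence. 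That alone is not sufficient, so instead I will use the fact that $\Delta_0 \cap$(interior of the isometric sphere of $\gamma_j$) $= \emptyset$ together with uniform bounds. I expect this uniformity over the countably infinite alphabet to be the main obstacle. To handle it, I will invoke the explicit form of the Li--Pan generators: each one is $\gamma_j = p\,g$, where $p \in \Gamma_\infty$ and $g$ lies in a fixed finite set. Here the $p$-part moves the pole, while the radii $r_j$ and the positions of $g\infty$ stay in a fixed compact set. With that form, $\gamma_j^{-1} = g^{-1}p^{-1}$, and $p^{-1}$ preserves neighborhoods of $\infty$ up to a bounded Euclidean shift. If needed, neighborhoods should be taken as $\Gamma_\infty$-invariant horoball-type sets $\{|z|>R\}$ in the $Z$-coordinate, combined with the bounded $Y$-coordinate from \eqref{limit set inclusion}. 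Then $g^{-1}$, from a finite set and with $g^{-1}\infty \ne \infty$ hmm, would map that neighborhood near $g^{-1}\infty$. So the correct choice is to define $\Lambda_-$ as a small spherical neighborhood of the finite set of relevant poles that is forward invariant. I would first try checking the Li--Pan construction, where I believe $\Lambda_-$ is a small ball around $\infty$ disjoint from $\overline{\Delta_0}$, and where invariance comes from $\gamma_j^{-1}\infty$ lying outside $\overline{\Delta_0}$ in the complementary region.

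The third step is the contraction estimate in $d_{\mathbb S^{n-1}}$. On $\Lambda_-$, which is a compact spherical neighborhood of $\infty$ disjoint from $\overline{\Delta_0}$, we have $|x - \gamma_j\infty|$ bounded below by $\operatorname{dist}(\Lambda_-, \overline{\Delta_0}) > 0$ in the spherical sense. The spherical derivative of $\gamma_j^{-1}$ is then bounded by a quantity comparable to $r_j^2$ times a uniform constant. The contraction hypothesis bounds $r_j^2 \le \theta \operatorname{dist}(\gamma_j^{-1}\infty, \Delta_0)^2$. By shrinking $\Lambda_-$ and replacing $\theta$ with a power of $T$ if necessary, we obtain a uniform $\theta < 1$. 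Integrating the derivative bound along spherical geodesics inside $\Lambda_-$ gives the Lipschitz estimate, using the convexity of a small spherical ball.
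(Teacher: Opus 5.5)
There is a genuine gap, and it sits exactly where the lemma's content lies. You never prove $\gamma^{-1}(\Lambda_-)\subset\Lambda_-$ uniformly over the countably infinite alphabet; you defer it to ``checking the Li--Pan construction''. Since $\gamma_j^{-1}\infty\in\gamma_j^{-1}(\Lambda_-)$, invariance forces every pole $\gamma_j^{-1}\infty$ into $\Lambda_-$, so the real question is where these poles sit.

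Two of the uniformity issues you worry about are in fact automatic:
\begin{enumerate}
\item A horoball $\{x_n>h_0\}$ precisely invariant under $\Stab_\Gamma(\infty)$ is disjoint from $\gamma_j\{x_n>h_0\}$, since $\gamma_j\notin\Stab_\Gamma(\infty)$. The latter is a ball of Euclidean diameter $r_j^2/h_0$, so $r_j\le h_0$.
\item For $x\in\Delta_0$, uniform contraction gives $|\gamma_jx-\gamma_j\infty|=r_j^2/|x-\gamma_j^{-1}\infty|\le\theta^{1/2}r_j$. Hence all $\gamma_j\infty$ lie in a fixed ball $\{|x|\le D'\}$.
\end{enumerate}
So for $R>D'$, the map $\gamma_j^{-1}$ sends $\{|x|>R\}$ into the Euclidean ball of radius $h_0^2/(R-D')$ about $\gamma_j^{-1}\infty$, with spherical derivative of order $r_j^2/(1+|\gamma_j^{-1}\infty|^2)$ on that cap. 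Invariance and contraction for $\Lambda_-=\Lambda(\Gamma)\cap(\{|x|>R\}\cup\{\infty\})$ then both follow once $\inf_j|\gamma_j^{-1}\infty|$ is large compared with $R$, $D'$ and $h_0$.

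That uniform depth of the poles is the missing input. The properties of \cref{prop:Coding} you use only give $\gamma_j^{-1}\infty\notin\overline{\Delta_0}$ (because $\infty\notin\overline{\Delta_j}$) and $r_j\le\theta^{1/2}\,\mathrm{dist}(\gamma_j^{-1}\infty,\Delta_0)$. Neither keeps the poles away from $\overline{\Delta_0}$. The depth has to be extracted from the explicit form of the Li--Pan generators, which is why the paper simply quotes \cite[Lemma~4.8]{LP23}.

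Several of your intermediate claims are also false.
\begin{enumerate}
\item $\gamma_j\infty\in\overline{\Delta_j}$ is wrong, since $\overline{\Delta_j}=\gamma_j\overline{\Delta_0}$ and $\infty\notin\overline{\Delta_0}$. Your third step's lower bound on $|x-\gamma_j\infty|$ via $\mathrm{dist}(\Lambda_-,\overline{\Delta_0})$ rests on this claim.
\item There cannot be infinitely many generators $\gamma_j=pg$ with $p\in\Gamma_\infty$ and $g$ from a finite set. Infinitely many distinct $p$ sharing one $g$ would give $p(g\Delta_0)=\Delta_j\subset\Delta_0$, contradicting proper discontinuity of the $\Gamma_\infty$-action on $\R^{n-1}$. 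With that ordering the shifts by $p^{-1}$ would also be unbounded, not bounded. For cusp-excursion generators the parabolic factor acts first, $\gamma_j=g_jp_j$. It is then a large translation part of $p_j$ that pushes $\gamma_j^{-1}\infty=p_j^{-1}(g_j^{-1}\infty)$ deep toward $\infty$, which is exactly the depth needed above.
\item Replacing $\theta$ by a power of $T$ changes the alphabet and proves a different statement. The lemma is needed for each single $\gamma\in\gen$, in order to define $\hat{T}$ on $\Lambda_+\times\Lambda_-$.
\item Shrinking $\Lambda_-$ is not free: it requires re-checking invariance.
\item The bound $r_j^2\le\theta\,\mathrm{dist}(\gamma_j^{-1}\infty,\Delta_0)^2$ only bounds the spherical derivative by roughly $\theta(|\gamma_j^{-1}\infty|+D_0)^2/(1+|\gamma_j^{-1}\infty|^2)$, with $D_0=\sup_{x\in\Delta_0}|x|$. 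This need not be below $1$ unless $|\gamma_j^{-1}\infty|$ is large, which is the same missing input.
\end{enumerate}
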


Using \cref{prop:Coding} and following \cite[Lemma 2]{You98}, there exists a $T$-invariant ergodic probability measure $\nu$ on $\Delta_0$ such that
\begin{equation}
\label{eqn:nu_and_mu}
d\nu = f_0 \, d\mu
\end{equation}
where $f_0: \Delta_0 \to \R$ is a positive Lipschitz density function bounded away from $0$ and $+\infty$. Then, $\Lambda_+ \subset \Delta_0$ is a full measure subset with respect to the equivalent measures $\mu$ and $\nu$.

\begin{definition}[Cylinder]
Subsets of the form $\gamma\Delta_0 \subset \Delta_0$ for some $\gamma \in \gen^{(k)}$ and $k \in \Z_{\geq 0}$ are \emph{cylinders} of \emph{length} $k$. We denote them by $\mathtt{C}$ or other typewriter style letters.
\end{definition}

We introduce another distance function $D$ on $\Delta_0$ as in \cite[\S 5]{Sto11} by
\begin{align*}
D(x, y) =
\begin{cases}
\min \{\diam(\mathtt{C}): \text{cylinders } \mathtt{C} \subset \Delta_0 \text{ such that } x,y\in\overline{\mathtt{C}}\}, & x, y \in \Delta_0, x \neq y \\
0, & \text{otherwise}.
\end{cases}
\end{align*}

\begin{lemma}[{\cite[Lemma 3.4]{LPS25}}]
The following holds:
\begin{enumerate}
\item $D$ is a distance function on $\Delta_0$;
\item for all $x,y\in \Delta_0$, we have $d_{\mathrm{E}}(x,y)\leq D(x,y)$;
\item for all $k \in \mathbb{N}$, $\gamma\in \gen^{(k)}$, and $x,y\in \Delta_0$, we have $D(\gamma x,\gamma y)\leq \theta^k D(x,y)$.
\end{enumerate}
\end{lemma}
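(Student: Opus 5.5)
We must show that $D$ is a distance function on $\Delta_0$, that $d_{\mathrm{E}} \leq D$, and that $D$ contracts by $\theta^k$ under inverse branches of length $k$. Throughout we use the structure of cylinders from \cref{prop:Coding}: the length-$k$ cylinders are the sets $\gamma\Delta_0$ with $\gamma \in \gen^{(k)}$. For each $k$ these cylinders are pairwise disjoint, and every length-$(k+1)$ cylinder lies in a unique length-$k$ cylinder. This follows by induction from $\Delta_j = \gamma_j\Delta_0 \subset \Delta_0$ being disjoint. Hence any two cylinders are either nested or have disjoint interiors, and their closures behave the same way up to boundary overlaps.

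\textbf{Parts (1) and (2).} First, the minimum in the definition exists. The cylinder $\Delta_0$ itself always contains $x,y$, so the set of competitors is nonempty. By (2) of \cref{prop:Coding}, the diameter of a length-$k$ cylinder is at most $\theta^k \diam(\Delta_0)$. So for $x \neq y$ only cylinders of length at most $k_0$ can have diameter at least $d_{\mathrm{E}}(x,y)$, where $k_0$ depends on $x,y$. Since the length-$k$ cylinders are disjoint with diameters tending to zero, only finitely many such cylinders have closures containing $x$. Thus the minimum is over a finite set and is attained.

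Part (2) is then immediate: $x,y \in \overline{\mathtt{C}}$ gives $d_{\mathrm{E}}(x,y) \leq \diam(\mathtt{C})$. Positivity and symmetry of $D$ follow from (2) and the definition.

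For the triangle inequality, take minimizing cylinders $\mathtt{C}_1 \ni x,z$ and $\mathtt{C}_2 \ni z,y$ (closures). Both closures contain $z$, so by the nested structure one contains the other; I will handle the boundary case by choosing the cylinders of smallest diameter. Then $x,y$ lie in the closure of the larger one, so
\[
D(x,y) \leq \max\{D(x,z), D(z,y)\},
\]
which is an ultrametric inequality and in particular the triangle inequality. The degenerate cases $x=z$ or $z=y$ are trivial.

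\textbf{Part (3).} Let $\gamma \in \gen^{(k)}$ and $x \neq y$, and let $\mathtt{C} = \gamma'\Delta_0$ be a minimizing cylinder for $(x,y)$, with $\gamma'$ of length $m$. Then $\gamma\mathtt{C} = \gamma\gamma'\Delta_0$ is a cylinder of length $k+m$, and its closure contains $\gamma x$ and $\gamma y$, since $\gamma$ extends continuously to the closure. It remains to bound $\diam(\gamma\mathtt{C}) \leq \theta^k \diam(\mathtt{C})$.

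This is the main obstacle. Condition (2) of \cref{prop:Coding} bounds the derivative of each $\gamma_j$ by $\theta$ on $\Delta_0$, which gives the Lipschitz bound $\theta$ along segments only if the segments stay inside $\Delta_0$. The cylinder $\mathtt{C}$ lies in $\Delta_0$, but a Euclidean segment between two points of $\mathtt{C}$ lies in $\Delta_0$ only because of convexity.

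I would first use that $\Delta_0 = B_Y \times \Delta'_\infty$ is convex, being a product of a ball and a parallelotope. Then $\|d\gamma\| \leq \theta^k$ on $\Delta_0$: the chain rule applies because each $\gamma_{\alpha_i}$ maps $\Delta_0$ into $\Delta_{\alpha_i} \subset \Delta_0$. By the mean value inequality along segments in $\Delta_0$, $\gamma$ is $\theta^k$-Lipschitz on $\overline{\Delta_0}$, since $\gamma$ is a Möbius map defined near $\overline{\Delta_0}$. Hence
\[
\diam(\gamma\mathtt{C}) \leq \theta^k \diam(\mathtt{C}) = \theta^k D(x,y),
\]
and therefore $D(\gamma x, \gamma y) \leq \theta^k D(x,y)$. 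The case $x = y$ is trivial.

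If the $\theta$ in \cref{prop:Coding} does not directly control the Euclidean metric, I would instead shrink $\theta$, possibly replacing it by a uniform constant coming from the bounded distortion in (3) of \cref{prop:Coding}.
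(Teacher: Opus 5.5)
Parts (2) and (3) are fine, and your overall route is the natural one; the paper gives no proof of this lemma and only cites \cite[Lemma 3.4]{LPS25}. In (3), the chain rule gives $\|(d\gamma)_x\|_{\mathrm{op}} \leq \theta^k$ on $\Delta_0$. Convexity of $\Delta_0 = B_Y \times \Delta'_\infty$ then lets you integrate along segments inside a cylinder, and $\gamma\mathtt{C}$ is an admissible cylinder for $(\gamma x, \gamma y)$. Your closing hedge is unnecessary: the operator norm in \cref{prop:Coding} is the Euclidean one in the upper half space model, so it controls $d_{\mathrm{E}}$ directly. (Had it not, you would have to enlarge $\theta$, not shrink it.)

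The gap is the triangle inequality in (1). Cylinders of equal length are disjoint as open sets, but that says nothing about their closures. So from $z \in \overline{\mathtt{C}_1} \cap \overline{\mathtt{C}_2}$ you cannot conclude that one of $\mathtt{C}_1, \mathtt{C}_2$ contains the other, and choosing minimal-diameter cylinders does not change this. \cref{prop:Coding} does not exclude touching closures, and they are the expected picture near a parabolic point $g\infty \in \Delta_0$. There the pieces look like $g\eta\Delta_0$ with $\eta \in \Gamma_\infty$, and adjacent $\Gamma_\infty$-translates of $B_Y \times \Delta'_\infty$ share faces. At such a point the inequality genuinely breaks. Suppose siblings $\mathtt{C}_1, \mathtt{C}_2$ with parent $\mathtt{P}$ touch at $z$, and take $x \in \mathtt{C}_1$, $y \in \mathtt{C}_2$. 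Then the only cylinders admissible for $(x,y)$ are $\mathtt{P}$ and its ancestors, so $D(x,y) = \diam(\mathtt{P})$. This can exceed $\diam(\mathtt{C}_1) + \diam(\mathtt{C}_2) \geq D(x,z) + D(z,y)$.

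The missing idea is to argue only for points lying in an open cylinder of every length, e.g.\ points of $\Lambda_+$. This is the only place $D$ is used, through $\Lip_D(\Lambda_+, \R)$.
\begin{itemize}
\item If $x$ lies in the open length-$k$ cylinder $\mathtt{C}_k(x)$, that set is a neighborhood of $x$ disjoint from every other length-$k$ cylinder. So $x$ lies in the closure of no other length-$k$ cylinder.
\item Hence the cylinders admissible for $(x,y)$ are exactly the common initial segment $\mathtt{C}_0(x) = \mathtt{C}_0(y), \dotsc, \mathtt{C}_K(x) = \mathtt{C}_K(y)$ of the two chains.
\item $K$ is finite because $\diam(\mathtt{C}_k(x)) \leq \theta^k \diam(\Delta_0)$, while admissible cylinders have diameter at least $d_{\mathrm{E}}(x,y) > 0$.
\item This gives the minimum for free, replacing your loose finiteness argument. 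For $x, y, z \in \Lambda_+$, your nesting argument then correctly yields $D(x,y) \leq \max\{D(x,z), D(z,y)\}$.
\end{itemize}
To get a metric on all of $\Delta_0$, you would need an extra input from the coding that \cref{prop:Coding} does not state: distinct cylinders of equal length have disjoint closures.
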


\begin{lemma}[{\cite[Lemma 3.5]{LPS25}}]
\label{lem:CylinderEstimate}
For all cylinders $\mathtt{C} \subset \Delta_0$, Borel subsets $E \subset \Delta_0$, and $\gamma \in \langle \gen\rangle_\sg$, we have
\begin{align*}
\diam(\gamma\mathtt{C}) &\asymp \|d\gamma\|\diam(\mathtt{C}), & \mu(\gamma E) &\asymp \|d\gamma\|^{\delta_\Gamma} \mu(E), & \nu(\gamma E) &\asymp \|d\gamma\|^{\delta_\Gamma} \nu(E),
\end{align*}
with some implicit constant $C_{\mathrm{cyl}} > 1$.
\end{lemma}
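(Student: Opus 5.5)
The plan is to derive all three estimates from one bounded distortion property: the conformal maps $\gamma \in \langle \gen\rangle_\sg$ have uniformly comparable derivative norms across $\Delta_0$.

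\textbf{Bounded distortion.} We first show there is a constant $C > 1$ such that
\begin{align*}
\|(d\gamma)_x\|_{\mathrm{op}} \leq C \|(d\gamma)_y\|_{\mathrm{op}} \qquad \text{for all } \gamma \in \langle \gen\rangle_\sg \text{ and } x, y \in \Delta_0.
\end{align*}
Write $\gamma = \gamma_{\alpha_1} \dotsb \gamma_{\alpha_k}$ and use the chain rule. Then $\log\|(d\gamma)_x\|_{\mathrm{op}}$ is the sum over $i$ of $\log\|(d\gamma_{\alpha_i})_{x_i}\|_{\mathrm{op}}$, where $x_i = \gamma_{\alpha_{i+1}} \dotsb \gamma_{\alpha_k} x$. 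Operator norms multiply exactly here because the maps are conformal on the boundary, so the derivatives are similarities. By \cref{prop:Coding}(3), each summand is $C_1$-Lipschitz. By \cref{prop:Coding}(2), we have $d(x_i, y_i) \leq \theta^{k-i} \diam(\Delta_0)$. Summing the geometric series gives a total discrepancy of at most $C_1 \diam(\Delta_0)/(1-\theta)$.

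\textbf{Main obstacle.} Property (3) gives a Lipschitz bound on the derivative along a path, so we need $\Delta_0$ to be convex, or at least quasi-convex with bounded path lengths. This holds since $\Delta_0 = B_Y \times \Delta'_\infty$ is a product of a ball and a parallelotope. For the same reason, we should check that each intermediate point $x_i$ lies in $\Delta_0$. This follows because $\gamma_j \Delta_0 = \Delta_j \subset \Delta_0$ by construction.

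\textbf{Diameter estimate.} Let $\mathtt{C}$ be a cylinder, so $\mathtt{C} \subset \Delta_0$. For the upper bound, take any $x, y \in \mathtt{C}$ and integrate $d\gamma$ along the straight segment between them, which lies in the convex set $\Delta_0$. This gives $d(\gamma x, \gamma y) \leq \|d\gamma\| d(x,y)$, hence $\diam(\gamma\mathtt{C}) \leq \|d\gamma\| \diam(\mathtt{C})$.

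For the lower bound, we use that $\gamma$ is a Möbius transformation, so its restriction to $\Delta_0$ is conformal with bounded distortion. In fact a Möbius map $\gamma$ restricted to a region at definite distance from its pole satisfies
\begin{align*}
|\gamma x - \gamma y|^2 = \|(d\gamma)_x\|_{\mathrm{op}} \|(d\gamma)_y\|_{\mathrm{op}} |x - y|^2.
\end{align*}
Combined with bounded distortion, this gives $|\gamma x - \gamma y| \asymp \|d\gamma\| |x - y|$ directly, which yields both bounds at once. I would use this identity as the primary argument.

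\textbf{Measure estimates.} By $\Gamma$-invariance and conformality of the PS density,
\begin{align*}
\mu(\gamma E) = \int_E e^{-\delta_\Gamma \beta_\xi(\gamma^{-1} o, o)} \, d\mu(\xi).
\end{align*}
For a Möbius map, $e^{-\beta_\xi(\gamma^{-1} o, o)}$ equals the spherical derivative $|\gamma'(\xi)|_{\mathbb{S}^{n-1}}$. On the bounded set $\Delta_0$, and because $\gamma\Delta_0 \subset \Delta_0$ is also bounded, the spherical and Euclidean derivatives are comparable up to constants depending only on $\Delta_0$. Bounded distortion then gives $\mu(\gamma E) \asymp \|d\gamma\|^{\delta_\Gamma} \mu(E)$.

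The estimate for $\nu$ follows from $d\nu = f_0 \, d\mu$ in \eqref{eqn:nu_and_mu}, since $f_0$ is bounded away from $0$ and $+\infty$. Taking $C_{\mathrm{cyl}}$ to be the maximum of all the constants above completes the proof.
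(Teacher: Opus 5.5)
Your proposal is correct and follows the standard argument behind the cited \cite[Lemma 3.5]{LPS25}, which the paper invokes without reproducing a proof: bounded distortion from Properties~(2)--(3) of \cref{prop:Coding} on the convex set $\Delta_0 = B_Y \times \Delta'_\infty$, then the M\"obius identity $|\gamma x - \gamma y|^2 = \|(d\gamma)_x\|_{\mathrm{op}}\|(d\gamma)_y\|_{\mathrm{op}}|x - y|^2$ for the diameters (valid globally, and it neatly avoids needing convexity of $\gamma\Delta_0$), and conformality of the PS density together with $d\nu = f_0\,d\mu$ for the measures. There is no gap to report.
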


\subsection{Symbolic model}
\label{subsec:SymbolicModelForFrameFlows}
Define the map $\hat{T}: \Lambda_+ \times \Lambda_- \to \Lambda_+ \times \Lambda_-$ by
\begin{equation*}
\hat{T}(x,y) = \bigl(\gamma_j^{-1}x,\gamma_j^{-1}y\bigr) \qquad \text{if $x\in \Delta_j$, for all $y \in \Lambda_-$ and $j \in \calA$}.
\end{equation*}

Recall the return time map $\Ret:\Delta_{\sqcup}\to \R$ from \cref{prop:Coding}. We extend it to the map $\Ret: \Delta_{\sqcup} \times \Lambda_- \to \R$ by setting $\Ret(x,y)=\Ret(x)$ for all $(x,y) \in \Delta_{\sqcup} \times \Lambda_-$ and use the notation $\Ret_0 = 0$ and
\begin{align}
\label{eqn:BirkhoffSum}
\Ret_k=\sum_{j=0}^{k-1}\Ret\circ \hat{T}^j \qquad \text{for all $k \in \N$}.
\end{align}
We define the space
\begin{align*}
\Lambda^\Ret \coloneq \{(x,y,s)\in \Lambda_+\times \Lambda_-\times \R:0\leq s<\Ret(x, y)\}.
\end{align*}

We introduce the following map which makes a time change and then an embedding using the Hopf parametrization $\T^1(\HS) \cong \{(x, y) \in \partial_\infty\HS \times \partial_\infty\HS: x \neq y\}\times \R$ as follows:
\begin{align*}
\tilde{\Phi}: \overline{\Delta_0} \times \overline{\Lambda_-} \times \R &\to \T^1(\HS)\\
\nonumber
(x,y,s)&\mapsto (x,y,s+\log(1+\|x\|^2)).
\end{align*}
It induces a map
\begin{align}
\label{eqn:embedding}
\Phi:\overline{\Delta_0} \times \overline{\Lambda_-} \times \R \to \T^1(\HS).
\end{align}
By abuse of notation, they restrict to maps
\begin{align*}
\tilde{\Phi}: \Lambda^\Ret &\to \T^1(\HS), & \Phi: \Lambda^\Ret &\to \T^1(X).
\end{align*}
Then $\tilde{\Phi}$ maps $\Lambda_+ \times \{\infty\}\times \{0\}$ into the unstable horosphere based at $\infty$ which contains $o \in \HS$, and $\Lambda^\Ret \cap (\{x\} \times \Lambda_- \times \{s\})$ into the stable horosphere based at $x \in \Lambda_+$ for all $s \in \R_{\geq 0}$.

Consider the set of rectangles $\mathcal{R} = \{R_j\}_{j \in \calA}$ where for all $j \in \calA$, we define $R_j \coloneq \Phi(\overline{\Delta_j} \times \overline{\Lambda_-} \times \{0\})$ with \emph{center} $w_j \coloneq \Phi(\gamma_j x_0, \infty, 0)$.
It satisfies the Markov property with respect to $\hat{T}$ and hence plays the role of a Markov section which was used in the convex cocompact case in \cite{Sar22a,Sar22b}. However, there are important differences:
\begin{itemize}
\item the alphabet $\gen$ is countably infinite;
\item the diameter of the rectangles are bounded above but not below by a positive number;
\item the return time map $\Ret$ is unbounded above.
\end{itemize}

Fix a reference point $x_0\in \Lambda_+ \subset \Delta_0$. We define a section
\begin{equation*}
\tilde{F}: \Delta_0 \times \Lambda_- \to \F(\HS)
\end{equation*}
which is \emph{smooth} in the first argument in the following fashion: 
\begin{itemize}
\item Fix a frame $\tilde{F}(x_0,\infty) \in \F(\HS)$ based at the tangent vector $\tilde{\Phi}(x_0, \infty, 0)$.
\item Extend the section $\tilde{F}$ such that for any $x,x' \in \Delta_0$, the frames $\tilde{F}(x,\infty)$ and $\tilde{F}(x',\infty)$ are backward asymptotic, i.e.,
\begin{equation*}
\lim_{t\to -\infty} d(\tilde{F}(x,\infty)a_t,\tilde{F}(x',\infty)a_t)=0.
\end{equation*}
Then, we must have $\tilde{F}(x',\infty)=\tilde{F}(x,\infty)n^+$ for some unique $n^+\in N^+$.
\item Extend the section $\tilde{F}$ such that for any $x \in \Delta_0$ and $y,y'\in \Lambda_-$, the frames $\tilde{F}(x,y)$ and $\tilde{F}(x,y')$ are forward asymptotic, i.e.,
\begin{equation*}
\lim_{t\to \infty}d(\tilde{F}(x,y)a_t,\tilde{F}(x,y')a_t)=0.
\end{equation*}
Then, we must have $\tilde{F}(x,y')=\tilde{F}(x,y)n^-$ for some unique $n^-\in N^-$.
\end{itemize}
Then, $\tilde{F}$ induces a map $F: \Delta_0 \times \Lambda_- \to \F(X)$.
\begin{definition}[Holonomy]
\label{holonomy}
The \emph{holonomy} is a map $\Hol: \Delta_{\sqcup} \times \Lambda_- \to M$ such that for all $(x,y) \in \Delta_{\sqcup} \times \Lambda_-$, we have
\begin{equation*}
F(x,y)a_{\Ret(x,y)}=F\bigl(\hat{T}(x,y)\bigr)\Hol(x,y)^{-1}.
\end{equation*}
\end{definition}

\begin{definition}[Generalized holonomy]
We call the combined map $\GHol: \Delta_{\sqcup} \times \Lambda_- \to AM$ defined by $\GHol(x, y) = a_{\Ret(x, y)} \Hol(x, y)$ for all $(x, y) \in \Delta_{\sqcup} \times \Lambda_-$  the \emph{generalized holonomy}.
\end{definition}

The following lemma can be deduced using the construction of $F$ (see \cite[Lemma 4.2]{SW21} for its proof).

\begin{lemma}
\label{lem:stable direction}
For all $x \in \Delta_{\sqcup}$, the maps $\Ret|_{\{x\} \times \Lambda_-}$, $\Hol|_{\{x\} \times \Lambda_-}$, and $\GHol|_{\{x\} \times \Lambda_-}$ are constant.
\end{lemma}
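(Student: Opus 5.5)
The plan is to read off the three claims directly from how the section $\tilde F$ was built, by working in $\F(\HS)$ and writing $x \in \Delta_j$. Fix $x \in \Delta_j$ and $y, y' \in \Lambda_-$. By construction $\tilde F(x, y') = \tilde F(x, y) n^-$ for some $n^- \in N^-$, since the two frames are forward asymptotic. Set $\hat T(x,y) = (\gamma_j^{-1}x, \gamma_j^{-1}y)$ and $\hat T(x,y') = (\gamma_j^{-1}x, \gamma_j^{-1}y')$; these share the first coordinate. Hence $\tilde F(\hat T(x,y')) = \tilde F(\hat T(x,y)) \tilde n^-$ for some $\tilde n^- \in N^-$.

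At the level of $\F(\HS)$, the defining relation of the holonomy reads
\begin{align*}
\tilde F(x,y)\, \GHol(x,y) = \gamma_j \tilde F(\hat T(x,y)),
\end{align*}
with $\GHol(x,y) = a_{\Ret(x,y)}\Hol(x,y) \in AM$; the same relation holds with $y'$ in place of $y$. The lift $\gamma_j$ is the same in both relations because the first coordinate lies in $\Delta_j$ for both points. We also know that $\Ret(x,y) = \Ret(x)$ by definition, so constancy of $\Ret$ is immediate. The content of the lemma is therefore the constancy of $\Hol$, from which constancy of $\GHol$ follows.

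To compare the two relations, substitute $\tilde F(x,y') = \tilde F(x,y)n^-$ and $\tilde F(\hat T(x,y')) = \tilde F(\hat T(x,y))\tilde n^-$, and then use the first relation to eliminate $\gamma_j$. This gives
\begin{align*}
n^- \GHol(x,y') = \GHol(x,y)\, \tilde n^-,
\end{align*}
that is, $\GHol(x,y)^{-1} n^- \GHol(x,y') \in N^-$. Write $g = \GHol(x,y)$ and $g' = \GHol(x,y')$. Since $AM$ normalizes $N^-$, we get $g^{-1}g' \in (g^{-1}N^-g)\cdot N^- = N^-$. Moreover $g^{-1}g' \in AM$, and $AM \cap N^- = \{e\}$ by the Bruhat/Iwasawa-type decomposition ($N^-AM$ is a product with unique factorization). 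Hence $g = g'$, and projecting to the $A$ and $M$ components gives constancy of both $\Ret$ and $\Hol$.

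I expect the only delicate point to be lifting the relation in \cref{holonomy} from $\F(X)$ to $\F(\HS)$ with the same group element $\gamma_j$ for $y$ and $y'$, so that it cancels. This is ensured because $\hat T$ acts on both coordinates by $\gamma_j^{-1}$, where $j$ is determined by $x$ alone. A secondary point is that the section is defined on $\Delta_0 \times \Lambda_-$, and $\gamma_j^{-1}y \in \Lambda_-$ by \cref{lambda-}, so every term above is defined.
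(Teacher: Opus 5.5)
Your argument is correct and is essentially the standard one the paper defers to (\cite[Lemma 4.2]{SW21}). $\Ret$ is constant by definition, and comparing the two lifted holonomy relations via the forward-asymptotic relations $\tilde F(x,y') = \tilde F(x,y)n^-$ and $\tilde F(\hat T(x,y')) = \tilde F(\hat T(x,y))\tilde n^-$ forces $\GHol(x,y)^{-1}\GHol(x,y') \in AM \cap N^- = \{e\}$. The point you flag as delicate, that the lift is $\gamma_j$ for every $y$, holds for two reasons: $\gamma_j$ carries the horosphere used by $\tilde\Phi$ at $\gamma_j^{-1}x$ to a horosphere at $x$ that does not depend on $y$, and the lift is unique because $\Gamma$ is torsion-free.
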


Now, we introduce the congruence aspect. Let $\mathfrak{q} \subset \calO_\K$ be an ideal. We have the reduction map
\begin{align*}
\pi_{\mathfrak{q}}: \tilde{\mathbf{G}}(\calO_\K) \to \tilde{\mathbf{G}}(\calO_\K/\mathfrak{q})
\end{align*}
and we define the \emph{principal congruence subgroup of level $\mathfrak{q}$} to be $\ker(\pi_{\mathfrak{q}}) \cdot \ker(\pi)$. We define the \emph{congruence subgroups} of $\tilde{\Gamma}$ and of $\Gamma$ of \emph{level $\mathfrak{q}$} to be the normal subgroups
\begin{align*}
\tilde{\Gamma}_{\mathfrak{q}} &= \tilde{\Gamma} \cap \ker(\pi_{\mathfrak{q}}) \cdot \ker(\pi) \lhd \tilde{\Gamma}, & \Gamma_{\mathfrak{q}} = \pi(\tilde{\Gamma}_{\mathfrak{q}}) \lhd \Gamma,
\end{align*}
respectively. Then, $\tilde{\Gamma}_{\mathfrak{q}}$ contains $\ker(\pi)$ as the only torsion elements. When $\mathfrak{q} \subset \calO_\K$ is a nontrivial ideal, define the finite group
\begin{align*}
F_{\mathfrak{q}} = \Gamma_{\mathfrak{q}} \backslash \Gamma \cong \tilde{\Gamma}_{\mathfrak{q}} \backslash \tilde{\Gamma}.
\end{align*}
Using the strong approximation theorem of Weisfeiler \cite[Theorem 10.1]{Wei84}, there exists a nontrivial proper ideal $\mathfrak{q}_0 \subset \calO_\K$ such that for all ideals $\mathfrak{q} \subset \calO_\K$ coprime to $\mathfrak{q}_0$, the map $\pi_{\mathfrak{q}}|_{\tilde{\Gamma}}$ is surjective and hence induces the isomorphism
\begin{align*}
\overline{\pi_{\mathfrak{q}}|_{\tilde{\Gamma}}}: F_{\mathfrak{q}} \to \tilde{G}_{\mathfrak{q}}
\end{align*}
where $\tilde{G}_{\mathfrak{q}} \coloneq \pi_{\mathfrak{q}}(\ker(\pi)) \backslash \tilde{\mathbf{G}}(\calO_\K/\mathfrak{q})$. For simplicity, we assume that the ideal $\mathfrak{q}_0 \subset \calO_\K$ suffices for the strong approximation theorem, \cite[Corollary 6]{GV12}, and \cite[Theorem 6.1]{HdS22} for the return trajectory subgroups introduced in \cref{sec:ZariskiDensityAndTraceFieldOfTheReturnTrajectorySubgroups}.

Let $\mathfrak{q} \subset \calO_\K$ be a nontrivial ideal for the rest of the section. The \emph{congruence cover} of $X$ of \emph{level $\mathfrak{q}$} is $X_\mathfrak{q} = \Gamma_\mathfrak{q} \backslash \mathbb H^n$. We have the isometries $\T^1(X_\mathfrak{q}) \cong \Gamma_\mathfrak{q} \backslash G/M$ and $\F(X_\mathfrak{q}) \cong \Gamma_\mathfrak{q} \backslash G$. Since $m^{\mathrm{BMS}}$ is left $\Gamma$-invariant, so in particular it is left $\Gamma_\mathfrak{q}$-invariant. Thus, it descends to the BMS measure $m^{\mathrm{BMS}}_\mathfrak{q}$ on $\Gamma_\mathfrak{q} \backslash G$ which, by right $M$-invariance, descends again to the BMS measure $m^{\mathrm{BMS}}_\mathfrak{q}$ on $\Gamma_\mathfrak{q} \backslash G/M$ by abuse of notation. Note that $m^{\mathrm{BMS}}_\mathfrak{q}(\Gamma_\mathfrak{q} \backslash G) = m^{\mathrm{BMS}}_\mathfrak{q}(\Gamma_\mathfrak{q} \backslash G/M) = \#F_{\mathfrak{q}}$.

We define the space
\begin{equation*}
\Lambda^{\Ret, \mathfrak{q}, M} := \{(x,y,\Gamma_{\mathfrak{q}}\gamma,m,s)\in \Lambda_+\times \Lambda_-\times F_{\mathfrak{q}}\times M\times \R:0\leq s<\Ret(x,y)\}.
\end{equation*}
Without the third component (or effectively for $\mathfrak{q} = \calO_\K$), we denote the above space by $\Lambda^{\Ret, M}$. We use the notations $\Hol^0 = \GHol^0 = e$ and
\begin{align*}
\Hol^k &= \prod_{j = 0}^{k - 1} \Hol \circ \hat{T}^j, & \GHol^k &= \prod_{j = 0}^{k - 1} \GHol \circ \hat{T}^j \qquad \text{for all $k \in \N$}
\end{align*}
where the products are taken in \emph{ascending} order from left to right.
The symbolic frame semiflow $\{\hat{T}_t\}_{t\geq 0}$ is defined by
\begin{equation*}
\hat{T}_t(x,y,\Gamma_{\mathfrak{q}}\gamma,m,s)=(\hat{T}^k(x,y),\Gamma_{\mathfrak{q}}\gamma \cdot \hat{\gamma},\Hol^k(x,y)^{-1}m, s+t-\Ret_k(x,y))
\end{equation*}
for all $(x,y,\Gamma_{\mathfrak{q}}\gamma,m,s) \in (\Lambda_+ \cap \hat{\gamma}\Delta_0)\times \Lambda_- \times F_{\mathfrak{q}}\times M \times \R$ and $\hat{\gamma} \in \gen^{(k)}$, where $k \in \Z_{\geq 0}$ such that $0\leq s+t-\Ret_k(x,y)<\Ret(\hat{T}^k(x,y))$. We call the acting element $\hat{\gamma}$ a \emph{cocycle}, and $\Gamma_{\mathfrak{q}} \hat{\gamma}$ a \emph{congruence cocycle}.

Abusing notation, we define the map
\begin{align*}
\tilde{\Phi}:\Lambda^{\Ret, M}&\to \F(\HS)\\
(x,y,m,s)&\mapsto \tilde{F}(x,y)a_sm.
\end{align*}
This map is well-defined, and it induces a map $\Phi_{\mathfrak{q}}:\Lambda^{\Ret, \mathfrak{q}, M} \to \F(X_\mathfrak{q})$. In fact,
\begin{equation*}
\Phi_\mathfrak{q}(x,y,\Gamma_{\mathfrak{q}}\gamma,m,a_s)=\Gamma_{\mathfrak{q}}\gamma F(x,y)a_sm.
\end{equation*}

Finally, we relate the measures $\hat{\nu}^{\Ret, \mathfrak{q}, M}$ and $m^{\operatorname{BMS}}_\mathfrak{q}$. By, \cite[Proposition 4.11]{LP23}, there exists a unique $\hat{T}$-invariant ergodic probability measure $\hat{\nu}$ on $\Lambda_+\times\Lambda_-$ which projects to the measure $\nu$ on $\Lambda_+$. We define a $\{\hat{T}_t\}_{t \geq 0}$-invariant measure on $\Lambda^{\Ret}$ by
\begin{align*}
d\hat{\nu}^{\Ret} \coloneq \hat{\nu}(\Ret)^{-1} \, d\hat{\nu}\, d\Leb,
\end{align*}
where $\Leb$ is the Lebesgue measure. We use the counting measure on $F_{\mathfrak{q}}$ and the Haar probability measure on $M$ to lift the measure $\hat{\nu}^{\Ret}$ to $\hat{\nu}^{\Ret, \mathfrak{q}, M}$ on $\Lambda^{\Ret, \mathfrak{q}, M}$.

The following is a corollary of \cite[Proposition 4.15]{LP23} and generalizes \cite[Corollary 3.10]{LPS25}. Here, $\{\mathcal{F}_t\}_{t \in \R}$ denotes the frame flow.

\begin{corollary}
\label{cor:factormap}
Let $\mathfrak{q} \subset \calO_\K$ be a nontrivial ideal. The map
\begin{equation*}
\Phi_{\mathfrak{q}}:\bigl(\Lambda^{\Ret, \mathfrak{q}, M},\{\hat{T}_t\}_{t \geq 0}, \hat{\nu}^{\Ret, \mathfrak{q}, M}\bigr)\to \bigl(\F(X_{\mathfrak{q}}),\{\mathcal{F}_t\}_{t \in \R},\m_{\mathfrak{q}}\bigr)
\end{equation*}
is a factor map, i.e.,
\begin{equation*}
(\Phi_{\mathfrak{q}})_{*}\bigl(\hat{\nu}^{\Ret, \mathfrak{q}, M}\bigr)=\m_{\mathfrak{q}} \qquad \text{and} \qquad \Phi_{\mathfrak{q}}\circ \hat{T}_t=\mathcal{F}_t\circ \Phi_{\mathfrak{q}} \qquad \text{for all $t\geq0$}.
\end{equation*}
\end{corollary}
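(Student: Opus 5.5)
The plan is to reduce to the case $\mathfrak{q} = \calO_\K$, where the statement is the factor map property for the frame flow on $\F(X)$. That base case is \cite[Corollary 3.10]{LPS25}, which itself comes from \cite[Proposition 4.15]{LP23} for the geodesic flow together with the holonomy bookkeeping. The congruence coordinate is then added on top, and we check equivariance and the pushforward of measures separately.

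\textbf{Equivariance.} Take $(x,y,\Gamma_{\mathfrak{q}}\gamma,m,s)$ with $x \in \Lambda_+ \cap \hat{\gamma}\Delta_0$, where $\hat{\gamma} = \gamma_{\alpha_1}\dotsb\gamma_{\alpha_k} \in \gen^{(k)}$. Iterating the defining identity of the holonomy (\cref{holonomy}) $k$ times, and using the lift $\tilde{F}$, gives the upstairs identity
\begin{align*}
\tilde{F}(x,y)a_{\Ret_k(x,y)} = \hat{\gamma}\,\tilde{F}\bigl(\hat{T}^k(x,y)\bigr)\Hol^k(x,y)^{-1}.
\end{align*}
This holds because on $\Delta_j$ the map $\hat{T}$ acts by $\gamma_j^{-1}$, and the lift of the section transforms by the deck element $\gamma_j$. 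The ascending product convention for $\Hol^k$ is exactly what makes the composition come out in this order.

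To see that $\hat{\gamma}$ appears on the left and is not merely absorbed modulo $\Gamma$, recall that in \cite{LP23,LPS25} the identity first holds in $\F(\HS)$ modulo $\Gamma$. Since $\tilde{F}$ is defined on $\Delta_0 \times \Lambda_-$ with values in $\F(\HS)$, one must identify which element of $\Gamma$ relates the two frames. By construction this element is $\gamma_j$ on $\Delta_j$: the unstable horosphere piece over $\Delta_j$ is mapped by $\gamma_j^{-1}$ onto the one over $\Delta_0$. Checking this carefully is the main point, since the rest is formal.

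Given the identity, we compute
\begin{align*}
\Gamma_{\mathfrak{q}}\gamma\tilde{F}(x,y)a_{s+t}m
&= \Gamma_{\mathfrak{q}}\gamma\hat{\gamma}\,\tilde{F}\bigl(\hat{T}^k(x,y)\bigr)a_{s+t-\Ret_k}\Hol^k(x,y)^{-1}m.
\end{align*}
Here we use that $a_t$ commutes with $M$, since $C_G(A)=AM$. The right-hand side is exactly $\Phi_{\mathfrak{q}}\circ\hat{T}_t$ applied to the point, so $\Phi_{\mathfrak{q}}\circ \hat{T}_t=\mathcal{F}_t\circ \Phi_{\mathfrak{q}}$.

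\textbf{Pushforward of measures.} The projection $p:\F(X_{\mathfrak{q}})\to\F(X)$ is a $\#F_{\mathfrak{q}}$-sheeted cover, and $\m_{\mathfrak{q}}$ is the lift of $\m$ with counting measure on fibers. By the base case, $(\Phi)_*\hat{\nu}^{\Ret,M}=\m$. For a Borel set $E$ that is small enough to be evenly covered, its preimage is $\bigsqcup_{\Gamma_{\mathfrak{q}}\gamma \in F_{\mathfrak{q}}}\Gamma_{\mathfrak{q}}\gamma\tilde{E}$. Its $\Phi_{\mathfrak{q}}$-preimage splits along the $F_{\mathfrak{q}}$-coordinate, and each slice is a translate of $\Phi^{-1}(E)$. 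The counting measure on $F_{\mathfrak{q}}$ assigns each slice the measure $\hat{\nu}^{\Ret,M}(\Phi^{-1}(E))$. Hence both $(\Phi_{\mathfrak{q}})_*\hat{\nu}^{\Ret,\mathfrak{q},M}$ and $\m_{\mathfrak{q}}$ restrict on each sheet to the same measure, $\m$ transported to that sheet.

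The remaining care is with the well-definedness of $\Phi_{\mathfrak{q}}$ on representatives. The subtlety is that on sets $\Gamma_{\mathfrak{q}}\gamma\tilde{E}$ the sheet index must match the $F_{\mathfrak{q}}$-coordinate. This is ensured by fixing the lift $\tilde{F}$ with values in a fixed fundamental region, which is the lift chosen in the construction. Since evenly covered sets generate the Borel $\sigma$-algebra, the two measures agree, and we get $(\Phi_{\mathfrak{q}})_{*}\hat{\nu}^{\Ret, \mathfrak{q}, M}=\m_{\mathfrak{q}}$.
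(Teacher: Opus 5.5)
Your proposal is correct and is essentially the argument the paper leaves implicit when it calls this a corollary of \cite[Proposition 4.15]{LP23} generalizing \cite[Corollary 3.10]{LPS25}: the lifted identity $\tilde{F}(x,y)a_{\Ret_k(x,y)} = \hat{\gamma}\,\tilde{F}(\hat{T}^k(x,y))\Hol^k(x,y)^{-1}$ gives equivariance, and a fiberwise count over the $F_{\mathfrak{q}}$-cover gives the measure identity. Two small clean-ups: first, the deck element is forced to be $\gamma_j$ because $\mathcal{G}_{\Ret(x)}\tilde{\Phi}(x,y,0) = \gamma_j\tilde{\Phi}(\hat{T}(x,y),0)$ already holds in $\T^1(\HS)$ (this is what $\Ret(x) = \log\|(dT)_x\|_{\mathrm{op}}$ encodes) and a torsion-free discrete $\Gamma$ acts freely on $\T^1(\HS)$ (your horosphere remark is not literally right, since $\gamma_j^{-1}$ moves the horosphere based at $\infty$ to one based at $\gamma_j^{-1}\infty \neq \infty$); second, no fundamental-region choice of $\tilde{F}$ is needed, since for each $(x,y,m,s)$ the map $\Gamma_{\mathfrak{q}}\gamma \mapsto \Gamma_{\mathfrak{q}}\gamma\tilde{F}(x,y)a_sm$ is a bijection from $F_{\mathfrak{q}}$ onto the fiber of $\F(X_{\mathfrak{q}}) \to \F(X)$, so by Fubini each sheet over $E$ receives mass exactly $\hat{\nu}^{\Ret,M}(\Phi^{-1}(E)) = \m(E)$.
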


\section{Congruence transfer operators with holonomy}
\label{sec:TransferOperators}
In this section, we introduce the congruence transfer operators with holonomy associated to the countably infinite coding. The main technical objective in this paper is to obtain uniform spectral bounds for these operators as stated in \cref{thm:UniformSpectralBoundI,thm:UniformSpectralBoundII}.

Recall that we have fixed the Haar probability measure on $M$. Then, $L^2(M)$ is a Hilbert space equipped with the standard inner product $\langle \bigcdot, \bigcdot \rangle$ defined by $\langle f, g\rangle = \int_M f\overline{g}$. As usual, we denote by $\|\bigcdot\|_2$ the corresponding $L^2$ norm on $L^2(M)$ and any of its subspaces. Similarly, for a nontrivial ideal $\mathfrak{q} \subset \calO_\K$, we have the counting measure on $F_\mathfrak{q}$ and the Hilbert space $L^2(F_\mathfrak{q})$. We will also use the Hilbert subspace $L_0^2(F_\mathfrak{q}) \subset L^2(F_\mathfrak{q})$ consisting of functions with $0$ mean.

We need to treat the function space
\begin{multline*}
C\bigl(\Lambda^\Ret \times F_\mathfrak{q} \times M, \C\bigr) \cong C\bigl(\Lambda^\Ret, C(F_\mathfrak{q} \times M, \C)\bigr) \\
\subset C\bigl(\Lambda^\Ret, L^2(F_\mathfrak{q} \times M)\bigr) \cong C\bigl(\Lambda^\Ret, L^2(F_\mathfrak{q}) \otimes L^2(M)\bigr).
\end{multline*}
Denote by $\widehat{M}$ the unitary dual of $M$ and by $1 \in \widehat{M}$ the trivial irreducible representation. Define $\widehat{M}_0 \coloneq \widehat{M} \smallsetminus \{1\}$. Applying the Peter--Weyl theorem for the left regular representation $(\varsigma, L^2(M))$ gives the Hilbert direct sum decomposition
\begin{align*}
(\varsigma, L^2(M)) = \widehat{\bigoplus}_{\rho \in \widehat{M}} (\rho, V_\rho)^{\oplus \dim(\rho)}.
\end{align*}

For all $b \in \R$ and $\rho \in \widehat{M}$, we define the tensored unitary representations $\rho_b: AM \to \U\bigl(V_\rho^{\oplus \dim(\rho)}\bigr)$ and $\rho_{b, \mathfrak{q}}: AM \to \U\bigl(L^2(F_\mathfrak{q}) \otimes V_\rho^{\oplus \dim(\rho)}\bigr)$ by
\begin{align*}
	\rho_b(a_tm)(z) &= e^{ibt}\rho(m)(z) \qquad \text{for all $z \in V_\rho^{\oplus \dim(\rho)}$}, \\
	\rho_{b, \mathfrak{q}}(a_tm)(z) &= (\Id \otimes \rho_b(a_tm))(z) \qquad \text{for all $z \in L^2(F_\mathfrak{q}) \otimes V_\rho^{\oplus \dim(\rho)}$},
\end{align*}
for all $t \in \mathbb R$, and $m \in M$.

Denote the Lie algebras $\mathfrak{a} := \operatorname{T}_e(A)$ and $\mathfrak{m} := \operatorname{T}_e(M)$. For any unitary representation $\rho: M \to \U(V)$ for some Hilbert space $V$, we denote the differential at $e \in M$ by $d\rho := (d\rho)_e: \mathfrak{m} \to \mathfrak{u}(V)$, and define the norm
\begin{align*}
\|\rho\| := \sup_{z \in \mathfrak{m}, \|z\| = 1} \|d\rho(z)\|_{\mathrm{op}}
\end{align*}
and similarly for any unitary representation $\rho: AM \to \U(V)$.

We recall the following useful fact regarding the Lie theoretic norms from \cite{Sar22a,Sar22b} which is a uniform version of \cite[Lemma 4.4]{SW21}.

\begin{lemma}[{\cite[Lemmas 4.4]{Sar22a}}]
\label{lem:maActionLowerBound}
There exists $\varepsilon_1 \in (0, 1)$ such that for all $b \in \R$, $\rho \in \widehat{M}$, nontrivial ideals $\mathfrak{q} \subset \calO_\K$, and $\omega \in L^2(F_\mathfrak{q}) \otimes V_\rho^{\oplus \dim(\rho)}$ with $\|\omega\|_2 = 1$, there exists $z \in \mathfrak{a} \oplus \mathfrak{m}$ with $\|z\| = 1$ such that $\|d\rho_{b, \mathfrak{q}}(z)(\omega)\|_2 \geq \varepsilon_1 \|\rho_b\|$.
\end{lemma}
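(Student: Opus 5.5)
We need to prove: there exists $\varepsilon_1\in(0,1)$ such that for all $b$, $\rho$, $\mathfrak{q}$, and unit $\omega$, some unit $z\in\mathfrak{a}\oplus\mathfrak{m}$ satisfies $\|d\rho_{b,\mathfrak{q}}(z)\omega\|_2\ge\varepsilon_1\|\rho_b\|$.

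The plan is to remove the congruence factor first. Since $\rho_{b,\mathfrak{q}}=\Id\otimes\rho_b$, we have $d\rho_{b,\mathfrak{q}}(z)=\Id\otimes d\rho_b(z)$. Fix an orthonormal basis $\{e_g\}_{g\in F_\mathfrak{q}}$ of $L^2(F_\mathfrak{q})$ and write $\omega=\sum_g e_g\otimes\omega_g$ with $\sum_g\|\omega_g\|^2=1$. Then for each $z$,
\begin{align*}
\|d\rho_{b,\mathfrak{q}}(z)\omega\|_2^2=\sum_g\|d\rho_b(z)\omega_g\|^2 .
\end{align*}
It therefore suffices to produce lower bounds that are quadratic forms in $\omega$ and average over a fixed finite set of directions $z$. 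This is the natural route because a single $z$ chosen for one $\omega_g$ need not work for another.

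Fix an orthonormal basis $z_0,z_1,\dots,z_d$ of $\mathfrak{a}\oplus\mathfrak{m}$, with $z_0$ spanning $\mathfrak{a}$ and $d=\dim\mathfrak{m}$. The key step is the following claim: there is a constant $c>0$, depending only on $M$, such that for every $\rho\in\widehat M$, every $b$, and every $v\in V_\rho^{\oplus\dim\rho}$,
\begin{align*}
\sum_{i=0}^d\|d\rho_b(z_i)v\|^2\ge c\|\rho_b\|^2\|v\|^2 .
\end{align*}

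To prove the claim, write $d\rho_b(z_0)=ib\lambda\,\Id$, where $\lambda=|d/dt\text{ of }z_0|$, and note that $d\rho_b(z_i)=d\rho(z_i)$ for $i\ge1$. The sum of the $\mathfrak{m}$-terms equals $\langle -\mathcal{C}_\rho v,v\rangle$, where $\mathcal{C}_\rho=\sum_{i\ge1}d\rho(z_i)^2$ is the Casimir element of $M$. Since $\rho$ is irreducible, $-\mathcal{C}_\rho$ acts as the scalar $c_\rho\ge0$. On the other side, $\|\rho\|^2=\sup_{\|z\|=1}\|d\rho(z)\|_{\mathrm{op}}^2\le c_\rho$, because $-d\rho(z)^2\le-\mathcal{C}_\rho$ holds after completing $z$ to an orthonormal basis.

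The $\mathfrak{a}$-term contributes $b^2\lambda^2\|v\|^2$. Hence the whole sum is at least $(b^2\lambda^2+\|\rho\|^2)\|v\|^2$. Finally, $\|\rho_b\|^2\le 2\max(\lambda^2,1)(b^2\lambda^2+\|\rho\|^2)$ up to the normalization of the norm on $\mathfrak{a}\oplus\mathfrak{m}$, which is Cauchy--Schwarz on a unit $z=sz_0+w$. This proves the claim with a uniform $c$.

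Now sum over $g$. We get $\sum_i\|d\rho_{b,\mathfrak{q}}(z_i)\omega\|^2\ge c\|\rho_b\|^2$. By pigeonhole, some $z_i$ satisfies $\|d\rho_{b,\mathfrak{q}}(z_i)\omega\|_2\ge\sqrt{c/(d+1)}\,\|\rho_b\|$. Taking $\varepsilon_1=\min\bigl(\tfrac12,\sqrt{c/(d+1)}\bigr)$ works, and this choice is independent of $\mathfrak{q}$, $b$, $\rho$, and $\omega$.

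The main obstacle is the comparison between $\|\rho\|$ and the Casimir eigenvalue, which must hold uniformly in $\rho$. The inequality $\|\rho\|^2\le c_\rho$ is the direction we need, and it follows from the operator inequality above without any explicit representation theory. The trivial case $\rho=1$, $b=0$ is vacuous, since then both sides vanish.
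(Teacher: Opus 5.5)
Your proof is correct. The paper gives no proof of this lemma. It imports the statement from \cite[Lemma 4.4]{Sar22a}, where it is recorded as the level-uniform version of \cite[Lemma 4.4]{SW21}, so your argument is a complete self-contained substitute.

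Uniformity in $\mathfrak{q}$ comes from the point you isolate. Since $\rho_{b,\mathfrak{q}} = \Id \otimes \rho_b$ is a direct sum of copies of $\rho_b$, your lower bound is for the quadratic form $\sum_i \|d\rho_b(z_i)v\|^2$ over a fixed basis. That form adds up over the $F_{\mathfrak{q}}$-components, so neither the constant nor the pigeonhole step sees $\#F_{\mathfrak{q}}$. The pieces are:
\begin{itemize}
\item the Casimir acts by a scalar $c_\rho$, and the operator inequality $-d\rho(z)^2 \le -\mathcal{C}_\rho$ gives $\|\rho\|^2 \le c_\rho$;
\item the $\mathfrak{a}$-direction contributes $b^2\lambda^2$;
\item additivity over components, then pigeonhole over $d+1$ fixed unit vectors.
\end{itemize}
As you note, choosing a single direction separately for each component would not work.

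Two small tidy-ups.
\begin{itemize}
\item Define $\lambda$ by $\lambda = 1/\|\dot a_0\|$ with $\dot a_0 = \frac{d}{dt}\big|_{t=0} a_t$. Then for a unit $z = s z_0 + w$, Cauchy--Schwarz gives $\|\rho_b\|^2 \le b^2\lambda^2 + \|\rho\|^2$ directly, so your claim holds with $c = 1$ and the factor $2\max(\lambda^2,1)$ is unnecessary.
\item Taking $z_0 \perp \mathfrak{m}$ uses $\mathfrak{a} \perp \mathfrak{m}$. For $n \ge 4$ this is forced: for $X \in \mathfrak{a}$, the functional $\langle X, \cdot \rangle$ is $\Ad(M)$-invariant on $\mathfrak{m}$, hence vanishes on $[\mathfrak{m},\mathfrak{m}] = \mathfrak{m}$ because $\mathfrak{m}$ is semisimple. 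For $n = 3$ it is only a normalization of the metric. Without it, norm equivalence on $\mathfrak{a} \oplus \mathfrak{m}$ changes the Cauchy--Schwarz comparison, and hence $\varepsilon_1$, by a constant factor only. The pigeonhole still selects a unit vector.
\end{itemize}
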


In Dolgopyat's method, the source of the required oscillations come from the \emph{local non-integrability condition (LNIC)} (see \cref{subsec:LNIC}) and the oscillations are actually realized when $\|\rho_b\|$ is sufficiently large. This occurs precisely when $|b|$ is sufficiently large or $\rho \in \widehat{M}$ is nontrival. This motivates us to define the following for some $b_0 > 0$ which we take to be $b_0 = 1$ later:
\begin{align*}
\widehat{M}_0(b_0) := \{(b, \rho) \in \R \times \widehat{M}: |b| > b_0 \text{ or } \rho \neq 1\}.
\end{align*}
We fix the related constant $\delta_{\varsigma} := \inf_{b \in \R, \rho \in \widehat{M}_0} \|\rho_b\| = \inf_{\rho \in \widehat{M}_0} \|\rho\|$ which is positive because $M$ is a compact connected Lie group (recall from \cite[Example 3.1.4]{Lub10} that compact Lie groups have property (T)). Then, $\inf_{(b, \rho) \in \widehat{M}_0(b_0)} \|\rho_b\| \geq \min\{b_0, \delta_{\varsigma}\}$. Thus, we also fix the constant
\begin{align}
\label{eqn:Constantdelta1varrho}
\delta_{1, \varsigma} := \min\{1, \delta_{\varsigma}\}.
\end{align}

We make the following definition. Here and elsewhere, we use the subscript $\mathrm{b}$ to restrict to bounded functions. We also use the notation $\xi = a + ib \in \C$ throughout the paper and omit writing the decomposition explicitly.

\begin{definition}[Congruence transfer operator with holonomy]
For all $\xi \in \mathbb C$ with $a > -\epsilon_0$ and $\rho \in \widehat{M}$, the \emph{congruence transfer operator with holonomy} $\mathcal{M}_{\xi\Ret, \mathfrak{q}, \rho}: C_{\mathrm{b}}\bigl(\Lambda_+, L^2(F_{\mathfrak{q}}) \otimes V_\rho^{\oplus \dim(\rho)}\bigr) \to C_{\mathrm{b}}\bigl(\Lambda_+, L^2(F_{\mathfrak{q}}) \otimes V_\rho^{\oplus \dim(\rho)}\bigr)$ is defined by
\begin{align*}
\mathcal{M}_{\xi\Ret, \mathfrak{q}, \rho}(H)(x) &= \sum_{\gamma \in \gen} e^{-\xi\Ret(\gamma x)} \bigl(\gamma^{-1} \otimes \rho(\Hol(\gamma x)^{-1})\bigr) H(\gamma x) \\
&= \sum_{\gamma \in \gen} e^{-a\Ret(\gamma x)} \bigl(\gamma^{-1} \otimes \rho_b(\GHol(\gamma x)^{-1})\bigr) H(\gamma x)
\end{align*}
for all $x \in \Lambda_+$ and $H \in C_{\mathrm{b}}\bigl(\Lambda_+, L^2(F_{\mathfrak{q}}) \otimes V_\rho^{\oplus \dim(\rho)}\bigr)$.
\end{definition}

The above is well-defined due to the exponential tail property (see Property~(5) in \cref{prop:Coding}) and boundedness of the argument functions. Denote $\mathcal{M}_{\xi\Ret, \mathfrak{q}} \coloneq \mathcal{M}_{\xi\Ret, \mathfrak{q}, 1}$ and $\mathcal{L}_{\xi\Ret} \coloneq \mathcal{M}_{\xi\Ret, \mathcal{O}_\mathbb{K}}$.

Let $(V, \|\bigcdot\|)$ be any normed vector space over $\R$ or $\C$. Let $d$ be any distance function on $\Delta_0$; in particular, $d = d_{\mathrm{E}}$ or $d = D$. For any function $H: \Lambda_+\to V$, denote
\begin{align*}
\|H\|_{\infty} &= \sup\{\|H(x)\|: x\in \Lambda_+\},\\
\Lip_d(H)&=\sup\left\{\frac{\|H(x) - H(x')\|}{d(x, x')}: x, x'\in \Lambda_+, x\neq x'\right\},\\
\|H\|_{\Lip(d)}&=\|H\|_{\infty}+\Lip_d(H).
\end{align*}
Denote by $\Lip_d(\Lambda_+, V)$ the space of functions $H:\Lambda_+\to V$ with $\|H\|_{\Lip(d)} < +\infty$. We omit $d$ from the above notations if $d = d_{\mathrm{E}}$. In particular, we will work with the function spaces $\Lip\bigl(\Lambda_+, V_\rho^{\oplus \dim(\rho)}\bigr)$ and $\Lip_D(\Lambda_+, \R)$ corresponding to the normed vector spaces $\bigl(V_\rho^{\oplus \dim(\rho)}, \|\bigcdot\|_2\bigr)$ for some $\rho \in \widehat{M}$ and $(\R, |\bigcdot|)$. For any function $H:\Delta_0\to V$, denote
\begin{equation*}
\Lip_d^{\mathrm{e}}(H)=\sup\left\{\frac{\|H(x) - H(x')\|}{d(x, x')}: x, x'\in \Delta_j, x\neq x', j \in \calA\right\}.
\end{equation*}

Define the PS measure $\mu_{\mathrm{E}}$ on $\Delta_0$ with respect to the Euclidean metric by
\begin{equation}
\label{equ:mu E}
d\mu_{\mathrm{E}}(x)=(1+\|x\|^2)^{\delta_\Gamma} \, d\mu(x).
\end{equation}
Note that $\Lambda_+ \subset \Delta_0$ is a full measure subset with respect to $\mu_{\mathrm{E}}$. Using the quasi-invariance of the PS measure $\mu$, a straightforward computation gives $\mathcal{L}_{\delta_\Gamma \Ret}^*(\mu_{\mathrm{E}}) = \mu_{\mathrm{E}}$.

By \cite[Lemma A.1]{LPS25}, the family $\xi\mapsto \mathcal{L}_{(\delta_\Gamma+\xi)\Ret}$ of operators on $\Lip(\Lambda_+,\mathbb{C})$ is analytic on $\{\xi=a+ib\in \mathbb{C}: a>-\frac{\epsilon_0}{2}\}$. It can be shown as in \cite[Proposition A]{You98} that $\mathcal{L}_{\delta_\Gamma\Ret}|_{\Lip(\Lambda_+,\mathbb{C})}$ has a spectral gap with a maximal simple eigenvalue $\lambda_0 = 1$ with a corresponding eigenfunction $h_0:\Lambda_+\to \R$ given by $h_0(x)=(1+\|x\|^2)^{-\delta_\Gamma}f_0(x)$, where $f_0$ is the density function defined in \cref{eqn:nu_and_mu}. It satisfies $\int_{\Lambda_+} h_0 \, d\mu_{\mathrm{E}} = 1$. Moreover, by perturbation theory of operators (see \cite[Chapter 7]{Kat95}), there exists $a_0' \in (0, \frac{\epsilon_0}{2})$ and analytic maps
\begin{itemize}
\item $[-a_0', a_0'] \to \R$ denoted by $a \mapsto \lambda_a$,
\item $[-a_0', a_0'] \to \Lip(\Lambda_+, \R)$ denoted by $a \mapsto h_a$,
\end{itemize}
such that $\mathcal{L}_{(\delta_\Gamma + a)\Ret}h_a = \lambda_a h_a$, and $h_a$ is bounded away from $0$ and $+\infty$ and normalized such that $\int_{\Lambda_+} h_a \, d\mu_{\mathrm{E}}=1$.

Recall the measures $\mu$ and $\nu$ from \cref{subsec:Patterson--SullivanDensity,subsec:expanding map}, respectively, and also \cref{eqn:nu_and_mu}. Combining with \cref{equ:mu E} and the definition of $h_0$, we have
\begin{equation}
d\nu = h_0 \, d\mu_{\mathrm{E}}.
\end{equation}

Define the function
\begin{align*}
\FRet^{(a)} = -(\delta_\Gamma + a)\Ret - \log(\lambda_a) + \log \circ h_a - \log \circ h_a \circ T
\end{align*}
which is cohomologous to $-(\delta_\Gamma + a)\Ret - \log(\lambda_a)$. Due to Property~(4) in \cref{prop:Coding}, we can fix some $a_0' > 0$ and
\begin{align}
\label{eqn:ConstantC1'C2}
C_1' &> \max\left(\Lip^{\mathrm{e}}(\Ret), \Lip^{\mathrm{e}}(\GHol), \sup_{|a| \leq a_0'} \Lip^{\mathrm{e}}\bigl(\FRet^{(a)}\bigr)\right), & C_2 &= \frac{C_1'}{1 - \theta},
\end{align}
where $\Lip^{\mathrm{e}}(\GHol)$ is defined similarly using the Riemannian metric on $AM$.

For all $\xi \in \mathbb C$ with $a > -\epsilon_0$ and $\rho \in \widehat{M}$, we \emph{normalize} the transfer operator with holonomy as
\begin{align*}
\mathcal{M}_{\xi, \mathfrak{q}, \rho} = m_{\lambda_a h_a}^{-1} \circ \mathcal{M}_{(\delta_\Gamma + \xi)\Ret, \mathfrak{q}, \rho} \circ m_{h_a}
\end{align*}
where $m_h: C\bigl(\Lambda_+, L^2(F_{\mathfrak{q}}) \otimes V_\rho^{\oplus \dim(\rho)}\bigr) \to C\bigl(\Lambda_+, L^2(F_{\mathfrak{q}}) \otimes V_\rho^{\oplus \dim(\rho)}\bigr)$ denotes the multiplication operator by $h \in C(\Lambda_+, \R)$. For all $k \in \N$, its $k$-th iteration is simply given by
\begin{align}
\label{eqn:k^thIterationOfCongruenceTransferOperatorOfType_rho}
\mathcal{M}_{\xi, \mathfrak{q}, \rho}^k(H)(x) = \sum_{\gamma \in \gen^{(k)}} e^{\FRet_k^{(a)}(\gamma x)} \bigl(\gamma^{-1} \otimes \rho_b(\GHol^k(\gamma x)^{-1})\bigr) H(\gamma x)
\end{align}
for all $x \in \Lambda_+$ and $H \in C_{\mathrm{b}}\bigl(\Lambda_+, L^2(F_{\mathfrak{q}}) \otimes V_\rho^{\oplus \dim(\rho)}\bigr)$. Denote $\mathcal{M}_{\xi, \mathfrak{q}} \coloneq \mathcal{M}_{\xi, \mathfrak{q}, 1}$ and  $\calL_\xi \coloneq \calM_{\xi, \mathcal{O}_\mathbb{K}}$. Due to the above normalization, it satisfies $\cal{L}_0^*(\nu) = \nu$.

We now state the main technical theorem regarding uniform spectral bounds for the congruence transfer operators with holonomy.
We treat the different regions of the unitary dual $\R \times \widehat{M}$ separately.
The first should be thought of as estimates for low-frequence functions and the second as estimates for high-frequency functions. The proofs for each of these theorems use completely different techniques: expansion machinery for the first, and Dolgopyat's method for the second.

\begin{theorem}
\label{thm:UniformSpectralBoundI}
If $n = 3$, assume $\K = \Q$. Under \cref{assumption}, there exist $\eta > 0$, $C \geq 1$, $a_0 > 0$, $b_0 > 0$, and a nontrivial proper ideal $\mathfrak{q}_0' \subset \calO_\K$ such that for all $\xi \in \C$ with $|a| < a_0$ and $|b| \leq b_0$, (square-free if $n = 3$) ideals $\mathfrak{q} \subset \calO_\K$ coprime to $\mathfrak{q}_0\mathfrak{q}_0'$, $k \in \N$, and $H \in \Lip\bigl(\Lambda_+, L_0^2(F_\mathfrak{q})\bigr)$, we have
\begin{align*}
\bigl\|\mathcal{M}_{\xi, \mathfrak{q}}^k(H)\bigr\|_2 \leq C N_\K(\mathfrak{q})^C e^{-\eta k} \|H\|_{\Lip}.
\end{align*}
\end{theorem}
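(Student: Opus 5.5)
I plan to follow the Bourgain--Gamburd--Sarnak strategy as adapted in \cite{Sar22a}: reduce to an $L^2$-flattening statement for the congruence cocycle, and then use the expansion machinery on a suitable finitely generated return trajectory subgroup. First, by perturbation in $\xi$ (analyticity of $\xi \mapsto \mathcal{M}_{\xi,\mathfrak{q}}$, with $|a|<a_0$ and $|b|\le b_0$ small), I will reduce to $\xi = 0$ up to an error $e^{O(|\xi|)k}$. This error is absorbed provided the decay rate at $\xi=0$ beats it on a fixed time scale, which is why $a_0, b_0$ are taken small.

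The second step is to compare with the tensor structure. I write $\mathcal{M}_{0,\mathfrak{q}}^k(H)(x) = \sum_{\gamma\in\gen^{(k)}} e^{\FRet_k^{(0)}(\gamma x)}\gamma^{-1}H(\gamma x)$. Using the Lipschitz bound on $H$ and the contraction $D(\gamma x,\gamma y)\le\theta^k D(x,y)$, I replace $H(\gamma x)$ by its value at cylinder centers, and replace the weights by their values at a reference point. The error is $O(\theta^{k}\|H\|_{\Lip})$. What remains is essentially a convolution of $H$ (pointwise in $F_\mathfrak{q}\cong\tilde{G}_\mathfrak{q}$, with mean zero) against the pushforward probability measure $\mu_k$ on $\tilde G_\mathfrak{q}$ of the weights $e^{\FRet_k^{(0)}}$ on $\gen^{(k)}$.

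The third step is to iterate over blocks of length $r\approx C\log N_\K(\mathfrak{q})$, using the Markov property. This gives decay once I show the $L^2$-flattening $\|\mu_r * f\|_2 \le N_\K(\mathfrak{q})^{-c}\|f\|_2$ for $f\in L_0^2(\tilde G_\mathfrak{q})$. That in turn follows from a spectral gap for the Cayley graphs of $\pi_\mathfrak{q}$ of a finitely generated group. The choice of group is dictated by the fact that $\gen$ is infinite, so I cannot invoke expansion for $\langle\gen\rangle$ directly. Instead I truncate to a finite subalphabet $\calA_0\subset\calA$ with total weight close to $1$, and form the return trajectory subgroup generated by cycles $\gamma(\alpha)\gamma(\beta)^{-1}$ with letters in $\calA_0$. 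I will decompose $\mu_r$ as a mixture: the part supported on words in $\calA_0$ dominates a positive multiple of a symmetrized random walk on this finitely generated group, while the remainder has total mass bounded by $(1-c_0)^{r}$-type terms. Expansion on the finite part then comes from Golsefidy--Varj\'u \cite{GV12} when $n=3$ (square-free $\mathfrak{q}$) and from He--de Saxc\'e \cite[Theorem 6.1]{HdS22} otherwise. The key point is that the constants depend only on the fixed finite generating set, so the uniformity issue in $\epsilon$ disappears.

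The main obstacle is verifying the hypotheses of these expansion theorems for the truncated return trajectory subgroup: it must be Zariski dense in $\tilde{\mathbf G}$ and have full trace field $\K$. For Zariski density, I will show that for $\calA_0$ large enough the limit set of the truncated subgroup contains a large portion of the radial limit set $\Lambda_{\mathrm r}(\Gamma)$, in particular points not contained in any proper subsphere. By Noetherianity, the Zariski closure stabilizes as $\calA_0$ grows, and it equals $\mathbf G$ because $\Gamma$ is Zariski dense. For the trace field, I will use \cref{assumption}: the elements of $\langle\gen\rangle_\sg$ whose traces generate $\K$ are finitely many, so they involve finitely many letters, and I include those letters in $\calA_0$. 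The remaining step is to pass from the semigroup to the return trajectory subgroup. Here I will show that the subgroup is weakly commensurable with $\Gamma$, and then invoke rigidity results for weakly commensurable Zariski dense subgroups of absolutely simple groups to conclude that the trace fields agree; this is where $n\ne3$ is needed. Finally, $\mathfrak q_0'$ is chosen to cover the finitely many bad primes of strong approximation for this subgroup.
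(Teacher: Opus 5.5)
Your overall architecture matches the paper: expansion for a finite generating set inside the return trajectory subgroup, He--de Saxc\'e and Golsefidy--Varj\'u, strong approximation, and weak commensurability for $n\neq 3$. However, there are genuine gaps in both the analytic and the algebraic parts.

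\emph{Analytic part.} The first step does not work as stated. On $L_0^2(F_\mathfrak q)$-valued functions, $\mathcal M_{\xi,\mathfrak q}^k$ is not comparable to $e^{O(|\xi|)k}\mathcal M_{0,\mathfrak q}^k$. The phases $e^{ib\Ret_k}$ are not small along long orbits, and dominating by the positive operator destroys the mean-zero cancellation. A genuine perturbation argument needs $\|\mathcal M_{0,\mathfrak q}^k\|\le A\rho^k$ with $A,\rho$ independent of $\mathfrak q$. Your third step only gives decay after $r\asymp\log N_\K(\mathfrak q)$ steps, so an $O(|\xi|)$ perturbation can only be absorbed for $|\xi|\to 0$ as $N_\K(\mathfrak q)\to\infty$.

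More seriously, even a repaired version (a fixed-scale uniform gap for real weights) reaches only small $|b|$. Step 4 of the outline uses \cref{thm:UniformSpectralBoundI} with the same $b_0$ as \cref{thm:UniformSpectralBoundII}. That $b_0$ is a threshold dictated by Dolgopyat's method (the paper takes $b_0=1$), and nothing else controls $\rho=1$ on $L_0^2(F_\mathfrak q)$ below it.

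The missing idea is how the paper treats all $|b|\le b_0$ at once:
\begin{enumerate}
\item The phases stay inside the complex measures $\mu^{\xi,\mathfrak q,x}_{(\alpha_s,\dotsc,\alpha_{r+1})}$.
\item Expansion is used only to show that the positive measures $\nu$ become $\ell^2$-flat after $r\ge C_0\log N_\K(\mathfrak q)$ steps.
\item Quasirandomness (\cref{lem:ConvolutionBoundOnE_q^q}) converts this into decay for arbitrary complex measures, but only on new functions.
\end{enumerate}
This is why the reduction to $E_\mathfrak q^\mathfrak q$ in \cref{thm:UniformSpectralBoundIReduced} is needed, and it is absent from your plan.

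Your mixture decomposition is also backwards. Words of length $r$ with all letters in a finite $\calA_0$ carry mass at most $(1-c)^r$, so that part, not the remainder, is exponentially small. The paper instead cuts words into blocks of length $l$. It only asks that, within each block, two $p$-letter prefixes differing by an element of $\tilde S_{\mathrm{fin}}$ have comparable, non-negligible weight (\cref{lem:NearlyFlat,lem:GV_Expander}). The distortion between blocks is controlled by \cref{lem:EstimateNu}.

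\emph{Algebraic part.} Both properties are asserted rather than proved, and the trace-field step points the wrong way.

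Zariski density of $\Gamma$ does not pass to $\Omega$ for free. The real content is \cref{pro:LimitSetH(yy)RadialEqualsLimitSetH(yy)EqualsLimitSetGamma}, namely $\Lambda_\infty(\langle\gen\rangle_\sg)\subset\Lambda_{\mathrm r}(\Omega)$. It is proved by producing elements of $S$ from returns of the Poincar\'e map (\cref{lem:ProducingElementsofH}), whose orbit points converge to a prescribed point inside a cone. You give no mechanism for producing such elements.

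For the trace field, showing that elements of $\Omega$ have weakly commensurable partners in $\Gamma$ is trivial, since $\Omega<\Gamma$, and it only yields $\K_\Omega\subset\K$. The symmetric statement would require every hyperbolic element of $\Gamma$ to have a partner in $\Omega$. That is unavailable, because only $\Lambda_\infty(\langle\gen\rangle_\sg)$ is known to consist of radial limit points of $\Omega$; this is exactly why \cref{assumption} is imposed.

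The paper proves the opposite inclusion as follows:
\begin{enumerate}
\item For $\gamma\in\langle\gen\rangle_\sg$, the attracting fixed point is $\zeta$ of a periodic sequence, hence a radial limit point of $\Omega$.
\item Susskind--Swarup (\cref{lem:SS92}) then gives $\gamma^n\in\Omega$.
\item A one-directional semigroup version of Prasad--Rapinchuk (\cref{thm:PR}), which needs containment in $\tilde{\mathbf G}(\calO_\K)$ rather than finite generation, yields $\K=\K_{\langle\gen\rangle_\sg}\subset\K_{\Omega}$.
\end{enumerate}
The finite set $\tilde S_{\mathrm{fin}}$ is extracted only afterwards (\cref{cor:Z-DenseInSimplyConnectedCoverGAndTraceFieldK}), rather than by truncating the alphabet at the outset.
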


\begin{theorem}
\label{thm:UniformSpectralBoundII}
There exist $\eta > 0$, $C \geq 1$, $a_0 > 0$, and $b_0 > 0$ such that for all $\xi \in \C$ with $|a| < a_0$, if $(b, \rho) \in \widehat{M}_0(b_0)$, then for all nontrivial ideals $\mathfrak{q} \subset \calO_\K$, $k \in \N$, and $H \in \Lip\bigl(\Lambda_+, L^2(F_\mathfrak{q}) \otimes V_\rho^{\oplus \dim(\rho)}\bigr)$, we have
\begin{align*}
\bigl\|\mathcal{M}_{\xi, \mathfrak{q}, \rho}^k(H)\bigr\|_2 \leq C e^{-\eta k} \|H\|_{1, \|\rho_b\|}.
\end{align*}
\end{theorem}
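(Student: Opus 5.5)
The bound in \cref{thm:UniformSpectralBoundII} comes from Dolgopyat's method, run for the frame flow with the congruence cocycle carried along as an inert tensor factor. The key structural fact is that $\gamma \mapsto \gamma^{-1}$ acts on $L^2(F_\mathfrak{q})$ by permutations, hence unitarily, and is constant on cylinders. Consequently, pointwise norms $\|\mathcal{M}^k_{\xi,\mathfrak{q},\rho}(H)(x)\|_2$ are controlled exactly as in the non-congruence case. The oscillation needed for cancellation comes only from the holonomy factor $\rho_b(\GHol^k(\gamma x)^{-1})$, which acts on the second tensor factor.

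\textbf{Lasota--Yorke step.} First I will establish Lasota--Yorke type inequalities for the normalized operators, with respect to the norm $\|H\|_{1,\|\rho_b\|} = \|H\|_\infty + \|\rho_b\|^{-1}\Lip_D(H)$. For $\gamma \in \gen^{(k)}$, I differentiate the $k$-th iterate \cref{eqn:k^thIterationOfCongruenceTransferOperatorOfType_rho} term by term. This uses three inputs:
\begin{itemize}
\item the contraction $D(\gamma x,\gamma y)\le\theta^k D(x,y)$;
\item the constants $C_1'$ and $C_2$ of \cref{eqn:ConstantC1'C2}, which bound $\Lip^{\mathrm e}$ of $\FRet^{(a)}_k$ and $\GHol^k$;
\item the fact that the permutation $\gamma^{-1}$ does not depend on $x$ within the cylinder.
\end{itemize}
The output is $\Lip_D(\mathcal{M}^k H) \le C(\|\rho_b\|\,\|H\|_\infty + \theta^k \Lip_D(H))$. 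Summability over the countably infinite alphabet follows from the exponential tail property~(4) and $|a|<a_0\le a_0'$.

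\textbf{Dolgopyat operators and the main obstacle.} Next I will construct Dolgopyat operators $\mathcal{N}_J$ on the cone of pairs $(u,h)$, with $u$ positive log-Lipschitz and $\|H(x)\|_2\le u(x)$. The required domination is $|\mathcal{M}^{m}_{\xi,\mathfrak{q},\rho}H| \le \mathcal{N}_J u$, together with $\int (\mathcal{N}_J u)^2\,d\nu \le (1-\epsilon)\int u^2\,d\nu$. The ingredients are the local non-integrability condition (LNIC), the non-concentration property (NCP), and the doubling property of $\nu$ from \cref{sec:LNIC_NCP_LDP}. The cancellation is produced by the LNIC combined with \cref{lem:maActionLowerBound}: for any unit vector $\omega\in L^2(F_\mathfrak{q})\otimes V_\rho^{\oplus\dim(\rho)}$ there is a direction $z$ with $\|d\rho_{b,\mathfrak{q}}(z)\omega\|\ge\varepsilon_1\|\rho_b\|$, and $\varepsilon_1$ is uniform in $\mathfrak{q}$. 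This uniformity is where the congruence setting enters, and it is the main obstacle. I must check that the two branches $\gamma_1,\gamma_2\in\gen^{(m)}$ compared in the LNIC argument carry congruence cocycles that are constant on the relevant small balls, so that the $L^2(F_\mathfrak{q})$ components never separate the two vectors being compared. The argument compares the $\rho_b$ parts of $\gamma_1^{-1}\otimes\rho_b(\cdot)$ and $\gamma_2^{-1}\otimes\rho_b(\cdot)$ applied to $H$. The plan is to restrict to finitely many branches, a fixed finite set of cylinders depending only on $\Gamma$, so that the cocycle is locally constant. Then the angle estimate between the two summands depends only on the $AM$-variation, and the vectors involved have norms uniform in $\mathfrak{q}$.

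\textbf{Conclusion.} Iterating the domination gives $\int\|\mathcal{M}^{jm}H\|_2^2\,d\nu\le(1-\epsilon)^j\|H\|^2_{1,\|\rho_b\|}$. I convert this $L^2(\nu)$ decay to the stated $L^2$ bound, and interpolate the non-multiple iterates using the Lasota--Yorke bound. This yields $\eta,C,a_0,b_0$ independent of $\mathfrak{q}$. Everything except the uniformity check above follows \cite{LPS25} verbatim.
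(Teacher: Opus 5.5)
Your handling of the congruence cocycle is essentially right and matches the paper. The LNIC branches $\alpha_0,\alpha_j\in\gen^{(m)}$ are fixed group elements, so $\alpha_\ell\otimes\Id$ is the same unitary at $x_1$ and $x_2$ and drops out of the Brin--Pesin comparison. \cref{lem:maActionLowerBound} then supplies the $\mathfrak{q}$-uniform $\varepsilon_1$. Separation of the two vectors by their $L^2(F_\mathfrak{q})$ components would only help cancellation, so nothing has to prevent it.

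The gap is your claimed uniform one-step contraction $\int(\mathcal{N}_J u)^2\,d\nu\le(1-\epsilon)\int u^2\,d\nu$. That is the convex cocompact mechanism, and it is not available with cusps. The NCP of \cref{prop:NCP} holds only at points $x$ with $u(x)a_{-\log\epsilon}\in\Omega_R$. With $\epsilon\asymp\|\rho_b\|^{-1}$, cancellation via \cref{lem:PartnerPointInZariskiDenseLimitSetForBPBound} is therefore produced only on $\Omega(\log\|\rho_b\|,R_0)$, the union of cylinders of diameter comparable to $\|\rho_b\|^{-1}$. Elsewhere the geodesic at that scale is deep in a cusp; since $\Ret$ is unbounded, cylinders of a given length have no lower diameter bound. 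On that region you only get $h_n^2\le\zeta(a)\mathcal{L}_0^m(h_{n-1}^2)$, with $\zeta(a)$ near but possibly above $1$. Nothing prevents $u^2\,d\nu$ from concentrating there, so no uniform $(1-\epsilon)$ follows. Also, \cref{sec:LNIC_NCP_LDP} supplies no doubling property; its third ingredient is the large deviation property, which your plan never uses.

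The missing idea is that contraction happens only along a positive proportion of each orbit, and this is controlled by the LDP.

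\begin{enumerate}
\item \cref{thm:FrameFlowUniformDolgopyat} gives $h_n^2\le\tilde\eta\,\mathcal{L}_0^m(h_{n-1}^2)$ only on closed sets $\Omega_n\subset\Omega(\log\|\rho_b\|,R_0)$, and the $\zeta(a)$ bound elsewhere.
\item \cref{prop:LDP}, combined with a stochastic dominance argument, yields $\nu\{x:\#\{j\le n: T^{jm}x\in\Omega_j\}<\kappa n\}<2e^{-\kappa n}$.
\item Following Tsujii--Zhang, set $G_0=h_0^2$, $G_j=\tilde\eta G_{j-1}$ on $T^{-jm}(\Omega_j)$, and $G_j=\zeta(a)G_{j-1}$ off it. Prove $\mathcal{L}_0^{jm}(G_j)\ge h_j^2$ by induction.
\item Use $\mathcal{L}_0^*\nu=\nu$ to get $\|\mathcal{M}^{nm}_{\xi,\mathfrak{q},\rho}(H)\|_2^2\le\int G_n\,d\nu$.
\item Split $\Lambda_+$ according to whether the orbit makes at least $\kappa n$ good visits. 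Choose $a_0$ so that $|\log\zeta(a)|\le\eta$, where $\eta=\frac{1}{4}\min\{\kappa,-\kappa\log\tilde\eta\}$. This gives $\int G_n\,d\nu\le e^{\eta n}(\tilde\eta^{\kappa n}+2e^{-\kappa n})\|h_0\|_\infty^2$, which decays exponentially.
\end{enumerate}

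Without this averaging over orbits, or an equivalent substitute, your iteration step does not close in the presence of cusps.
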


\subsection{Outline of the proof of \cref{thm:UniformExponentialMixing}}
In the following, we outline the proof of \cref{thm:UniformExponentialMixing}. We avoid writing a full proof since it is nearly a verbatim repetition of the derivation from \cite[\S 9]{LPS25} (cf. \cite{AGY06,SW21}).
\begin{enumerate}[label=Step \arabic*:]
\item \textit{Reduction for $F_\mathfrak{q}$-invariant functions.} For any function $\phi \in C^1(\Gamma_\mathfrak{q} \backslash G)$, we may decompose it as $\phi = \overline{\phi} + \phi_0$ where $\overline{\phi}$ is $F_\mathfrak{q}$-invariant, i.e., constant along $F_\mathfrak{q}$-fibers and $\phi_0$ has mean $0$ along $F_\mathfrak{q}$-fibers. In doing so for both functions $\phi, \psi \in C^1(\Gamma_\mathfrak{q} \backslash G)$, the correlation function for the pair $(\phi, \psi)$ in \cref{thm:UniformExponentialMixing} is reduced to the sum of correlation functions for the pairs $(\overline{\phi}, \overline{\psi})$ and $(\phi, \psi_0)$. For the first term, since $\overline{\phi}$ and $\overline{\psi}$ can be thought of as functions in $C^1(\Gamma \backslash G)$, we obtain the desired exponential error term from \cite{LPS25} with a constant factor $\#\tilde{G}_\mathfrak{q} \leq N_{\mathbb K}(\mathfrak{q})^{C}$ where $C > 0$ depends on $n$. It remains to treat the second term.
\item \textit{Integrating out the strong stable direction.} We invoke \cref{cor:factormap} to work with the symbolic model. We work with test functions $\Lambda^{\Ret, \mathfrak{q}, M} \to \R$ which are Lipschitz, $C^1$, $C^r$ (for sufficiently large $r \geq 0$), and $C^1$ in the $\Lambda_+$\nobreakdash-, $\Lambda_-$\nobreakdash-, $M$\nobreakdash-, and $\R$\nobreakdash-coordinates, respectively---a standard convolution argument guarantees the main theorem for $C^1$ or even H\"older test functions. Thanks to Step 1, we may assume that the second test function has $0$ mean along $F_\mathfrak{q}$-fibers. We first take the correlation function and subdivide it into integrals over flow boxes. We then use half of the flow time to contract the $\Lambda_-$-coordinate and approximate, with an exponential error term with a constant factor $N_{\mathbb K}(\mathfrak{q})^{C}$, the first test function by one which is independent of $\Lambda_-$. We can then integrate the second test function in the $\Lambda_-$-coordinate to obtain one which is independent of $\Lambda_-$ as well. It suffices to show uniform exponential decay for the new correlation function.
\item \textit{Laplace transform of the new correlation function.} Taking the Laplace transform of the new correlation function, we can derive the following formula in terms of the congruence transfer operators with holonomy (see \cite[\S 9]{LPS25} for the precise notation): for all $\phi, \psi \in C\bigl(\Lambda_+^\Ret\times F_\mathfrak{q} \times M\bigr)$ and $\xi \in \mathbb C$ with $a > 0$, we have
	\begin{align}
		\label{eqn: correlation function formula}
		\hat{\Upsilon}_{\phi, \psi}^0(\xi) = \sum_{k = 1}^\infty \sum_{\rho \in \widehat{M}} \lambda_a^k \left\langle \hat{\phi}_{\xi, \rho}, \mathcal{M}_{\overline{\xi}, \mathfrak{q}, \rho}^k\big(\hat{\psi}_{-\overline{\xi}, \rho}\big) \right\rangle.
	\end{align}

\item \textit{Payley--Wiener theorem.} The spectral bounds from \cref{thm:UniformSpectralBoundII} imply that the sum $\sum_{k = 1}^\infty \sum_{\rho \in \widehat{M}_0(b_0)}$ from \cref{eqn: correlation function formula} can be extended holomorphically to a strip to the left of the critical line (which is $i\R$ in our parametrization) \emph{uniformly} in the ideals $\mathfrak{q} \subset \mathcal{O}_\mathbb{K}$. Now we restrict to $|b| \leq b_0$ and $\rho = 1$ in \cref{eqn: correlation function formula}. By our assumption stemming from Step 1, the argument of the congruence transfer operator with holonomy is in $\Lip\bigl(\Lambda_+, L_0^2(F_\mathfrak{q})\bigr)$. So again, the spectral bounds from \cref{thm:UniformSpectralBoundI} imply that the remaining sum $\sum_{k = 1}^\infty$ from \cref{eqn: correlation function formula}, can be extended holomorphically to a uniform strip to the left of the critical line. Now via the inverse Laplace transform, a Paley--Wiener type analysis gives the desired uniform exponential decay while carrying through the factor $N_\K(\mathfrak{q})^C$.
\end{enumerate}

Therefore, it suffices to focus on the proofs of \cref{thm:UniformSpectralBoundI,thm:UniformSpectralBoundII} in the rest of the paper.

\section{Reduction of \cref{thm:UniformSpectralBoundI}}
\label{sec:Reduction}
In this section, we state \cref{thm:UniformSpectralBoundIReduced}. Using the concept of new invariant functions, \cref{thm:UniformSpectralBoundI} can be reduced to \cref{thm:UniformSpectralBoundIReduced} (cf. \cite{OW16}). Since the proofs are nearly identical to those in \cite{Sar22a}, we omit them and refer the reader to loc. cit.

Let $\mathfrak{q} \subset \mathfrak{q}' \subset \mathcal{O}_{\mathbb K}$ be ideals. We have the reduction map and the induced pull back
\begin{align*}
\pi_{\mathfrak{q}, \mathfrak{q}'}&: \tilde{\mathbf{G}}(\mathcal{O}_{\mathbb K}/\mathfrak{q}) \to \tilde{\mathbf{G}}(\mathcal{O}_{\mathbb K}/\mathfrak{q}'), \\
\pi_{\mathfrak{q}, \mathfrak{q}'}^*&: L^2(\tilde{\mathbf{G}}(\mathcal{O}_{\mathbb K}/\mathfrak{q}')) \to L^2(\tilde{\mathbf{G}}(\mathcal{O}_{\mathbb K}/\mathfrak{q})).
\end{align*}
Define
\begin{align*}
\hat{E}_{\mathfrak{q}'}^\mathfrak{q} &= \pi_{\mathfrak{q}, \mathfrak{q}'}^*(L^2(\tilde{\mathbf{G}}(\mathcal{O}_{\mathbb K}/\mathfrak{q}'))), & \dot{E}_{\mathfrak{q}'}^\mathfrak{q} &= \hat{E}_{\mathfrak{q}'}^\mathfrak{q} \cap \left(\bigoplus_{\mathfrak{q}' \subsetneq \mathfrak{q}''} \hat{E}_{\mathfrak{q}''}^\mathfrak{q}\right)^\perp.
\end{align*}
Then, we have the orthogonal decomposition
\begin{align*}
L_0^2(\tilde{\mathbf{G}}(\mathcal{O}_{\mathbb K}/\mathfrak{q})) = \bigoplus_{\mathfrak{q} \subset \mathfrak{q}' \subsetneq \mathcal{O}_{\mathbb K}} \dot{E}_{\mathfrak{q}'}^\mathfrak{q} \qquad \text{for all ideals $\mathfrak{q} \subset \mathcal{O}_{\mathbb K}$}.
\end{align*}
Using the induced reduction map $\overline{\pi_{\mathfrak{q}, \mathfrak{q}'}}: \tilde{G}_\mathfrak{q} \to \tilde{G}_{\mathfrak{q}'}$, we also have the orthogonal decomposition
\begin{align*}
L_0^2(\tilde{G}_\mathfrak{q}) = \bigoplus_{\mathfrak{q} \subset \mathfrak{q}' \subsetneq \mathcal{O}_{\mathbb K}} E_{\mathfrak{q}'}^\mathfrak{q} \qquad \text{for all ideals $\mathfrak{q} \subset \mathcal{O}_{\mathbb K}$}.
\end{align*}
For all $\phi \in L^2(\tilde{G}_\mathfrak{q})$, define $\dot{\phi} \in L^2(\tilde{\mathbf{G}}(\mathcal{O}_{\mathbb K}/\mathfrak{q}))$ by
\begin{align*}
\dot{\phi}(g) = \phi(\pi_{\mathfrak{q}}(\ker(\pi))g) \qquad \text{for all $g \in \tilde{\mathbf{G}}(\mathcal{O}_{\mathbb K}/\mathfrak{q})$}.
\end{align*}
Then $E_{\mathfrak{q}'}^\mathfrak{q} = \{\phi \in L^2(\tilde{G}_\mathfrak{q}): \dot{\phi} \in \dot{E}_{\mathfrak{q}'}^\mathfrak{q}\}$. Suppose $\mathfrak{q}$ is coprime to $\mathfrak{q}_0$ so that we may use the isomorphism $\overline{\pi_\mathfrak{q}|_{\tilde{\Gamma}}}: F_\mathfrak{q} \to \tilde{G}_\mathfrak{q}$. We see that the subspace $E_{\mathfrak{q}'}^\mathfrak{q} \subset L^2(\tilde{G}_\mathfrak{q})$ consists of all \emph{new} functions which are invariant under $\Gamma_{\mathfrak{q}'}$ but not invariant under $\Gamma_{\mathfrak{q}''}$, for any $\mathfrak{q}' \subsetneq \mathfrak{q}''$. We also have the orthogonal decomposition
\begin{align*}
\Lip(\Lambda_+, L^2(F_\mathfrak{q})) = \bigoplus_{\mathfrak{q} \subset \mathfrak{q}' \subsetneq \mathcal{O}_{\mathbb K}} \Lip\bigl(\Lambda_+, E_{\mathfrak{q}'}^\mathfrak{q}\bigr) \qquad \text{for all ideals $\mathfrak{q} \subset \mathcal{O}_{\mathbb K}$}.
\end{align*}
It turns out that the congruence transfer operator $\mathcal{M}_{\xi, \mathfrak{q}}$ preserves $\Lip\bigl(\Lambda_+, E_{\mathfrak{q}'}^\mathfrak{q}\bigr)$ for all $\xi \in \mathbb C$ and the following theorem implies \cref{thm:UniformSpectralBoundI}.

\begin{theorem}
\label{thm:UniformSpectralBoundIReduced}
There exist $\kappa \in (0, 1)$, $C > 0$, $a_0 > 0$, $b_0 > 0$, and $q_1 \in \mathbb N$ such that for all $\xi \in \mathbb C$ with $|a| < a_0$ and $|b| \leq b_0$, (square-free if $n = 3$) ideals $\mathfrak{q} \subset \calO_\K$ coprime to $\mathfrak{q}_0$ with $N_\K(\mathfrak{q}) > q_1$, there exists an integer $s \in (0, C\log(N_\K(\mathfrak{q})))$ such that for all $j \in \mathbb Z_{\geq 0}$ and $H \in \Lip\bigl(\Lambda_+, E_\mathfrak{q}^\mathfrak{q}\bigr)$, we have
\begin{align*}
\big\|\mathcal{M}_{\xi, \mathfrak{q}}^{js}(H)\big\|_{\Lip} \leq N_\K(\mathfrak{q})^{-j\kappa} \|H\|_{\Lip}.
\end{align*}
\end{theorem}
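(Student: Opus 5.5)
Our plan is to follow the Bourgain--Gamburd--Sarnak strategy as adapted in \cite{Sar22a}: deduce the contraction from an $L^2$-flattening lemma for the random walk driven by the transfer operator, combined with the expansion machinery. First we bound the $k$-th iterate \cref{eqn:k^thIterationOfCongruenceTransferOperatorOfType_rho} for $H \in \Lip(\Lambda_+, E_\mathfrak{q}^\mathfrak{q})$ with $k = s \approx C\log N_\K(\mathfrak{q})$. Fix $x \in \Lambda_+$. The value $\mathcal{M}_{\xi,\mathfrak{q}}^s(H)(x)$ is then a weighted sum over $\gamma \in \gen^{(s)}$ of $\gamma^{-1}$ acting on $H(\gamma x)$. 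Since $\|d\gamma\| \leq \theta^s$, the Lipschitz property lets us replace $H(\gamma x)$ by $H(\gamma x_0)$, or more precisely by its value at the cylinder's reference point. The error is $O(\theta^s \|H\|_{\Lip})$, which is at most $N_\K(\mathfrak{q})^{-\kappa}\|H\|_{\Lip}$ once $C$ is large. Splitting $s = s_1 + s_2$, the main term becomes a convolution of a probability-like measure $\mu_{x}^{(s_2)}$ on $\tilde{G}_\mathfrak{q}$, namely the pushforward under $\pi_\mathfrak{q}$ of the weights $e^{\FRet^{(a)}_{s_2}}$ on $\gen^{(s_2)}$. It acts on the function obtained after $s_1$ steps, and we also need $|b| \leq b_0$ and $|a| < a_0$ to be small so that these weights are close to the positive ones with $\xi = 0$.

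The core estimate is the flattening $\|\mu^{(s_2)} * \phi\|_2 \leq N_\K(\mathfrak{q})^{-\kappa'}\|\phi\|_2$ for new functions $\phi \in E_\mathfrak{q}^\mathfrak{q}$. We would obtain it in the usual three stages.
\begin{enumerate*}
\item Escape from subgroups / non-concentration: the measure gives mass at most $N_\K(\mathfrak{q})^{-\epsilon}$ to any proper subgroup coset. This uses Zariski density, the Lipschitz weight estimates of \cref{lem:CylinderEstimate}, and the exponential tail property.
\item A product theorem (Bourgain--Gamburd / He--de Saxc\'e \cite[Theorem 6.1]{HdS22}, or Golsefidy--Varj\'u \cite{GV12} in the square-free case for $n = 3$), which yields $\|\mu^{(s_2)}\|_2$ close to uniform.
\item Quasirandomness of $\tilde{G}_\mathfrak{q}$ on new representations, whose dimensions are $\geq N_\K(\mathfrak{q})^{c}$, to convert this into operator-norm decay on $E_\mathfrak{q}^\mathfrak{q}$.
\end{enumerate*}
Iterating then gives the bound with $js$ and the $N_\K(\mathfrak{q})^{-j\kappa}$ factor. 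The Lipschitz seminorm is controlled separately by a Lasota--Yorke inequality, which uses the contraction $\theta^s$ together with $C_2$ from \cref{eqn:ConstantC1'C2}.

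The main obstacle is stage (1), together with the expander input of \cref{lem:GV_Expander}, because $\gen$ is countably infinite. Concretely, the return trajectory subgroups are infinitely generated and the $\epsilon$ depends on the letters. To handle this, we would truncate to a finite subalphabet $\calA_0 \subset \calA$ that carries all but a tiny fraction of the $\nu$-mass, using the exponential tail (4) of \cref{prop:Coding}. We would discard the contribution of words containing letters outside $\calA_0$, which costs a geometrically small amount in $s$. Then we apply the finitely generated expansion results to the subgroup generated by the return trajectories in $\langle \Gamma_{\calA_0}\rangle_\sg$. For this we must verify that this finitely generated subgroup is Zariski dense with full trace field $\K$. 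The plan is to show that its radial limit set meets $\Lambda_{\mathrm{r}}(\Gamma)$ in a non-elementary, Zariski dense configuration, which is the input from \cref{sec:ZariskiDensityAndTraceFieldOfTheReturnTrajectorySubgroups}. The trace field part uses \cref{assumption} and weak commensurability with $\Gamma$. Enlarging $\mathfrak{q}_0$ so that strong approximation holds for this subgroup then finishes the argument.
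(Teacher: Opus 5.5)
\textbf{The proposal has a genuine gap: the truncation to a finite subalphabet fails.} For a fixed finite $\calA_0 \subsetneq \calA$, every omitted letter carries weight bounded below at each point. So the total weight $\sum e^{\FRet_s^{(0)}}$ of length-$s$ words using only letters of $\calA_0$ is at most $(1-c)^s$ for some $c>0$, which for $s \asymp \log N_\K(\mathfrak{q})$ is at most $N_\K(\mathfrak{q})^{-c'}$. The words you propose to discard therefore carry weight at least $1 - N_\K(\mathfrak{q})^{-c'}$. This is not ``a geometrically small amount in $s$'': the discarded mass tends to $1$ as $s$ grows, and bounding it trivially leaves no power saving. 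Letting $\calA_0$ grow with $\mathfrak{q}$ instead would destroy the uniformity of the expansion constants.

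So, even granting uniform expansion for a finitely generated subgroup, your plan has no mechanism that transfers it to the weighted, non-symmetric measure $\mu^{(s_2)}$ supported on infinitely many words. Stages (1)--(2) do not supply one. \cite[Corollary 6]{GV12} and \cite[Theorem 6.1]{HdS22} are spectral-gap statements for Cayley graphs of a fixed finite symmetric set, not product theorems into which $\mu^{(s_2)}$ can be fed. Running the Bourgain--Gamburd machine on $\mu^{(s_2)}$ would need a non-concentration estimate that you only assert; for non-square-free $\mathfrak{q}$ this is considerably more delicate than escape from subgroups. The usual ``short words inject modulo $\mathfrak{q}$'' argument also breaks down, since the infinitely many letters of $\gen$ have unbounded norm.

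\textbf{What the paper does instead.} It never discards words; the finite set enters only through the expander.
\begin{itemize}
\item It proves Zariski density and full trace field for the whole, infinitely generated $\tilde\Omega$ (\cref{thm:Z-DenseInSimplyConnectedCoverGAndTraceFieldK}). Only then does it extract a finite symmetric $\tilde S_{\mathrm{fin}} \subset \tilde S$ with the same properties (\cref{cor:Z-DenseInSimplyConnectedCoverGAndTraceFieldK}).
\item Uniform expansion of $\Cay(\tilde G_\mathfrak{q}, \pi_\mathfrak{q}(\tilde S_{\mathrm{fin}}))$ gives, for each unit $\phi \in L_0^2(\tilde G_\mathfrak{q})$, words $\beta,\tilde\beta$ of length $p$ with $\|\delta_{\pi_\mathfrak{q}(\gamma(\beta)\gamma(\tilde\beta)^{-1})} * \phi - \phi\|_2 \geq \epsilon$ (\cref{lem:GV_Expander}).
\item The word is cut into $r' \asymp \log N_\K(\mathfrak{q})$ blocks of length $l>p$. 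In each block the first $p$ letters are summed over all of $\calA$ and the rest are frozen. Since $\beta,\tilde\beta$ involve only finitely many fixed letters, they carry a uniformly positive proportion of the weight of the block measure $\eta^\mathfrak{q}$.
\item The parallelogram law then gives a fixed contraction factor $C<1$ on $L_0^2$ per block (\cref{lem:EtaOperatorBound}). All other words are kept and bounded trivially within the block.
\item Compounding over the blocks (\cref{lem:EstimateNu,lem:ExpanderMachineryBound}) and applying \cref{lem:ConvolutionBoundOnE_q^q} on new functions yields \cref{lem:L2FlatteningLemma}.
\end{itemize}

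\textbf{Two smaller corrections.}
\begin{itemize}
\item The condition $|b| \le b_0$ does not make the weights close to positive ones, since $e^{ib\Ret_s}$ oscillates over words of length $\asymp \log N_\K(\mathfrak{q})$. The sup-norm estimates in \cref{lem:L2FlatteningLemma} and \cref{prop:ReducedTheoremEstimate} hold for every $b$. The bound $|b| \le b_0$ is used only in the Lipschitz part, where your Lasota--Yorke step is fine once the sup-norm saving is in hand.
\item The trace-field argument shows that $\langle\gen\rangle_\sg$ is weakly commensurable to $\Omega$, via \cref{pro:LimitSetH(yy)RadialEqualsLimitSetH(yy)EqualsLimitSetGamma} and \cref{lem:SS92}. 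This gives $\K_{\langle\gen\rangle_\sg} \subset \K_\Omega$; it is not weak commensurability with $\Gamma$. With your subalphabet you would additionally need \cref{assumption} for $\langle \Gamma_{\calA_0}\rangle_\sg$.
\end{itemize}
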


\section{Zariski density and trace field of the return trajectory subgroups}
\label{sec:ZariskiDensityAndTraceFieldOfTheReturnTrajectorySubgroups}
In this section we recall the return trajectory subgroups as defined in \cite{Sar22a}, with slight modifications to adapt it to the current setting, and prove that they are Zariski dense and have full trace field $\K$. This will be required in \cref{sec:ProofOfUniformSpectralBoundIReducedViaExpansionMachinery} in order to use the strong approximation theorem together with the expansion machinery of Golsefidy--Varj\'{u} \cite{GV12} and He--de Saxc\'{e} \cite{HdS22}.

\begin{definition}[Return trajectory subgroup]
Recalling \cref{eqn: cocycle for a sequence}, we define the \emph{return trajectory subgroup} to be the subgroup $\Omega < \Gamma$ generated by the subset
\begin{align*}
S = \{\gamma(\alpha) \cdot \gamma(\tilde{\alpha})^{-1}: \alpha, \tilde{\alpha} \text{ are sequences of any length } k \in \N\} \subset \Gamma.
\end{align*}
\end{definition}

We denote $\tilde{S} = \tilde{\pi}^{-1}(S) \subset \tilde{\Gamma}$ and $\tilde{\Omega} = \langle \tilde{S} \rangle = \tilde{\pi}^{-1}(\Omega) < \tilde{\Gamma}$. We will prove the following theorem.

\begin{theorem}
\label{thm:Z-DenseInSimplyConnectedCoverGAndTraceFieldK}
For the subgroup $\tilde{\Omega} < \tilde{\Gamma}$, the following holds:
\begin{enumerate}
\item $\tilde{\Omega}$ is Zariski dense in $\tilde{\mathbf{G}}$;
\item $\K_{\tilde{\Omega}} = \K$.
\end{enumerate}
\end{theorem}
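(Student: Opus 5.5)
We will first prove Zariski density of $\Omega$ in $\mathbf{G}$ and then lift it to $\tilde{\Omega}$. Since $\tilde{\pi}$ is an isogeny with finite central kernel and $\tilde{\mathbf{G}}$ is connected, the Zariski closure of $\tilde{\Omega} = \tilde{\pi}^{-1}(\Omega)$ surjects onto the Zariski closure of $\Omega$. Its identity component then has full dimension, so it is all of $\tilde{\mathbf{G}}$. Thus it suffices to work in $G \cong \SO(n,1)^\circ$.

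\textbf{Step 1: $\Omega$ is a normal-like, large subgroup of $\Gamma$ in the radial sense.} For any $\gamma \in \gen^{(k)}$ and any sequence $\tilde\alpha$ of length $k$, the element $\gamma\gamma(\tilde\alpha)^{-1}$ lies in $S$. Hence $\langle\gen\rangle_\sg$ is contained in a union of cosets $\Omega\gamma(\tilde\alpha)$. More useful: fixing one letter $j_0 \in \calA$, every $\gamma(\alpha)$ with $|\alpha| = k$ lies in $\Omega \gamma_{j_0}^k$. We will use this to show $\Lambda_{\mathrm{r}}$-type containment $\Lambda_+ \subset \Lambda(\Omega)$, as follows. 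Take $x \in \Lambda_+$ with coding $x = \lim_k \gamma(\alpha_1,\dotsc,\alpha_k) o'$, where $o'$ is a point over $\Delta_0$. Using the contraction in \cref{prop:Coding}, the points $\gamma(\alpha|_k)\gamma_{j_0}^{-k} \cdot \gamma_{j_0}^k o'$ stay in controlled position. Since $\gamma_{j_0}$ is hyperbolic with attracting fixed point $p \in \Delta_{j_0}$, a cleaner route is to consider $\gamma(\alpha|_k)\gamma(\tilde\alpha)^{-1}$ with $\tilde\alpha = (j_0,\dotsc,j_0)$ applied to the repelling fixed point region. This gives $\Omega$-orbit points converging to $x$, so $x \in \Lambda(\Omega)$. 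Since $\Lambda_+$ has full $\mu$-measure in $\Delta_0$, $\Lambda(\Omega)$ contains a set of positive Patterson--Sullivan measure. Therefore $\Lambda(\Omega)$ has Hausdorff dimension $\delta_\Gamma$ and is not contained in any proper round subsphere (using Zariski density of $\Gamma$, i.e.\ $\Lambda(\Gamma)$ is not contained in a subsphere, together with the fact that $\mu$ gives zero mass to subspheres, which holds by $\Gamma$-invariance and ergodicity).

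\textbf{Step 2: Zariski density.} The Zariski closure $\mathbf{H}$ of $\Omega$ is an algebraic subgroup of $\SO(n,1)$. Since $\Omega$ is nonelementary (its limit set is infinite), $\mathbf{H}^\circ(\R)$ is a noncompact semisimple subgroup preserving $\Lambda(\Omega)$. By the classification of connected Zariski closed nonelementary subgroups of $\SO(n,1)^\circ$, such a subgroup is contained, up to compact factors, in the stabilizer of a totally geodesic $\H^m$ and has limit set equal to a round $(m-1)$-sphere containing $\Lambda(\Omega)$. By Step 1, $m = n$, and the compact part must be trivial. Hence $\mathbf{H}^\circ = \mathbf{G}$. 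We must replace the false identity $\Lambda(\Omega) = \Lambda(\Gamma)$ from \cite{Sar22a} by the positive-measure radial statement. This is where we need care: the cusp excursions mean codings of $x$ use letters of unbounded size, so we must verify that the convergence $\gamma(\alpha|_k)\gamma(\tilde\alpha)^{-1} w \to x$ is uniform using only the uniform contraction $\theta$ from \cref{prop:Coding}, for a suitable base point $w$ near the repelling fixed point of $\gamma_{j_0}$ in $\Lambda_-$. \cref{lambda-} supplies exactly the needed uniform control on $\Lambda_-$.

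\textbf{Step 3: trace field, the main obstacle.} Trivially $\K_{\tilde\Omega} \subset \K_{\tilde\Gamma} = \K$. Over $\Q$ there is nothing to prove. In general we use \cref{assumption}. Any $\gamma \in \gen^{(k)}$ satisfies $\gamma^{m} \in \Omega\,\gamma_{j_0}^{mk}$, and more to the point $\gamma\gamma_{j_0}^{-k} \in \Omega$. Since $\Omega$ is Zariski dense by Step 2, Vinberg's theorem says $\K_{\Omega}$ is the field of definition of the adjoint form of $\Omega$, and $\K_\Omega$ is contained in $\K$. We then show that $\Omega$ and $\langle\gen\rangle_\sg$ are weakly commensurable, in the sense of Prasad--Rapinchuk, by producing for each semisimple $\gamma \in \langle\gen\rangle_\sg$ an element of $\Omega$ with a common eigenvalue power. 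A natural candidate is that for $\gamma$ of length $k$, some power $\gamma^{N}$ times a bounded correction lies in $\Omega$. Weak commensurability of Zariski dense subgroups implies equality of trace fields (\cite[Theorem 2]{PR09}-type result, requiring absolute simplicity, hence $n \neq 3$). We then get $\K_\Omega = \K_{\langle\gen\rangle_\sg} = \K$ by \cref{assumption}, and the same holds for $\tilde\Omega$ by \cite[Corollary 1.4.8]{Mar91}. The hardest point is constructing elements of $\Omega$ sharing eigenvalues with a given $\gamma$. We would first try taking ping-pong products $\gamma^N\gamma_{j_0}^{-Nk}$ and using the fact that $\Omega$ has finite index in the group generated by $\langle\gen\rangle_\sg$ modulo a cyclic quotient $\Z$ (the length homomorphism). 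If that holds, $\gamma^{N}\in\Omega$ for $N$ divisible by the index, which directly gives weak commensurability.
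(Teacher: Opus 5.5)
The trace-field part (Step 3) has a genuine gap, and the construction in Step 1 does not work as written. Your reduction from $\tilde\Omega$ to $\Omega$ and the inclusion $\K_{\tilde\Omega}\subset\K$ are fine.

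\textbf{Step 3.} For a given $\gamma\in\langle\gen\rangle_\sg$ you never produce an element of $\Omega$ weakly commensurable with it. The elements $\gamma^N\gamma_{j_0}^{-Nk}\in\Omega$ share no eigenvalue with $\gamma$ in general.

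The ``length homomorphism'' route cannot work either. If $\ell:\langle\gen\rangle\to\Z$ with $\ell(\gamma_j)=1$ existed, then $S\subset\ker\ell$, so $\Omega\subset\ker\ell$. Since $\ell(\gamma^N)=Nk\neq 0$ for $\gamma\in\gen^{(k)}$, \emph{no} positive power of $\gamma$ would lie in $\Omega$. When $\langle\gen\rangle$ is free on $\gen$, $\Omega$ is in fact the free factor of $\ker\ell$ generated by the $\gamma_{j_0}^m\gamma_i\gamma_{j_0}^{-m-1}$, $m\geq 0$, which has infinite index. So $\gamma^N\in\Omega$ cannot come from a finite-index argument; indeed, a posteriori no such $\ell$ exists in this setting.

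The paper obtains $\gamma^n\in\Omega$ geometrically, in three steps.
\begin{enumerate}
\item The attracting fixed point $\gamma^+=\zeta(\beta\beta\beta\dotsb)$ lies in $\Lambda_\infty(\langle\gen\rangle_\sg)$.
\item This set is contained in the \emph{radial} limit set $\Lambda_{\mathrm{r}}(\Omega)$ by \cref{pro:LimitSetH(yy)RadialEqualsLimitSetH(yy)EqualsLimitSetGamma}. That proof uses the Poincar\'e map to build elements of $S$ whose orbit points approach the target point inside a fixed cone.
\item The Susskind--Swarup result \cref{lem:SS92}, applied to the discrete group $\langle\Omega,\gamma\rangle<\Gamma$, then gives $\gamma^n\in\Omega$.
\end{enumerate}

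You also cannot simply quote \cite[Theorem 2]{PR09}. The object $\langle\gen\rangle_\sg$ is an infinitely generated \emph{semigroup}, and you only get one-directional weak commensurability. Passing to the group generated by finitely many of its elements does not help: its elements are no longer positive words, so the radial argument says nothing about them. This is why the paper proves the semigroup variant \cref{thm:PR}, with finite generation replaced by containment in $\mathbf G_i(R)$ for a finitely generated ring $R$.

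\textbf{Step 1.} This step only targets $\Lambda_+\subset\Lambda(\Omega)$. That suffices for Zariski density but cannot feed Susskind--Swarup, which needs radiality.

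The construction itself fails. You place the base point near the repelling fixed point of $\gamma_{j_0}$ in $\Lambda_-$ and invoke \cref{lambda-}. That lemma says the \emph{inverses} $\gamma_j^{-1}$ contract $\Lambda_-$; the forward words $\gamma(\alpha_1,\dotsc,\alpha_k)$ have no contraction there. So $\gamma(\alpha_1,\dotsc,\alpha_k)\gamma_{j_0}^{-k}w$ need not approach $x$. For $x=\zeta(j_0,j_0,\dotsc)$ all your elements are trivial.

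A repair is to take a base point $\zeta(\beta)\in\overline{\Delta_0}$ and $\tilde\alpha=(\beta_1,\dotsc,\beta_k)$. Then
$\gamma(\alpha_1,\dotsc,\alpha_k)\gamma(\beta_1,\dotsc,\beta_k)^{-1}\zeta(\beta)=\gamma(\alpha_1,\dotsc,\alpha_k)\zeta(\sigma^k\beta)\to x$,
and a convergence-group argument finishes.

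With that fix, your argument for part (1) is a valid alternative to the paper's density argument: a set of positive Patterson--Sullivan measure cannot lie in a round subsphere when $\Gamma$ is Zariski dense. Part (2) still needs the radial containment and the two tools above.
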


Before working towards the proof, we introduce a few useful tools and deduce a corollary of the above theorem. The following lemma is a well-known; for a proof of the reverse direction, see for instance \cite[Lemma 7.4]{Sar25}, and the forward direction follows from \cite[Proposition 3.12]{Win15}.

\begin{lemma}
\label{lem: Zariski dense condition}
A subgroup $\Gamma' \leq G$ or $\Gamma' \leq \tilde{G}$ is Zariski dense if and only if $\Lambda(\Gamma') \subset \partial_\infty\HS$ is not contained in any $(n - 2)$-sphere in $\partial_\infty\HS$.
\end{lemma}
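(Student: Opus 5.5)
The plan is to prove both directions by passing between closed $\Gamma'$-invariant subsets of $\partial_\infty\HS$ and $\Gamma'$-invariant linear subspaces of $\R^{n+1}$. Here $G = \mathbf{G}(\R)^\circ$ acts on $\R^{n+1}$ preserving a form of signature $(n,1)$, and $\partial_\infty\HS$ is the projectivized light cone. Under this dictionary, $(n-2)$-spheres (and more generally round $k$-spheres) in $\partial_\infty\HS$ correspond to timelike subspaces $W$, via $S_W = \mathbb{P}(W \cap \{\text{light cone}\})$. For $\tilde{G}$ one passes through $\pi$, which has finite kernel. It therefore preserves Zariski density (since $\tilde{\mathbf{G}} \to \mathbf{G}$ is an isogeny) and limit sets. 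So it suffices to treat $\Gamma' \le G$. Throughout I take $\Gamma'$ discrete, as in the applications where $\Gamma' < \Gamma$.

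\emph{Limit set in a sphere $\Rightarrow$ not Zariski dense.} Suppose $\Lambda(\Gamma') \subset S$ for some $(n-2)$-sphere $S$. Let $S_0$ be the intersection of all round spheres containing $\Lambda(\Gamma')$.
\begin{enumerate*}
\item[(a)] $S_0$ is again a round sphere of some dimension $\le n-2$, possibly a point, two points, or empty. This is because it corresponds to the intersection of the corresponding timelike subspaces, or to an isotropic line in the degenerate case.
\item[(b)] Since $\Lambda(\Gamma')$ is $\Gamma'$-invariant and $\Gamma'$ maps round spheres to round spheres, $S_0$ is $\Gamma'$-invariant.
\item[(c)] Hence $\Gamma'$ lies in the stabilizer of a proper nonzero linear subspace of $\R^{n+1}$, which is a proper algebraic subgroup. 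The cases are: the span $W$ of $S_0$ if $S_0$ has at least two points; an isotropic line if $S_0$ is a point; and, if $\Lambda(\Gamma')=\emptyset$, a timelike line, since then $\Gamma'$ is finite and fixes a point of $\HS$.
\end{enumerate*}
This gives the forward direction, as in \cite[Proposition 3.12]{Win15}.

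\emph{Not Zariski dense $\Rightarrow$ limit set in a sphere.} Let $H$ be the Zariski closure of $\Gamma'$, a proper algebraic subgroup, and let $\Gamma'' = \Gamma' \cap H^\circ$. This has finite index in $\Gamma'$, so $\Lambda(\Gamma'') = \Lambda(\Gamma')$, and I may replace $\Gamma'$ by $\Gamma''$ and assume $H = H^\circ$ is connected. I then invoke the classical fact (Karpelevich--Mostow, or Di Scala--Olmos) that a connected subgroup of $\SO(n,1)^\circ$ acting irreducibly on $\R^{n+1}$ is the whole group. So $H$ preserves a proper nonzero subspace $W$, and I split into three cases according to the type of $W$.
\begin{enumerate*}
\item[(i)] If $W$ is timelike and proper, then every orbit $\Gamma' x$ with $x$ in the totally geodesic copy of $\mathbb H^{\dim W - 1}$ stays in that copy. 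Its accumulation points therefore lie in $S_W$, which has dimension $\le n-2$ and hence lies in an $(n-2)$-sphere. Recall that the limit set is independent of the choice of $o$.
\item[(ii)] If $W$ is spacelike, replace it by $W^\perp$, which is timelike and proper, and apply case (i).
\item[(iii)] If $W$ is degenerate, then $W \cap W^\perp$ is an invariant isotropic subspace. In signature $(n,1)$ this is a line, so $\Gamma'$ fixes a point $p \in \partial_\infty\HS$. A discrete group fixing a boundary point is elementary, so $\Lambda(\Gamma') \subset \{p,q\}$ for some $q$. Two points lie in an $(n-2)$-sphere for all $n \ge 2$.
\end{enumerate*}

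The main obstacle I expect is the reverse direction. There I must rely on the irreducibility fact for connected subgroups and correctly handle the degenerate case (iii). Case (iii) genuinely requires discreteness: for non-discrete groups of similarities fixing $p$, the limit set can be all of $\partial_\infty\HS$ even though the Zariski closure is a proper parabolic subgroup. So I would state clearly that $\Gamma'$ is assumed discrete, as it is for subgroups of $\Gamma$, and cite \cite[Lemma 7.4]{Sar25} for the details of the case analysis.
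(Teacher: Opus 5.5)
Your proposal is correct and uses the same split as the paper. The paper gives no argument of its own: it attributes the forward direction to \cite[Proposition 3.12]{Win15} and the reverse direction to \cite[Lemma 7.4]{Sar25}. Your two arguments unpack those citations faithfully: the minimal $\Gamma'$-invariant sphere for the forward direction, and, for the reverse direction, a proper invariant subspace of the identity component of the Zariski closure with the timelike/spacelike/degenerate case split.

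Your insistence on discreteness is well founded and corrects the statement as written. A non-discrete group of similarities fixing $\infty$, e.g.\ the one generated by $z \mapsto 2z$ and the integer translations, has limit set all of $\partial_\infty\HS$ yet lies in a proper parabolic subgroup. The lemma is only ever applied to subgroups of $\tilde{\Gamma}$ (namely $\tilde{\Omega}$ and its finitely generated subgroups), which are discrete, so nothing in the paper is affected.
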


Though it can be proved by purely algebro-geometric arguments, we have the following corollary.

\begin{corollary}
\label{cor: finitely generated Zariski dense subgroup}
For any Zariski dense subgroup $\Gamma' \leq \tilde{G}$, there exists a finitely generated Zariski dense subgroup $\Gamma_{\mathrm{fin}}' \leq \Gamma' \leq \tilde{G}$.
\end{corollary}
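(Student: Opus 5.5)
The plan is to use the geometric criterion of \cref{lem: Zariski dense condition} twice: once in the forward direction to translate Zariski density of $\Gamma'$ into a statement about its limit set, and once in the reverse direction to certify that a suitable finitely generated subgroup is Zariski dense. Since $\Gamma'$ is Zariski dense, the forward direction gives that $\Lambda(\Gamma')$ is not contained in any $(n-2)$-sphere in $\partial_\infty\HS$. Because $\partial_\infty\HS \cong \mathbb S^{n-1}$, any $n+1$ points in general position (no $n+1$ of them on a common $(n-2)$-sphere) already fail to lie on a common $(n-2)$-sphere. Hence I can pick finitely many points $\xi_1, \dotsc, \xi_{n+1} \in \Lambda(\Gamma')$ not lying on a common $(n-2)$-sphere. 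This is possible since the set of $(n-2)$-spheres through $n$ given generic points is determined and $\Lambda(\Gamma')$ escapes it.

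The key step is to realize these points, up to small perturbation, inside limit sets of finitely generated subgroups. Here I note that $\Gamma'$ is discrete when it is a subgroup of $\tilde{\Gamma}$, which is the case of interest; if $\Gamma'$ is not discrete, the limit set notion in \cref{lem: Zariski dense condition} must be read for general subgroups. In that case I would fall back on the algebraic argument given at the end. For the discrete non-elementary case, fixed points of loxodromic elements are dense in $\Lambda(\Gamma')$. The condition of not lying on a common $(n-2)$-sphere is open in $(\mathbb S^{n-1})^{n+1}$. So I can choose loxodromic elements $g_1, \dotsc, g_{n+1} \in \Gamma'$ whose attracting fixed points $\xi_i'$ are close enough to the $\xi_i$ that they still do not lie on a common $(n-2)$-sphere.

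I then set $\Gamma_{\mathrm{fin}}' = \langle g_1, \dotsc, g_{n+1}\rangle \le \Gamma'$. Each $\xi_i'$ is a limit point of $\langle g_i \rangle \subset \Gamma_{\mathrm{fin}}'$, so $\Lambda(\Gamma_{\mathrm{fin}}')$ contains all the $\xi_i'$ and is not contained in any $(n-2)$-sphere. The reverse direction of \cref{lem: Zariski dense condition} then gives Zariski density of $\Gamma_{\mathrm{fin}}'$.

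I expect the main obstacle to be the density of loxodromic fixed points together with the discreteness hypothesis implicit in the use of limit sets. To sidestep it, the purely algebraic route is the backup, and it avoids these issues entirely. Consider the Zariski closures $\overline{\langle F\rangle}$ as $F$ ranges over finite subsets of $\Gamma'$. By Noetherianity of the Zariski topology on $\tilde{\mathbf G}$, there is a minimal element of this family, say $\overline{\langle F_0\rangle}$. For any $g \in \Gamma'$, the closure $\overline{\langle F_0 \cup \{g\}\rangle}$ contains $\overline{\langle F_0\rangle}$, so by minimality (choosing a minimal element among closed subgroups, which satisfy the descending chain condition) the two closures are equal. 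Therefore $\Gamma' \subset \overline{\langle F_0\rangle}$, which forces $\overline{\langle F_0\rangle} = \tilde{\mathbf G}$, and $\Gamma_{\mathrm{fin}}' = \langle F_0\rangle$ works.

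Strictly, one takes an element of minimal dimension and then minimal number of components. Descending chain condition on Zariski closed sets is what makes this choice legitimate.
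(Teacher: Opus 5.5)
Your main (geometric) argument is the paper's proof. It uses the forward direction of \cref{lem: Zariski dense condition} and the density of loxodromic fixed points in $\Lambda(\Gamma')$. It then chooses $n+1$ loxodromic elements whose attracting fixed points lie on no common $(n-2)$-sphere, and applies the reverse direction to the subgroup they generate. Your perturbation step just spells out what the paper compresses into ``we may take''. In the ball model, $n+1$ points of $\mathbb{S}^{n-1}$ lie on no common $(n-2)$-sphere exactly when they are affinely independent in $\R^n$, which is an open condition. This part is fine.

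The algebraic backup, however, is wrong as written.

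\textbf{Minimal versus maximal.} The minimal element of the family $\{\overline{\langle F\rangle}\}$ is just $\overline{\langle\emptyset\rangle}=\{e\}$. Minimality says nothing about the larger set $\overline{\langle F_0\cup\{g\}\rangle}$. You would need a \emph{maximal} element, and Noetherianity does not supply one. Zariski closed subgroups can form infinite ascending chains, for example the groups of $2^k$-th roots of unity in $\mathbf{G}_{\mathrm{m}}$. Their union is a Zariski dense subgroup of $\mathbf{G}_{\mathrm{m}}$ with no finitely generated Zariski dense subgroup. So any correct argument must use the structure of $\tilde{\mathbf G}$, which yours never does.

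\textbf{A working algebraic version.} Take $F_0$ maximizing $\dim\overline{\langle F_0\rangle}$. Then every $g\in\Gamma'$ normalizes the identity component $H_0$ of $\overline{\langle F_0\rangle}$, so $H_0$ is normal in $\tilde{\mathbf G}$. The image of $\Gamma'$ in $\tilde{\mathbf G}/H_0$ is then locally finite and Zariski dense. By Jordan--Schur, a locally finite linear group in characteristic zero has Zariski closure with abelian identity component. This forces the connected semisimple group $\tilde{\mathbf G}/H_0$ to be trivial.

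\textbf{No backup is needed.} The Lie algebra of the topological closure of $\Gamma'$ is $\Ad(\Gamma')$-invariant. Since stabilizers of subspaces are Zariski closed, it is an ideal of the simple Lie algebra $\mathfrak{so}(n,1)$. Hence a Zariski dense $\Gamma'$ is either discrete or dense in $\tilde G$. In the dense case it meets the nonempty open set of loxodromic elements with attracting fixed point near any prescribed point. In any event, the paper only applies the corollary to the discrete group $\tilde\Omega<\tilde\Gamma$.
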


\begin{proof}
Let $\Gamma' \leq \tilde{G}$ be Zariski dense. Recall that $\Lambda(\Gamma')$ can be obtained as the closure of the set of hyperbolic fixed points of $\Gamma'$. By \cref{lem: Zariski dense condition}, we may take a generating subset $S_{\mathrm{fin}}' \subset \Gamma'$ consisting of $n + 1$ hyperbolic elements whose attracting fixed points are not contained in any $(n - 2)$-sphere in $\partial_\infty\HS$. Then again by \cref{lem: Zariski dense condition}, $\Gamma_{\mathrm{fin}}' \coloneq \langle S_{\mathrm{fin}}' \rangle \leq \Gamma' \leq \tilde{G}$ is Zariski dense.
\end{proof}

The following is a corollary of \cref{thm:Z-DenseInSimplyConnectedCoverGAndTraceFieldK}.

\begin{corollary}
\label{cor:Z-DenseInSimplyConnectedCoverGAndTraceFieldK}
There exists a subgroup $\tilde{\Omega}_{\mathrm{fin}} = \langle \tilde{S}_{\mathrm{fin}} \rangle < \tilde{\Omega} < \tilde{\Gamma}$ generated by the finite symmetric subset $\tilde{S}_{\mathrm{fin}} \subset \tilde{S} \subset \tilde{\Gamma}$ such that the following holds:
\begin{enumerate}
\item $\tilde{\Omega}_{\mathrm{fin}}$ is Zariski dense in $\tilde{\mathbf{G}}$;
\item $\K_{\tilde{\Omega}_{\mathrm{fin}}} = \K$.
\end{enumerate}
\end{corollary}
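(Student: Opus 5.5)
We deduce \cref{cor:Z-DenseInSimplyConnectedCoverGAndTraceFieldK} from \cref{thm:Z-DenseInSimplyConnectedCoverGAndTraceFieldK} by picking finitely many elements of $\tilde{S}$ that witness each of the two properties separately, and then taking their union. Since $\tilde{S}$ is closed under inversion ($(\gamma(\alpha)\gamma(\tilde{\alpha})^{-1})^{-1} = \gamma(\tilde{\alpha})\gamma(\alpha)^{-1}$, and $\tilde{\pi}^{-1}$ of a symmetric set is symmetric), any finite subset of $\tilde{S}$ can be symmetrized inside $\tilde{S}$.

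\emph{Zariski density.} By \cref{thm:Z-DenseInSimplyConnectedCoverGAndTraceFieldK}(1), $\tilde{\Omega}$ is Zariski dense. The argument of \cref{cor: finitely generated Zariski dense subgroup} gives finitely many elements of $\tilde{\Omega}$ generating a Zariski dense subgroup. Each of these elements is a finite word in $\tilde{S}$, so we take $\tilde{S}_1 \subset \tilde{S}$ to be the finite set of letters appearing in these words. Then $\langle \tilde{S}_1\rangle$ contains a Zariski dense subgroup and is therefore Zariski dense.

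An alternative, purely algebraic route avoids the limit set. Consider the Zariski closures of $\langle F\rangle$ as $F$ ranges over the finite subsets of $\tilde{S}$. These are algebraic subgroups, so by the Noetherian property a maximal one exists, say $\overline{\langle F_0\rangle}$. This subgroup must contain every element of $\tilde{S}$, hence contains $\tilde{\Omega}$, and so it is all of $\tilde{\mathbf{G}}$.

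\emph{Trace field.} By \cref{thm:Z-DenseInSimplyConnectedCoverGAndTraceFieldK}(2), $\K = \mathbb{Q}(\tr(\Ad(\tilde{\Omega})))$. Since $\K$ is a finite extension of $\Q$, it is generated over $\Q$ by finitely many traces $\tr(\Ad(\omega_1)), \dotsc, \tr(\Ad(\omega_m))$ with $\omega_i \in \tilde{\Omega}$. We write each $\omega_i$ as a word in $\tilde{S}$ and let $\tilde{S}_2$ be the finite set of letters used. Then $\K \subset \K_{\langle \tilde{S}_2\rangle}$.

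\emph{Combining.} Set $\tilde{S}_{\mathrm{fin}} = \tilde{S}_1 \cup \tilde{S}_2$ together with their inverses; this is a finite symmetric subset of $\tilde{S}$. Let $\tilde{\Omega}_{\mathrm{fin}} = \langle \tilde{S}_{\mathrm{fin}}\rangle$.
\begin{enumerate}
\item $\tilde{\Omega}_{\mathrm{fin}}$ contains $\langle \tilde{S}_1\rangle$, so it is Zariski dense in $\tilde{\mathbf{G}}$.
\item $\tilde{\Omega}_{\mathrm{fin}} < \tilde{\Omega}$, so its trace field is contained in $\K_{\tilde{\Omega}} = \K$.
\item $\tilde{\Omega}_{\mathrm{fin}}$ contains $\langle \tilde{S}_2\rangle$, so its trace field contains $\K$.
\end{enumerate}
Together, (2) and (3) give $\K_{\tilde{\Omega}_{\mathrm{fin}}} = \K$.

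None of these steps is a real obstacle. The only points needing care are that trace fields are monotone under inclusion of subgroups, and that finite generation of $\K$ over $\Q$ lets finitely many elements realize the full trace field. All of the substantive work lies in \cref{thm:Z-DenseInSimplyConnectedCoverGAndTraceFieldK} itself.
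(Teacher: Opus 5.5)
Your main argument is correct and is essentially the paper's proof. The paper also applies \cref{cor: finitely generated Zariski dense subgroup} to $\tilde{\Omega}$ and then takes a finite symmetric subset of $\tilde{S}$ whose generated group contains those finitely many generators and realizes the full trace field. You supply the details the paper leaves implicit: symmetry of $\tilde{S}$, finite generation of $\K$ over $\Q$ by finitely many traces, and monotonicity of trace fields.

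The one thing to correct is your ``alternative, purely algebraic route,'' which is wrong as stated. Noetherianity of the Zariski topology gives the descending chain condition on closed sets, so it yields minimal elements, not maximal ones. Ascending chains of algebraic subgroups need not stabilize, e.g.\ $\mu_2 \subset \mu_4 \subset \mu_8 \subset \dotsb$ in $\mathbf{G}_{\mathrm{m}}$. Indeed, the principle you invoke is false in general: the union of all $\mu_{2^k}$ is Zariski dense in $\mathbf{G}_{\mathrm{m}}$, yet each of its finitely generated subgroups is finite.

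The route can be repaired as follows.
\begin{enumerate}
\item Choose a finite $F_0 \subset \tilde{S}$ with $\dim \overline{\langle F_0\rangle}$ maximal.
\item Then $H^\circ \coloneq \overline{\langle F_0\rangle}^\circ = \overline{\langle F_0 \cup \{s\}\rangle}^\circ$ for every $s \in \tilde{S}$. Hence $H^\circ$ is normalized by $\tilde{\Omega}$, and therefore it is a connected normal subgroup of $\tilde{\mathbf{G}}$.
\item $H^\circ$ is defined over $\R$ and $\tilde{\mathbf{G}}$ is $\R$-almost simple, including for $n = 3$. So $H^\circ$ is trivial or all of $\tilde{\mathbf{G}}$.
\item If $H^\circ$ were trivial, every $\langle F\rangle$ with $F_0 \subset F \subset \tilde{S}$ finite would be finite. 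Then $\tilde{\Omega}$ would be torsion, contradicting that its only torsion elements are $\pm e$.
\end{enumerate}
Since your first route is already valid, you can also simply delete this aside.
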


\begin{proof}
We start with the properties provided by \cref{thm:Z-DenseInSimplyConnectedCoverGAndTraceFieldK}. By \cref{cor: finitely generated Zariski dense subgroup}, there exists a finite generating subset $\tilde{S}_{\mathrm{fin}}' \subset \tilde{\Omega}$ and a subgroup $\tilde{\Omega}_{\mathrm{fin}}' \coloneq \langle \tilde{S}_{\mathrm{fin}}' \rangle < \Gamma$ which is Zariski dense in $\tilde{\mathbf{G}}$. Now, we can simply take a finite subset $\tilde{S}_{\mathrm{fin}} \subset \tilde{S} \subset \Gamma$ such that
\begin{itemize}
\item $\tilde{S}_{\mathrm{fin}}$ is symmetric, i.e., $\gamma \in \tilde{S}_{\mathrm{fin}}$ if and only if $\gamma^{-1} \in \tilde{S}_{\mathrm{fin}}$;
\item $\tilde{S}_{\mathrm{fin}}' \subset \langle \tilde{S}_{\mathrm{fin}}\rangle$;
\item $\Q(\tr(\Ad(\langle\tilde{S}_{\mathrm{fin}}\rangle))) = \K$.
\end{itemize}
Finally, taking $\tilde{\Omega}_{\mathrm{fin}} \coloneq \langle \tilde{S}_{\mathrm{fin}} \rangle < \Gamma$, it is clear that the required Zariski density and full trace field conditions are satisfied.
\end{proof}

Our goal is now to prove \cref{thm:Z-DenseInSimplyConnectedCoverGAndTraceFieldK}. As in \cite{Sar22a}, we need to study the limit set of $\tilde{\Omega}$. However, for $\Gamma$ geometrically finite with parabolic elements and $\K$ strictly larger than $\Q$, this is not sufficient. In that case, we will also need to introduce further tools from the works of Susskind--Swarup \cite{SS92} and another machinery of Prasad--Rapinchuk \cite{PR09}.

We prepare by first fixing some notations for the rest of the section in addition to those in \cref{sec:Preliminaries}. We will work in the upper half space model. We will also use the isometry $\T^1(\HS) \cong \HS \times \R^n$ and denote by $\pi_{\HS}: \HS \times \R^n \to \HS$ the tangent bundle projection map.
Let $\pi_{\R^{n - 1}}: \R^n \to \R^{n - 1}$ be the orthogonal projection onto $\R^{n - 1} = \langle e_1, e_2, \dotsc, e_{n - 1}\rangle$. Let $B^{\mathrm{E}}_\epsilon(u) \subset \R^{n - 1} \subset \partial_\infty\HS$ denote the open Euclidean ball of radius $\epsilon > 0$ centered at $u \in \R^{n - 1}$. We reserve the notation $B_\epsilon(Q) \subset \HS$, where $\epsilon > 0$, for the $\epsilon$-neighborhood of a subset $Q \subset \HS$ with respect to the hyperbolic metric. Denote by $d_{\mathrm{ss}}$ (resp. $d_{\mathrm{su}}$) the distance function induced by the Riemannian metric on any strong stable (resp. strong unstable) leaf in $\T^1(\HS)$. Accordingly, we decorate balls and neighborhoods of sets by superscript $\mathrm{ss}$ (resp. $\mathrm{su}$). Define the map $\vis^\pm: \T^1(\HS) \to \partial_\infty \HS$ by $\vis^\pm(u) = u^\pm$ for all $u \in \T^1(\HS)$. Consider the set of unit tangent vectors
\begin{align*}
V = \{(u, -e_n) \in \T^1(\HS): \langle u, e_n \rangle = 1\}
\end{align*}
perpendicular to a horosphere. Setting $C_{\mathrm{E}} > 0$ to be the image of the constant map $V \to \R$ defined by $u \mapsto \|(d\vis^+)_u\|_{\mathrm{op}}$, we have
\begin{align*}
\frac{1}{C_{\mathrm{E}}}d_{\mathrm{su}}(u, v)\leq \|u^+ - v^+\| \leq C_{\mathrm{E}}d_{\mathrm{su}}(u, v) \qquad \text{for all $u, v \in V$}.
\end{align*}

Denote $\mathsf{Z} := \supp\bigl(m^{\mathrm{BMS}}\bigr) \subset \T^1(\HS)$. Recall the map $\tilde{\Phi}$ from \cref{subsec:SymbolicModelForFrameFlows}. For all $j \in \calA$, denote
\begin{align*}
\mathsf{S}_j &\coloneq \tilde{\Phi}(\{\gamma_jx_0\} \times \overline{\Lambda_-} \times \{0\}), & \hat{\mathsf{S}}_j &\coloneq \tilde{\Phi}(\{\gamma_jx_0\} \times \Lambda_- \times \{0\}), \\
\mathsf{U}_j &\coloneq \tilde{\Phi}(\overline{\Delta_j} \times \{\infty\} \times \{0\}), & \hat{\mathsf{U}}_j &\coloneq \tilde{\Phi}((\Lambda_+ \cap \Delta_j) \times \{\infty\} \times \{0\}).
\end{align*}
Denote the unions $\mathsf{U} := \bigsqcup_{j \in \calA} \mathsf{U}_j$, $\mathsf{U}_\sqcup := \bigsqcup_{j \in \calA} \interior(\mathsf{U}_j)$, and $\mathsf{S} := \bigsqcup_{j \in \calA} \mathsf{S}_j$. For all $\gamma \in \Gamma$, we define the map $[\bigcdot, \bigcdot]: \gamma \mathsf{U} \times \gamma \mathsf{S} \to \T^1(\HS)$ by
\begin{align*}
[\gamma u, \gamma s] = \gamma \tilde{\Phi}(x, y, 0)
\end{align*}
for all $u = (x, \infty, 0) \in \mathsf{U}$ and $s = (\gamma_jx_0, y, 0) \in \mathsf{S}_j$ for some $j \in \calA$. For all $j \in \calA$, denote the rectangles and their subsets
\begin{align*}
\mathsf{R}_j &:= [\mathsf{U}_j, \mathsf{S}_j], & \hat{\mathsf{R}}_j &:= [\hat{\mathsf{U}}_j, \hat{\mathsf{S}}_j];
\end{align*}
the former are lifts of $R_j$ from \cref{subsec:SymbolicModelForFrameFlows}. Denote the unions $\mathsf{R} := \bigsqcup_{j \in \calA} \mathsf{R}_j$, $\mathsf{R}_\sqcup := \bigsqcup_{j \in \calA} [\interior(\mathsf{U}_j), \hat{\mathsf{S}}_j]$, and $\hat{\mathsf{R}} := \bigsqcup_{j \in \calA} \hat{\mathsf{R}}_j$. We generalize the previous notation: for all $\gamma \in \Gamma$, define $[v_1, v_2] = [u_1, s_2]$ for all $v_1 = [u_1, s_1] \in \gamma \mathsf{R}$ and $v_2 = [u_2, s_2] \in \gamma \mathsf{R}$. We define the trajectory isomorphism $\psi$ analogous to \cite[Definition 1.1]{Rat73} in the following fashion: for all $u, u' \in [\mathsf{U}, u]$, we have $u' = \psi_u^{-1}(u')a_t$ for some unique point $\psi_u^{-1}(u') \in \T^1(\HS)$ in the strong unstable leaf through $u$ and $t \in \R$. It will be convenient to abuse notation and define the return time map $\Ret: \Gamma\mathsf{R}_\sqcup \to \R$ by $\Ret(\gamma u) = \Ret(x)$, and the Poincar\'e map $\mathcal{P}: \Gamma\mathsf{R}_\sqcup \to \Gamma\interior(\mathsf{R})$ by
\begin{align*}
\mathcal{P}(\gamma u) = \mathcal{G}_{\Ret(\gamma u)}(\gamma u) \in \Gamma \tilde{\Phi}(\hat{T}(x, y), 0) \subset \Gamma \interior(\mathsf{R}),
\end{align*}
for all $\gamma \in \Gamma$ and $u = \tilde{\Phi}(x, y, 0) \in \mathsf{R}_\sqcup$. We use the same notation from \cref{eqn:BirkhoffSum}. Although $\Ret_k$ for $k \geq 0$ is not well-defined at all points $x \in \overline{\Delta_0}$ a priori, we make sense of it and extend the definition using the same formula from \cref{prop:Coding} if $x \in \gamma \overline{\Delta_0}$ for an inverse branch $\gamma \in \gen^{(k)}$ (not necessarily unique); in this case, we write $\Ret_\alpha$ where we specify the inverse branch with the sequence $\alpha$ of length $k$ corresponding to $\gamma$. Define the constant
\begin{align*}
\underline{\Ret} = \inf\{\Ret(x): x \in \Delta_0\} \in (0, +\infty).
\end{align*}

The following \cref{lem:ProducingElementsofH} gives a description of the elements in the generating set of the return trajectory subgroup using the Poincar\'{e} map.

\begin{lemma}
\label{lem:ProducingElementsofH}
Let $p \in \mathbb N$, and $h \in \Gamma$. If there exist $(y, z) \in \calA^2$, $g_z \in \langle \gen\rangle_\sg$, $u_0 \in \hat{\mathsf{R}}_y$, and $v_0 \in g_z\hat{\mathsf{R}}_z$ such that $\mathcal P^{p + 1}(u_0) \in g_z\hat{\mathsf{R}}_z$ and $\mathcal P^{-(p + 1)}(v_0) \in h\hat{\mathsf{R}}_y$, then $h \in S$.
\end{lemma}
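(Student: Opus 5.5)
The plan is to read off, from the two Poincar\'e-map conditions, two words of the same length $p+1$ in the alphabet whose ratio is exactly $h$. I will treat $\mathcal{P}$ as acting on $\Gamma\mathsf{R}_\sqcup$ equivariantly, $\mathcal{P}(\gamma u) = \gamma\mathcal{P}(u)$. For $u = \tilde{\Phi}(x,y,0) \in \hat{\mathsf{R}}_j$ with $x \in \Delta_j$, we have $\hat{T}(x,y) = (\gamma_j^{-1}x, \gamma_j^{-1}y)$. So in $\T^1(\HS)$, flowing from $u$ for time $\Ret(x)$ lands on $\gamma_j\tilde{\Phi}(\hat{T}(x,y),0)$, which lies in $\gamma_j\hat{\mathsf{R}}_{j'}$, where $j'$ is the next letter of $x$. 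The geodesic-flow image is computed in $\T^1(\HS)$ before projecting, and the time change $\log(1+\|x\|^2)$ in $\tilde{\Phi}$ is arranged exactly so that this holds (this is \cref{cor:factormap}, or the construction of the rectangles).

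The first step iterates this for $u_0 \in \hat{\mathsf{R}}_y$. If $u_0 = \tilde{\Phi}(x,y',0)$ with $x$ having coding $(y, \alpha_2, \alpha_3, \dotsc)$, then
\[
\mathcal{P}^{p+1}(u_0) \in \gamma_y\gamma_{\alpha_2}\dotsb\gamma_{\alpha_{p+1}}\hat{\mathsf{R}}_{\alpha_{p+2}}.
\]
The hypothesis $\mathcal{P}^{p+1}(u_0) \in g_z\hat{\mathsf{R}}_z$ then gives $\alpha_{p+2} = z$ and $g_z = \gamma(\alpha)$ with $\alpha = (y, \alpha_2, \dotsc, \alpha_{p+1})$ of length $p+1$. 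The translates $\gamma\hat{\mathsf{R}}$ are pairwise disjoint for distinct $\gamma$ (and distinct letters), since the rectangles form a Markov section for the factor map. Applying that disjointness once more, I expect to be able to deduce equality of group elements directly, not merely of cosets.

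The second step runs the same argument backwards. Since $v_0 \in g_z\hat{\mathsf{R}}_z$, write $v_0 = g_z w_0$ with $w_0 \in \hat{\mathsf{R}}_z$. By equivariance, $\mathcal{P}^{-(p+1)}(v_0) = g_z\mathcal{P}^{-(p+1)}(w_0)$. The backward Poincar\'e map is the inverse of the forward one, which is a bijection on $\Gamma\hat{\mathsf{R}}$. Hence $\mathcal{P}^{-(p+1)}(w_0) = \beta^{-1}w_1$, where $\beta = \gamma(\tilde{\alpha})$ for some sequence $\tilde{\alpha}$ of length $p+1$ and $w_1 \in \hat{\mathsf{R}}_{\tilde{\alpha}_1}$. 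Concretely, $w_1$ satisfies $\mathcal{P}^{p+1}(w_1) \in \beta\hat{\mathsf{R}}_z$, by the forward formula. Combining with the hypothesis,
\[
\mathcal{P}^{-(p+1)}(v_0) \in g_z\gamma(\tilde{\alpha})^{-1}\hat{\mathsf{R}}_{\tilde{\alpha}_1} \cap h\hat{\mathsf{R}}_y,
\]
so by disjointness $\tilde{\alpha}_1 = y$ and $h = \gamma(\alpha)\gamma(\tilde{\alpha})^{-1}$. Since both $\alpha$ and $\tilde{\alpha}$ have length $p+1 \in \N$, this gives $h \in S$.

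The main obstacle is justifying that the backward iterate is well defined and has the stated form: each point of $\hat{\mathsf{R}}_z$ must have backward orbit passing through some $\hat{\mathsf{R}}$. Points of $\hat{\mathsf{R}}$ have their $\Lambda_-$-coordinate in $\Lambda_-$. By \cref{lambda-}, $\Lambda_-$ is the image of the natural extension, so each $y \in \Lambda_-$ has a preimage under $\hat{T}$ of the form $(\gamma_j x, \gamma_j y)$. I would use this, together with the uniqueness in the disjointness of $\gamma\hat{\mathsf{R}}$ ($\gamma\hat{\mathsf{R}}_j \cap \gamma'\hat{\mathsf{R}}_{j'} \neq \emptyset$ implies $\gamma = \gamma'$ and $j = j'$). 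That uniqueness holds because $\Gamma$ acts freely and the rectangles lie in $\Delta_0$ with disjoint interiors, and I would check it first.
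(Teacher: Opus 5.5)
Your cocycle bookkeeping is the intended argument. The paper states this lemma without proof, as the analogue of the corresponding lemma in the prequel. The forward condition forces $g_z=\gamma(\alpha)$ with $\alpha_1=y$, and the backward condition forces $h^{-1}g_z=\gamma(\tilde\alpha)$ with $\tilde\alpha_1=y$. Both words have length $p+1$, so $h=\gamma(\alpha)\gamma(\tilde\alpha)^{-1}\in S$. However, two of your supporting justifications do not work as written.

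\textbf{The uniqueness step.} The statement that $\gamma\hat{\mathsf R}_j\cap\gamma'\hat{\mathsf R}_{j'}\neq\emptyset$ implies $\gamma=\gamma'$ cannot be derived from freeness of the $\Gamma$-action. Freeness only gives $gu\neq u$, not $gu\notin\hat{\mathsf R}$ for $u\in\hat{\mathsf R}$. Also, the rectangles are subsets of $\T^1(\HS)$, not of $\Delta_0$. To see that something more is involved, note that any $g\in\Gamma$ fixing $\infty$ is a height-preserving Euclidean isometry, so $g\tilde\Phi(x,\infty,0)=\tilde\Phi(gx,\infty,0)$ with $\infty\in\Lambda_-$. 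Disjointness therefore already requires $g\Lambda_+\cap\Lambda_+=\emptyset$ for such $g\neq e$. That is a property of how $\Delta_0$ is chosen relative to the cusp stabilizer, not a consequence of freeness.

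What you need is exactly that $\hat{\mathsf R}$ projects injectively to $\T^1(X)$, i.e.\ the cross-section (Markov) property of $\mathcal R$. The paper uses this tacitly when it defines $\Ret$ and $\mathcal P$ on $\Gamma\mathsf R_\sqcup$ through the decomposition $\gamma u$. You should invoke it in that form, as you do in your second paragraph, rather than via freeness.

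\textbf{The backward step.} The ``main obstacle'' about backward iterates is not an obstacle, and your proposed resolution misreads \cref{lambda-}. That lemma gives $\gamma^{-1}\Lambda_-\subset\Lambda_-$, i.e.\ forward invariance of the second coordinate under $\hat T$. It does not say that every point of $\Lambda_-$ has a $\hat T$-preimage.

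No natural-extension argument is needed, because the hypothesis itself supplies a point $w\in h\hat{\mathsf R}_y$ with $\mathcal P^{p+1}(w)=v_0$. Write $w=hw'$ with $w'\in\hat{\mathsf R}_y$. Let $(\tilde\alpha_1,\tilde\alpha_2,\dotsc)$ be the coding of the forward endpoint of $w'$, so $\tilde\alpha_1=y$, and set $\tilde\alpha=(\tilde\alpha_1,\dotsc,\tilde\alpha_{p+1})$. Your forward computation gives $\mathcal P^{p+1}(w')\in\gamma(\tilde\alpha)\hat{\mathsf R}_{\tilde\alpha_{p+2}}$. On the other hand, $\mathcal P^{p+1}(w')=h^{-1}v_0\in h^{-1}g_z\hat{\mathsf R}_z$. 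Uniqueness then yields $h^{-1}g_z=\gamma(\tilde\alpha)$.

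With these two repairs the proof goes through.
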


\cref{lem:ProducingElementsofH} provides a procedure to produce elements in the return trajectory subgroup. Our strategy is to use this procedure to construct a sequence of orbit points whose limit is $0 \in \partial_\infty \HS$. We also ensure the crucial property that such a sequence can be constructed inside a cone, as depicted in \cref{fig:OrbitVectorCloseToOrigin}, to show that $0 \in \partial_\infty \HS$ is a radial limit point (see \cref{def:RadialLimitSet}).

We introduce a type of limit set
\begin{align*}
\Lambda_\infty(\langle \gen\rangle_\sg) &\coloneq \bigcap_{k \in \N} \bigcup_{\gamma \in \gen^{(k)}} \gamma \cdot \overline{\Delta_0} \\
&\subsetneq \Lambda(\langle \gen\rangle_\sg) \subsetneq \Lambda(\Gamma).
\end{align*}
Note the above containment as in \cref{eqn: + limit set}. Note also that $\Lambda_+ \subsetneq \Lambda_\infty(\langle \gen\rangle_\sg)$. Intuitively, we think of it as the set of limit points of $\langle \gen\rangle_\sg$ associated to infinite sequences/words. We will show a stronger containment and make the previous intuition precise in the proposition below. Here, we set the shift space
\begin{align*}
\Sigma^+ \coloneq \calA^\N
\end{align*}
endowed with:
\begin{itemize}
\item the metric $d_\theta$, where $\theta \in (0, 1)$ is the constant from \cref{prop:Coding}, defined by
\begin{align*}
	d_\theta(x, y) = \theta^{\inf\{j \in \N: x_j \neq y_j\}} \qquad \text{for all $x, y \in \Sigma^+$},
\end{align*}
\item and the shift operator $\sigma: \Sigma^+ \to \Sigma^+$.
\end{itemize}

\begin{proposition}
\label{prop: infty limit set has infty sequence}
The following holds.
\begin{enumerate}
\item There exists a Lipschitz continuous surjective map $\zeta: \Sigma^+ \to \Lambda_\infty(\langle \gen\rangle_\sg)$ which is injective on $\zeta^{-1}(\Lambda_+)$ and defines a semiconjugacy between the shift operator $\sigma: \Sigma^+ \to \Sigma^+$ and $T: \Lambda_+ \to \Lambda_+$, i.e.,
\begin{align*}
\zeta \circ \sigma = T \circ \zeta \qquad \text{on $\zeta^{-1}(\Lambda_+)$}.
\end{align*}
\item We have the containment $\Lambda_\infty(\langle \gen\rangle_\sg) \subset \Lambda_{\mathrm{r}}(\langle \gen\rangle_\sg)$.
\end{enumerate}
\end{proposition}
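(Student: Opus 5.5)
For (1), I will define $\zeta$ by nested cylinders. For $\omega = (\omega_1, \omega_2, \dotsc) \in \Sigma^+$ and $k \in \N$, set $\gamma_{\omega|_k} \coloneq \gamma_{\omega_1}\dotsb\gamma_{\omega_k} \in \gen^{(k)}$ and $K_k(\omega) \coloneq \gamma_{\omega|_k}\overline{\Delta_0}$. Since $\gamma_{\omega_{k+1}}\overline{\Delta_0} \subset \overline{\Delta_0}$, these sets are nested compact sets. By Property~(2) of \cref{prop:Coding} (or via \cref{lem:CylinderEstimate}), $\diam(K_k(\omega)) \lesssim \theta^k \diam(\overline{\Delta_0})$, so the intersection is a single point, which I call $\zeta(\omega)$.

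Then $\zeta(\omega) \in \bigcup_{\gamma \in \gen^{(k)}} \gamma\overline{\Delta_0}$ for every $k$, so $\zeta(\omega) \in \Lambda_\infty(\langle \gen\rangle_\sg)$. For surjectivity, take $x$ in this intersection. For each $k$ choose $\alpha^{(k)}$ with $x \in \gamma(\alpha^{(k)})\overline{\Delta_0}$, noting that truncations of such admissible words are admissible. Since $\calA$ is countable but possibly not finitely branching, I cannot use compactness of $\Sigma^+$ directly. Instead I will use that $\overline{\Delta_j} \cap \overline{\Delta_{j'}}$ contains limit points only in a controlled way, and argue with König's lemma on the tree of admissible prefixes. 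The tree is locally finite at $x$ because only finitely many $\Delta_j$ have closures meeting a given point of $\R^{n-1}$: the $\Delta_j$ accumulate only at $\infty$, which lies outside $\overline{\Delta_0}$. This gives an infinite branch $\omega$ with $\zeta(\omega) = x$.

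For the Lipschitz bound: if $d_\theta(\omega, \omega') = \theta^m$, then both points lie in the same cylinder of length $m-1$, so $|\zeta(\omega) - \zeta(\omega')| \lesssim \theta^{m}$. On $\zeta^{-1}(\Lambda_+)$, the point lies in the open disjoint sets $\Delta_j$, so the first letter is determined as the unique $j$ with $x \in \Delta_j$. Applying $T$ and inducting gives both injectivity there and $\zeta \circ \sigma = T \circ \zeta$, since $\gamma_{\omega_1}^{-1}$ maps $K_{k}(\omega)$ onto $K_{k-1}(\sigma\omega)$.

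For (2), fix $x = \zeta(\omega)$ and the orbit points $g_k \coloneq \gamma_{\omega|_k}$. I aim to show that $g_k o$ stays within bounded distance of the geodesic ray toward $x$. The plan is to compare with a fixed point $p_0 \in \HS$ whose shadow from $o$ contains $\overline{\Delta_0}$, with $\overline{\Delta_0}$ bounded in $\R^{n-1}$ away from $\infty$. Then $g_k p_0$ has shadow containing $g_k\overline{\Delta_0} \ni x$. By \cref{lem:CylinderEstimate}, the Euclidean size of this shadow is comparable to $\|dg_k\| \to 0$, so $g_k p_0$ lies within a uniform distance $r$ of the geodesic from $o$ to $x$, and $g_k p_0 \to x$. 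Replacing $p_0$ by $o$ changes distances by at most $d(o, p_0)$, which gives the radial condition.

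The main obstacle is making the shadow claim uniform over the countably many letters. The uniform distortion bound (Property~(3) of \cref{prop:Coding}) and the uniform constant in \cref{lem:CylinderEstimate} make $g_k$ act on $\overline{\Delta_0}$ essentially as a similarity with ratio $\|dg_k\|$. Hence $g_k$ maps the half-ball over $\overline{\Delta_0}$ to a region of hyperbolically bounded shape over a set of diameter about $\|dg_k\|$. I expect that step, or the local finiteness needed in (1), to require the most care.
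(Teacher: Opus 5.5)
\textbf{The gap is in the surjectivity step of (1).} Your construction of $\zeta$ matches the paper's: nested closed cylinders $\gamma_{\omega_1}\dotsb\gamma_{\omega_k}\overline{\Delta_0}$ shrinking to a point. Your arguments for the Lipschitz bound, for injectivity on $\zeta^{-1}(\Lambda_+)$ (an open $\Delta_j$ misses $\overline{\Delta_{j'}}$ for $j' \neq j$), and for $\zeta\circ\sigma = T\circ\zeta$ are fine, and more detailed than the paper, which only says these properties ``easily follow''. You are also right that surjectivity needs finite branching, since $\Sigma^+$ is not compact.

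However, the reason you give for finite branching is false. Every $\Delta_j$ lies in the bounded set $\Delta_0$, so the infinitely many disjoint $\Delta_j$ must accumulate at points of $\overline{\Delta_0}$; they never accumulate ``only at $\infty$''. Compare the Gauss map, where $\Delta_j = (1/(j+1), 1/j)$ accumulate at the cusp point $0 \in \overline{(0,1)}$. Moreover, König's lemma needs the pointwise statement that each $y \in \overline{\Delta_0}$ lies in only finitely many $\overline{\Delta_j}$. That is not a statement about accumulation points at all: in the Gauss example, $0$ is an accumulation point lying in no $\overline{\Delta_j}$. So this step needs a genuinely different argument.

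Here is one that works. For a M\"obius map $g$ with pole $c = g^{-1}(\infty)$ one has $\|(dg)_x\|_{\mathrm{op}} = r^2/|x-c|^2$, hence $|\nabla_x \log\|(dg)_x\|_{\mathrm{op}}| = 2/|x-c|$. Property~(3) of \cref{prop:Coding} then forces $|x - \gamma_j^{-1}(\infty)| \geq 2/C_1$ for all $x \in \Delta_0$, uniformly in $j$. So each $\gamma_j$ is uniformly close to a similarity at small scales around $\overline{\Delta_0}$. Since $\Delta_0 = B_Y \times \Delta'_\infty$ is convex, each $\Delta_j$ contains an open cone of a fixed aperture with apex at any point of $\overline{\Delta_j}$. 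Disjointness of the $\Delta_j$ then bounds the number of $\overline{\Delta_j}$ through a point by a solid-angle packing constant, and König's lemma applies.

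\textbf{For (2) you take a different route from the paper, and it is valid once made precise.} The paper reads radiality off the section. Flowing $\tilde{\Phi}(x,\infty,0)$ toward $x$ for time $\Ret_{\alpha_{[k]}}$ lands at $\gamma(\alpha_{[k]})\tilde{\Phi}(\hat{T}^k(x,\infty),0)$. So the geodesic ending at $x$ passes through $\gamma(\alpha_{[k]})y_k$ with $y_k$ in a fixed compact set, and $\gamma(\alpha_{[k]})y$ stays at bounded distance from it.

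Your argument is purely conformal-geometric, and the same pole estimate makes your ``near-similarity'' precise. Telescoping Properties~(2) and~(3) gives $\Lip(\log\|(dg_k)_{\bigcdot}\|_{\mathrm{op}}) \leq C_1/(1-\theta)$ on $\Delta_0$. Hence $g_k^{-1}(\infty)$ stays at distance at least $2(1-\theta)/C_1$ from $\Delta_0$. On a bounded-height region over a small neighbourhood of $\overline{\Delta_0}$, the Poincar\'e extension of $g_k$ then scales heights by $\asymp \|dg_k\|$ and horizontal displacements by $\lesssim \|dg_k\|$. Consequently $g_k p_0$ lies at height $\asymp \|dg_k\|$ within horizontal distance $\lesssim \|dg_k\|$ of $x$, i.e.\ in a fixed cone over $x$.

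You should drop the ``shadow'' phrasing: the shadow of $g_k p_0$ that contains $g_k\overline{\Delta_0}$ is seen from $g_k o$, not from $o$.

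Comparing the two routes: the paper's is shorter given the Markov section and its flow identities. Yours is independent of those identities, which for $x \notin \Lambda_+$ require extending $\Ret_\alpha$ and $\tilde{\Phi}$ to closed cylinders. Yours also yields explicit height estimates, and its pole estimate is exactly what repairs (1).
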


\begin{proof}
We simply define the map $\zeta: \Sigma^+ \to \Lambda_\infty(\langle \gen\rangle_\sg)$ in the following fashion: for all $\alpha = (\alpha_1, \alpha_2, \dotsc) \in \Sigma^+$, we assign the point $\zeta(\alpha) \in \Lambda_\infty(\langle \gen\rangle_\sg)$ uniquely determined by
\begin{align*}
\{\zeta(\alpha)\} = \bigcap_{k \in \N} \gamma_{\alpha_1} \dotsb \gamma_{\alpha_k} \cdot \overline{\Delta_0} \subset \Lambda_\infty(\langle \gen\rangle_\sg).
\end{align*}
Indeed, the above intersection produces a singleton by the following standard argument. Denoting the closed cylinders
\begin{align*}
\overline{\mathtt{C}_k} \coloneq \gamma_{\alpha_1} \dotsb \gamma_{\alpha_k} \cdot \overline{\Delta_0} \qquad \text{for all $k \in \N$},
\end{align*}
we have $\overline{\mathtt{C}_k} \supset \overline{\mathtt{C}_l}$ for all $k, l \in \N$ with $k \leq l$, and $\diam(\overline{\mathtt{C}_k}) \to 0$ as $k \to +\infty$ by \cref{lem:CylinderEstimate} and Property~(2) of \cref{prop:Coding}. By compactness, $\bigcap_{k \in \N} \overline{\mathtt{C}_k}$ is a singleton. The properties of $\zeta$ in the proposition easily follow.

Let us now show that any $x \in \Lambda_\infty(\langle \gen\rangle_\sg)$ is a radial limit point of $\langle \gen\rangle_\sg$. Write $x = \zeta(\alpha)$ for some $\alpha \in \Sigma^+$.
By the above characterization of $\zeta(\alpha)$, we deduce that for any $y \in \overline{\H^n}$, we have
\begin{align*}
\gamma_{\alpha_1} \dotsb \gamma_{\alpha_k} \cdot y \to x \qquad \text{as $k \to +\infty$}.
\end{align*}
For any choice $y \in \H^n$, we claim that $\{\gamma_{\alpha_1} \dotsb \gamma_{\alpha_k} \cdot y\}_{k \in \N}$ is a radial sequence. Let us choose $y \coloneq \pi_{\H^n}(\tilde{\Phi}(x, \infty, 0))$. Write $\alpha_{[k]} := (\alpha_1, \dotsc, \alpha_k)$ and apply the geodesic flow for time $\Ret_{\alpha_{[k]}}(x, \infty)$. After taking basepoints, we then have
\begin{align*}
\pi_{\H^n}\Bigl(\mathcal{G}_{\Ret_{\alpha_{[k]}}(x, \infty)}(\tilde{\Phi}(x, \infty, 0))\Bigr) &= \gamma_{\alpha_1} \dotsb \gamma_{\alpha_k} \cdot \pi_{\H^n}(\tilde{\Phi}(\hat{T}^k(x, \infty), 0)) \\
&= \gamma_{\alpha_1} \dotsb \gamma_{\alpha_k} \cdot y_k
\end{align*}
where we write $y_k \coloneq \pi_{\H^n}(\tilde{\Phi}(\hat{T}^k(x, \infty), 0))$, for all $k \in \N$. Of course the hyperbolic distances between the points $y$ and $y_k$ are bounded over all $k \in \N$ since the points are all on the compact set $\tilde{\Phi}(\overline{\Delta_0} \times \{\infty\} \times \{0\})$ on the same unstable horosphere. Observing from the above identity that $\{\gamma_{\alpha_1} \dotsb \gamma_{\alpha_k} \cdot y_k\}_{k \in \N}$ is a sequence on the same geodesic, our claim is proven.
\end{proof}

We need the following lemma. The proof is nearly identical to the one in \cite{Sar22a} using the Markov property of $\mathcal{R}$.

\begin{lemma}
\label{lem:ChooseAFarAway_v_SuchThat0IsInView}
There exist a finite subset $\calA_0 \subset \calA$, and constants $\delta_1 > 0$, $\delta_2 > 0$, and $r \in \mathbb N$, such that the following holds. Let $z \in \calA_0$. For all $u \in \mathsf{R}$, there exists $v \in [\hat{\mathsf{U}}_z, u]$ such that
\begin{enumerate}
\item $v = \tilde{\Phi}(x, \bullet, 0)$ where $x = \zeta(\alpha)$ and $\alpha \in \Sigma^+$ is a $\sigma$-periodic sequence with period $r$ composed only of letters in $\calA_0$ with the first letter being $z$;
\item\label{itm:ChooseAFarAway_v} $d_{\mathrm{su}}(u, \psi_u^{-1}(v)) > \delta_1$;
\item\label{itm:0IsInView} $\mathcal{P}^r(v) \in g_z[\hat{\mathsf{U}}_z, \mathsf{S}_z]$ such that $B_{\delta_2}^{\mathrm{ss}}(\mathcal{P}^r(v)) \cap \mathsf{Z} \subset [\mathcal{P}^r(v), g_z\mathsf{S}_z]$ for some $g_z \in \Gamma$ and if $g \in G$ such that $gv = (e_n, -e_n)$, then we have $\vis^-\bigl(B_{\delta_2}^{\mathrm{ss}}(g\mathcal{P}^r(v))\bigr) \supset \partial_\infty \HS \smallsetminus B_{C_{\mathrm{E}}^{-1}\delta_1e^{-\underline{\Ret}}}^{\mathrm{E}}(0)$.
\end{enumerate}
\end{lemma}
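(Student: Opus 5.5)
The plan is to adapt the argument of \cite{Sar22a}, replacing the finite Markov section there by a suitable finite sub-alphabet $\calA_0 \subset \calA$ chosen once and for all. Two features of the finite setting have to be recovered:
\begin{enumerate*}
\item a uniform lower bound on the "size" of the strong unstable pieces $\hat{\mathsf{U}}_z$ for $z \in \calA_0$, used for \ref{itm:ChooseAFarAway_v};
\item a uniform lower bound on the strong stable sizes of the $\mathsf{S}_z$, together with the geometry of $\Lambda_-$ near $\infty$, used for \ref{itm:0IsInView}.
\end{enumerate*}
Since $\Lambda_-$ does not depend on $j$ and $\overline{\Delta_0}$ is bounded, the stable side is essentially uniform already. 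The only genuinely countable issue is on the unstable side, and there we will simply restrict to finitely many letters.

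\textbf{Choice of $\calA_0$ and of the periodic orbits.} First choose $\calA_0$ finite such that, for every $u \in \mathsf{R}$ and every $z \in \calA_0$, the set $[\hat{\mathsf{U}}_z, u]$ contains two points whose $\psi_u^{-1}$-images are at $d_{\mathrm{su}}$-distance at least $3\delta_1$ from each other. For this it suffices to take a single $z$, or any finite collection, with $\diam(\Lambda_+ \cap \Delta_z) > 0$. The stable holonomy $[\bigcdot, u]$ is uniformly bi-Lipschitz on the compact set $\overline{\Delta_0} \times \overline{\Lambda_-}$, so this diameter lower bound transfers uniformly in $u$.

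Next, inside $\Lambda_+ \cap \Delta_z$ we use periodic points. For each $z \in \calA_0$ we fix two words $\alpha, \alpha'$ of a common length $r$, with letters in $\calA_0$, both starting with $z$, whose periodic points $\zeta(\bar\alpha)$ and $\zeta(\bar\alpha')$ lie in $\Lambda_+$ (by \cref{prop: infty limit set has infty sequence}) and are far apart. This is possible by taking words passing through distinct cylinders of length $2$ and using the uniform contraction. Given $u$, at least one of the two resulting points $v$ satisfies $d_{\mathrm{su}}(u, \psi_u^{-1}(v)) > \delta_1$ by the triangle inequality. We make $r$ common to all $z \in \calA_0$ by padding with powers.

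\textbf{Property \ref{itm:0IsInView}.} We have $\mathcal{P}^r(v) = \gamma(\alpha)\tilde{\Phi}(\hat{T}^r(x,\cdot),0)$, and since $\alpha$ is periodic, $\hat{T}^r x = x \in \Delta_z$. So $\mathcal{P}^r(v) \in g_z[\hat{\mathsf{U}}_z, \mathsf{S}_z]$ with $g_z = \gamma(\alpha)$.

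The inclusion $B_{\delta_2}^{\mathrm{ss}}(\mathcal{P}^r(v)) \cap \mathsf{Z} \subset [\mathcal{P}^r(v), g_z\mathsf{S}_z]$ follows because $\Lambda_-$ is an open neighborhood of $\infty$ in $\Lambda(\Gamma)$ (\cref{lambda-}). Hence, after translating by $g_z^{-1}$, the backward endpoints in $\Lambda(\Gamma)$ near $\infty$ lie in $\Lambda_-$. Since only finitely many $z$ occur, $\delta_2$ can be taken uniform.

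For the visual statement, normalize by $g$ with $gv = (e_n,-e_n)$. Then $g\mathcal{P}^r(v) = (e^{-\Ret_r}e_n, -e_n)$, i.e.\ the point flowed by $\Ret_r(x) \ge r\underline{\Ret}$. A strong stable ball of fixed radius $\delta_2$ at height $e^{-\Ret_r}$ has visual image, through $\vis^-$, equal to the complement of a Euclidean ball around $0$ of radius $\asymp \delta_2^{-1}e^{-\Ret_r}$. Because $\Ret_r$ is bounded above on our finitely many words, we then choose $\delta_2$ (or enlarge $r$) so that this radius is below $C_{\mathrm{E}}^{-1}\delta_1 e^{-\underline{\Ret}}$.

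\textbf{Main obstacle.} The main obstacle is the order of quantifiers in \ref{itm:0IsInView}. The radius on the right-hand side depends on $\delta_1$, while $\delta_2$ must also make the inclusion into $[\mathcal{P}^r(v), g_z\mathsf{S}_z]$ hold. These pull $\delta_2$ in opposite directions: it must be large enough to see everything except a small ball around $0$, and small enough for the product structure to hold. I would resolve this by increasing $r$, that is, using higher powers of the periodic words. This shrinks the excluded ball exponentially while keeping $\delta_2$ fixed by \cref{lambda-}.
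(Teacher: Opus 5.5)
Your overall architecture is sound: a finite sub-alphabet $\calA_0$, periodic words starting with $z$, one of two well-separated periodic points to get (2), and powers of a periodic word to enlarge $r$ without moving $x$. The visual half of (3) also works as you describe, though it uses the lower bound $\Ret_r \geq r\underline{\Ret}$, not an upper bound on $\Ret_r$.

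The gap is the inclusion $B_{\delta_2}^{\mathrm{ss}}(\mathcal{P}^r(v)) \cap \mathsf{Z} \subset [\mathcal{P}^r(v), g_z\mathsf{S}_z]$, which must hold with a single $\delta_2$ for \emph{every} $u \in \mathsf{R}$. After translating by $g_z^{-1}$, the centre is $\tilde{\Phi}(x, g_z^{-1}y, 0)$, where $y \in \overline{\Lambda_-}$ is the stable coordinate of $u$. So you need one $\epsilon > 0$ with $B(g_z^{-1}y, \epsilon) \cap \Lambda(\Gamma) \subset \overline{\Lambda_-}$ for all $y \in \overline{\Lambda_-}$.
\begin{itemize}
\item The point $g_z^{-1}y$ is in general not near $\infty$; for large $r$ it is near the repelling fixed point of $g_z$.
\item Openness of $\Lambda_-$ in $\Lambda(\Gamma)$ only gives a radius depending on the point, and finiteness of $\calA_0$ says nothing about uniformity in $u$.
\item What you actually need is that $g_z^{-1}(\overline{\Lambda_-})$ lies in a fixed compact subset of the relative interior of $\overline{\Lambda_-}$ in $\Lambda(\Gamma)$. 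This does not follow from \cref{lambda-} as stated: the map $t \mapsto (t+1)/2$ on $[0,1) \subset [0,2]$ has the analogous invariance and contraction, yet its fixed point $1$ is outside $[0,1)$.
\item Taking powers alone does not fix this. The sets $g_z^{-k}(\overline{\Lambda_-})$ shrink to the repelling fixed point of $\gamma(\alpha)$, which a priori is only known to lie in $\overline{\Lambda_-}$.
\end{itemize}

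This is where the Markov property of $\mathcal{R}$, which the paper invokes when deferring to \cite{Sar22a}, has to enter. Here is one way to close the gap.
\begin{itemize}
\item Since $\hat{\nu} = \hat{T}^m_*\hat{\nu}$ and $\hat{T}^m$ maps $\Lambda_+ \times \Lambda_-$ into $\Lambda_+ \times \bigcup_{\beta} \gamma(\beta)^{-1}\Lambda_-$ (union over words of length $m$), there is $y_* \in \Lambda_-$ lying in some $\gamma(\beta_m)^{-1}\Lambda_-$ for every $m$.
\item Fix $\epsilon$ with $B(y_*, 2\epsilon) \cap \Lambda(\Gamma) \subset \Lambda_-$, and $m$ with $\theta^m \diam(\Lambda_-) < \epsilon$. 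Then $\gamma(\beta_m)^{-1}(\overline{\Lambda_-}) \subset B(y_*, \epsilon)$ by \cref{lambda-}.
\item Put the letters of $\beta_m$ into $\calA_0$ and make every periodic word end with $\beta_m$. Then $g_z^{-1}(\overline{\Lambda_-}) \subset B(y_*, \epsilon)$ for the word and all its powers.
\item A uniform $\delta_2$ follows by comparing $d_{\mathrm{ss}}$ with the spherical metric on backward endpoints, for the finitely many $x$. Your powers argument then gives the visual condition with $\delta_2$ held fixed.
\end{itemize}

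Separately, you need the periodic points $\zeta(\alpha)$ to lie in $\Lambda_+$, so that $v \in [\hat{\mathsf{U}}_z, u]$. \Cref{prop: infty limit set has infty sequence} does not give this. Choosing words whose closed cylinder lies inside the open set $\Delta_0$, adding the finitely many extra letters to $\calA_0$, does.
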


The following proposition formulates in a precise manor the relationship between the limit sets of $\langle \gen\rangle_\sg$ and $\Omega$. Unlike in \cite{Sar22a,Sar22b}, the situation in the geometrically finite case is more complicated---we do not simply prove that they are equal and rather prove containment between certain types of limits sets.

\begin{proposition}
\label{pro:LimitSetH(yy)RadialEqualsLimitSetH(yy)EqualsLimitSetGamma}
We have $\Lambda_\infty(\langle \gen\rangle_\sg) \subset \Lambda_{\mathrm{r}}(\Omega)$.
\end{proposition}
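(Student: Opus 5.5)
The plan is to show that every $x \in \Lambda_\infty(\langle \gen\rangle_\sg)$ is a radial limit point of $\Omega$ by producing elements of $\Omega$ whose orbit points converge to $x$ inside a cone. By $\Omega$-equivariance of radial limit sets and a normalization, it suffices to build such a sequence converging to $0$ after translating $x$ to $0$. Write $x = \zeta(\alpha)$ using \cref{prop: infty limit set has infty sequence}.

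For each $k$, we use the prefix $\alpha_{[k]}$ and the point $u_k = \tilde{\Phi}(x,\infty,0)$. Its forward trajectory visits $\gamma(\alpha_{[k]})\mathsf{R}$ at time $\Ret_{\alpha_{[k]}}(x,\infty)$. We then apply \cref{lem:ChooseAFarAway_v_SuchThat0IsInView} to the point $\mathcal{P}^k(u_k)$ (pulled back by $\gamma(\alpha_{[k]})^{-1}$) with a fixed letter $z \in \calA_0$. This yields $v$ on the same weak unstable piece, a strong unstable distance more than $\delta_1$ away, with the periodic $\calA_0$-word of period $r$ starting at $z$. Pushing forward by $\gamma(\alpha_{[k]})$ gives $v_k$.

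The key mechanism comes from \cref{lem:ProducingElementsofH}. The two trajectories, $u_k$ followed for $k + r$ returns and $v_k$ followed for $k + r$ returns, both land in $g_z\hat{\mathsf{R}}_z$. By item (3) of \cref{lem:ChooseAFarAway_v_SuchThat0IsInView}, we can take $w_k = [\mathcal{P}^{k+r}(u_k), \mathcal{P}^{k+r}(v_k)]$, which lies in the $\delta_2$-stable ball intersected with $\mathsf{Z}$. Flowing $w_k$ backward along the $u$-itinerary versus the $v$-itinerary should produce $h_k \in S \subset \Omega$, namely $h_k = \gamma(\beta)\gamma(\tilde\beta)^{-1}$ for two words of equal length $k+r$ ending at the same cylinder. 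Here $\beta$ is the coding word of $u_k$ and $\tilde{\beta}$ is $\alpha_{[k]}$ concatenated with the periodic $\calA_0$-block.

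Next we have to show that $h_k o \to x$ radially. The point $h_k$ applied to a basepoint on $v$'s side of the geodesic lands at bounded distance from the point of the geodesic toward $x$ at time roughly $\Ret_{\alpha_{[k]}}(x,\infty)$. This uses that the two trajectories stay within bounded distance until the split, which occurs at the $k$-th return. After the split, the $v$-segment has bounded length, since the word uses only the finitely many letters of $\calA_0$ for $r$ steps and hence has bounded return time. The $\delta_1$-separation together with item (3) guarantees that $0$ (the image of $x$ after normalizing $v$ to $(e_n,-e_n)$) lies in the visual range. This keeps $h_k o$ inside a uniform cone around the geodesic ray to $x$, as in the proof of \cref{prop: infty limit set has infty sequence}. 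Since $\Ret_{\alpha_{[k]}} \ge k\underline{\Ret} \to \infty$, we get $h_k o \to x$ with bounded distance to the ray, so $x \in \Lambda_{\mathrm{r}}(\Omega)$.

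The main obstacle is the bookkeeping in the previous step. One has to verify that the element produced by \cref{lem:ProducingElementsofH} moves the basepoint to within a uniformly bounded distance of the geodesic ray. The bound must not depend on $k$ or on the possibly unbounded return times of the letters $\alpha_i$. The uniformity comes precisely from restricting the auxiliary branch to the finite alphabet $\calA_0$ and from the backward asymptotics along stable leaves. I would first try to control this via the stable-leaf comparison at the landing time and the bounded geometry of $\tilde{\Phi}(\overline{\Delta_0}\times\overline{\Lambda_-}\times\{0\})$.
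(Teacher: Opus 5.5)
There is a genuine gap, and it sits exactly in the step you call ``bookkeeping'': you construct the wrong element. Your $h_k=\gamma(\alpha_{[k+r]})\gamma(\alpha_{[k]}b)^{-1}$, with $b$ the periodic $\calA_0$-block, lies in $S$ trivially by the definition of $S$. So membership needs no geometry; the whole content of the proposition is where $h_k o$ lies for a \emph{fixed} basepoint $o$. Your heuristic only says that $h_k$ carries a $k$-dependent point $p_k$ close to the ray $[o,x)$. Here $p_k$ is the $(k+r)$-th return point of the $v$-trajectory, at distance $\approx\Ret_{\alpha_{[k]}}$ from $o$. Since $d(o,p_k)\to\infty$, this gives no control of $h_ko$.

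Concretely, with $P_k=\gamma(\alpha_{[k]})$ you have $h_k=P_kg_kP_k^{-1}$, where $g_k=\gamma(\alpha_{k+1},\dotsc,\alpha_{k+r})\gamma(b)^{-1}$. The point $P_k^{-1}o$ lies at distance $\approx\Ret_{\alpha_{[k]}}$ along the geodesic from the bounded set $\tilde\Phi(\overline{\Delta_0}\times\overline{\Lambda_-}\times\{0\})$ toward $P_k^{-1}\infty\in\Lambda_-$. For $h_ko$ to be near $[o,x)$, you need $g_kP_k^{-1}o$ near $P_k^{-1}[o,x)$. That requires $g_k$ either to nearly fix this far point, which gives no progress toward $x$, or to carry it near the part of the ray heading into $T^kx$. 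Nothing in your construction relates $g_k$ to $x$ in either way. Generically $h_ko$ sits at hyperbolic distance comparable to $\Ret_{\alpha_{[k]}}$ from the ray, so it approaches $x$ (if at all) tangentially, not radially.

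There is also a problem with $w_k$. The vector $\mathcal{P}^{k+r}(u_k)$ lies in $\gamma(\alpha_{[k+r]})\mathsf{R}$, not in $g_z\hat{\mathsf{R}}_z$, so the bracket defining $w_k$ is undefined. Even read as a local product it is vacuous, because $u_k$ and $v_k$ both have backward endpoint $\infty$.

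The missing idea is to use item~(3) of \cref{lem:ChooseAFarAway_v_SuchThat0IsInView} to prescribe the \emph{backward endpoint} $x$. In the paper, $u_q=\mathcal{P}^{-q}(u_q')$ plays the role of your $\mathcal{P}^{-k}(v_k)$. The horizontal offset of $\mathcal{P}^{q+r}(u_q)$ is $\asymp e^{-\Ret_{\alpha_{[q]}}(u)}$, with the lower bound coming from the $\delta_1$-separation. Its height is bounded below using that the block uses only letters of $\calA_0$. Together these put $\mathcal{P}^{q+r}(u_q)$ in a fixed cone $\mathcal{C}_0$ at $x=0$, and they place $x$ in $\vis^-\bigl(B^{\mathrm{ss}}_{\delta_2}(\mathcal{P}^{q+r}(u_q))\bigr)$.

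Hence there is $v_q'\in B^{\mathrm{ss}}_{\delta_2}(\mathcal{P}^{q+r}(u_q))\cap\mathsf{Z}\subset[\mathcal{P}^{q+r}(u_q),g_z\mathsf{S}_z]$ with $(v_q')^-=x$. Its backward ray stays in a bounded neighborhood of $\mathcal{C}_0$. One perturbs $v_q'$ to $v_q\in g_z\hat{\mathsf{R}}_z$ whose backward itinerary tracks this ray, and \emph{defines} $h_q$ by $\mathcal{P}^{-(q+r)}(v_q)\in h_q\hat{\mathsf{R}}_y$. Then \cref{lem:ProducingElementsofH} gives $h_q\in S$; the second word is the backward itinerary of $v_q$, not the forward coding of $x$.

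Finally, $h_q\pi_{\HS}(u)$ is within bounded distance of $\mathcal{P}^{-(q+r)}(v_q)$. It therefore lies in a fixed cone at $x$ and converges to $x$. These orbit points lie deeper than depth $\Ret_{\alpha_{[q]}}$, on the backward ray of $v_q$, not at time $\approx\Ret_{\alpha_{[k]}}$ on the ray from $o$. You do note that $x$ lies in the visual range, but you never use it to select this vector, and that selection is what makes the argument work.
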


\begin{proof}
Let $u \in \mathsf{U}_y$ for some $y \in \calA$ be a tangent vector with an arbitrary forward limit point $u^+ \in \Lambda_\infty(\langle \gen\rangle_\sg)$. There exists $g \in G$ such that $gu = (e_n, -e_n)$ and $gu^+ = 0 \in \partial_\infty \HS$. Without loss of generality, we can assume $g = e \in G$. We will now construct a sequence of $\Omega$-orbit points of $\pi_{\HS}(u) = e_n \in \HS$ inside some cone whose limit is $u^+ = 0 \in \partial_\infty \HS$.
We will use the following key property: since $u^+ \in \Lambda_\infty(\langle \gen\rangle_\sg)$, by \cref{prop: infty limit set has infty sequence}, there exists an associated sequence $\alpha \in \Sigma^+$ such that $\zeta(\alpha) = u^+ = 0$.

Fix $\calA_0$, $\delta_1$, $\delta_2$, and $r \in \mathbb N$ from \cref{lem:ChooseAFarAway_v_SuchThat0IsInView}. Fix the constant
\begin{align*}
\overline{\Ret} = \sup\{\Ret(x): x \in \gamma_j\Delta_0, j \in \calA_0\} \in (0, +\infty);
\end{align*}
it is well-defined since $\calA_0$ is a finite set. Let $q \in \mathbb N$ and $z \in \calA_0$. Let $\mathtt{C}_q \subset \mathsf{U}_y$ be a cylinder and $\alpha_{[q]}$ be a corresponding sequence of length $q$ such that $u \in \overline{\mathtt{C}_q}$ and $\overline{\mathcal{P}^q(\mathtt{C}_q)} = \gamma[\mathsf{U}, s]$ for some $\gamma \in \Gamma$ and $s \in \mathsf{S}$. Note that $ua_{\Ret_{\alpha_{[q]}}(u)} \in \overline{\mathcal{P}^q(\mathtt{C}_q)}$, corresponding to which we can fix $u_q' \in \mathcal{P}^q(\mathtt{C}_q) \cap \gamma[\hat{\mathsf{U}}_z, \mathsf{S}_z]$ provided by \cref{lem:ChooseAFarAway_v_SuchThat0IsInView}. Let $u_q = \mathcal{P}^{-q}(u_q') \in \hat{\mathsf{U}}_y$. By \cref{lem:ChooseAFarAway_v_SuchThat0IsInView}, we have $\mathcal{P}^{q + r}(u_q) \in g_z[\hat{\mathsf{U}}_z, \mathsf{S}_z]$ for some $g_z \in \Gamma$. Fix the constant $\hat{\delta} := \sup\{\diam_{d_{\mathrm{su}}}([\mathsf{U}, s]): s \in \mathsf{S}\}$. Since $\|\pi_{\R^{n - 1}}(\pi_{\HS}(\mathcal{P}^{q + r}(u_q)))\| = \|\pi_{\R^{n - 1}}(\pi_{\HS}(u_q'))\|$, so $\pi_{\R^{n - 1}}(\pi_{\HS}(\mathcal{P}^{q + r}(u_q)))$ satisfies the bound
\begin{align}
\label{eqn:u_q^+_HorizontalCoordinateBounds}
\frac{1}{C_{\mathrm{E}}} \delta_1 e^{-\Ret_{\alpha_{[q]}}(u)} \leq \|\pi_{\R^{n - 1}}(\pi_{\HS}(\mathcal{P}^{q + r}(u_q)))\| \leq C_{\mathrm{E}}\hat{\delta} e^{-\Ret_{\alpha_{[q]}}(u)}
\end{align}
using $d_{\mathrm{su}}\bigl(ua_{\Ret_{\alpha_{[q]}}(u)}, \psi_{ua_{\Ret_{\alpha_{[q]}}(u)}}^{-1}(u_q')\bigr) > \delta_1$ from property~(2) of \cref{lem:ChooseAFarAway_v_SuchThat0IsInView}. Moreover, by property~(1) of \cref{lem:ChooseAFarAway_v_SuchThat0IsInView}, it lies at height
\begin{align*}
\langle \pi_{\HS}(\mathcal{P}^{q + r}(u_q)), e_n \rangle = e^{-\Ret_{q + r}(u_q)} = e^{-(\Ret_{\alpha_{[q]}}(u_q) + \Ret_r(u_q'))} \geq e^{-(\Ret_{\alpha_{[q]}}(u) + \underline{\Ret} + r\overline{\Ret})}.
\end{align*}
We then have the calculation
\begin{align*}
\frac{\langle \pi_{\HS}(\mathcal{P}^{q + r}(u_q)), e_n \rangle}{\|\pi_{\R^{n - 1}}(\pi_{\HS}(\mathcal{P}^{q + r}(u_q)))\|} \geq \frac{e^{-(\Ret_\alpha(u) + \underline{\Ret} + r\overline{\Ret})}}{C_{\mathrm{E}}\hat{\delta} e^{-\Ret_\alpha(u)}} = \frac{e^{-(\underline{\Ret} + r\overline{\Ret})}}{C_{\mathrm{E}}\hat{\delta}}
\end{align*}
which is a constant. Thus, $\mathcal{P}^{q + r}(u_q) \in \mathcal{C}_0$ where we define the cone
\begin{align*}
\mathcal{C}_0 = \{(\tilde{w}, w_n) \in \R^{n - 1} \times \R: w_n > 2C_{\mathrm{E}}^{-1}\hat{\delta}^{-1}e^{-(\underline{\Ret} + r\overline{\Ret})} \|\tilde{w}\|\}.
\end{align*}
Let $\iota \in G$ be a translation by $-\pi_{\R^{n - 1}}(\pi_{\HS}(u_q')) = -u_q^+$ followed by a dilation by a factor of $\langle \pi_{\HS}(u_q'), e_n \rangle^{-1} = e^{\Ret_{\alpha_{[q]}}(u_q)}$. Then, $\iota u_q' = (e_n, -e_n)$ and hence by property~(3) of \cref{lem:ChooseAFarAway_v_SuchThat0IsInView}, we have $B_{\delta_2}^{\mathrm{ss}}(\mathcal{P}^{q + r}(u_q)) \cap \mathsf{Z} \subset [\mathcal{P}^{q + r}(u_q), g_z\mathsf{S}_z]$ and $\vis^-\bigl(B_{\delta_2}^{\mathrm{ss}}(\iota \mathcal{P}^{q + r}(u_q))\bigr) \supset \partial_\infty \HS \smallsetminus B_{C_{\mathrm{E}}^{-1}\delta_1e^{-\underline{\Ret}}}^{\mathrm{E}}(0)$. Applying $\iota^{-1}$, we get
\begin{multline*}
\vis^-\bigl(B_{\delta_2}^{\mathrm{ss}}(\mathcal{P}^{q + r}(u_q))\bigr) \\
\supset \partial_\infty \HS \smallsetminus B_{C_{\mathrm{E}}^{-1}\delta_1 e^{-\Ret_{\alpha_{[q]}}(u_q) - \underline{\Ret}}}^{\mathrm{E}}(u_q^+) \\
\supset \partial_\infty \HS \smallsetminus B_{C_{\mathrm{E}}^{-1}\delta_1 e^{-\Ret_{\alpha_{[q]}}(u)}}^{\mathrm{E}}(u_q^+).
\end{multline*}
Recalling \cref{eqn:u_q^+_HorizontalCoordinateBounds}, we have $0 \in \vis^-\bigl(B_{\delta_2}^{\mathrm{ss}}(\mathcal{P}^{q + r}(u_q))\bigr)$. Also recalling $0, u_q^+ \in \Lambda(\Gamma)$, there exists $v_q' \in B_{\delta_2}^{\mathrm{ss}}(\mathcal{P}^{q + r}(u_q)) \cap \mathsf{Z} \subset [\mathcal{P}^{q + r}(u_q), g_z\mathsf{S}_z]$ with $(v_q')^+ = u_q^+$ and $(v_q')^- = 0$. Then, $\pi_{\HS}(v_q') \in B_{\hat{\delta}}(\mathcal{C}_0)$. It follows that $\gamma \subset B_{\hat{\delta}}(\mathcal{C}_0)$ for the geodesic ray $\gamma = \{\pi_{\HS}(v_q' a_{-t}) \in \HS: t \geq \underline{\Ret}\}$ because $(v_q')^- = 0$. Thus, there exists $v_q'' \in [\mathcal{P}^{q + r}(u_q), g_z\hat{\mathsf{S}}_z]$ such that $\pi_{\HS}(w_q) \in B_{\hat{\delta}}(\gamma) \subset B_{2\hat{\delta}}(\mathcal{C}_0)$ where $w_q = \mathcal{P}^{-(q + r - 1)}(v_q'') \in \gamma'\mathsf{R}$ for some $\gamma' \in \Gamma$. Then, there exists $w_q' \in \gamma'\hat{\mathsf{R}}_{z'}$ for some $z' \in \calA_0$ such that $\mathcal{P}^{- 1}(w_q') \in h_q\hat{\mathsf{R}}_y$ for some $h_q \in \Gamma$. Let $v_q = \mathcal{P}^{q + r - 1}([w_q, w_q']) \in [\mathcal{P}^{q + r}(u_q), g_z\hat{\mathsf{S}}_z]$. Then, $\mathcal{P}^{-(q + r - 1)}(v_q) \in \gamma'\hat{\mathsf{R}}_{z'}$ and $\mathcal{P}^{-(q + r)}(v_q) \in h_q\hat{\mathsf{R}}_y$. The first fact we can conclude is $h_q \in S \subset \Omega$ by \cref{lem:ProducingElementsofH}. The second fact we can conclude is $h_q\pi_{\HS}(u) \in B_{2\hat{\delta}}(\mathcal{P}^{-(q + r)}(v_q))$ and hence we have $h_q\pi_{\HS}(u) \in B_{\overline{\Ret} + 4\hat{\delta}}(B_{\hat{\delta}}(\gamma)) \subset B_{\overline{\Ret} + 4\hat{\delta}}(B_{2\hat{\delta}}(\mathcal{C}_0))$.

Define the cone $\mathcal{C} = B_{\overline{\Ret} + 4\hat{\delta}}(B_{2\hat{\delta}}(\mathcal{C}_0))$. Then, $\{h_q\}_{q \in \mathbb N} \subset \Omega$ is a sequence such that $\{h_q\pi_{\HS}(u)\}_{q \in \mathbb N} \subset \mathcal{C}$. In fact, since $h_q\pi_{\HS}(u) \in B_{\overline{\Ret} + 4\hat{\delta}}(B_{\hat{\delta}}(\gamma))$ for all $q \in \mathbb N$ and $\lim_{q \to \infty} u_q^+ = 0$, we also have $\lim_{q \to \infty} h_q\pi_{\HS}(u) = 0$. Hence, $u^+ = 0 \in \Lambda_{\mathrm{r}}(\Omega)$.
\end{proof}

\begin{figure}
\centering
\includegraphics{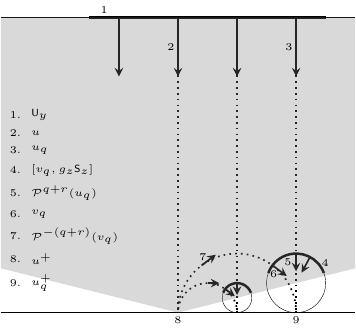}
\caption{This illustrates the idea of the proof of \cref{pro:LimitSetH(yy)RadialEqualsLimitSetH(yy)EqualsLimitSetGamma}. Note that the actual positions of $v_q$ and $\mathcal{P}^{-(q + r)}(v_q)$ are perturbations of what is shown.}
\label{fig:OrbitVectorCloseToOrigin}
\end{figure}

For the proof of \cref{thm:Z-DenseInSimplyConnectedCoverGAndTraceFieldK}, we require some more tools. First we recall a fact due to Susskind--Swarup \cite{SS92}.

\begin{lemma}[{\cite[Corollary 2]{SS92}}]
\label{lem:SS92}
Let $\Gamma' < G$ be a discrete subgroup and $\gamma \in G$ be a hyperbolic element, one of whose fixed points is a radial limit point of $\Gamma'$. If $\langle \Gamma', \gamma \rangle < G$ is a discrete subgroup, then $\gamma^n \in \Gamma'$ for some $n \in \N$.
\end{lemma}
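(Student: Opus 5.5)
The plan is a pigeonhole argument on elements that move a fixed basepoint a bounded distance, which discreteness of $\langle \Gamma', \gamma \rangle$ forces into a finite set. Let $\xi$ be the fixed point of $\gamma$ that is a radial limit point of $\Gamma'$. Replacing $\gamma$ by $\gamma^{-1}$ if necessary, I assume $\xi$ is the attracting fixed point of $\gamma$; this is harmless, since $\gamma^{-n} \in \Gamma'$ if and only if $\gamma^n \in \Gamma'$. Let $c : \R \to \HS$ be the axis of $\gamma$, parametrized by arclength with $\lim_{t \to \infty} c(t) = \xi$. Let $\ell > 0$ be the translation length, so that $\gamma c(t) = c(t + \ell)$. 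Fix the basepoint $o' \coloneq c(0)$. The definition of radial limit point (\cref{def:RadialLimitSet}) is independent of the basepoint and of the choice of geodesic ray ending at $\xi$, up to enlarging the constant $r$, because any two geodesic rays ending at $\xi$ are eventually at bounded distance. Hence there exist $r > 0$, a sequence $\{g_k\}_{k \in \N} \subset \Gamma'$, and times $t_k \to +\infty$ such that $d(g_k o', c(t_k)) < r$.

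Next I match each $g_k$ with a power of $\gamma$. For each $k$, choose $m_k \in \Z$ with $|t_k - m_k \ell| \leq \ell$. Then $d(\gamma^{m_k} o', c(t_k)) = d(c(m_k\ell), c(t_k)) \leq \ell$, and $m_k \to +\infty$ because $t_k \to +\infty$. Set $h_k \coloneq g_k^{-1}\gamma^{m_k} \in \langle \Gamma', \gamma \rangle$. By the triangle inequality,
\begin{align*}
d(o', h_k o') = d(g_k o', \gamma^{m_k} o') < r + \ell \qquad \text{for all $k \in \N$}.
\end{align*}

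Since $\langle \Gamma', \gamma\rangle$ is discrete, it acts properly discontinuously on $\HS$. So only finitely many of its elements move $o'$ by less than $r + \ell$, and $\{h_k\}_{k \in \N}$ is a finite set. Because $m_k \to +\infty$, I can choose indices $k \neq l$ with $h_k = h_l$ and $m_k \neq m_l$. Then $g_k^{-1}\gamma^{m_k} = g_l^{-1}\gamma^{m_l}$, which gives
\begin{align*}
\gamma^{m_k - m_l} = g_k g_l^{-1} \in \Gamma'.
\end{align*}
Here $m_k - m_l \neq 0$, and inverting if needed gives $\gamma^n \in \Gamma'$ with $n = |m_k - m_l| \in \N$.

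I expect no serious obstacle. The only points needing care are the reduction to a radial sequence along the axis of $\gamma$ itself, which is the basepoint-independence remark above, and making sure the exponents $m_k$ are not eventually constant, which follows from $t_k \to \infty$. Discreteness of the enlarged group $\langle \Gamma', \gamma\rangle$, not just of $\Gamma'$, is exactly what makes the pigeonhole step work.
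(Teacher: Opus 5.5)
Your proof is correct. The paper gives no proof of this lemma. It quotes it from \cite[Corollary 2]{SS92} and uses it as a black box in the proof of \cref{thm:Z-DenseInSimplyConnectedCoverGAndTraceFieldK}, with $\Gamma' = \Omega$ and $\gamma \in \langle \gen \rangle_\sg$, to obtain the weak commensurability needed for the trace field. So the only difference is that you supply a self-contained argument where the paper relies on the literature.

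Your argument makes clear that it needs just two inputs:
\begin{enumerate}
\item The $\langle \gamma \rangle$-orbit of a point on the axis is $\ell$-dense along the axis. Hence every radial approximant $g_k o'$ can be paired with some $\gamma^{m_k} o'$ at bounded distance.
\item The enlarged group $\langle \Gamma', \gamma \rangle$ acts properly discontinuously. This forces $\{g_k^{-1}\gamma^{m_k}\}$ to be finite.
\end{enumerate}
Nothing about $\Gamma'$ beyond discreteness of $\langle \Gamma', \gamma\rangle$ is used. The pairing step is where the cyclic structure of the second group is essential.

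Two small points you might make explicit. First, $t_k \to +\infty$ follows from the requirement $g_k o \to \xi$ in \cref{def:RadialLimitSet}: staying within bounded distance of the ray rules out $t_k$ bounded or $t_k \to -\infty$. Second, the reduction to the attracting fixed point is optional, since for a repelling $\xi$ you could simply take $m_k \to -\infty$.
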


We also require a machinery of Prasad--Rapinchuk \cite{PR09}. However, we are forced to work with Zariski dense subsemigroups of $G$ rather than Zariski dense subgroups of $G$. Fortunately, with appropriate generalizations of the required definition and theorem, the proof goes through nearly verbatim. First we generalize \cite[Definitions 1.1]{PR09} to subsemigroups. We emphasize that in our generalization of the second part of the definition, it is no longer symmetric and has directionality.

\begin{definition}[Weakly commensurable]
Let $\mathbf G_1$ and $\mathbf G_2$ be semisimple linear algebraic groups defined over a field $\mathbb F$.
\begin{enumerate}
\item We say that $g_1 \in \mathbf G_1(\mathbb F)$ and $g_2 \in \mathbf G_2(\mathbb F)$ are \emph{weakly commensurable} if there exist:
\begin{itemize}
\item maximal $\mathbb F$-tori $\mathbf T_1 < \mathbf G_1$ and $\mathbf T_2 < \mathbf G_2$;
\item $\overline{\mathbb F}$-characters $\chi_1: \mathbf T_1 \to \mathbf{G}_{\mathrm{m}}$ and $\chi_2: \mathbf T_2 \to \mathbf{G}_{\mathrm{m}}$;
\end{itemize}
such that
\begin{align*}
g_1 &\in \mathbf T_1(\mathbb F), & g_2 &\in \mathbf T_2(\mathbb F), & \chi_1(g_1) &= \chi_2(g_2) \neq 1.
\end{align*}
\item We say that a Zariski dense subsemigroup $\Gamma_1 < \mathbf G_1(\mathbb F)$ is \emph{weakly commensurable} to another Zariski dense subsemigroup $\Gamma_2 < \mathbf G_2(\mathbb F)$ if for all semisimple elements $\gamma_1 \in \Gamma_1$ of infinite order, there exists a semisimple element $\gamma_2 \in \Gamma_2$ of infinite order such that $\gamma_1$ and $\gamma_2$ are weakly commensurable.
\end{enumerate}
\end{definition}

The following theorem is the generalization of \cite[Theorem 2]{PR09} to subsemigroups with a slight modification for the finite generation hypothesis.

\begin{theorem}
\label{thm:PR}
Let $\mathbf{G}_1$ and $\mathbf{G}_2$ be connected absolutely simple linear algebraic groups defined over a field $\mathbb F$ of characteristic zero. For a finitely generated subring $R \subset \mathbb F$, let $\Gamma_1 < \mathbf{G}_1(R)$ and $\Gamma_2 < \mathbf{G}_2(R)$ be Zariski dense subsemigroups. If $\Gamma_1$ is weakly commensurable to $\Gamma_2$, then we have the containment of their trace fields: $\K_{\Gamma_1} \subset \K_{\Gamma_2}$.
\end{theorem}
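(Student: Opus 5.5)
The plan is to follow the proof of \cite[Theorem 2]{PR09} almost verbatim, checking at each step that only semigroup operations (products and powers of elements of $\Gamma_1$, $\Gamma_2$) are used and never inverses.

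\textbf{Reduction to $\K_{\Gamma_2}$.} First observe that Zariski density of a subsemigroup implies Zariski density of the subgroup it generates; in characteristic zero the Zariski closure of a semigroup is already a group. Hence standard facts about trace fields apply. By Vinberg's theorem, $\Gamma_i$ can be realized, after conjugation, inside $\mathbf{G}_i^{\mathrm{ad}}$ defined over $\K_{\Gamma_i}$, with $\Ad(\Gamma_i) \subset \mathbf{G}_i^{\mathrm{ad}}(\K_{\Gamma_i})$. It also uses only the linear span of $\Ad(\Gamma_i)$, which is the same for the semigroup and the group.

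Write $K_i = \K_{\Gamma_i}$. The aim is to show every $\sigma \in \mathrm{Aut}(\overline{\mathbb F}/K_2)$ fixes $K_1$. Arguing by contradiction, suppose some $\sigma$ fixing $K_2$ moves $K_1$. Following \cite{PR09}, I would use the finitely generated ring $R$ to choose a suitable embedding into a local field, or a specialization. This produces a regular semisimple element $\gamma_1 \in \Gamma_1$ with two properties. First, it is generic in the sense of Prasad--Rapinchuk: its torus $\mathbf T_1$ has Galois group acting on the root system by the full Weyl group. Second, some eigenvalue $\lambda = \chi_1(\gamma_1)$ generates a field with $\sigma(\lambda) \neq \lambda$ in a controlled way.

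\textbf{Existence of generic elements.} The key input is the existence of generic elements in Zariski dense subgroups. Prasad--Rapinchuk prove it via a Hilbert-irreducibility-type argument, and it works for Zariski dense sub\emph{semigroups}. The set of non-generic elements is contained in a thin set, and the argument (e.g.\ the version for semigroups by Prasad--Rapinchuk or via Jouve--Kowalski--Zywina random walks) only needs the Zariski closure and a random product of generators. So I would cite or adapt that result, noting that one may restrict to a finitely generated Zariski dense subsemigroup inside $\mathbf{G}_1(R)$.

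\textbf{Contradiction and main obstacle.} By weak commensurability, there is $\gamma_2 \in \Gamma_2$ with $\chi_1(\gamma_1) = \chi_2(\gamma_2) \neq 1$. Since $\Ad\gamma_2$ is defined over $K_2$, the multiplicative group generated by the Galois conjugates of the eigenvalues of $\gamma_2$ is stable under $\mathrm{Gal}(\overline{\mathbb F}/K_2)$. Genericity of $\gamma_1$, combined with absolute simplicity (irreducibility of the Weyl group action on the character lattice, as in \cite[Theorem 4]{PR09}), forces $\mathbf T_1$ to be defined over the compositum. From this one deduces that $\sigma$ must fix $K_1$, which is the contradiction.

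I expect the main obstacle to be the existence of generic elements in the semigroup setting, since the original proof uses the group structure of $\Gamma_1$ in a strong approximation/Hilbert irreducibility step. My plan there is to pass to finitely many generators, which is why the finite generation hypothesis is modified, and to use that the Zariski closure of the semigroup equals that of the group. The directionality of the semigroup definition appears only because we start from $\gamma_1 \in \Gamma_1$, which is exactly what the proof needs.
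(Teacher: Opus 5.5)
Your overall plan is the paper's plan: rerun \cite[\S 5]{PR09} for semigroups. However, your test for the check (``only semigroup operations, never inverses'') and your account of how $\gamma_1$ is produced both miss the one step that genuinely needs a semigroup argument.

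In the paper, $\gamma_1$ does not come from Hilbert irreducibility or from random products of generators. The argument runs as follows:
\begin{enumerate}
\item the field-theoretic \cite[Proposition 5.1]{PR09} supplies embeddings into $p$-adic fields that send $R$ into $p$-adic integers;
\item \cite[Lemma 5.3]{PR09} controls the Zariski closure of the image $\rho(\Gamma_1)$;
\item \cite[Proposition 5.2]{PR09} controls its $p$-adic closure;
\item \cite[Lemma 2]{PR03} then gives $\gamma_1 \in \Gamma_1$ with $\rho(\gamma_1) \in \mathrm U \times \mathrm U^{(1)} \times \mathrm U^{(2)}$.
\end{enumerate}
The Zariski step is covered by the fact you quote, and \cite[Lemma 2]{PR03} is already stated for semigroups. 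What you never address is that Proposition 5.2 and the final selection step use that the $p$-adic closure of $\rho(\Gamma_1)$ is a group. For a semigroup this is not automatic, and it is where the hypothesis $\Gamma_1 \subset \mathbf{G}_1(R)$ really does its work. Since $R$ is sent into $p$-adic integers, $\rho(\Gamma_1)$ lies in a compact group, and a closed subsemigroup $S$ of a compact group is a subgroup: for $s \in S$ pick $n_k \to \infty$ with $s^{n_k} \to e$, and then $s^{n_k - 1} \to s^{-1}$. Without compactness this fails; for example, $\{p^k : k \geq 0\}$ is closed in $\mathbb{Q}_p^\times$ but is not a group. So your claim that the argument ``only needs the Zariski closure and a random product of generators'' is not correct, and your verification is incomplete at exactly this point.

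Two smaller corrections. First, the finite generation hypothesis is replaced by $\Gamma_i \subset \mathbf{G}_i(R)$ so that infinitely generated semigroups such as $\langle\gen\rangle_\sg$ and $\Omega$ are allowed. It is not there to let you pass to finitely many generators, and that passage is unnecessary. If you do make it for $\Gamma_1$, you must keep the trace field unchanged; this is possible, since $\K_{\Gamma_1}$ is a finitely generated field. For $\Gamma_2$ the passage is unavailable, because $\Gamma_1$ need not be weakly commensurable to a finitely generated subsemigroup of $\Gamma_2$. Second, your sketch of the contradiction, via a single automorphism $\sigma$ fixing $\K_{\Gamma_2}$ and ``$\mathbf{T}_1$ defined over the compositum'', does not produce a contradiction as written. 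In \cite{PR09} the contradiction comes from the prescribed local behaviour encoded by $\mathrm U \times \mathrm U^{(1)} \times \mathrm U^{(2)}$, and that part should simply be imported verbatim, as the paper does.
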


Let us now explain the key points due to which the proof of \cite[Theorem 2]{PR09} in \cite[\S 5]{PR09} also works for \cref{thm:PR} with minor changes; we keep the same notation and make references to loc. cit.

\begin{enumerate}
\item \cite[Proposition 5.1]{PR09} is a general proposition about fields and does not concern the subsemigroups $\Gamma_1$ and $\Gamma_2$.
\item \cite[Lemma 5.3]{PR09} and its proof hold for a \emph{semigroup} $\Gamma$ due to the general fact that the Zariski closure of its image under a homomorphism into an algebraic group over $\mathbb F$ must be an algebraic \emph{subgroup}.
\item \cite[Proposition 5.2]{PR09} and its proof hold for a \emph{semigroup} $\Gamma$ due to the general fact that the closure of its image under a homomorphism into the $\mathbb Q_p$-points of an algebraic group over $\mathbb Q_p$ must be a \emph{subgroup}.
\item In the beginning of the proof of \cite[Theorem 2]{PR09}, finite generation of $\Gamma_1$ and $\Gamma_2$ is used only to ensure that they are contained in $\mathbf{G}_1(R)$ and $\mathbf{G}_2(R)$, respectively, for some finitely generated subring $R \subset \mathbb F$. We simply impose the latter as a hypothesis in \cref{thm:PR}. We make this change in the theorem to allow infinitely generated subsemigroups which is required for our purposes.
\item \cite[Lemma 2]{PR03} invoked later in the proof of \cite[Theorem 2]{PR09} is actually stated for \emph{subsemigroups}. Using this lemma and again the fact from (3) above, there exists $\gamma_1$ such that $\rho(\gamma_1) \in \mathrm U \times \mathrm U^{(1)} \times \mathrm U^{(2)}$. The rest of the proof of \cite[Theorem 2]{PR09} goes through verbatim.
\end{enumerate}

We have the following immediate corollary in our setting.

\begin{corollary}
\label{cor:PR09}
Let $\tilde{\Gamma}_1, \tilde{\Gamma}_2 < \tilde{\mathbf G}(\calO_\K)$ be Zariski dense subsemigroups. If $\tilde{\Gamma}_1$ is weakly commensurable to $\tilde{\Gamma}_2$, then $\K_{\Gamma_1} \subset \K_{\Gamma_2}$.
\end{corollary}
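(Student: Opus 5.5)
The plan is to deduce \cref{cor:PR09} directly from \cref{thm:PR} with $\mathbf{G}_1 = \mathbf{G}_2 = \tilde{\mathbf G}$, $\Gamma_1 = \tilde{\Gamma}_1$, $\Gamma_2 = \tilde{\Gamma}_2$, and then compare the trace fields of $\tilde{\Gamma}_i$ with those of their images $\Gamma_i = \pi(\tilde{\Gamma}_i)$. Three hypotheses of \cref{thm:PR} must be checked.

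\emph{Field.} Take $\mathbb F = \K$, which has characteristic zero.

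\emph{Finitely generated ring.} Since $\tilde{\mathbf G} < \GL_N$ is defined over $\K$, we have $\tilde{\Gamma}_i \subset \tilde{\mathbf G}(\calO_\K)$. Take $R = \calO_\K$; it is finitely generated as a $\Z$-module and hence as a ring.

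\emph{Absolute simplicity.} The group $\tilde{\mathbf G}$ is connected, being simply connected. Over $\overline{\K}$ it becomes $\mathrm{Spin}_{n+1}$. This is absolutely almost simple for $n + 1 \geq 3$ with $n + 1 \neq 4$, and here $n \neq 3$ because the $n = 3$ case is excluded in the trace field setting.

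\emph{Zariski density.} This is assumed.

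One point needs care: \cite{PR09} uses ``absolutely simple'' in the sense of absolutely almost simple, so a finite center is allowed. That is also how \cref{thm:PR} should be read, so $\mathrm{Spin}$ qualifies even though it has nontrivial center. Weak commensurability of $\tilde{\Gamma}_1$ to $\tilde{\Gamma}_2$ is a hypothesis. \Cref{thm:PR} therefore gives $\K_{\tilde{\Gamma}_1} \subset \K_{\tilde{\Gamma}_2}$.

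It remains to pass to $\Gamma_i = \pi(\tilde{\Gamma}_i)$. The covering $\pi$ is a central isogeny, so its differential identifies the Lie algebras of $\tilde{\mathbf G}$ and $\mathbf G$. Under this identification $\Ad(\pi(g)) = \Ad(g)$ for every $g$, since central elements act trivially under $\Ad$. Hence $\tr(\Ad(\tilde{\Gamma}_i)) = \tr(\Ad(\Gamma_i))$ as subsets of $\K$, so $\K_{\tilde{\Gamma}_i} = \K_{\Gamma_i}$, consistent with \cite[Corollary 1.4.8]{Mar91} as already used in \cref{sec:Preliminaries}. This yields $\K_{\Gamma_1} \subset \K_{\Gamma_2}$.

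The only step needing thought is whether $\tilde{\mathbf G}$ meets the absolute simplicity hypothesis, that is, excluding $n = 3$ and accepting the almost-simple convention. Everything else is bookkeeping.
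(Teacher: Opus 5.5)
Your proposal is correct and is essentially the paper's own argument: the paper treats the corollary as immediate from \cref{thm:PR} with $\mathbf{G}_1 = \mathbf{G}_2 = \tilde{\mathbf G}$, $\mathbb F = \K$, $R = \calO_\K$, combined with the identification $\K_{\tilde{\Gamma}_i} = \K_{\Gamma_i}$ coming from the central isogeny $\pi$, and your treatment of absolute (almost) simplicity matches the paper's caveat excluding $n = 3$. The only addition worth making is that when $n = 3$ the standing assumption $\K = \Q$ makes the conclusion trivial, because both trace fields then equal $\Q$; so excluding $n = 3$ loses nothing.
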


Finally, we combine the above tools to prove \cref{thm:Z-DenseInSimplyConnectedCoverGAndTraceFieldK}.

\begin{proof}[Proof of \cref{thm:Z-DenseInSimplyConnectedCoverGAndTraceFieldK}]
By \cref{pro:LimitSetH(yy)RadialEqualsLimitSetH(yy)EqualsLimitSetGamma}, we have $\Lambda_\infty(\langle \gen\rangle_\sg) \subset \Lambda_{\mathrm{r}}(\Omega)$. Recalling that $\Lambda_\infty(\langle \gen\rangle_\sg)$ contains $\Lambda_+$, we know that the former is dense in $\Lambda(\Gamma) \cap \Delta_0$. Therefore, Zariski density of $\tilde{\Omega}$ follows using \cref{lem: Zariski dense condition} by the same argument as in the proof of \cref{thm:Z-DenseInSimplyConnectedCoverGAndTraceFieldK}.

Now we turn to the trace field. Now, any element $\gamma \in \langle \gen\rangle_\sg$ is hyperbolic and writing it as a word in the letters of $\gen$ gives an associated finite sequence $\beta$. We then obtain an infinite sequence $\alpha \in \Sigma^+$ by extending $\beta$ in a periodic fashion, i.e., by appending together repetitions of $\beta$. Then, it is clear that the attracting fixed point $\gamma^+$ of $\gamma$ satisfies $\gamma^+ = \zeta(\alpha) \in \Lambda_\infty(\langle \gen\rangle_\sg)$. In fact, by \cref{pro:LimitSetH(yy)RadialEqualsLimitSetH(yy)EqualsLimitSetGamma}, we have $\gamma^+ \in \Lambda_\infty(\langle \gen\rangle_\sg) \subset \Lambda_{\mathrm{r}}(\Omega)$. Thus, by \cref{lem:SS92} of Susskind--Swarup, we can conclude that $\gamma^n \in \Omega$ for some $n \in \N$. We conclude from definitions that $\langle \gen\rangle_\sg$ is weakly commensurable to $\Omega$. Therefore, $\widetilde{\langle \gen\rangle_\sg}$ is weakly commensurable to $\tilde{\Omega}$. \Cref{cor:PR09} then gives $\K_{\langle \gen\rangle_\sg} \subset \K_{\Omega}$. Recalling our standing \cref{assumption}, which says $\K_{\langle \gen\rangle_\sg} = \K$, the proof is complete.
\end{proof}

\section{Proof of \cref{thm:UniformSpectralBoundIReduced} via expansion machinery}
\label{sec:ProofOfUniformSpectralBoundIReducedViaExpansionMachinery}
We follow the rest of the arguments in \cite{Sar22a} (see also \cite{Sar22b}). First, the congruence transfer operators will be approximated by convolutions with measures to mimic a random walk. Then, we derive an $L^2$-flattening lemma using the expansion machinery. These two ingredients are required for the proof of \cref{thm:UniformSpectralBoundIReduced}. Since much of the proofs are verbatim repetitions of those in \cite{Sar22a}, we mainly focus on the differences that arise due to the countably infinite coding $\calA$.

Let $\mathfrak{q} \subset \mathcal{O}_{\mathbb K}$ be an ideal coprime to $\mathfrak{q}_0$. We define $\Sigma^+$-analogues of $T$, $\Ret$, $\FRet^{(a)}$, and $\mathcal{M}_{\xi, \mathfrak{q}}$ as follows. We simply take the $\Sigma^+$-analogue of $T$ to be the shift operator $\sigma: \Sigma^+ \to \Sigma^+$. For any function $f$ defined on $\Lambda_+$, in particular $\Ret$ and $\FRet^{(a)}$, we take its $\Sigma^+$-analogue to be $f \circ \zeta$. We now define the $\Sigma^+$-analogue
\begin{align*}
\mathcal{M}_{\xi, \mathfrak{q}}: C_{\mathrm{b}}\bigl(\Sigma^+, L^2(\tilde{G}_\mathfrak{q})\bigr) \to C_{\mathrm{b}}\bigl(\Sigma^+, L^2(\tilde{G}_\mathfrak{q})\bigr)
\end{align*}
as in \cref{eqn:k^thIterationOfCongruenceTransferOperatorOfType_rho} by replacing all objects with their $\Sigma^+$-analogues, in particular replacing the $T$-branches with $\sigma$-branches, and then taking $k = 1$ and $\rho = 1$. For the rest of the section, we will only work with $\Sigma^+$-analogous objects without further comments.

By a simple inductive proof (see \cite[\S 10]{Sar22a}), \cref{thm:UniformSpectralBoundIReduced} follows from the following proposition.

\begin{proposition}
\label{prop:ReducedTheoremEstimate}
There exist $\kappa \in (0, 1)$, $C > 0$, $a_0 > 0$, $b_0 > 0$, and $q_1 \in \mathbb N$ such that for all (square-free if $n = 3$) ideals $\mathfrak{q} \subset \mathcal{O}_{\mathbb K}$ coprime to $\mathfrak{q}_0$ with $N_{\mathbb K}(\mathfrak{q}) > q_1$, there exists an integer $s \in (0, C\log(N_{\mathbb K}(\mathfrak{q})))$ such that for all $H \in \Lip\bigl(\Sigma^+, E_\mathfrak{q}^\mathfrak{q}\bigr)$, we have
\begin{align*}
\big\|\mathcal{M}_{\xi, \mathfrak{q}}^s(H)\big\|_\infty \leq \frac{1}{2}N_{\mathbb K}(\mathfrak{q})^{-\kappa}\|H\|_{\Lip(d_\theta)} \qquad \text{for all $\xi \in \mathbb C$ with $|a| < a_0$},
\end{align*}
and
\begin{multline*}
\Lip_{d_\theta}\bigl(\mathcal{M}_{\xi, \mathfrak{q}}^s(H)\bigr) \leq \frac{1}{2}N_{\mathbb K}(\mathfrak{q})^{-\kappa}\|H\|_{\Lip(d_\theta)}\\
\text{for all $\xi \in \mathbb C$ with $|a| < a_0$ and $|b| \leq b_0$}.
\end{multline*}
\end{proposition}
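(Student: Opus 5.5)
We follow the scheme of \cite[\S\S 6--9]{Sar22a} and use the random-walk approximation of the congruence transfer operator. Fix $\xi$ with $|a|<a_0$, $|b| \leq b_0$, and $H \in \Lip\bigl(\Sigma^+, E_\mathfrak{q}^\mathfrak{q}\bigr)$. Iterating \cref{eqn:k^thIterationOfCongruenceTransferOperatorOfType_rho} $s$ times gives
\begin{align*}
\mathcal{M}_{\xi,\mathfrak{q}}^s(H)(x) = \sum_{\gamma \in \gen^{(s)}} e^{\FRet_s^{(a)}(\gamma x)} \bigl(\gamma^{-1} \otimes \rho_b(\GHol^s(\gamma x)^{-1})\bigr) H(\gamma x).
\end{align*}
We split $s = s_1 + s_2$, where $s_2 \asymp \log N_\K(\mathfrak{q})$ carries the expansion and $s_1$ is a preliminary block.

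The first step is to freeze $x$ in the last $s_2$ letters. We replace $H(\gamma x)$ by $H(\gamma x_0)$ and $e^{\FRet^{(a)}}$, $e^{ib\Ret}$ along the first $s_2$ letters by their values at a fixed point. Each replacement costs $O(\theta^{s_1})\|H\|_{\Lip(d_\theta)}$, using the contraction $d_\theta$ and \cref{eqn:ConstantC1'C2}; the factor $e^{ib\Ret}$ is handled because $|b| \leq b_0$ and $\Lip^{\mathrm e}(\Ret)<C_1'$. The operator then becomes, up to small error, a convolution $\mu_x^{*s_2}$-type expression acting on $L^2_0$-new vectors in $E^\mathfrak{q}_\mathfrak{q}$. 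Here $\mu_x$ is a probability-like measure on $\Gamma$ supported on $\gen$, with weights $e^{\FRet^{(a)}}$, and these weights sum to approximately $\lambda_a^{-1}\lambda_{0}\approx 1$. The measure is pushed to $\tilde{G}_\mathfrak{q}$ by $\pi_\mathfrak{q}$.

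The second step is the countably infinite alphabet, which is the main new point. We truncate to a finite subalphabet $\calA_1 \supset$ the letters needed to express $\tilde{S}_{\mathrm{fin}}$ from \cref{cor:Z-DenseInSimplyConnectedCoverGAndTraceFieldK}. We choose $\calA_1$ large enough that the tail mass $\sum_{j\notin\calA_1} e^{\FRet^{(a)}}$ is small, uniformly for $|a|<a_0$, by the exponential tail property (4) of \cref{prop:Coding}. We then write $\mu_x = (1-\varepsilon)\mu_x^{\mathrm{fin}} + \varepsilon\mu_x^{\mathrm{tail}}$, bounding the tail contribution trivially by $\|H\|_\infty$ and using positivity. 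Considering differences $\gamma(\alpha)\gamma(\tilde\alpha)^{-1}$ of equal-length words over $\calA_1$ moves the problem into the finitely generated group $\tilde{\Omega}_{\mathrm{fin}}$. That group is Zariski dense with trace field $\K$ by \cref{cor:Z-DenseInSimplyConnectedCoverGAndTraceFieldK}. Hence its Cayley graphs modulo $\mathfrak{q}$ coprime to $\mathfrak{q}_0$ form expanders, by \cite{GV12} in the square-free case and by \cite[Theorem 6.1]{HdS22} otherwise. This yields an $L^2$-flattening lemma, \cref{lem:GV_Expander}: for $s_2 \geq C\log N_\K(\mathfrak{q})$,
\begin{align*}
\bigl\|\nu^{(s_2)} * \phi\bigr\|_2 \leq N_\K(\mathfrak{q})^{-\kappa'} \|\phi\|_2 \qquad \text{for } \phi \in E_\mathfrak{q}^\mathfrak{q}.
\end{align*}
The new-vector condition ensures that $\phi$ sees no proper quotient $\tilde{G}_{\mathfrak{q}'}$. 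The constants depend only on $\calA_1$, so they are uniform in $\mathfrak{q}$. The delicate point is to make the error $\varepsilon$ from the tail small relative to $N_\K(\mathfrak{q})^{-\kappa'}$. Since $\varepsilon$ is a fixed constant, we handle it by applying the flattening on the event that at least a fixed proportion of the $s_2$ steps use letters of $\calA_1$. A large-deviation or Chernoff bound makes the complementary event exponentially small in $s_2$, hence a power of $N_\K(\mathfrak{q})$.

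The third step combines the pieces and controls the Lipschitz seminorm. The sup-norm bound follows by taking $s_1 \asymp \log N_\K(\mathfrak{q})$ so that $\theta^{s_1} \leq N_\K(\mathfrak{q})^{-\kappa}$. The flattened $L^2$ bound is converted to an $L^\infty$ bound on $\tilde{G}_\mathfrak{q}$ at a loss of $\#\tilde{G}_\mathfrak{q}^{1/2}$; this loss is absorbed by running the expansion step long enough, with $s_2 = C\log N_\K(\mathfrak{q})$ and $C$ large. For $\Lip_{d_\theta}$ we use the Lasota--Yorke type inequality
\begin{align*}
\Lip\bigl(\mathcal{M}^s H\bigr) \leq C(\theta^s \Lip(H) + \|\mathcal{M}^{s'}|H|\|_\infty).
\end{align*}
We then apply the sup bound again, which requires $|b| \leq b_0$. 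Choosing constants finishes the proof. We expect the main obstacle to be the uniform-in-$\mathfrak{q}$ treatment of the infinite tail in the second step.
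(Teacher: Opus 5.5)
The gap is in how you handle the oscillating factor $e^{ib\Ret_s(\gamma x)}$. You freeze it ``at a fixed point'' and then run the expansion argument for a \emph{positive} measure with weights $e^{\FRet^{(a)}}$. Freezing in $x$ is neither cheap nor needed. Since $|\Ret_s(\gamma x)-\Ret_s(\gamma x')|\le C_2\,d(x,x')$, the error is $O(b_0)$, not $O(\theta^{s_1})$. And the estimate is pointwise in $x$ anyway: the paper keeps the $x$-dependent measures $\mu^{\xi,\mathfrak{q},x}_{(\alpha_s,\dotsc,\alpha_{r+1})}$ and proves bounds uniform in $x$. More seriously, the phase depends on the word $\gamma$, and $b\Ret_s(\gamma x)$ ranges over an unbounded set. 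Already $\Ret$ is unbounded on the countable alphabet, and $\Ret_s\ge s\underline{\Ret}$ with $s\asymp\log N_\K(\mathfrak{q})$. So the measure you must convolve with on $\tilde{G}_\mathfrak{q}$ is genuinely complex, and no small $b_0$ independent of $\mathfrak{q}$ changes this. The spectral gap of $\Cay(\tilde{G}_\mathfrak{q},\tilde{S}_{\mathrm{fin},\mathfrak{q}})$ gives contraction on $L_0^2$ only for positive, nearly flat measures. For a complex $\mu$, passing to $|\mu|*|\phi|$ destroys the mean-zero condition. Note also that the first inequality of the proposition is claimed for \emph{all} $b\in\R$, which no argument resting on $|b|\le b_0$ can deliver.

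The missing ingredient is the quasi-randomness bound \cref{lem:ConvolutionBoundOnE_q^q}: $\|\tilde{\mu}|_{E_\mathfrak{q}^\mathfrak{q}}\|_{\mathrm{op}}\le CN_\K(\mathfrak{q})^{-1/4}(\#\tilde{G}_\mathfrak{q})^{1/2}\|\mu\|_2$ for arbitrary complex $\mu$. It exploits the large dimension of the irreducible representations of $\tilde{G}_\mathfrak{q}$ that occur in the new vectors. The paper combines it with two further facts:
\begin{itemize}
\item the pointwise domination $|\mu|\le\hat{\mu}\lesssim\nu$;
\item the $\ell^2$-flatness of the positive measure, $\|\nu\|_2\lesssim\|\nu\|_1\bigl((\#\tilde{G}_\mathfrak{q})^{-1/2}+C^{r'}\bigr)$, which comes from the block contraction of \cref{lem:ExpanderMachineryBound} together with \cref{lem:EstimateNu}.
\end{itemize}
This yields \cref{lem:L2FlatteningLemma}, which does not see the phases at all. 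That is where $E_\mathfrak{q}^\mathfrak{q}$ enters. It is not needed, as you suggest, to make expansion applicable, since the spectral gap already holds on all of $L_0^2(\tilde{G}_\mathfrak{q})$.

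Two lesser points. First, the infinite tail is not the obstacle you expect. In each block, the two special words from the finite set $\tilde{S}_{\mathrm{fin}}$ carry a uniformly positive fraction of the block's mass. This already gives a fixed per-block contraction $C<1$ (\cref{lem:EtaOperatorBound}), with no small tail or Chernoff bound needed. Second, your Lasota--Yorke step works only in the form $\Lip_{d_\theta}(\mathcal{M}_{\xi,\mathfrak{q}}^{s}H)\le C\bigl(\theta^{s-s'}\Lip_{d_\theta}(\mathcal{M}_{\xi,\mathfrak{q}}^{s'}H)+(1+|b|)\|\mathcal{M}_{\xi,\mathfrak{q}}^{s'}H\|_\infty\bigr)$, with $H$ rather than $|H|$ inside; with $|H|$ there is no decay.
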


\subsection{Approximating the congruence transfer operators}
We introduce some useful notations. Let $j \in \mathbb N$ and $(\alpha_j, \alpha_{j - 1}, \dotsc, \alpha_1)$ be a sequence. We denote $\alpha^j = (\alpha_j, \alpha_{j - 1}, \dotsc, \alpha_1)$. Also, when sequences are themselves written in a sequence, they are to be concatenated. For all $y \in \mathcal A$, denote $\omega(y) \in \Sigma^+$ to be any sequence such that $(y, \omega(y))$ is admissible. We naturally extend the notation for admissible sequences as well so that $\omega(\alpha^j) = \omega(\alpha_1)$.

Let $\mathfrak{q} \subset \mathcal{O}_{\mathbb K}$ be an ideal coprime to $\mathfrak{q}_0$. For any complex measure $\mu$ on $\tilde{G}_\mathfrak{q}$ and $\phi \in L^2(\tilde{G}_\mathfrak{q})$ the convolution $\mu * \phi \in L^2(\tilde{G}_\mathfrak{q})$ is defined by
\begin{align*}
	(\mu * \phi)(g) = \sum_{h \in \tilde{G}_\mathfrak{q}} \mu(h) \phi(gh^{-1}) \qquad \text{for all $g \in \tilde{G}_\mathfrak{q}$}.
\end{align*}

For all $\xi \in \mathbb C$ with $|a| < a_0'$, for all ideals $\mathfrak{q} \subset \mathcal{O}_{\mathbb K}$ coprime to $\mathfrak{q}_0$, $x \in \Sigma^+$, integers $0 < r < s$, and sequences $(\alpha_s, \alpha_{s - 1}, \dotsc, \alpha_{r + 1})$, we define the complex measures
\begin{align*}
	\mu_{(\alpha_s, \alpha_{s - 1}, \dotsc, \alpha_{r + 1})}^{\xi, \mathfrak{q}, x} &= \sum_{\alpha^r} e^{(\FRet_s^{(a)} + ib\Ret_s)(\alpha^s, x)} \delta_{\pi_\mathfrak{q}(\gamma(\alpha^r))}, \\
	\nu_0^{a, \mathfrak{q}, x, r} &= \sum_{\alpha^r} e^{\FRet_r^{(a)}(\alpha^r, x)} \delta_{\pi_\mathfrak{q}(\gamma(\alpha^r))}, \\
	\hat{\mu}_{(\alpha_s, \alpha_{s - 1}, \dotsc, \alpha_{r + 1})}^{a, \mathfrak{q}, x} &= \sum_{\alpha^r} e^{\FRet_s^{(a)}(\alpha^s, x)} \delta_{\pi_\mathfrak{q}(\gamma(\alpha^r))} = e^{\FRet_{s - r}^{(a)}(\alpha^s, x)} \nu_0^{a, \mathfrak{q}, x, r}, \\
	\nu_{(\alpha_s, \alpha_{s - 1}, \dotsc, \alpha_{r + 1})}^{a, \mathfrak{q}, x} &= e^{\FRet_{s - r}^{(a)}(\alpha_s, \alpha_{s - 1}, \dotsc, \alpha_{r + 1}, \omega(\alpha_{r + 1}))} \nu_0^{a, \mathfrak{q}, x, r},
\end{align*}
on $\tilde{G}_\mathfrak{q}$, and also for all $H \in C_{\mathrm{b}}\bigl(\Sigma^+, L^2(\tilde{G}_\mathfrak{q})\bigr)$, define the function
\begin{align*}
	\phi_{(\alpha_s, \alpha_{s - 1}, \dotsc, \alpha_{r + 1})}^{\mathfrak{q}, H} = \delta_{\pi_\mathfrak{q}(\gamma(\alpha_s, \alpha_{s - 1}, \dotsc, \alpha_{r + 1}))} * H(\alpha_s, \alpha_{s - 1}, \dotsc, \alpha_{r + 1}, \omega(\alpha_{r + 1}))
\end{align*}
in $L^2(\tilde{G}_\mathfrak{q})$. Note that $\big\|\phi_{(\alpha_s, \alpha_{s - 1}, \dotsc, \alpha_{r + 1})}^{\mathfrak{q}, H}\big\|_2 \leq \|H\|_\infty$.

In the course of the proof of \cref{prop:ReducedTheoremEstimate}, the basic \cite[Lemma 7.1]{Sar22a} is also required. In our setting with countably infinite coding $\calA$, the lemma is also required to ensure that the above measures, and all sums which appear in the rest of this section, are well-defined---we justify it below.

\begin{lemma}
There exists $C > 0$ such that for all $|a| < a_0'$, $x \in \Sigma^+$, and $k \in \N$, we have $\sum_{\alpha^k} e^{\FRet^{(a)}_k(\alpha^k, x)} \leq C$.
\end{lemma}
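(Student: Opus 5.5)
The plan is to recognize the sum as the $k$-th iterate of the normalized transfer operator $\calL_a$ (with $b = 0$, $\rho = 1$, trivial congruence cocycle) applied to the constant function $\mathbf 1$, and then to use the eigenfunction $h_a$ to bound it. By the $\Sigma^+$-analogue of \cref{eqn:k^thIterationOfCongruenceTransferOperatorOfType_rho}, we have
\begin{align*}
\sum_{\alpha^k} e^{\FRet^{(a)}_k(\alpha^k, x)} = \calL_a^k(\mathbf 1)(x),
\end{align*}
where $\calL_a = m_{\lambda_a h_a}^{-1} \circ \calL_{(\delta_\Gamma + a)\Ret} \circ m_{h_a}$. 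Since $\mathcal{L}_{(\delta_\Gamma+a)\Ret}h_a = \lambda_a h_a$, the normalized operator satisfies $\calL_a(\mathbf 1) = \mathbf 1$ identically. Hence $\calL_a^k(\mathbf 1) = \mathbf 1$, and in fact the sum equals $1$ exactly for every $k$ on the coded points. It is cleanest to check directly that $\FRet^{(a)}_k(\alpha^k, x) = -(\delta_\Gamma + a)\Ret_k(\alpha^k,x) - k\log\lambda_a + \log h_a(\zeta(\alpha^k,x)) - \log h_a(\zeta(x))$ by telescoping, so that the sum is $\lambda_a^{-k} h_a(\zeta(x))^{-1}\calL_{(\delta_\Gamma+a)\Ret}^k(h_a)(\zeta(x)) = 1$.

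The steps are as follows. First, verify the telescoping identity for the Birkhoff sum of the coboundary $\log h_a - \log h_a\circ T$, transported to $\Sigma^+$ through $\zeta$ using the semiconjugacy of \cref{prop: infty limit set has infty sequence}. Second, justify absolute convergence of the countable sums. Every term is positive, and by Property~(4) of \cref{prop:Coding} together with \cref{lem:CylinderEstimate}, we have $\sum_{j} e^{-(\delta_\Gamma + a)\Ret(\gamma_j x)} \lesssim \sum_j \|d\gamma_j\|^{\delta_\Gamma + a} \asymp \sum_j \mu(\Delta_j)\|d\gamma_j\|^{a}$. This is finite for $|a| < a_0' < \epsilon_0/2$, because $e^{\epsilon_0\Ret}\in L^1$ and $\Ret \asymp -\log\|d\gamma_j\|$ on $\Delta_j$ up to the bounded distortion constant $C_1$. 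Iterating this over $k$ letters is then legitimate by Tonelli.

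The main obstacle is points $x \in \Sigma^+$ with $\zeta(x) \notin \Lambda_+$, or with $(\alpha^k, x)$ not landing in $\Lambda_+$, where $h_a$ is a priori only defined on $\Lambda_+$. I would handle this by using that $h_a$ is Lipschitz and bounded away from $0$ and $+\infty$ uniformly for $|a| \le a_0'$. It therefore extends continuously to $\overline{\Delta_0}\supset \Lambda_\infty(\langle\gen\rangle_\sg)$, and the eigen-equation extends by continuity, since each term is continuous and the tails are uniformly summable by the previous step. If one prefers to avoid exact equality, there is a fallback. Set $C_h = \sup_{|a|\le a_0'} \sup h_a / \inf h_a$, which is finite by analyticity of $a\mapsto h_a$ in $\Lip$ and compactness of $[-a_0',a_0']$. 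Then $\calL_a^k(\mathbf 1)(x) \le C_h \lambda_a^{-k}\cdot \lambda_a^k = C_h$ pointwise. This yields the lemma with $C = C_h$, uniformly in $a$, $x$, and $k$.
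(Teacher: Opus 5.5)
Your proof is correct, and it takes a different, more direct route than the paper.

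\textbf{The paper's route.} It applies Cauchy--Schwarz to the splitting $e^{\FRet_k^{(a)}} = e^{\FRet_k^{(a)} - \frac{1}{2}\FRet_k^{(0)}} \cdot e^{\frac{1}{2}\FRet_k^{(0)}}$. The second factor contributes $\calL_0^k(\chi_{\Sigma^+})^{1/2} = 1$. The first is handled by writing $2\FRet_k^{(a)} - \FRet_k^{(0)} = -(\delta_\Gamma + 2a)\Ret_k + (\text{remaining terms})$ and invoking the exponential tail property with $|2a| < \epsilon_0$.

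\textbf{Your route.} The fact the paper uses only at $a = 0$, that the normalized operator fixes constants, holds for every $|a| \leq a_0'$. It follows directly from $\calL_a = m_{\lambda_a h_a}^{-1} \circ \mathcal{L}_{(\delta_\Gamma + a)\Ret} \circ m_{h_a}$ and $\mathcal{L}_{(\delta_\Gamma + a)\Ret} h_a = \lambda_a h_a$. So the sum is exactly $\calL_a^k(\mathbf{1})(x) = 1$.

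\textbf{What your route buys.} It gives genuine uniformity in $k$, with $C = 1$. The paper's argument, as written, does not give a $k$-independent constant:
\begin{itemize}
\item the remaining terms in its expansion include $-2k\log\lambda_a$, which is unbounded in $k$;
\item $\sum_{\alpha^k} e^{-(\delta_\Gamma + 2a)\Ret_k(\alpha^k, x)}$ grows like $\lambda_{2a}^k$, where $\lambda_{2a}$ is the leading eigenvalue of $\mathcal{L}_{(\delta_\Gamma+2a)\Ret}$; the exponential tail only bounds the one-letter sum, hence the $k$-letter sum only by a constant to the power $k$;
\item so the first Cauchy--Schwarz factor is of size $(\lambda_{2a}/\lambda_a^2)^{k/2}$, which is $\geq 1$ by convexity of the pressure, and strictly larger for $a \neq 0$.
\end{itemize}

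\textbf{Two small remarks on your write-up.} First, the summability step is already implicit in the eigen-equation, which the paper takes as given. Second, your $C_h$ fallback still uses the eigen-equation at the point $x$, so it does not bypass the boundary issue. The paper's proof faces that issue equally, through $\calL_0(\chi_{\Sigma^+}) = \chi_{\Sigma^+}$.

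A cleaner way to treat $x$ with $\zeta(x) \notin \Lambda_+$ is Fatou's lemma:
\begin{enumerate}
\item pick $x_n \to x$ in $d_\theta$ with $\zeta(x_n) \in \Lambda_+$, which is possible since every cylinder has positive $\nu$-measure by \cref{lem:CylinderEstimate};
\item extend $h_a$ continuously and use continuity of each term $e^{\FRet_k^{(a)}(\alpha^k, \cdot)}$;
\item conclude $\sum_{\alpha^k} e^{\FRet_k^{(a)}(\alpha^k, x)} \leq \liminf_{n} \calL_a^k(\mathbf{1})(x_n) = 1$.
\end{enumerate}
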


\begin{proof}
Let $|a| < a_0'$, $x \in \Sigma^+$, and $k \in \mathbb N$. We use the Cauchy--Schwarz inequality to calculate that
\begin{align*}
\sum_{\alpha^k} e^{\FRet_k^{(a)}(\alpha^k, x)} &= \sum_{\alpha^k} e^{\FRet_k^{(a)}(\alpha^k, x) - (1/2)\FRet_k^{(0)}(\alpha^k, x)} \cdot e^{(1/2)\FRet_k^{(0)}(\alpha^k, x)} \\
&\leq \left(\sum_{\alpha^k} e^{2\FRet_k^{(a)}(\alpha^k, x) - \FRet_k^{(0)}(\alpha^k, x)}\right)^{1/2} \left(\sum_{\alpha^k} e^{\FRet_k^{(0)}(\alpha^k, x)}\right)^{1/2} \\
&\leq C\mathcal{L}_0^k(\chi_{\Sigma^+})(x)^{1/2} \leq C
\end{align*}
for some absolute constant $C > 0$. To see this, we may expand $2\FRet_k^{(a)}(\alpha^k, x) - \FRet_k^{(0)}(\alpha^k, x)$ and observe that the first term is $-(\delta_\Gamma + 2a)\Ret_k(\alpha^k, x)$ and the rest of the terms are bounded uniformly in $|a| < a_0'$. We then recall $|2a| < 2a_0' < \epsilon_0$ and apply the exponential tail property from \cref{prop:Coding}. We have also used the fact that $\mathcal{L}_0(\chi_{\Sigma^+}) = \chi_{\Sigma^+}$.
\end{proof}

The following lemma (cf. \cite[Lemma 7.3]{Sar22a}) shows that congruence transfer operators can be approximated by certain convolutions; it is proved as in \cite[Lemma 4.21]{OW16}.

\begin{lemma}
	\label{lem:TransferOperatorConvolutionApproximation}
	For all $\xi \in \mathbb C$ with $|a| < a_0'$, for all ideals $\mathfrak{q} \subset \mathcal{O}_{\mathbb K}$ coprime to $\mathfrak{q}_0$, $x \in \Sigma^+$, integers $0 < r < s$, and $H \in \Lip\bigl(\Sigma^+, L^2(\tilde{G}_\mathfrak{q})\bigr)$, we have
	\begin{multline*}
		\left\|\mathcal{M}_{\xi, \mathfrak{q}}^s(H)(x) - \sum_{\alpha_{r + 1}, \alpha_{r + 2}, \dotsc, \alpha_s} \mu_{(\alpha_s, \alpha_{s - 1}, \dotsc, \alpha_{r + 1})}^{\xi, \mathfrak{q}, x} * \phi_{(\alpha_s, \alpha_{s - 1}, \dotsc, \alpha_{r + 1})}^{\mathfrak{q}, H}\right\|_2 \\
		\leq C_f \Lip_{d_\theta}(H)\theta^{s - r}.
	\end{multline*}
\end{lemma}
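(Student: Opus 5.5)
We expand $\mathcal{M}_{\xi, \mathfrak{q}}^s(H)(x)$ using the $s$-th iterate formula \cref{eqn:k^thIterationOfCongruenceTransferOperatorOfType_rho}, rewritten in $\Sigma^+$-analogues with $\rho = 1$. Each term is indexed by $\alpha^s = (\alpha_s, \dotsc, \alpha_1)$ and has the form $e^{(\FRet_s^{(a)} + ib\Ret_s)(\alpha^s, x)} \bigl(\gamma(\alpha^s)^{-1} \cdot H(\alpha^s, x)\bigr)$, where the congruence cocycle acts on $L^2(\tilde{G}_\mathfrak{q})$ by translation. We split $\gamma(\alpha^s)$ into the block coming from $(\alpha_s, \dotsc, \alpha_{r+1})$ and the block $\gamma(\alpha^r)$. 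Since the operator acts by right translation, translation by the product factors as a convolution $\delta_{\pi_\mathfrak{q}(\gamma(\alpha^r))} * \delta_{\pi_\mathfrak{q}(\gamma(\alpha_s, \dotsc, \alpha_{r+1}))} * H(\alpha^s, x)$. We fix the convention (ordering and inverses) so that it matches the definition of $\mu^{\xi,\mathfrak{q},x}_{(\alpha_s,\dotsc,\alpha_{r+1})}$ and $\phi^{\mathfrak{q},H}_{(\alpha_s,\dotsc,\alpha_{r+1})}$, exactly as in \cite[Lemma 4.21]{OW16}. Grouping the sum over $\alpha^s$ as an outer sum over $(\alpha_{r+1}, \dotsc, \alpha_s)$ and an inner sum over $\alpha^r$, we obtain exactly
\begin{align*}
\sum_{\alpha_{r+1}, \dotsc, \alpha_s} \sum_{\alpha^r} e^{(\FRet_s^{(a)} + ib\Ret_s)(\alpha^s, x)} \delta_{\pi_\mathfrak{q}(\gamma(\alpha^r))} * \delta_{\pi_\mathfrak{q}(\gamma(\alpha_s, \dotsc, \alpha_{r+1}))} * H(\alpha^s, x),
\end{align*}
with one discrepancy: $H$ is evaluated at $(\alpha^s, x)$ rather than at $(\alpha_s, \dotsc, \alpha_{r+1}, \omega(\alpha_{r+1}))$.

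The difference is therefore controlled by replacing $H(\alpha^s, x)$ with $H(\alpha_s, \dotsc, \alpha_{r+1}, \omega(\alpha_{r+1}))$ in each term. These two points of $\Sigma^+$ agree in their first $s - r$ coordinates, so their $d_\theta$-distance is at most $\theta^{s-r}$, up to the indexing convention, which contributes a harmless constant factor. Hence
\begin{align*}
\|H(\alpha^s, x) - H(\alpha_s, \dotsc, \alpha_{r+1}, \omega(\alpha_{r+1}))\|_2 \leq \Lip_{d_\theta}(H)\theta^{s-r}.
\end{align*}
Convolution with a Dirac mass is unitary on $L^2(\tilde{G}_\mathfrak{q})$, and $|e^{ib\Ret_s}| = 1$. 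By the triangle inequality the total error is therefore at most
\begin{align*}
\Lip_{d_\theta}(H)\theta^{s-r}\sum_{\alpha^s} e^{\FRet_s^{(a)}(\alpha^s, x)}.
\end{align*}

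The remaining point, which is the only genuine difference from the finite-alphabet setting, is that this last sum over the countably infinite alphabet is finite and bounded uniformly in $s$, $x$, and $|a| < a_0'$. This is exactly the preceding lemma: $\sum_{\alpha^k} e^{\FRet^{(a)}_k(\alpha^k, x)} \leq C$. It also justifies the absolute convergence needed to regroup the series in the first step. Taking $C_f$ to be that constant, times the constant from the indexing convention for $d_\theta$, gives the claim. The main obstacle is thus only bookkeeping: checking the ordering and inverse conventions in the convolution so that the decomposition matches the stated $\mu$ and $\phi$; no new analytic input beyond the summability lemma is needed.
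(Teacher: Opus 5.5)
Your proposal is correct and follows the paper's approach: the paper proves this lemma as in \cite[Lemma 4.21]{OW16}, by expanding the $s$-th iterate, regrouping the terms into the convolutions $\mu * \phi$, and bounding each termwise error by $\Lip_{d_\theta}(H)\theta^{s-r}$ using unitarity of translations and $|e^{ib\Ret_s}| = 1$. As you identify, the only new input for the countably infinite alphabet is the preceding summability lemma, which bounds $\sum_{\alpha^s} e^{\FRet_s^{(a)}(\alpha^s, x)}$ uniformly and also justifies the absolute convergence needed to regroup the series.
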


\subsection{$L^2$-flattening lemma}
\Cref{prop:ReducedTheoremEstimate} is proven using the following lemma exactly as in \cite[\S 10]{Sar22a} and so we omit the proof.

\begin{lemma}
\label{lem:L2FlatteningLemma}
There exist $C > 0$, $C_0 > 0$, and $l \in \mathbb N$ such that for all $\xi \in \mathbb C$ with $|a| < a_0'$, (square-free if $n = 3$) ideals $\mathfrak{q} \subset \mathcal{O}_{\mathbb K}$ coprime to $\mathfrak{q}_0$, $x \in \Sigma^+$, integers $C_0 \log(N_{\mathbb K}(\mathfrak{q})) \leq r < s$ with $r \in l\mathbb Z$, sequences $(\alpha_s, \alpha_{s - 1}, \dotsc, \alpha_{r + 1})$, and $\phi \in E_\mathfrak{q}^\mathfrak{q}$ with $\|\phi\|_2 = 1$, we have
\begin{align*}
\left\|\mu_{(\alpha_s, \alpha_{s - 1}, \dotsc, \alpha_{r + 1})}^{\xi, \mathfrak{q}, x} * \phi\right\|_2 &\leq C N_{\mathbb K}(\mathfrak{q})^{-\frac{1}{3}} \left\|\nu_{(\alpha_s, \alpha_{s - 1}, \dotsc, \alpha_{r + 1})}^{a, \mathfrak{q}, x}\right\|_1, \\
\left\|\hat{\mu}_{(\alpha_s, \alpha_{s - 1}, \dotsc, \alpha_{r + 1})}^{a, \mathfrak{q}, x} * \phi\right\|_2 &\leq C N_{\mathbb K}(\mathfrak{q})^{-\frac{1}{3}} \left\|\nu_{(\alpha_s, \alpha_{s - 1}, \dotsc, \alpha_{r + 1})}^{a, \mathfrak{q}, x}\right\|_1.
\end{align*}
\end{lemma}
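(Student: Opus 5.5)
The plan is to follow the structure of the $L^2$-flattening argument in \cite[\S 8--9]{Sar22a}. The new ingredient is that the sums over $\alpha^r$ in the definitions of $\mu^{\xi,\mathfrak{q},x}_{(\alpha_s,\dotsc,\alpha_{r+1})}$ and $\nu_0^{a,\mathfrak{q},x,r}$ run over a countably infinite alphabet. We handle this by truncating to the finite alphabet supplied by \cref{cor:Z-DenseInSimplyConnectedCoverGAndTraceFieldK}.

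The first step fixes this finite sub-alphabet. The finite symmetric set $\tilde{S}_{\mathrm{fin}} \subset \tilde{S}$ consists of finitely many elements $\gamma(\alpha)\gamma(\tilde{\alpha})^{-1}$ with $\alpha, \tilde{\alpha}$ of equal length. Pad these words to a common length $l$, using a fixed filler word built from letters of a fixed finite set $\calA_1 \subset \calA$. Let $W \subset \calA_1^l$ be the resulting finite set of words, chosen so that $\{\gamma(\alpha)\gamma(\tilde{\alpha})^{-1} : \alpha, \tilde{\alpha} \in W\}$ generates a group containing $\tilde{\Omega}_{\mathrm{fin}}$. By \cref{cor:Z-DenseInSimplyConnectedCoverGAndTraceFieldK}, this group is then Zariski dense with trace field $\K$. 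Consequently, the Cayley graphs of $\pi_\mathfrak{q}$ of the finitely generated group $\langle \tilde{S}_W \rangle$ form an expander family for all ideals $\mathfrak{q}$ coprime to $\mathfrak{q}_0$ (square-free if $n = 3$). For $n = 3$ this is \cite[Corollary 6]{GV12}; otherwise it is \cite[Theorem 6.1]{HdS22}. We also use the associated multiplicity bound: every nontrivial representation occurring in $E_\mathfrak{q}^\mathfrak{q}$ has dimension at least $N_\K(\mathfrak{q})^{c}$.

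The second step splits each block of $l$ letters of $\alpha^r$ into its $W$-part and its complement. Write $\nu_0^{a,\mathfrak{q},x,r}$ as a sum over the blocks. Using \cref{lem:CylinderEstimate} and the Gibbs-type bounds $e^{\FRet^{(a)}_k} \asymp$ (products of block weights), uniformly in $|a|<a_0'$ and with constants $C_f$ from \eqref{eqn:ConstantC1'C2}, each block's weighted measure dominates a fixed multiple $c_W > 0$ of a probability measure $\eta$ on $\pi_\mathfrak{q}(\gamma(W))$ with uniform weights. The complementary part is controlled by the earlier lemma giving $\sum_{\alpha^k} e^{\FRet_k^{(a)}} \leq C$. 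We thus obtain a decomposition $\hat{\mu} \approx c\,\eta_1 * \cdots * \eta_{r/l} + (\text{positive remainder})$ of total mass $\asymp \|\nu^{a,\mathfrak{q},x}_{(\alpha_s,\dotsc)}\|_1$. The bound for $\mu^{\xi,\mathfrak{q},x}$ should follow from the $\hat{\mu}$ bound by the triangle inequality, since $|e^{ib\Ret_s}|=1$, once the flattening is carried out with absolute values. The lack of cancellation among the phases is harmless here, because the flattening is applied to positive measures.

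The third step, which I expect to be the main obstacle, is to show that the convolution of the block measures flattens in $L^2(E_\mathfrak{q}^\mathfrak{q})$. The concern is that the weights $\eta_i$ depend on $x$ and on the later letters, so they are not identically distributed. I would follow \cite{Sar22a}, combining the Bourgain--Gamburd $\ell^2$-flattening with the expander property, and use the fact that all the $\eta_i$ are comparable, with uniform constants, to the uniform measure on the finite set $\pi_\mathfrak{q}(\gamma(W))$. After $r \geq C_0\log N_\K(\mathfrak{q})$ steps this gives $\|\eta_1 * \cdots * \eta_{r/l}\|_2 \leq N_\K(\mathfrak{q})^{-1/2+o(1)}$. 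The multiplicity bound on $E_\mathfrak{q}^\mathfrak{q}$ then gives operator norm $\leq N_\K(\mathfrak{q})^{-1/3}$.

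What remains is to prevent the infinite remainder from spoiling this. I would expand the product over blocks and, for each pattern of which blocks lie in $W$, apply the flattening to the $W$-blocks. This is where the requirement $r\in l\mathbb Z$ comes from. Patterns with fewer than a fixed fraction of $W$-blocks are handled by a large-deviation estimate: they carry mass at most $(1-c_W)^{r/l}\cdot C^{r/l}$-type bounds. I need to check that this is at most $N_\K(\mathfrak{q})^{-1/3}\|\nu\|_1$ once $C_0$ is large. If the constants do not cooperate, I would instead group $l$-blocks into larger super-blocks so that $W$ appears with probability close to $1$.
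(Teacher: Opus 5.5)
\textbf{Verdict: genuine gap.} Your expander input is fine. You also correctly identify the form of near-flatness that an infinite alphabet requires: finitely many distinguished words keep a uniformly positive share of each block's mass, by the bound $\sum_{\alpha^k}e^{\FRet_k^{(a)}}\leq C$. The gap is in passing from one block to $r/l$ blocks.

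The weight $e^{\FRet_r^{(a)}(\alpha^r,x)}$ is not a product of block weights, because the weight of each letter depends on all later letters and on $x$. So the real difficulty is not that your $\eta_i$ are non-identically distributed. It is that the blocks are not independent, and $\hat\mu$ is not a convolution of block measures at all. Consequently, neither your decomposition $\hat\mu\approx c\,\eta_1*\dotsb*\eta_{r/l}+(\text{remainder})$ nor your expansion over patterns of $W$-blocks is an identity. If you replace exact factorization by a comparison $\asymp$ with a fixed constant per block, you lose a factor $C^{r/l}$, and this must be beaten by the per-block gain. That is precisely the estimate you leave unchecked. Moreover, padding the words of $\tilde S_{\mathrm{fin}}$ to the full block length makes $c_W$ exponentially small in $l$, so enlarging blocks or using super-blocks works against you.

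The missing idea is the head/tail decoupling used in the paper.
\begin{itemize}
\item Split each $l$-block into a head $\alpha_j^{(p)_1}$ of fixed length $p$, where the words of $\tilde S_{\mathrm{fin}}$ live, and a tail $\alpha_j^{(l-p)_2}$.
\item Give block $j$ the weight $E(\alpha_{j+1}^{(l-p)_2},\alpha_j^l)$, evaluated with the true tail followed by $\omega(\alpha_j^l)$. Every letter then sees at least $l-p$ correct future letters, so $\nu_0^{\mathfrak q}$ and $\nu_1^{\mathfrak q}$ agree up to $e^{\pm r'C\theta^l}$ (\cref{lem:EstimateNu}).
\item Once all tails are fixed, $\nu_1^{\mathfrak q}$ is an \emph{exact} convolution of the head measures $\eta^{\mathfrak q}$.
\item Each $\eta^{\mathfrak q}$ contracts $L^2_0$ by a constant $C<1$ independent of $l$ (\cref{lem:GV_Expander,lem:EtaOperatorBound}). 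The reason is that the distinguished pair shares its tail, which cancels in $\gamma(\beta_j^l)\gamma(\tilde\beta_j^l)^{-1}$, while the finitely many heads retain a fixed share of the block mass.
\item The rest of the infinite alphabet is absorbed by the triangle inequality, so no large-deviation step is needed.
\end{itemize}
Taking $l>l_0$ lets this $l$-independent contraction beat the $e^{C\theta^l}$ distortion (\cref{lem:ExpanderMachineryBound}). Writing $\nu=\nu*\delta_e$ then gives $\|\nu\|_2\lesssim(\#\tilde G_{\mathfrak q})^{-1/2}\|\nu\|_1$ once $r\geq C_0\log N_\K(\mathfrak q)$. Finally, $\mu$ and $\hat\mu$ are dominated in absolute value, pointwise on $\tilde G_{\mathfrak q}$, by a constant multiple of $\nu$, so \cref{lem:ConvolutionBoundOnE_q^q} yields both inequalities.
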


In the rest of this section, we focus on the proof of \cref{lem:L2FlatteningLemma}. The arguments are based on \cite[\S 9]{Sar22a} which was largely inspired by and generalized from \cite[Appendix]{MOW19} by Bourgain--Kontorovich--Magee. We omit most derivation of estimates as they are very similar to those in \cite{Sar22a}. However, we focus on the key difference which is in the proof of \cref{lem:GV_Expander}, using the results of the previous section.

In the rest of this section, we consider $\tilde{S}_{\mathrm{fin}}$ from \cref{cor:Z-DenseInSimplyConnectedCoverGAndTraceFieldK} and fix $p \in \N$ to be half the largest word length of its elements viewed as words generated by $\gen$. Then, the corollary applies when it is needed in \cref{lem:GV_Expander}.

For the purposes of proving \cref{lem:L2FlatteningLemma}, we will fix $\xi \in \mathbb C$ with $|a| < a_0'$, $x \in \Sigma^+$, $r \in \mathbb N$ with factorization $r = r'l$ for some fixed $r' \in \mathbb N$ and some fixed integer $l > p$ henceforth. For all ideals $\mathfrak{q} \subset \mathcal{O}_{\mathbb K}$ coprime to $\mathfrak{q}_0$, for all sequences $(\alpha_s, \alpha_{s - 1}, \dotsc, \alpha_{r + 1})$, denote $\mu_{(\alpha_s, \alpha_{s - 1}, \dotsc, \alpha_{r + 1})}^{\xi, \mathfrak{q}, x}$, $\nu_0^{a, \mathfrak{q}, x, r}$, $\hat{\mu}_{(\alpha_s, \alpha_{s - 1}, \dotsc, \alpha_{r + 1})}^{a, \mathfrak{q}, x}$, and $\nu_{(\alpha_s, \alpha_{s - 1}, \dotsc, \alpha_{r + 1})}^{a, \mathfrak{q}, x}$ by $\mu_{(\alpha_s, \alpha_{s - 1}, \dotsc, \alpha_{r + 1})}^\mathfrak{q}$, $\nu_0^\mathfrak{q}$, $\hat{\mu}_{(\alpha_s, \alpha_{s - 1}, \dotsc, \alpha_{r + 1})}^\mathfrak{q}$, and $\nu_{(\alpha_s, \alpha_{s - 1}, \dotsc, \alpha_{r + 1})}^\mathfrak{q}$, respectively.

Let $\alpha^r$ be a sequence. The following additional notations will be useful. Define
\begin{align*}
	\alpha_j^l &= (\alpha_{jl}, \alpha_{jl - 1}, \dotsc, \alpha_{(j - 1)l + 1}), \\
	\alpha_j^{(p)_1} &= (\alpha_{jl}, \alpha_{jl - 1}, \dotsc, \alpha_{jl - p + 1}), \\
	\alpha_j^{(l - p)_2} &= (\alpha_{jl - p}, \alpha_{jl - p - 1}, \dotsc, \alpha_{(j - 1)l + 1}),
\end{align*}
for all $1 \leq j \leq r'$. For example, with these notations and conventions we have $\alpha^r = \big(\alpha_{r'}^l, \alpha_{r' - 1}^l, \dotsc,  \alpha_1^l\big) = (\alpha_r, \alpha_{r - 1}, \dotsc, \alpha_1)$ and $\alpha_j^l = \big(\alpha_j^{(p)_1}, \alpha_j^{(l - p)_2}\big)$ for all $1 \leq j \leq r'$. We also have $\sigma^k(\alpha^j) = \alpha^{j - k}$ for all $1 \leq j \leq r$ and $0 \leq k \leq j - 1$. We also add the convention that $\alpha_j^{(l - p)_2}$ is the empty sequence for $j \in \{0, r' + 1\}$.

For all ideals $\mathfrak{q} \subset \mathcal{O}_{\mathbb K}$ coprime to $\mathfrak{q}_0$ and sequences $\alpha^r$, define the coefficients and measures
\begin{align*}
E\big(\alpha_{j + 1}^{(l - p)_2}, \alpha_j^l\big) &=
\begin{cases}
e^{\FRet_{2l - p}^{(a)}(\alpha^{2l - p}, x)}, & j = 1 \\
e^{\FRet_l^{(a)}(\alpha_{j + 1}^{(l - p)_2}, \alpha_j^l, \omega(\alpha_j^l))}, & 2 \leq j \leq r' - 1 \\
e^{\FRet_p^{(a)}(\alpha_{r'}^l, \omega(\alpha_{r'}^l))}, & j = r'
\end{cases}
\\
\eta^\mathfrak{q}\big(\alpha_{j + 1}^{(l - p)_2}, \alpha_j^{(l - p)_2}\big) &= \sum_{\alpha_j^{(p)_1}} E\big(\alpha_{j + 1}^{(l - p)_2}, \alpha_j^l\big) \delta_{\pi_\mathfrak{q}(\gamma(\alpha_j^l))} \qquad \text{for all $1 \leq j \leq r'$}
\end{align*}
which depend on $\alpha_j^{(l - p)_2}$ only for $j = 1$. Due to the following lemma, which is proved as in \cite[Lemma 9.3]{Sar22a}, these measures are said to be \emph{nearly flat}.

\begin{lemma}
\label{lem:NearlyFlat}
There exists $C > 1$ such that for all $1 \leq j \leq r'$, and for all pairs of sequences $\big(\alpha_{j + 1}^{(l - p)_2}, \alpha_j^l\big)$ and $\big(\tilde{\alpha}_{j + 1}^{(l - p)_2}, \tilde{\alpha}_j^l\big)$ with $\big(\alpha_{j + 1}^{(l - p)_2}, \alpha_j^{(l - p)_2}\big) = \big(\tilde{\alpha}_{j + 1}^{(l - p)_2}, \tilde{\alpha}_j^{(l - p)_2}\big)$, we have
\begin{align*}
\frac{E\big(\tilde{\alpha}_{j + 1}^{(l - p)_2}, \tilde{\alpha}_j^l\big)}{E\big(\alpha_{j + 1}^{(l - p)_2}, \alpha_j^l\big)} \leq C
\end{align*}
where $C$ is independent of $|a| < a_0'$, $x \in \Sigma^+$, $r \in \mathbb N$ and its factorization $r = r'l$ with $l > p$.
\end{lemma}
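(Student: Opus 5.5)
We need to bound the ratio of $E$ for two pairs of sequences that agree except in the block $\alpha_j^{(p)_1}$ (and in the case $j=1$ also agree in $\alpha_1^{(l-p)_2}$). The plan is to write the ratio as an exponential of the difference of Birkhoff sums and to split that sum into the part coming from the differing letters and the part coming from the common letters.

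\emph{The differing block.} Each $E$ is $e^{\FRet_m^{(a)}(w)}$ for a word $w$ of some length $m\in\{2l-p,l,p\}$ followed by a tail. The two words share the prefix $\alpha_{j+1}^{(l-p)_2}$, then differ in exactly $p$ letters $\alpha_j^{(p)_1}$ versus $\tilde\alpha_j^{(p)_1}$. After that they share the suffix: $\alpha_j^{(l-p)_2}$ together with either $x$ or $\omega(\alpha_j^l)$. The difference of Birkhoff sums therefore splits into three kinds of terms.
\begin{enumerate}
\item Terms from positions before the differing block. The evaluation points agree in at least the next $p$ coordinates, so $\Lip^{\mathrm e}(\FRet^{(a)})\le C_1'$ and the contraction $\theta$ give a geometric sum bounded by $C_2$.
\item The $p$ terms at the differing positions. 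These are not Lipschitz-comparable, since the letters differ.
\item Terms after the block. Here the sequences agree except possibly in their tails. For $j=1$ the tail is $x$ in both. For $j\ge 2$ the tail $\omega(\alpha_j^l)=\omega(\alpha_{(j-1)l+1})$ depends only on the last letter, which is common. So these terms contribute $0$, or at most $C_2$ by the same Lipschitz argument.
\end{enumerate}

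\emph{Main obstacle.} The difficulty is the $p$ terms at the differing letters. With a countably infinite alphabet, $\FRet^{(a)}=-(\delta_\Gamma+a)\Ret-\log\lambda_a+\log h_a-\log h_a\circ T$ is unbounded, because $\Ret$ is unbounded above. A naive bound fails, so the statement must rely on what the block $\alpha_j^{(p)_1}$ ranges over.

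The key observation to establish is that in the flattening argument these $p$-blocks should range only over the words spelling the finite generating set $\tilde S_{\mathrm{fin}}$. That set is finite by \cref{cor:Z-DenseInSimplyConnectedCoverGAndTraceFieldK}, so only finitely many letters of $\calA$ occur in it. I would argue, as in the construction of $\tilde S_{\mathrm{fin}}$ with $p$ half its maximal word length, that the relevant $\alpha_j^{(p)_1}$ are restricted to a finite subalphabet $\calA_{\mathrm{fin}}\subset\calA$. This is exactly the ``restrict to a finite subset of the alphabet'' idea from the outline. If so, $\Ret$ is bounded on $\bigcup_{i\in\calA_{\mathrm{fin}}}\Delta_i$, just as $\overline{\Ret}$ is finite on $\calA_0$. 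Moreover $\lambda_a$ and $h_a$ are bounded away from $0$ and $\infty$ uniformly for $|a|\le a_0'$, by analyticity and positivity. Hence each of the $p$ terms is bounded by a constant independent of $a$, $x$, $r$, $r'$ and $l$.

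\emph{Conclusion.} Summing the three contributions gives a total difference at most $2C_2+2p\sup_{|a|\le a_0',\,\calA_{\mathrm{fin}}}|\FRet^{(a)}|$, and the lemma follows with $C$ equal to the exponential of this quantity. If instead the sum defining $\eta^\mathfrak q$ were over all of $\calA^p$, I would fall back on comparing each word against a fixed reference word with bounded return time. That, however, yields only a one-sided bound, which is why I expect the finite restriction is what is actually used.
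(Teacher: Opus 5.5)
Your split of $\log E(\tilde\alpha)-\log E(\alpha)$ has three parts. The $l-p$ prefix terms are controlled by $C_1'$ and the $\theta$-contraction, with total at most $C_2$. The suffix and tail terms are identical for the two pairs. The remaining $p$ terms sit at the differing block. This is exactly the finite-alphabet argument of \cite[Lemma 9.3]{Sar22a} that the paper cites, and you correctly see that the $p$ differing terms are where it breaks.

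The step that fails is your ``key observation'' that $\alpha_j^{(p)_1}$ ranges over a finite subalphabet. It does not. $\eta^{\mathfrak q}$ is a sum over all $\alpha_j^{(p)_1}\in\calA^p$, and it has to be. \cref{lem:EstimateNu} compares $\nu_1^{\mathfrak q}$ with $\nu_0^{\mathfrak q}$, which sums over \emph{all} words of length $r$. Truncating the $p$-blocks would discard a definite fraction of the mass in each of the $r'$ blocks, which the factor $e^{r'C\theta^l}$ cannot absorb once $l$ is large. In the paper, finiteness enters only through $\tilde S_{\mathrm{fin}}$ in \cref{lem:GV_Expander}, that is, through the two reference blocks $\beta_j^{(p)_1},\tilde\beta_j^{(p)_1}$ it produces, not through the range of summation. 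So you have proved the inequality only for a restricted class of pairs, not the statement.

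In fact the statement, with ``for all pairs'', cannot hold with a uniform $C$, so neither your argument nor the cited one can give it. Take $j=r'$ and let the two blocks differ only in the first letter, $\alpha_{r'l}=k$ versus a fixed $\tilde\alpha_{r'l}=k_0$. All terms of $\FRet_p^{(a)}$ except the first are then evaluated at the same points. Hence $E(\tilde\alpha)/E(\alpha)=e^{O(1)}\exp\bigl((\delta_\Gamma+a)(\Ret(z)-\Ret(\tilde z))\bigr)$ with $z\in\Delta_k$ and $\tilde z\in\Delta_{k_0}$. This is unbounded in $k$: $\Ret\geq-\log\|d\gamma_k\|$ on $\Delta_k$, and $\sum_k\|d\gamma_k\|^{\delta_\Gamma}<\infty$ by \cref{prop:Coding}(1) and \cref{lem:CylinderEstimate}, so $\|d\gamma_k\|\to 0$.

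What is true, and what the proof of \cref{lem:EtaOperatorBound} (as in \cite[Lemma 9.6]{Sar22a}) actually uses, is the one-sided comparison you set aside, in summed form. For every block $\beta_j^{(p)_1}$ in the finite set coming from $\tilde S_{\mathrm{fin}}$,
\[
\bigl\|\eta^{\mathfrak q}\bigl(\alpha_{j+1}^{(l-p)_2},\alpha_j^{(l-p)_2}\bigr)\bigr\|_1\leq C\,E\bigl(\alpha_{j+1}^{(l-p)_2},(\beta_j^{(p)_1},\alpha_j^{(l-p)_2})\bigr).
\]
A pointwise ratio bound would not give this, since $\eta^{\mathfrak q}$ has infinitely many atoms. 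Instead:
\begin{enumerate}
\item Replace the prefix terms by their values at $\beta_j^{(p)_1}$, at cost $e^{C_2}$ (your Lipschitz argument).
\item Note that the suffix terms do not depend on the block.
\item Bound $\sum_{\alpha_j^{(p)_1}}e^{\FRet_p^{(a)}(\alpha_j^{(p)_1},\,\bigcdot)}$ uniformly by the lemma $\sum_{\alpha^k}e^{\FRet_k^{(a)}(\alpha^k,x)}\leq C$. This is where the exponential tail property enters.
\item Bound $e^{\FRet_p^{(a)}(\beta_j^{(p)_1},\,\bigcdot)}$ from below, using that $\Ret$ is bounded on the finitely many $\Delta_i$ involved and that $h_a$ and $\lambda_a$ are bounded away from $0$ and $\infty$ for $|a|\leq a_0'$.
\end{enumerate}
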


For all ideals $\mathfrak{q} \subset \mathcal{O}_{\mathbb K}$ coprime to $\mathfrak{q}_0$, we also define the measure
\begin{align*}
	\nu_1^\mathfrak{q} = \sum_{\alpha_1^{(l - p)_2}, \alpha_2^{(l - p)_2}, \dotsc, \alpha_{r'}^{(l - p)_2}} \mathop{\bigast}\limits_{j = 1}^{r'} \eta^\mathfrak{q}\big(\alpha_{j + 1}^{(l - p)_2}, \alpha_j^{(l - p)_2}\big).
\end{align*}
The following lemma, which is proved as in \cite[Lemma 9.4]{Sar22a}, shows that $\nu_0^\mathfrak{q}$ and $\nu_1^\mathfrak{q}$ are equivalent up to a certain multiplicative constant.

\begin{lemma}
	\label{lem:EstimateNu}
	There exists $C > 0$ such that for all ideals $\mathfrak{q} \subset \mathcal{O}_{\mathbb K}$ coprime to $\mathfrak{q}_0$, we have $\nu_0^\mathfrak{q} \leq e^{r' C\theta^l}\nu_1^\mathfrak{q}$ and $\nu_1^\mathfrak{q} \leq e^{r' C\theta^l} \nu_0^\mathfrak{q}$ where $C$ is independent of $|a| < a_0'$, $x \in \Sigma^+$, $r \in \mathbb N$ and its factorization $r = r'l$ with $l > p$.
\end{lemma}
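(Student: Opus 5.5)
The plan is to compare the two measures atom by atom. We expand $\nu_1^\mathfrak{q}$ as a sum over full sequences $\alpha^r$ and match each summand with the corresponding summand of $\nu_0^\mathfrak{q}$.

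First, we unfold the convolution. Since $\pi_\mathfrak{q}$ is a homomorphism and $\gamma(\alpha^r) = \gamma(\alpha_{r'}^l)\dotsb\gamma(\alpha_1^l)$, the $r'$-fold convolution $\mathop{\bigast}_{j = 1}^{r'} \eta^\mathfrak{q}\big(\alpha_{j + 1}^{(l - p)_2}, \alpha_j^{(l - p)_2}\big)$ becomes a sum over the blocks $\alpha_j^{(p)_1}$. After also summing over the blocks $\alpha_j^{(l - p)_2}$, this gives
\begin{align*}
\nu_1^\mathfrak{q} = \sum_{\alpha^r} \prod_{j = 1}^{r'} E\big(\alpha_{j + 1}^{(l - p)_2}, \alpha_j^l\big) \, \delta_{\pi_\mathfrak{q}(\gamma(\alpha^r))},
\end{align*}
which has the same support atoms as $\nu_0^\mathfrak{q} = \sum_{\alpha^r} e^{\FRet_r^{(a)}(\alpha^r, x)} \delta_{\pi_\mathfrak{q}(\gamma(\alpha^r))}$. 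Both measures are nonnegative, and the sums converge absolutely by the preceding lemma, since $\sum_{\alpha^k} e^{\FRet_k^{(a)}} \leq C$. It therefore suffices to prove, for every single sequence $\alpha^r$, that
\begin{align*}
\Bigl|\FRet_r^{(a)}(\alpha^r, x) - \sum_{j = 1}^{r'} \log E\big(\alpha_{j + 1}^{(l - p)_2}, \alpha_j^l\big)\Bigr| \leq r' C \theta^l.
\end{align*}

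Second, we decompose the Birkhoff sum. Write $\FRet_r^{(a)}(\alpha^r, x) = \sum_{i = 0}^{r - 1} \FRet^{(a)}(\sigma^i(\alpha^r, x))$. Group the indices $i$ into the windows that correspond to the definitions of $E$:
\begin{itemize}
\item the first $2l - p$ terms, which are exactly $\log E$ at $j = 1$;
\item for $2 \leq j \leq r' - 1$, the $l$ terms that start at the block $\alpha_{j + 1}^{(l - p)_2}$ and run through $\alpha_j^l$;
\item the final $p$ terms, which belong to $j = r'$.
\end{itemize}
I will double-check this indexing against the paper's right-to-left ordering convention. In each window with $j \geq 2$, the true points $\sigma^i(\alpha^r, x)$ differ from the points used in $E$ only in the tail: the tail is replaced by $\omega(\alpha_j^l)$ after at least $l - p$ common symbols have been read. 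Since the window starts $l - p$ symbols before the block $\alpha_j^l$, each evaluation point agrees with its true counterpart on at least the remaining symbols of that window followed by $\alpha_j^l$, which is at least $l$ symbols. Using $\Lip^{\mathrm{e}}(\FRet^{(a)}) \leq C_1'$ from \cref{eqn:ConstantC1'C2}, uniformly in $|a| \leq a_0'$, together with the $d_\theta$ contraction, the error in each window is a geometric sum
\begin{align*}
\sum_{m \geq l} C_1' \theta^{m} \leq C_2 \theta^l.
\end{align*}
Summing over the $r'$ windows gives the bound $r' C \theta^l$ with $C = C_2$ up to a harmless constant. This constant is independent of $a$, $x$, $r$, and the factorization $r = r'l$.

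The main obstacle is this bookkeeping. We must verify that, for the window labelled $j$, every term shares at least $l$ initial symbols with the true sequence before the substituted $\omega(\cdot)$ tail. Here the uniform Lipschitz bound $\Lip^{\mathrm{e}}$ on single cylinders is exactly what replaces the finite-alphabet argument of \cite[Lemma 9.4]{Sar22a}. A second point to check is that the countable alphabet causes no issue: the estimate is pointwise in $\alpha^r$, so no uniformity over letters is needed beyond $C_1'$. Exponentiating the pointwise estimate yields both inequalities $\nu_0^\mathfrak{q} \leq e^{r' C \theta^l} \nu_1^\mathfrak{q}$ and $\nu_1^\mathfrak{q} \leq e^{r' C \theta^l} \nu_0^\mathfrak{q}$.
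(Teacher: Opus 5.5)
\textbf{Verdict.} Your argument is correct and is the same one the paper uses, via \cite[Lemma 9.4]{Sar22a}. You unfold the convolution so that $\nu_1^\mathfrak{q}$ and $\nu_0^\mathfrak{q}$ become sums over the same sequences $\alpha^r$ with positive weights. You then compare $\FRet_r^{(a)}(\alpha^r, x)$ with $\sum_j \log E$ window by window, using the uniform local Lipschitz bound and geometric decay along cylinders. The $j = 1$ window matches exactly.

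\textbf{Indexing correction.} The bookkeeping you flagged is off by $p$. For $2 \le j \le r'$, the window's starting positions run through $\alpha_{j+1}^{(l-p)_2}$ and then only through $\alpha_j^{(p)_1}$; for $j = r'$, only through $\alpha_{r'}^{(p)_1}$. After the window, only $\alpha_j^{(l-p)_2}$ remains before the $\omega(\alpha_j^l)$ tail. So the last term of each window agrees with its true counterpart on only $l - p + 1$ symbols, not $l$.

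\textbf{Effect on the constant.} The per-window error is a geometric sum starting at $\theta^{l-p+1}$, so you get $C \approx C_2\theta^{1-p}$ (times cylinder-diameter constants) instead of $C_2$. This is still admissible, because $p$ is fixed before $l$, $r'$, and $r$ are chosen. The statement only asks for independence from $a$, $x$, $r$ and the factorization $r = r'l$, not from $p$.
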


The following is the key lemma which uses the expansion machinery of Golsefidy--Varj\'{u} \cite{GV12} and He--de Saxc\'{e} \cite{HdS22}, for which \cref{cor:Z-DenseInSimplyConnectedCoverGAndTraceFieldK} is required.

\begin{lemma}
\label{lem:GV_Expander}
There exists $\epsilon \in (0, 1)$ such that for all integers $0 \leq j \leq r'$, (square-free if $n = 3$) ideals $\mathfrak{q} \subset \calO_\K$ coprime to $q_0$, and $\phi \in L_0^2(\tilde{\mathbf{G}}_q)$ with $\|\phi\|_2 = 1$, there exist sequences $\beta_j^{(p)_1}$ and $\tilde{\beta}_j^{(p)_1}$ such that for all sequences $\beta_j^{(l - p)_2} = \tilde{\beta}_j^{(l - p)_2}$, we have
\begin{align*}
\|\delta_\gamma * \phi - \phi\|_2 \geq \epsilon
\end{align*}
where
\begin{align*}
\gamma = \pi_\mathfrak{q}\bigl(\gamma\bigl(\beta_j^l\bigr)\gamma\bigl(\tilde{\beta}_j^l\bigr)^{-1}\bigr) = \pi_\mathfrak{q}\left(\prod_{k = 1}^p \gamma_{\beta_{jl + 1 - k}} \prod_{k = 1}^p \gamma_{\tilde{\beta}_{jl - p + k}}^{-1}\right)
\end{align*}
and $\epsilon$ is independent of $r \in \mathbb N$ and its factorization $r = r'l$ with $l > p$.
\end{lemma}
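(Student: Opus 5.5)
The plan is to reduce the statement to a uniform spectral gap for the finite symmetric generating set $\tilde{S}_{\mathrm{fin}}$ of \cref{cor:Z-DenseInSimplyConnectedCoverGAndTraceFieldK}. Since $\tilde{\Omega}_{\mathrm{fin}} = \langle \tilde{S}_{\mathrm{fin}} \rangle$ is finitely generated, Zariski dense in $\tilde{\mathbf{G}}$, and has trace field $\K$, the strong approximation theorem together with \cite[Corollary 6]{GV12} (for $n = 3$, $\K = \Q$, square-free levels) or \cite[Theorem 6.1]{HdS22} (absolutely simple case, arbitrary $\mathfrak{q}$ coprime to $\mathfrak{q}_0$) shows that the Cayley graphs $\Cay(\pi_\mathfrak{q}(\tilde{\Omega}_{\mathrm{fin}}), \pi_\mathfrak{q}(\tilde{S}_{\mathrm{fin}}))$ form a family of expanders. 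By strong approximation for $\tilde{\Omega}_{\mathrm{fin}}$ we have $\pi_\mathfrak{q}(\tilde{\Omega}_{\mathrm{fin}}) = \tilde{\mathbf{G}}(\calO_\K/\mathfrak{q})$ for $\mathfrak{q}$ coprime to $\mathfrak{q}_0$; this is the reason $\mathfrak{q}_0$ was chosen to suffice for the return trajectory subgroups. Pushing down to $\tilde{G}_\mathfrak{q}$, this yields $\epsilon_0 > 0$, independent of $\mathfrak{q}$, such that every $\phi \in L_0^2(\tilde{G}_\mathfrak{q})$ with $\|\phi\|_2 = 1$ satisfies $\max_{s \in \tilde{S}_{\mathrm{fin}}} \|\delta_{\pi_\mathfrak{q}(s)} * \phi - \phi\|_2 \geq \epsilon_0$. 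Indeed, if every generator moved $\phi$ by less than $\epsilon_0$, the averaging operator would have $\langle \mu_{\tilde{S}_{\mathrm{fin}}} * \phi, \phi \rangle$ close to $1$, contradicting the uniform spectral gap.

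The second step converts a generator $s \in \tilde{S}_{\mathrm{fin}}$ into the form required in the lemma. Each $s \in \tilde{S} = \tilde{\pi}^{-1}(S)$ is, up to the central sign, of the form $\gamma(\alpha)\gamma(\tilde{\alpha})^{-1}$ with $\alpha, \tilde{\alpha}$ of a common length $k_s$. The sign is harmless because we work on $\tilde{G}_\mathfrak{q}$, where $\ker(\pi)$ is quotiented out. Since $\tilde{S}_{\mathrm{fin}}$ is finite, we have $k_s \leq p$ by the choice of $p$, which was made to dominate the relevant word lengths. We pad both words to length exactly $p$ by appending a common fixed letter string $\omega$ on the right, setting $\alpha' = (\alpha, \omega)$ and $\tilde{\alpha}' = (\tilde{\alpha}, \omega)$. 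Then $\gamma(\alpha')\gamma(\tilde{\alpha}')^{-1} = \gamma(\alpha)\gamma(\omega)\gamma(\omega)^{-1}\gamma(\tilde{\alpha})^{-1} = s$. Any admissibility constraints are vacuous here, since the coding is full-branched: every $\gamma_j$ maps $\Delta_0$ into $\Delta_j \subset \Delta_0$.

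Now take $\beta_j^{(p)_1}$ and $\tilde{\beta}_j^{(p)_1}$ to be these padded words, written in the reversed indexing convention $\alpha^j$ used in the section. In $\gamma(\beta_j^l)\gamma(\tilde{\beta}_j^l)^{-1}$ the common tail $\beta_j^{(l-p)_2} = \tilde{\beta}_j^{(l-p)_2}$ cancels, exactly as the displayed product formula in the lemma indicates. So $\gamma = \pi_\mathfrak{q}(s)$ for whichever $s$ attains the maximum above, and the bound holds with $\epsilon = \epsilon_0$, uniformly in $j$, $r$, $r'$, and $l > p$.

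The main obstacle is the first step: ensuring that the expander constant is uniform. With the countably infinite alphabet, no uniform statement is available for all of $\Omega$. This is exactly why we must pass to the finite set $\tilde{S}_{\mathrm{fin}}$ and check that it retains Zariski density and full trace field. Those properties are the content of \cref{cor:Z-DenseInSimplyConnectedCoverGAndTraceFieldK}, which we take as given. The remaining technical point is handling $\phi$ on $\tilde{G}_\mathfrak{q}$ rather than on $\tilde{\mathbf{G}}(\calO_\K/\mathfrak{q})$. We pass through $\dot{\phi}$, which has the same norm up to the factor $\sqrt{2}$ and mean zero, so the gap transfers.
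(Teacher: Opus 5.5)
Your proposal is correct and follows essentially the same route as the paper. The paper obtains uniform expansion of $\Cay(\tilde{G}_\mathfrak{q}, \tilde{S}_{\mathrm{fin}, \mathfrak{q}})$ from strong approximation together with Golsefidy--Varj\'{u} and He--de Saxc\'{e}, and then extracts a single generator moving $\phi$ by at least $\epsilon$. Your padding of both words on the right by a common string to length exactly $p$ makes explicit the step the paper compresses into ``following these isomorphisms shows that $\gamma$ has the form as in the lemma.''
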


\begin{proof}
Clearly, the lemma is independent of $0 \leq j \leq r'$. Let $\tilde{\Omega}_{\mathrm{fin}} \coloneq \langle \tilde{S}_{\mathrm{fin}}\rangle < \tilde{\Omega} < \tilde{\Gamma}$ be the subgroup generated by the \emph{finite} symmetric subset $\tilde{S}_{\mathrm{fin}} \subset \tilde{S} \subset \tilde{\Gamma}$ provided by \cref{cor:Z-DenseInSimplyConnectedCoverGAndTraceFieldK}. Then, \cref{cor:Z-DenseInSimplyConnectedCoverGAndTraceFieldK} says that $\tilde{\Omega}_{\mathrm{fin}}$ is Zariski dense in $\tilde{\mathbf{G}}$ and $\K_{\tilde{\Omega}_{\mathrm{fin}}} = \K$. Denote
\begin{align*}
\tilde{S}_{\mathrm{fin}, \mathfrak{q}} &\coloneq \pi_\mathfrak{q}(\tilde{S}_{\mathrm{fin}}), & \tilde{\Omega}_{\mathrm{fin}, \mathfrak{q}} \coloneq \pi_\mathfrak{q}(\tilde{\Omega}_{\mathrm{fin}}).
\end{align*}
By the strong approximation theorem of Weisfeiler \cite[Theorem 10.1]{Wei84}, we can then conclude $\tilde{\Omega}_{\mathrm{fin}, \mathfrak{q}} = \tilde{G}_\mathfrak{q}$ for all ideals $\mathfrak{q} \subset \calO_\K$ coprime to $\mathfrak{q}_0$. We can further use \cite[Corollary 6]{GV12} and the analogous corollary of \cite[Theorem 6.1]{HdS22} for $n \neq 3$ to conclude that the Cayley graphs $\Cay(\tilde{G}_\mathfrak{q}, \tilde{S}_{\mathrm{fin}, \mathfrak{q}})$ form expanders with respect to (square-free if $n = 3$) ideals $\mathfrak{q} \subset \calO_\K$ coprime to $\mathfrak{q}_0$. This means that there exists $\epsilon \in (0, 1)$ such that the smallest eigenvalue of $\Delta_\mathfrak{q}$ is $\lambda_1(\Delta_\mathfrak{q}) = 0$ and the next smallest eigenvalue satisfies $\lambda_2(\Delta_\mathfrak{q}) \geq \epsilon$ for all (square-free if $n = 3$) ideals $\mathfrak{q} \subset \calO_\K$ coprime to $\mathfrak{q}_0$. Here, $\Delta_\mathfrak{q}: L^2(\tilde{G}_\mathfrak{q}) \to L^2(\tilde{G}_\mathfrak{q})$ is a self-adjoint operator called the graph Laplacian and defined by
\begin{align*}
\Delta_\mathfrak{q}(\phi)  = \phi - \frac{1}{\#\tilde{S}_{\mathrm{fin}, \mathfrak{q}}}\sum_{\gamma \in \tilde{S}_{\mathrm{fin}, \mathfrak{q}}} \delta_\gamma * \phi \qquad \text{for all $\phi \in L^2(\tilde{G}_\mathfrak{q})$}.
\end{align*}
This is well-defined thanks to the finiteness of $\tilde{S}_{\mathrm{fin}}$. Note that the eigenspace corresponding to $\lambda_1(\Delta_\mathfrak{q}) = 0$ consists of constant functions. So, for all $\phi \in L_0^2(\tilde{G}_\mathfrak{q})$ with $\|\phi\|_2 = 1$, we have $\|\Delta_\mathfrak{q}(\phi)\|_2 \geq \epsilon$. Therefore,
\begin{align*}
\sum_{\gamma \in \tilde{S}_{\mathrm{fin}, \mathfrak{q}}} \|\delta_\gamma * \phi - \phi\|_2 \geq \epsilon \cdot \#\tilde{S}_{\mathrm{fin}, \mathfrak{q}}.
\end{align*}
We deduce that there exists $\gamma \in \tilde{S}_{\mathrm{fin}, \mathfrak{q}}$ such that $\|\delta_\gamma * \phi - \phi\|_2 \geq \epsilon$, for all (square-free if $n = 3$) ideals $\mathfrak{q} \subset \calO_\K$ coprime to $\mathfrak{q}_0$. But $\tilde{S}_{\mathrm{fin}} \subset \tilde{\Gamma}$ and we have the induced isomorphisms $\overline{\pi_\mathfrak{q}|_{\tilde{\Gamma}}}: \tilde{\Gamma}_\mathfrak{q} \backslash \tilde{\Gamma} \to \tilde{G}_\mathfrak{q}$ and $\overline{\tilde{\pi}}: \tilde{\Gamma}_\mathfrak{q} \backslash \tilde{\Gamma} \to \Gamma_\mathfrak{q} \backslash \Gamma$. Following these isomorphisms shows that $\gamma$ has the form as in the lemma, completing the proof.
\end{proof}

The following lemma is proved as in \cite[Lemma 9.6]{Sar22a} using \cref{lem:NearlyFlat,lem:GV_Expander}.

\begin{lemma}
	\label{lem:EtaOperatorBound}
	There exists $C \in (0, 1)$ such that for all (square-free if $n = 3$) ideals $\mathfrak{q} \subset \mathcal{O}_{\mathbb K}$ coprime to $\mathfrak{q}_0$, integers $1 \leq j \leq r'$, pairs of sequences $\big(\alpha_{j + 1}^{(l - p)_2}, \alpha_j^{(l - p)_2}\big)$, and $\phi \in L_0^2(\tilde{G}_\mathfrak{q})$ with $\|\phi\|_2 = 1$, we have
	\begin{align*}
		\left\|\eta^\mathfrak{q}\big(\alpha_{j + 1}^{(l - p)_2}, \alpha_j^{(l - p)_2}\big) * \phi\right\|_2 \leq C \left\|\eta^\mathfrak{q}\big(\alpha_{j + 1}^{(l - p)_2}, \alpha_j^{(l - p)_2}\big)\right\|_1
	\end{align*}
	where $C$ is independent of $|a| < a_0'$, $x \in \Sigma^+$, $r \in \mathbb N$ and its factorization $r = r'l$ with $l > p$.
\end{lemma}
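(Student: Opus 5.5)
The plan is to follow the proof of \cite[Lemma 9.6]{Sar22a}: use the near-flatness of \cref{lem:NearlyFlat} to split off a small uniform piece of $\eta^\mathfrak{q}$ that is a balanced combination of two Dirac masses, and then use the expansion estimate of \cref{lem:GV_Expander} to make that piece lose a definite fraction of its $\ell^1$ mass when convolved with $\phi$.

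Fix $j$, the pair $\big(\alpha_{j + 1}^{(l - p)_2}, \alpha_j^{(l - p)_2}\big)$, and $\phi \in L_0^2(\tilde{G}_\mathfrak{q})$ with $\|\phi\|_2 = 1$. Write $\eta = \eta^\mathfrak{q}\big(\alpha_{j + 1}^{(l - p)_2}, \alpha_j^{(l - p)_2}\big) = \sum_{\alpha_j^{(p)_1}} E_{\alpha_j^{(p)_1}} \delta_{g(\alpha_j^{(p)_1})}$, where $g(\alpha_j^{(p)_1}) = \pi_\mathfrak{q}(\gamma(\alpha_j^l))$. Apply \cref{lem:GV_Expander} to the function $\phi' = \delta_{\pi_\mathfrak{q}(\gamma(\alpha_j^{(l-p)_2}))^{-1}} * \phi$. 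Convolution by a Dirac mass is unitary and preserves $L_0^2$, so $\|\phi'\|_2 = 1$. Equivalently, conjugating the conclusion of that lemma, the lemma produces $\beta = \beta_j^{(p)_1}$ and $\tilde\beta = \tilde\beta_j^{(p)_1}$ such that
\[
\|\delta_{g(\beta)} * \phi - \delta_{g(\tilde\beta)} * \phi\|_2 \geq \epsilon .
\]
Here $\epsilon$ does not depend on $\mathfrak{q}$, $j$, $a$, $x$, or the factorization of $r$.

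Let $E_{\min} = \min(E_\beta, E_{\tilde\beta})$. If $\beta = \tilde\beta$ the displayed inequality is impossible, so the two indices are distinct and the decomposition
\[
\eta = E_{\min}\bigl(\delta_{g(\beta)} + \delta_{g(\tilde\beta)}\bigr) + \eta'
\]
holds with $\eta' \geq 0$ and $\|\eta'\|_1 = \|\eta\|_1 - 2E_{\min}$. The triangle inequality gives $\|\eta' * \phi\|_2 \leq \|\eta'\|_1$. The parallelogram law with unit vectors $u = \delta_{g(\beta)} * \phi$ and $v = \delta_{g(\tilde\beta)} * \phi$ gives
\[
\|u + v\|_2 \leq \sqrt{4 - \epsilon^2} \leq 2 - \tfrac{\epsilon^2}{4} .
\]
Combining the two bounds,
\[
\|\eta * \phi\|_2 \leq \|\eta\|_1 - \tfrac{\epsilon^2}{4} E_{\min} .
\]

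It then suffices to show $E_{\min} \geq c\|\eta\|_1$ with $c > 0$ uniform in all the parameters. By \cref{lem:NearlyFlat}, all coefficients of $\eta$ lie within a factor $C$ of each other, which gives $E_{\min} \geq C^{-1}\max_{\alpha_j^{(p)_1}} E_{\alpha_j^{(p)_1}}$. This is the main obstacle, because the index set of $\alpha_j^{(p)_1}$ is $\calA^p$, which is countably infinite: near-flatness compares individual coefficients but does not bound the total mass $\|\eta\|_1$ by a multiple of a single one.

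To get around this I would use the finite subalphabet. Since $p$ and $\tilde{S}_{\mathrm{fin}}$ were fixed in \cref{cor:Z-DenseInSimplyConnectedCoverGAndTraceFieldK} from finitely many letters, the indices $\beta, \tilde\beta$ range over a fixed finite set $\calA_1^p$. I would show that the relative weights are uniformly comparable to the $\nu$-measure of the corresponding cylinders. These weights are $E_{\alpha_j^{(p)_1}}\big/\sum E$, and the comparison would use \cref{lem:CylinderEstimate}, the Lipschitz bounds on $\FRet^{(a)}$ from \cref{eqn:ConstantC1'C2}, and the normalization $\mathcal{L}_0^*\nu = \nu$. This gives $E_\beta \geq c_1\|\eta\|_1$ for every $\beta \in \calA_1^p$, with $c_1 = \min_{\beta \in \calA_1^p} \nu(\gamma(\beta)\Delta_0)/C' > 0$.

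With $E_{\min} \geq c_1\|\eta\|_1$ in hand, the lemma follows with constant $1 - \epsilon^2 c_1/4 \in (0,1)$, which is uniform in $\mathfrak{q}$, $j$, $a$, $x$, and $r = r'l$.
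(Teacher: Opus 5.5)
Your argument is correct. Its skeleton is the one the paper uses when it defers to \cite[Lemma 9.6]{Sar22a}: \cref{lem:GV_Expander} supplies one pair of $p$-blocks whose translates of $\phi$ are $\epsilon$-apart, and positivity of $\eta^\mathfrak{q}$ turns this into a loss proportional to the mass carried by that pair.

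The real difference is how that mass is bounded below relative to $\|\eta\|_1$. In \cite{Sar22a} the bound comes from \cref{lem:NearlyFlat} together with finiteness of the alphabet, so each weight is at least $\|\eta\|_1/(C\,\#\calA^p)$. The paper adds nothing beyond this reference. With a countable alphabet that step does not transfer. In fact near-flatness cannot hold uniformly over all of $\calA^p$: each letter of the $p$-block contributes a factor $\asymp e^{-(\delta_\Gamma + a)\Ret}$ on its cylinder, and $\Ret$ is unbounded on $\calA$. So drop the sentence where you invoke it; your final bound does not need it.

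Your replacement has three parts:
\begin{itemize}
\item only the finitely many padded pairs representing $\tilde{S}_{\mathrm{fin}}$ can be selected;
\item the prefix factor $e^{\FRet^{(a)}_{l-p}}$ has bounded distortion in $\beta$;
\item the block factor $e^{\FRet^{(a)}_p(\beta\tau)}$, with $\tau$ the tail after the block, is bounded below on a finite set.
\end{itemize}
This is what makes the argument valid in the countable setting, and it carries out the paper's stated strategy of restricting to a finite subalphabet.

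When writing it out, bound $\|\eta\|_1$ from above using $\sum_{\beta \in \calA^p} e^{\FRet^{(a)}_p(\beta\tau)} = \mathcal{L}_a^p(1)(\tau) = 1$. Do not use termwise comparison with $\nu$: for $a \neq 0$, $\FRet^{(a)} - \FRet^{(0)}$ contains $-a\Ret$, so that comparison is uniform only on the finite subalphabet. Treat $j = 1$ and $j = r'$ the same way; for $j = 1$ a $\beta$-independent factor splits off, and for $j = r'$ the prefix is empty.

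Finally, use the paper's convention $(\delta_g * \phi)(h) = \phi(hg^{-1})$. Then the common suffix $\gamma(\alpha_j^{(l-p)_2})$ cancels exactly, giving $\|\delta_{g(\beta)} * \phi - \delta_{g(\tilde\beta)} * \phi\|_2 = \|\delta_\gamma * \phi - \phi\|_2$ with $\gamma$ as in \cref{lem:GV_Expander}. So apply that lemma to $\phi$ itself; applying it to your $\phi'$ would control a conjugate of $\gamma$ instead.
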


Then, the following lemma is proved as in \cite[Lemma 9.7]{Sar22a} using \cref{lem:EstimateNu,lem:EtaOperatorBound}.

\begin{lemma}
	\label{lem:ExpanderMachineryBound}
	There exists $l_0 \in \mathbb N$ such that if $l > l_0$, then there exists $C \in (0, 1)$ such that for all (square-free if $n = 3$) ideals $\mathfrak{q} \subset \mathcal{O}_{\mathbb K}$ coprime to $\mathfrak{q}_0$, integers $s > r$, sequences $(\alpha_s, \alpha_{s - 1}, \dotsc, \alpha_{r + 1})$, and $\phi \in L_0^2(\tilde{G}_\mathfrak{q})$ with $\|\phi\|_2 = 1$, we have
	\begin{align*}
		\left\|\nu_{(\alpha_s, \alpha_{s - 1}, \dotsc, \alpha_{r + 1})}^\mathfrak{q} * \phi\right\|_2 \leq C^r \left\|\nu_{(\alpha_s, \alpha_{s - 1}, \dotsc, \alpha_{r + 1})}^\mathfrak{q}\right\|_1
	\end{align*}
	where $C$ is independent of $|a| < a_0'$, $x \in \Sigma^+$, and $r \in \mathbb N$, but dependent on the factorization $r = r'l$.
\end{lemma}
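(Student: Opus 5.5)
The plan is to follow \cite[Lemma 9.7]{Sar22a} and factor $\nu^\mathfrak{q}$ into a convolution of the nearly flat measures $\eta^\mathfrak{q}$. Then \cref{lem:EtaOperatorBound} is applied block by block, and the error from this factorization is absorbed using \cref{lem:EstimateNu}.

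First, recall that
\[
\nu_{(\alpha_s, \dotsc, \alpha_{r + 1})}^\mathfrak{q} = e^{\FRet_{s - r}^{(a)}(\alpha_s, \dotsc, \alpha_{r + 1}, \omega(\alpha_{r + 1}))}\, \nu_0^\mathfrak{q}.
\]
The prefactor is a positive scalar and appears on both sides of the inequality, so it cancels. It therefore suffices to prove $\|\nu_0^\mathfrak{q} * \phi\|_2 \leq C^r \|\nu_0^\mathfrak{q}\|_1$.

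Next, compare $\nu_0^\mathfrak{q}$ with $\nu_1^\mathfrak{q}$. Both are positive measures, and \cref{lem:EstimateNu} gives $\nu_0^\mathfrak{q} \leq e^{r'C\theta^l}\nu_1^\mathfrak{q}$. I would use this pointwise domination in the following way. Write
\[
\nu_0^\mathfrak{q} = \sum_{\alpha^r} e^{\FRet_r^{(a)}(\alpha^r, x)}\delta_{\pi_\mathfrak{q}(\gamma(\alpha^r))}.
\]
The same indexing set of words $\alpha^r$ appears in $\nu_1^\mathfrak{q}$, because $\gamma(\alpha^r) = \gamma(\alpha_{r'}^l)\dotsb\gamma(\alpha_1^l)$. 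The two measures differ only by weights whose ratio lies in $[e^{-r'C\theta^l}, e^{r'C\theta^l}]$. Hence
\[
\|\nu_0^\mathfrak{q} * \phi\|_2 \leq \|\nu_1^\mathfrak{q} * \phi\|_2 + (e^{2r'C\theta^l} - 1)\|\nu_1^\mathfrak{q}\|_1,
\]
since convolution with a measure of total variation $m$ has operator norm at most $m$. Equivalently, one can run the argument of \cite{Sar22a} with a triangle inequality at the level of weights.

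For $\nu_1^\mathfrak{q}$, expand it as a sum over $(\alpha_1^{(l-p)_2}, \dotsc, \alpha_{r'}^{(l-p)_2})$ of $r'$-fold convolutions of the $\eta^\mathfrak{q}$. Convolution on $\tilde{G}_\mathfrak{q}$ with positive measures preserves $L_0^2$, because left convolution preserves the mean up to multiplication by the total mass. So \cref{lem:EtaOperatorBound} can be iterated $r'$ times, each time on a mean-zero function. This gives
\[
\Bigl\|\mathop{\bigast}_j \eta^\mathfrak{q} * \phi\Bigr\|_2 \leq C_\eta^{r'} \prod_j \|\eta^\mathfrak{q}\|_1 = C_\eta^{r'}\Bigl\|\mathop{\bigast}_j \eta^\mathfrak{q}\Bigr\|_1,
\]
with $C_\eta \in (0,1)$. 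Summing over the indices yields $\|\nu_1^\mathfrak{q} * \phi\|_2 \leq C_\eta^{r'}\|\nu_1^\mathfrak{q}\|_1$.

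Finally, I combine the pieces using $\|\nu_1^\mathfrak{q}\|_1 \leq e^{r'C\theta^l}\|\nu_0^\mathfrak{q}\|_1$. The cleanest route, as in \cite{Sar22a}, is to write $\nu_0^\mathfrak{q} = \nu_1^\mathfrak{q} \cdot g$ pointwise on words, with the density $g$ in $[e^{-r'C\theta^l}, e^{r'C\theta^l}]$. The convolution bound from the previous step is applied to $\nu_1^\mathfrak{q}$, and the density is then handled by the lower bound $e^{-r'C\theta^l}\nu_1^\mathfrak{q} \leq \nu_0^\mathfrak{q}$ together with a triangle inequality. This yields
\[
\|\nu_0^\mathfrak{q} * \phi\|_2 \leq \bigl(C_\eta^{r'}e^{r'C\theta^l} + (e^{2r'C\theta^l} - 1)\bigr)\|\nu_0^\mathfrak{q}\|_1 .
\]

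This last combination is the main obstacle. The additive error $(e^{2r'C\theta^l} - 1)$ must not spoil exponential decay in $r'$. It grows with $r'$ and cannot simply be absorbed into the decaying term $C_\eta^{r'}$.

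To get around this, I would avoid the global additive comparison and instead compare block by block inside the convolution product. The per-block weight ratios lie within $e^{\pm C\theta^l}$. Applying \cref{lem:EtaOperatorBound} to each block with that small perturbation gives a per-block factor $C_\eta + (e^{2C\theta^l} - 1)$. Choosing $l_0$ large makes this factor at most some $C' < 1$, uniformly in $|a| < a_0'$ and $x$. The total bound is then $(C')^{r'} = \bigl((C')^{1/l}\bigr)^r$, which gives the claim with $C = (C')^{1/l}$. This constant depends on the factorization $r = r'l$ through $l$, exactly as the statement allows.
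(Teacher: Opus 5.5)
The gap is in your last paragraph. Everything before it is correct: the reduction to $\nu_0^{\mathfrak q}$, the bound $\|\nu_1^{\mathfrak q}*\phi\|_2\le C_\eta^{r'}\|\nu_1^{\mathfrak q}\|_1$ obtained by fixing the $(l-p)_2$-letters and iterating \cref{lem:EtaOperatorBound}, and your diagnosis that the global comparison via \cref{lem:EstimateNu} leaves a non-decaying additive error. These are the ingredients the paper cites.

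Comparing ``block by block inside the convolution product'' presupposes that, once the $(l-p)_2$-letters are fixed, $\nu_0^{\mathfrak q}$ is a convolution product over $j$ of block measures, each an $e^{\pm C\theta^l}$-perturbation of $\eta^{\mathfrak q}\bigl(\alpha_{j+1}^{(l-p)_2},\alpha_j^{(l-p)_2}\bigr)$. It is not. The weight $e^{\FRet_r^{(a)}(\alpha^r,x)}$ is a Birkhoff sum. The part of it on the positions of $\alpha_{j+1}^{(l-p)_2}$ and $\alpha_j^{(p)_1}$ depends, to orders $\theta^{l-p},\theta^{2l-p},\dotsc$, on the letters $\alpha_{j-1}^{(p)_1},\alpha_{j-2}^{(p)_1},\dotsc$ of all inner blocks, and those are themselves summation variables. \Cref{lem:EstimateNu} controls only the product of these non-separable ratios, so there is no block measure to feed into \cref{lem:EtaOperatorBound}.

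Suppose you implement the idea the natural way: peel off the outermost block and split its weight into the reference weight $E$ plus a perturbation of relative size $O(\theta^{l-p})$. The main part does give $C_\eta$ times the $(r'-1)$-block quantity. The perturbation, however, depends on the inner letters, so you can only bound it by the triangle inequality over them, which discards all of their cancellation. Let $a_k$ be the ratio of $\|\cdot*\phi\|_2$ to total mass for the $k$ innermost blocks. You then get essentially $a_k\le C_\eta a_{k-1}+O(\theta^{l-p})$, which stalls at $O(\theta^{l-p}/(1-C_\eta))$ instead of decaying like $C^r$. The obstruction you identified simply reappears one block at a time.

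In fact the per-block factor $C_\eta+(e^{2C\theta^l}-1)$ is not a valid general principle. Identify $\Z/2\Z$ with $\{\pm1\}$, take one letter $s_j\in\{\pm1\}$ per block with group element $s_j$, and use weights $\prod_{j=1}^R e^{\epsilon s_js_{j-1}}$ with $s_0=1$. Each block ratio against the reference weight $e^{\epsilon s_j}$ lies in $e^{\pm2\epsilon}$, and the reference block measure has relative norm $\tanh\epsilon$ on $L_0^2$. Yet a transfer-matrix computation gives relative norm exactly $(\tanh\epsilon)^{R/2}$ for even $R$. For small $\epsilon$ this is far larger than $(\tanh\epsilon+e^{4\epsilon}-1)^R$.

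What is missing is an argument that keeps the inner cancellation inside the perturbation terms. One way is an induction on the number of summed inner blocks that carries an extra outer boundary weight. In it, you telescope each block's dependence on the deeper blocks, so that the piece depending on the $t$ nearest inner blocks, of relative size $O(\theta^{tl-p})$, forfeits only the cancellation of those $t+1$ blocks. This gives a recursion $a_k\le C_\eta' a_{k-1}+\sum_{t\ge1}C\theta^{tl-p}a_{k-1-t}$, with $a_k$ now a supremum over boundary weights and $C_\eta'$ slightly larger than $C_\eta$. Hence $a_k\lesssim\lambda^k$ with $\lambda<1$ once $l$ is large.

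For the downstream use in \cref{lem:L2FlatteningLemma}, you could instead pass through $\ell^2$-norms of the positive measures. There the pointwise domination $\nu_0^{\mathfrak q}\le e^{r'C\theta^l}\nu_1^{\mathfrak q}$ does transfer, but that proves a weaker statement than the lemma.
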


In the following lemma, $\tilde{\mu}: L^2(\tilde{G}_\mathfrak{q}) \to L^2(\tilde{G}_\mathfrak{q})$ associated to a complex measure $\mu$ on $\tilde{G}_\mathfrak{q}$ denotes an operator acting by convolution: $\tilde{\mu}(\phi) = \mu * \phi$ for all $\phi \in L^2(\tilde{G}_\mathfrak{q})$. The lemma is proved as in \cite[Lemma 9.8]{Sar22a} and the subsequent remark in loc. cit. says that the square-free hypothesis is not required.

\begin{lemma}
	\label{lem:ConvolutionBoundOnE_q^q}
	There exists $C > 0$ such that for all ideals $\mathfrak{q} \subset \mathcal{O}_{\mathbb K}$ coprime to $\mathfrak{q}_0$ and complex measures $\mu$ on $\tilde{G}_\mathfrak{q}$, we have
	\begin{align*}
		\|\tilde{\mu}|_{E_\mathfrak{q}^\mathfrak{q}}\|_{\mathrm{op}} \leq C N_{\mathbb K}(\mathfrak{q})^{-\frac{1}{4}} (\#\tilde{G}_\mathfrak{q})^{\frac{1}{2}} \|\mu\|_2.
	\end{align*}
\end{lemma}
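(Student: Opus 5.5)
The plan is to decompose $E_\mathfrak{q}^\mathfrak{q}$ into irreducible representations of $\tilde{G}_\mathfrak{q}$, bound the operator norm of $\tilde{\mu}$ on each isotypic piece by a Hilbert--Schmidt (Plancherel) argument, and then use a lower bound on the dimension of every irreducible constituent of $E_\mathfrak{q}^\mathfrak{q}$. Since $\tilde{\mu}$ commutes with right translations, it preserves $E_\mathfrak{q}^\mathfrak{q}$ and each of its isotypic components. Write $L^2(\tilde{G}_\mathfrak{q}) = \bigoplus_{\pi} \pi^{\oplus \dim \pi}$ via Peter--Weyl for the finite group $\tilde{G}_\mathfrak{q}$. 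On the $\pi$-isotypic component, $\tilde{\mu}$ acts through $\pi(\mu) = \sum_h \mu(h)\pi(h)$ in each copy. Hence
\begin{align*}
\|\tilde{\mu}|_{E_\mathfrak{q}^\mathfrak{q}}\|_{\mathrm{op}} = \max_{\pi \subset E_\mathfrak{q}^\mathfrak{q}} \|\pi(\mu)\|_{\mathrm{op}}.
\end{align*}

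Next I would use the Plancherel identity
\begin{align*}
\sum_\pi \dim(\pi) \|\pi(\mu)\|_{\mathrm{HS}}^2 = \#\tilde{G}_\mathfrak{q} \|\mu\|_2^2.
\end{align*}
This gives, for any $\pi$,
\begin{align*}
\|\pi(\mu)\|_{\mathrm{op}}^2 \leq \|\pi(\mu)\|_{\mathrm{HS}}^2 \leq \#\tilde{G}_\mathfrak{q}\|\mu\|_2^2/\dim(\pi).
\end{align*}
It therefore suffices to show that every irreducible $\pi$ occurring in $E_\mathfrak{q}^\mathfrak{q}$ has $\dim \pi \geq c N_\K(\mathfrak{q})^{1/2}$ for some $c>0$ uniform in $\mathfrak{q}$ coprime to $\mathfrak{q}_0$.

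The main obstacle is this dimension lower bound, a quasirandomness statement for new representations. The elements of $E_\mathfrak{q}^\mathfrak{q}$ are exactly the new functions, i.e.\ those not invariant under $\ker(\overline{\pi_{\mathfrak{q},\mathfrak{q}'}})$ for any $\mathfrak{q} \subsetneq \mathfrak{q}'$. So $\pi$ does not factor through any proper quotient level. By the Chinese remainder theorem, $\tilde{\mathbf{G}}(\calO_\K/\mathfrak{q}) \cong \prod_{\mathfrak{p}^e \| \mathfrak{q}} \tilde{\mathbf{G}}(\calO_\K/\mathfrak{p}^e)$ (up to the central quotient by $\pi_\mathfrak{q}(\ker\pi)$), and $\pi$ is a tensor product $\bigotimes \pi_{\mathfrak{p}}$ with each $\pi_\mathfrak{p}$ new at level $\mathfrak{p}^e$. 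Since dimensions multiply, it suffices to prove $\dim \pi_\mathfrak{p} \geq c' N(\mathfrak{p}^e)^{1/2}$ with $c'$ such that the product of constants does not decay. This holds, for instance, if $c'=1$ for all but finitely many primes, which are excluded by enlarging $\mathfrak{q}_0$.

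For $e=1$, I would invoke Landazuri--Seitz type bounds: nontrivial irreducibles of the finite quasi-simple group $\tilde{\mathbf{G}}(\mathbb{F}_\mathfrak{p})$ have dimension $\gg N(\mathfrak{p})^{c(\mathbf{G})}$ with $c(\mathbf{G}) \geq 1$ in the relevant ranks, which is well above the exponent $\frac12$ needed. For $e \geq 2$, a new $\pi_\mathfrak{p}$ restricts nontrivially to the abelian kernel $K_{e-1} = \ker(\tilde{\mathbf{G}}(\calO/\mathfrak{p}^e) \to \tilde{\mathbf{G}}(\calO/\mathfrak{p}^{e-1})) \cong \mathrm{Lie}(\tilde{\mathbf{G}})(\mathbb{F}_\mathfrak{p})$. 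The characters appearing in this restriction form a union of nonzero coadjoint orbits under $\tilde{\mathbf{G}}(\mathbb{F}_\mathfrak{p})$, each of size $\gg N(\mathfrak{p})^{\dim(\text{min. nilpotent orbit})}$. Combining this with the congruence filtration, in the manner of Bourgain--Gamburd and \cite[Lemma 9.8]{Sar22a}, one gets $\dim\pi_\mathfrak{p} \gg N(\mathfrak{p})^{e/2}$. This argument avoids any square-freeness assumption.

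Finally, combining the steps yields the stated bound with exponent $-\frac14$ on $N_\K(\mathfrak{q})$ and constant $C = c^{-1/2}$:
\begin{align*}
\|\tilde{\mu}|_{E_\mathfrak{q}^\mathfrak{q}}\|_{\mathrm{op}} \leq \bigl(\#\tilde{G}_\mathfrak{q}\bigr)^{\frac12}\|\mu\|_2 \bigl(cN_\K(\mathfrak{q})^{1/2}\bigr)^{-\frac12}.
\end{align*}
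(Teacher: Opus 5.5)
Your proposal is correct and matches the argument the paper defers to in \cite[Lemma 9.8]{Sar22a}: bound $\|\tilde{\mu}|_{E_\mathfrak{q}^\mathfrak{q}}\|_{\mathrm{op}}$ by the Hilbert--Schmidt quantity $(\#\tilde{G}_\mathfrak{q})^{1/2}\|\mu\|_2$ divided by the square root of the minimal dimension (equivalently, multiplicity) of a new irreducible representation of $\tilde{G}_\mathfrak{q}$, and then use the quasirandomness bound $\dim\pi \gg N_\K(\mathfrak{q})^{1/2}$, which holds without square-freeness.

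Two minor corrections. First, with the paper's convention $(\mu*\phi)(g)=\sum_h\mu(h)\phi(gh^{-1})$, $\tilde{\mu}$ is a combination of right translations, so it commutes with left rather than right translations; this is harmless because $E_\mathfrak{q}^\mathfrak{q}$ is cut out by normal subgroups. Second, for $e\geq 2$ the coadjoint-orbit count on the top layer $K_{e-1}$ alone gives a bound that does not grow with $e$. The growth $N(\mathfrak{p})^{e/2}$ comes from running the same orbit argument on the abelian kernel of reduction to level $\mathfrak{p}^{\lceil e/2\rceil}$.
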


Finally, \cref{lem:L2FlatteningLemma} is proved as in \cite[Lemma 9.1]{Sar22a} using \cref{lem:ExpanderMachineryBound,lem:ConvolutionBoundOnE_q^q}.

\section{Local non-integrability condition, non-concentration property, and large deviation property}
\label{sec:LNIC_NCP_LDP}
In this section, we collect three essential properties for Dolgopyat's method from \cite{LPS25}. The first two have already appeared previously in \cite{SW21}. The third property is required due to the presence of cusps; it was introduced and proved in \cite{LPS25}.

\subsection{LNIC}
\label{subsec:LNIC}
Let us recall the first key property called the \emph{local non-integrability condition (LNIC)} which we state in our setting. Given any pair of inverse branches $(\alpha, \beta) \in \gen^{(m)} \times \gen^{(m)}$ for some $m\in \mathbb{N}$, define the map $\BP_{(\alpha, \beta)}:\Delta_0\times \Delta_0\to AM$ by
\begin{align*}
\BP_{(\alpha, \beta)}(x,y) \coloneq \GHol^m(\alpha x, \infty)^{-1}\GHol^m(\alpha y, \infty)\GHol^m(\beta y, \infty)^{-1}\GHol^m(\beta x, \infty)
\end{align*}
for all $x, y \in \Delta_0$. This map is related to Brin--Pesin moves \cite{BP74,Bri82} (cf. \cite{SW21}).

\begin{proposition}[LNIC; {\cite[Proposition 5.8]{LPS25}}]
\label{prop:LNIC}
There exist $\epsilon\in (0,1)$, $m_0\in \N$, $\jj\in \N$, such that for all $m\geq m_0$, there exist $\{\alpha_j\}_{j = 0}^{\jj} \subset \gen^{(m)}$ such that for all $x \in \Delta_0$ and $\omega \in \LieA \oplus \LieM$ with $\|\omega\| = 1$, there exist $1\leq j\leq \jj$ and $Z\in \operatorname{T}_x(\Delta_0)$ with $\|Z\| = 1$ such that
\begin{equation*}
|\langle  (d\BP_{j, x})_x(Z),\omega\rangle|\geq \epsilon
\end{equation*}
where we denote $\BP_{j}\coloneq\BP_{(\alpha_0,\alpha_j)}$ and $\BP_{j, x} \coloneq \BP_j(x, \bigcdot)$ for all $x \in \Delta_0$ and $1\leq j\leq \jj$. 
\end{proposition}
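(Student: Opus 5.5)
Since the statement is quoted from \cite[Proposition 5.8]{LPS25} and its content concerns only the geometry of $\BP$ maps (the congruence cocycle plays no role), the plan is to reproduce the argument of \cite{SW21} adapted to the countably infinite coding, exactly as in \cite{LPS25}. The key input is that the Brin--Pesin map $\BP_{(\alpha,\beta)}(x,\bigcdot)$ measures the $AM$-component of the holonomy around a quadrilateral formed by strong unstable/stable leaves. Concretely, by the defining identity of $\Hol$ and \cref{lem:stable direction}, $\GHol^m(\alpha x,\infty)^{-1}\GHol^m(\alpha y,\infty)$ equals the $AM$-coordinate comparing the frames $\tilde F(\alpha x,\infty)$ and $\tilde F(\alpha y,\infty)$ after flowing back along the inverse branch $\alpha$. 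So $\BP_{(\alpha,\beta)}(x,y)$ is the $AM$-element obtained by composing unstable holonomy from $x$ to $y$ along the branch $\alpha$ with the inverse one along $\beta$. When the two branches have pre-images far apart near the backward endpoint, this is approximated, up to an error $O(\theta^m)$ in $C^1$, by a map $\Psi(x,y)$ given by the $AM$-part of a Brin--Pesin quadrilateral $u \to n^+ u \to n^- n^+ u \to \cdots$ with fixed stable displacement $n^-$.

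In order, I would first fix a reference point $z_0$ in $\Lambda_-$ and a finite set of points $z_1,\dots,z_{\jj}$ in $\Lambda_-$ (or in $\Lambda(\Gamma)$ near $\infty$, using \cref{lambda-}) chosen so that the differentials in $y$ of the associated Brin--Pesin maps $\Psi_j(x,\bigcdot)$, at $y = x$, span $\LieA\oplus\LieM$ for each $x$ in $\overline{\Delta_0}$. This is the Lie-theoretic heart of the argument. Differentiating the quadrilateral map gives $Z \mapsto$ the $\LieA\oplus\LieM$-component of $[\Ad(n^-)Z]$ for $Z \in \mathfrak n^+$. Because $[\mathfrak n^-,\mathfrak n^+] = \LieA\oplus\LieM$ and $\Lambda(\Gamma)$ is not in any sphere by Zariski density (\cref{lem: Zariski dense condition}), one can choose stable displacements $n^-_j$ in the directions of $\Lambda(\Gamma)$ so that these brackets span. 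Compactness of $\overline{\Delta_0}$ and continuity then give a uniform $\epsilon$ and reduce the whole problem to finitely many $j$.

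Second, I would realize each $z_j$ by inverse branches: choose $\alpha_j \in \gen^{(m)}$ for $m\ge m_0$ whose backward points approximate $z_j$. This uses density of the $\gen$-orbit of backward points in $\Lambda_-$ and the contraction from \cref{lambda-}, and only finitely many letters are needed, which sidesteps the countable alphabet. Third, I would show $C^1$ convergence $\BP_{(\alpha_0,\alpha_j)} \to \Psi_j$ as $m\to\infty$, using the distortion bound in Property~(3) of \cref{prop:Coding} and $\|d\gamma\|\le\theta^m$, so that for large $m$ the exact inequality $|\langle (d\BP_{j,x})_x(Z),\omega\rangle|\ge\epsilon$ holds with $\epsilon$ halved.

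I expect the main obstacle to be this uniform $C^1$ approximation in the presence of cusps. The branches can involve letters deep in the cusp, where $\Ret$ is unbounded, so the derivatives of $\GHol^m$ must be controlled via the Lipschitz constant $C_1'$ in \cref{eqn:ConstantC1'C2} summed geometrically with ratio $\theta$. The way around this is to restrict all $\alpha_j$ to a finite subalphabet, so that every estimate uses only finitely many letters and uniform bounds hold.
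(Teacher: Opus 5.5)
\textbf{Gap: the choice of stable corners.} The paper gives no proof of this statement; it is quoted from \cite[Proposition 5.8]{LPS25}. Your overall architecture follows the standard strategy of \cite{SW21}: realise $\BP_{(\alpha_0,\alpha_j)}$ as the $AM$-part of a Brin--Pesin quadrilateral, compute its derivative at $y=x$ as the $\LieA\oplus\LieM$-projection of a bracket with an element of $\mathfrak{n}^-$, then use general position and compactness. However, the one step where Zariski density must enter is not justified.

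Iterating the definition of $\Hol$ gives $\tilde{F}(\alpha x,\infty)\GHol^m(\alpha x,\infty)=\alpha\tilde{F}(x,\alpha^{-1}\infty)$ for $\alpha\in\gen^{(m)}$. Hence, exactly, $\BP_{(\alpha,\beta)}(x,y)=\Xi(x,y;\alpha^{-1}\infty)\,\Xi(x,y;\beta^{-1}\infty)^{-1}$, where $\Xi(x,y;z)$ is the $AM$-component of $\tilde{F}(x,z)^{-1}\tilde{F}(y,z)\in AMN^+$. So the only stable corners available are the points $\alpha^{-1}\infty$. A target $z_j$ can be approximated by such points at every length $m\geq m_0$ only if $z_j\in L:=\bigcap_{m}\overline{\bigcup_{\alpha\in\gen^{(m)}}\alpha^{-1}\Lambda_-}$ (these sets decrease in $m$). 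You instead pick the $z_j$ anywhere in $\Lambda_-$, or near $\infty$ in $\Lambda(\Gamma)$, using Zariski density of $\Gamma$, and then appeal to density of backward points in $\Lambda_-$. Nothing supports this: $\Lambda_-$ is just some neighbourhood of $\infty$ produced by \cref{lambda-}, and $L$ need not be all of $\overline{\Lambda_-}$. General position of \emph{realizable} corners is exactly the non-Lie-theoretic content of LNIC, so this step has to be replaced.

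\textbf{What is needed, and how to get it.} You need that $L$ lies in no $(n-2)$-sphere.
\begin{itemize}
\item \emph{Reduction to a sphere condition.} The map $z\mapsto W(z,x)\in\mathfrak{n}^-$, recording the stable move from $\tilde{F}(x,\infty)$ to backward endpoint $z$, is a M\"obius chart of $\partial_\infty\HS\smallsetminus\{x\}$. So the vectors $W(z_j,x)-W(z_0,x)$ span $\mathfrak{n}^-$ if and only if $\{x,z_0,\dotsc,z_{\jj}\}$ is not cospherical.
\item \emph{Spanning $\mathfrak{n}^-$ suffices.} If $(c,W')\in\LieA\oplus\LieM$ is orthogonal to $[V,Y]$ for all $Y\in\mathfrak{n}^+$, then $V$ is an eigenvector of the skew-symmetric $W'$ with real eigenvalue proportional to $c$. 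This forces $c=0$ and $W'V=0$, so $W'=0$ once the $V$'s span.
\item \emph{Uniformity in $x$.} Non-cospherical $z_0,\dotsc,z_{\jj}\in L$ therefore work for every $x\in\overline{\Delta_0}$ simultaneously, and compactness yields $\epsilon$.
\item \emph{$L$ is not cospherical.} Since $\gamma_j^{-1}\alpha^{-1}=(\alpha\gamma_j)^{-1}$, we have $\gamma_j^{-1}L\subset L$ for all $j\in\calA$. If $L$ lay in a sphere, the smallest such sphere $S$ would satisfy $\gamma_j^{-1}S=S$ for all $j$. That places $\langle\gen\rangle_\sg$ in the proper algebraic subgroup $\Stab_G(S)$.
\item \emph{Zariski density of $\langle\gen\rangle_\sg$.} This contradicts Zariski density of $\langle\gen\rangle_\sg$, which holds as in the proof of \cref{thm:Z-DenseInSimplyConnectedCoverGAndTraceFieldK}. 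Its attracting fixed points are dense in $\Lambda_+$, hence in $\Lambda(\Gamma)\cap\Delta_0$, so the group it generates is Zariski dense by \cref{lem: Zariski dense condition}. A semigroup has the same Zariski closure as the group it generates.
\item \emph{Concrete choice.} Take $z_j=\eta_j^-$, repelling fixed points of suitable $\eta_j\in\langle\gen\rangle_\sg$. They are realised at every length by $\alpha=\beta\eta_j^{q}$, since $\alpha^{-1}\infty=\eta_j^{-q}\beta^{-1}\infty\to\eta_j^-$ by \cref{lambda-}.
\end{itemize}

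\textbf{Your stated main obstacle disappears.} The exact formula for $\BP$ means no $C^1$ control of $\GHol^m$ along cusp excursions is needed. $\Xi$ is smooth in $(y,z)$ for $z\neq x$, and $\alpha$ enters only through $\alpha^{-1}\infty$.
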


We also recall an approximation lemma from \cite{LPS25} which will be used in \cref{sec:Dolgopyat'sMethod} (see \cite[Lemma 7.1]{SW21} for a proof). Fix $\delta_{AM} > 0$ such that any pair of points in $B_{AM}(e,\delta_{AM}) \subset AM$ has a unique geodesic through them. We can also fix a constant $C_{\BP} > 0$ such that
\begin{equation*}
C_{\BP} \geq \sup\{\|(d\BP_{j, x})_y\|_{\mathrm{op}}:x, y \in \Delta_0, j \in \{1, 2, \dotsc, \jj\}\}
\end{equation*}
for any $m\geq m_0$ and corresponding maps $\{\BP_j\}_{j=1}^{\jj}$ provided by \cref{prop:LNIC}.

\begin{lemma}[{\cite[Lemma 5.11]{LPS25}}]
\label{lem:ComparingExpWithBP}
Let $m\geq m_0$ and $\{\BP_j\}_{j=1}^{\jj}$ be the corresponding maps provided by \cref{prop:LNIC}. Then there exists $C_{\exp, \BP} > 0$ such that for all $1 \leq j \leq \jj$ and $x, y \in \Delta_0$ with $\|x - y\| < \frac{\delta_{AM}}{C_{\BP}}$, we have
\begin{align*}
d_{AM}(\exp(Z), \BP_j(x, y)) \leq C_{\exp, \BP}\|x - y\|^2
\end{align*}
where $Z = (d\BP_{j, x})_x(y-x)$.
\end{lemma}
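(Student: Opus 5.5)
The statement is \cref{lem:ComparingExpWithBP}, the local comparison between $\BP_j(x,\bigcdot)$ and the exponential of its differential. The plan is a second-order Taylor expansion in exponential coordinates on $AM$ near $e$.

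\textbf{Step 1: the base point.} First observe that $\BP_j(x,x) = e$ for every $x \in \Delta_0$. This is immediate from the definition: setting $y = x$ gives
\[
\GHol^m(\alpha_0 x,\infty)^{-1}\GHol^m(\alpha_0 x,\infty)\,\GHol^m(\alpha_j x,\infty)^{-1}\GHol^m(\alpha_j x,\infty) = e.
\]
Thus $\BP_{j,x}$ is a smooth map from $\Delta_0$ into $AM$ sending $x$ to $e$.

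\textbf{Step 2: the curve and the $C^2$ bound.} Smoothness comes from two facts: $\GHol^m(\bigcdot,\infty)$ is smooth on each cylinder, and the section $\tilde F$ is smooth in the first argument. For $\|x-y\| < \delta_{AM}/C_{\BP}$, consider the segment $c(t) = \BP_{j,x}(x + t(y-x))$ for $t \in [0,1]$. The bound $C_{\BP}$ on the differential gives $d_{AM}(e, c(t)) \le C_{\BP}\|x-y\| < \delta_{AM}$, so the whole curve stays in $B_{AM}(e,\delta_{AM})$. On this ball we may write $c(t) = \exp(W(t))$ with $W$ smooth, $W(0) = 0$ and $W'(0) = (d\BP_{j,x})_x(y-x) = Z$, because $d\exp_0 = \Id$. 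Taylor's theorem then gives
\[
\|W(1) - Z\| \le \tfrac12 \sup_{t}\|W''(t)\|,
\]
and $W'' = O(\|x-y\|^2)$ provided we have a uniform bound on the second derivatives of $\BP_{j,x}$ in $y$ and on the $C^2$ norm of $\log$ on $B_{AM}(e,\delta_{AM})$.

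\textbf{Step 3: conclusion.} Since $\exp$ is Lipschitz on a bounded neighborhood of $0$, we get
\[
d_{AM}\bigl(\exp(Z), \BP_j(x,y)\bigr) \le C\|W(1) - Z\| \le C_{\exp,\BP}\|x-y\|^2 .
\]

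\textbf{Main obstacle.} The only real difficulty is the uniform second-derivative bound, since $\Delta_0$ has countably many cylinders. This should not bite here. The branches $\alpha_0,\dots,\alpha_{\jj}$ are finitely many, fixed, and of length $m$, and $\Delta_0$ is relatively compact; the maps $\gamma \mapsto \GHol^m(\gamma\,\bigcdot,\infty)$ are restrictions of smooth maps defined on a neighborhood of $\overline{\Delta_0}$, expressible through Möbius transformations and the smooth frame section. Compactness therefore yields uniform $C^2$ bounds. This is the same argument as \cite[Lemma 7.1]{SW21}, with the finite set of branches replacing the Markov section.
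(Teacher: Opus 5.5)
Your proposal is correct. It is essentially the argument the paper relies on: the paper gives no proof itself and defers to \cite[Lemma 7.1]{SW21}, which likewise uses $\BP_{j,x}(x)=e$, a second-order Taylor expansion in exponential coordinates along the segment from $x$ to $y$ (this segment stays in the convex set $\Delta_0$), and uniform $C^2$ bounds coming from the finitely many fixed branches $\alpha_0,\dotsc,\alpha_{\jj}$ and compactness of $\overline{\Delta_0}$. One minor technicality: if $\log$ lacks uniform $C^2$ bounds on all of $B_{AM}(e,\delta_{AM})$, shrink the radius, since for $\|x-y\|$ bounded below the estimate is trivial from $d_{AM}(\exp(Z),\BP_j(x,y)) \leq 2C_{\BP}\|x-y\|$.
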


\subsection{NCP}
\label{subsec:NCP}
Let us recall the second key property called the \emph{non-concentration property (NCP)} which we state in our setting. It is the appropriate generalization of NCP in \cite[Proposition 6.6]{SW21} for geometrically finite hyperbolic manifolds with cusps. Define the map $u: \Delta_0 \to \T^1(X)$ by
\begin{align*}
u(x) = \Phi(x, \infty, 0) \qquad \text{for all $x \in \Delta_0$};
\end{align*}
recall the map $\Phi$ from \cref{eqn:embedding}. Note that $u({\Delta_0})$ is then the immersion of a subset of the horosphere corresponding to $\Delta_0 \times \{\infty\} \times \{0\}$. We define the compact region
\begin{align}
\label{compact set}
\Omega_R \subset \T^1(X)
\end{align}
to be the closed $(R + \log(C_{\Delta_0}))$-neighborhood of $u({\Delta_0})$ for some suitable constant $C_{\Delta_0} > 1$; we may take $C_{\Delta_0} = C_{\mathrm{cyl}} \cdot \max\{1, \diam(\Delta_0), \mu(\Delta_0)\}$ where $C_{\mathrm{cyl}} > 1$ is from \cref{lem:CylinderEstimate} (see \cite{LPS25}).

\begin{proposition}[NCP; {\cite[Proposition 6.1]{LPS25}}]
\label{prop:NCP}
Let $R > 0$. There exists $\eta \in (0, 1)$ such that for all $\epsilon \in (0, 1)$, $x \in \Lambda_\Gamma \cap \Delta_0 - \overline{B_\epsilon^{\mathrm{E}}(\partial \Delta_0)}$ with $u(x) a_{-\log(\epsilon)} \in \Omega_R$, and $w \in \R^{n - 1}$ with $\|w\| = 1$, there exists $y \in \Lambda_\Gamma \cap B(x,\epsilon) \subset \Delta_0$ such that $|\langle y - x, w \rangle| \geq \epsilon \eta$.
\end{proposition}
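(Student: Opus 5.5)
This is the NCP statement, quoted from \cite[Proposition 6.1]{LPS25}. I would prove it by contradiction and compactness, using the cusp structure to handle points that go deep into the cusp.

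\textbf{Step 1: set up the contradiction.} Suppose the conclusion fails for a fixed $R$. Then there are:
\begin{itemize}
\item $\eta_k \to 0$ and $\epsilon_k \in (0,1)$;
\item points $x_k \in \Lambda_\Gamma \cap \Delta_0$ at distance at least $\epsilon_k$ from $\partial\Delta_0$, with $u(x_k)a_{-\log \epsilon_k} \in \Omega_R$;
\item unit vectors $w_k$;
\end{itemize}
such that $\Lambda_\Gamma \cap B(x_k,\epsilon_k)$ lies in the slab $\{y : |\langle y-x_k,w_k\rangle| < \epsilon_k\eta_k\}$.

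\textbf{Step 2: rescale.} Let $g_k \in G$ be the translation by $-x_k$ followed by dilation by $\epsilon_k^{-1}$. This is exactly the geometric content of flowing $u(x_k)$ by time $-\log\epsilon_k$.
\begin{itemize}
\item By the recurrence hypothesis, $g_k$ can be corrected by some $\gamma_k \in \Gamma$ so that $\gamma_k g_k^{-1}$, or rather the frame $u(x_k)a_{-\log\epsilon_k}$ lifted to $\T^1(\HS)$, stays in a fixed compact subset of $G$. This uses compactness of $\Omega_R$, the $(R+\log C_{\Delta_0})$-neighborhood of $u(\Delta_0)$.
\item The rescaled sets $g_k(\Lambda_\Gamma \cap B(x_k,\epsilon_k)) = h_k\Lambda_\Gamma \cap B^{\mathrm{E}}_1(0)$ have $h_k$ ranging in a compact set, by $\Gamma$-invariance of $\Lambda_\Gamma$.
\item Pass to a subsequence with $h_k \to h$ and $w_k \to w$. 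In the Hausdorff limit, $h\Lambda_\Gamma \cap B_1^{\mathrm{E}}(0)$ lies in the hyperplane $w^\perp$, since $\eta_k \to 0$.
\end{itemize}

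\textbf{Step 3: derive the contradiction.} The point $0$ lies in $h\Lambda_\Gamma$ and is a limit point. By minimality of the $\Gamma$-action on $\Lambda_\Gamma$, a nonempty open piece of $\Lambda_\Gamma$ lies in a codimension-one sphere. Translating by group elements that expand that open piece, as for a radial limit point, or using a small neighborhood around a loxodromic fixed point in that piece, shows that all of $\Lambda_\Gamma$ lies in an $(n-2)$-sphere. This contradicts Zariski density by \cref{lem: Zariski dense condition}.

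\textbf{Main obstacle.} The delicate step is the uniformity of the compactness in Step 2. The condition $x \notin \overline{B_\epsilon^{\mathrm{E}}(\partial\Delta_0)}$ together with $u(x)a_{-\log\epsilon}\in\Omega_R$ ensures that the rescaled ball neither exits $\Delta_0$ nor sits deep in a cusp. Deep in a cusp, the limit set near a bounded parabolic point of rank $k<n-1$ can genuinely concentrate near a lower-dimensional affine subspace $Z$, and the statement would fail there.

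I would use the $\Omega_R$ condition to bound the $d(o,\cdot)$-displacement of the lift. I would then use the constant $C_{\Delta_0}$, via \cref{lem:CylinderEstimate}, to compare the Euclidean scale $\epsilon$ with the hyperbolic height, so that the rescaling maps land in a fixed compact set independent of $\epsilon$. The constant $\eta$ then depends only on $R$.
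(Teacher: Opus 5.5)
Your contradiction–compactness argument is correct, and it is essentially the standard proof behind the cited \cite[Proposition 6.1]{LPS25} (cf.\ \cite[Proposition 6.6]{SW21}); the paper itself only quotes the result. The argument runs as follows: rescale so that $u(x_k)a_{-\log\epsilon_k}$ becomes a fixed vector, use compactness of $\Omega_R$ to put the conjugating elements $h_k$ in a compact set, and contradict Zariski density because a nonempty relatively open piece of $h\Lambda_\Gamma$ lies in $w^\perp$. One remark: the ``main obstacle'' you describe is already resolved by your Step 2. Since $g_k^{-1}(e_n,-e_n)$ is exactly a lift of $u(x_k)a_{-\log\epsilon_k}$, compactness of $\Omega_R$ alone gives the uniform compact set, and neither $C_{\Delta_0}$ nor \cref{lem:CylinderEstimate} is needed.
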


\subsection{LDP}
\label{subsec:LDP}
Let us recall the third key property called the \emph{large deviation property (LDP)}.
Recall the measures $\mu$ and $\nu$ from \cref{subsec:Patterson--SullivanDensity,subsec:expanding map}, respectively, and also \cref{eqn:nu_and_mu}.

For all $t > 0$ and $R > 0$, we define $\Omega^\dagger(t, R)$ to be the set consisting of maximal cylinders $\mathtt{C} \subset \Delta_0$ satisfying
\begin{align*}
e^{-R - t} \leq \diam(\mathtt{C}) \leq e^{R - t}.
\end{align*}
Define 
\begin{align}
\label{eqn: union of sandwiched diameter cylinders}
\Omega(t, R) := \bigcup_{\mathtt{C}\in \Omega^\dagger(t, R)}\mathtt{C} \subset \Delta_0.
\end{align}
It is related to the compact subset $\Omega_R \subset \T^1(X)$ defined in \cref{compact set} according to the following lemma. We include the simple proof which makes clear the source of the constant $C_{\Delta_0}$ in \cref{compact set}. The proof uses the estimate from \cref{lem:CylinderEstimate} (see \cite[Lemma 7.1]{LPS25}).

\begin{lemma}
\label{lem:goodpartitioncusp}
Let $t > 0$ and $R > 0$. For all $x \in \Omega(t, R)$, we have $u(x)a_t\in \Omega_R$.
\end{lemma}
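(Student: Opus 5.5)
Since $x \in \Omega(t, R)$, we may fix a maximal cylinder $\mathtt{C} \in \Omega^\dagger(t, R)$ with $x \in \mathtt{C}$, say $\mathtt{C} = \gamma\Delta_0$ for some $\gamma \in \gen^{(k)}$. The plan is to exhibit an explicit point of $u(\Delta_0)$ within distance $R + \log(C_{\Delta_0})$ of $u(x)a_t$ in $\T^1(X)$. The natural candidate is $u(\gamma^{-1}x) = u(T^k x)$. Since $\Phi$ descends to $\Gamma\backslash$, we have $u(x)a_{\Ret_k(x)} = u(T^k x)$ (the geodesic flow for the Birkhoff sum time returns to the base horosphere section). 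It therefore suffices to show
\begin{align*}
|t - \Ret_k(x)| \leq R + \log(C_{\Delta_0}),
\end{align*}
because then $d(u(x)a_t, u(T^kx)) \leq |t - \Ret_k(x)|$ by moving along the flow line.

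To estimate $\Ret_k(x)$, we use the chain rule. We have $\Ret_k(x) = \log\|(dT^k)_x\|_{\mathrm{op}}$, and since $T^k = \gamma^{-1}$ near $x$ and these maps are conformal, this equals $-\log\|(d\gamma)_{T^kx}\|_{\mathrm{op}}$. By bounded distortion (Property~(3) of \cref{prop:Coding}, implicit in \cref{lem:CylinderEstimate}), $\|(d\gamma)_{y}\|_{\mathrm{op}}$ is comparable to $\|d\gamma\|$ uniformly in $y \in \Delta_0$, with constant $C_{\mathrm{cyl}}$. Next, \cref{lem:CylinderEstimate} gives $\diam(\mathtt{C}) = \diam(\gamma\Delta_0) \asymp \|d\gamma\|\diam(\Delta_0)$. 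Combining these,
\begin{align*}
e^{-\Ret_k(x)} \in \bigl[C_{\Delta_0}^{-1}\diam(\mathtt{C}),\, C_{\Delta_0}\diam(\mathtt{C})\bigr],
\end{align*}
after absorbing $\diam(\Delta_0)$ and $C_{\mathrm{cyl}}$ into $C_{\Delta_0}$; this is exactly why $C_{\Delta_0}$ includes those factors. Inserting $e^{-R-t} \leq \diam(\mathtt{C}) \leq e^{R-t}$ then yields the required bound on $|t - \Ret_k(x)|$.

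The main obstacle is bookkeeping of the distortion constants, namely ensuring that the pointwise derivative at $T^kx$ and the sup norm $\|d\gamma\|$ are comparable with the same constant $C_{\mathrm{cyl}}$. If this is not literally part of \cref{lem:CylinderEstimate}, one can instead integrate Property~(3) along $\Delta_0$, which has bounded diameter, and enlarge $C_{\Delta_0}$ accordingly. One also has to handle a subtlety: $x$ might lie only in $\Lambda_+$ or on $\overline{\mathtt{C}}$. This is harmless because $\Omega_R$ is closed, so a limiting argument applies.
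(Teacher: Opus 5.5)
Your strategy matches what the paper intends (it defers to \cite[Lemma 7.1]{LPS25} and uses \cref{lem:CylinderEstimate}). You write $x\in\mathtt{C}=\gamma\Delta_0$ with $\gamma\in\gen^{(k)}$, use the cylinder estimate plus bounded distortion to get $e^{-\Ret_k(x)}\asymp\|d\gamma\|\asymp\diam(\mathtt{C})$, hence $|t-\Ret_k(x)|\le R+O(1)$, and then pull back by $\gamma^{-1}$.

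However, the identity $u(x)a_{\Ret_k(x)}=u(T^kx)$, on which the argument rests, is false for $k\ge 1$. The isometry $\gamma^{-1}$ carries the vertical geodesic from $\infty$ to $x$ onto the geodesic from $\gamma^{-1}\infty$ to $T^kx$, and $\gamma^{-1}\infty\neq\infty$. Moreover, $\gamma^{-1}$ maps the horosphere based at $x$ of Euclidean diameter $h$ to the horosphere based at $T^kx$ of diameter $\|(d\gamma^{-1})_x\|_{\mathrm{op}}\,h=e^{\Ret_k(x)}h$. So what you actually get is $u(x)a_{\Ret_k(x)}=\Phi(\hat{T}^k(x,\infty),0)=\Phi(T^kx,\gamma^{-1}\infty,0)$. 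This point lies on the same strong stable horosphere (based at $T^kx$) as $u(T^kx)=\Phi(T^kx,\infty,0)$, but it is a different point of it. The flow returns to the rectangle section $\Phi(\Delta_0\times\Lambda_-\times\{0\})$, not to the unstable slice $u(\Delta_0)$.

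The repair is short but necessary. Put $C_{\mathrm{ss}}:=\sup\{d(\tilde{\Phi}(z,y,0),\tilde{\Phi}(z,\infty,0)) : z\in\overline{\Delta_0},\ y\in\overline{\Lambda_-}\}$. This is finite because $\tilde{\Phi}$ is continuous on the compact set $\overline{\Delta_0}\times\overline{\Lambda_-}\times\{0\}$; equivalently, the rectangles have uniformly bounded diameter. Then $d(u(x)a_t,u(T^kx))\le|t-\Ret_k(x)|+C_{\mathrm{ss}}$.

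Consequently, $C_{\Delta_0}$ must also absorb:
\begin{itemize}
\item the factor $e^{C_{\mathrm{ss}}}$;
\item the distortion constant;
\item a factor $\diam(\Delta_0)^{-1}$, needed for $e^{-\Ret_k(x)}\lesssim\diam(\mathtt{C})$, which $\max\{1,\diam(\Delta_0)\}$ covers only when $\diam(\Delta_0)\ge 1$.
\end{itemize}
So your remark that $C_{\mathrm{cyl}}$ and $\diam(\Delta_0)$ account exactly for $C_{\Delta_0}$ is not right. The lemma itself survives, since it only needs some suitable $C_{\Delta_0}$, which the paper allows.

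Finally, no limiting argument is needed. $\Omega(t,R)$ is a union of open cylinders, so $x\in\gamma\Delta_0$ and $T^kx\in\Delta_0$ directly.
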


\begin{proposition}[LDP; {\cite[Proposition 7.3]{LPS25}}]
\label{prop:LDP}
There exist $R_0>0$ and $\kappa \in (0, 1)$ such that the following holds.
For all $m,n \in \N$ and $t > 0$, we have
\begin{align*}
\nu\{x\in\Lambda_+: \#\{j \in \N: j \leq n, T^{jm}(x) \in \Omega(t,R_0) \} < \kappa n \} \leq  e^{-\kappa n}.
\end{align*}
\end{proposition}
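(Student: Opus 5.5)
The statement to prove is the large deviation property, \cref{prop:LDP}. I would prove it via a Chernoff-type exponential moment bound for the number of visits of the $m$-skeleton orbit $x, T^m x, T^{2m}x, \dotsc$ to the complement of $\Omega(t, R_0)$, using the Markov structure of the coding and the Gibbs property of $\nu$.

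\textbf{Step 1: a uniform one-step bound.} I would first show that the bad set is uniformly small on every cylinder, uniformly in $t$. Concretely, for suitable $R_0$ and every cylinder $\mathtt{C}$, I aim for
\begin{align*}
\nu\bigl(\mathtt{C} \cap T^{-\ell}(\Delta_0 \smallsetminus \Omega(t, R_0))\bigr) \leq \epsilon(R_0)\, \nu(\mathtt{C}),
\end{align*}
with $\epsilon(R_0) \to 0$ as $R_0 \to \infty$, uniformly in $t$ and $\ell$.
\begin{itemize}
\item By \cref{lem:CylinderEstimate} and the uniform contraction in \cref{prop:Coding}, the diameters along a nested chain of cylinders decrease geometrically. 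A point fails to lie in $\Omega(t, R_0)$ only if the chain jumps over the whole window $[e^{-R_0-t}, e^{R_0-t}]$ in one step.
\item Such a jump means that the next letter $\gamma_j$ has $\|d\gamma_j\| \leq e^{-2R_0}$, which forces $\Ret \geq 2R_0 - O(1)$ on the corresponding subcylinder.
\item By the exponential tail property (Property~(4) of \cref{prop:Coding}) together with the bounded distortion in \cref{lem:CylinderEstimate}, the conditional $\nu$-measure of $\{\Ret \geq 2R_0 - O(1)\}$ on any cylinder is at most $C e^{-2\epsilon_0 R_0}$.
\end{itemize}

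\textbf{Step 2: iterate the bound.} Let $F_j$ be the indicator of $T^{jm}x \notin \Omega(t, R_0)$. Since $\nu$ is $T$-invariant with bounded distortion on cylinders (a Gibbs/quasi-Bernoulli property), I would condition successively on the cylinder of length $jm$ containing $x$. Applying Step~1 repeatedly gives, for $\lambda > 0$,
\begin{align*}
\int e^{\lambda \sum_{j \leq n} F_j}\, d\nu \leq \bigl(1 + C(e^{\lambda} - 1)\epsilon(R_0)\bigr)^n.
\end{align*}

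\textbf{Step 3: Markov's inequality.} The event in \cref{prop:LDP} means $\sum_{j \leq n} F_j > (1-\kappa)n$. Applying Markov's inequality with, say, $\lambda = 1$ and $\kappa$ small, and choosing $R_0$ large enough that $\epsilon(R_0)$ is tiny, gives the bound $e^{-\kappa n}$.

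\textbf{Main obstacle.} The hardest part is Step~1 in its uniform form. Membership in $\Omega(t, R_0)$ is defined through maximal cylinders of a given diameter, so it is not a cylinder event at a fixed depth. It has to be compared with the first time the nested diameters drop below $e^{R_0-t}$, and the distortion constants $C_{\mathrm{cyl}}$ must be absorbed into $R_0$.
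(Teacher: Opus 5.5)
\textbf{Step 1 is fine; Step 2 has a genuine gap.}

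Your Step~1 is sound. Cut $\Delta_0$ along the first depth at which the nested cylinders have diameter at most $e^{R_0-t}$; this is a stopping time. The bad set then lies in the union, over these disjoint stopping cylinders $\gamma\Delta_0$, of $\gamma\bigl(\bigcup\{\Delta_b:\Ret|_{\Delta_b}\geq 2R_0-O(1)\}\bigr)$. \cref{lem:CylinderEstimate} together with Property~(4) of \cref{prop:Coding} then gives $\epsilon(R_0)\lesssim e^{-2\epsilon_0R_0}$.

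The gap is Step~2. The indicator $F_j$ of $\{T^{jm}x\notin\Omega(t,R_0)\}$ is not measurable with respect to cylinders of length $(j+1)m$, or of any length fixed in advance. It is decided at depth $jm+\tau_j$, where $\tau_j$ is the first depth at which the cylinder chain of $T^{jm}x$ has diameter below $e^{R_0-t}$. So $\tau_j$ is of order $t$, not $m$. When $t\gg m$, the windows deciding $F_1,\dotsc,F_n$ overlap. This is exactly the regime in which the proposition is used: $t=\log\|\rho_b\|\to\infty$ with $m$ fixed in \cref{thm:FrameFlowUniformDolgopyat}. After conditioning on the cylinder of length $jm$, the factors $e^{\lambda F_1},\dotsc,e^{\lambda F_{j-1}}$ are not yet determined and cannot be pulled out, so the successive conditioning does not go through.

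\textbf{Why this is not a technicality: failures cluster.} Up to constants absorbed into $R_0$, $F_j=1$ implies that the increasing sequence $\Ret_i(x)$ jumps over the window $\Ret_{jm}(x)+t+[-R_0,R_0]$. A single letter with very large $\Ret$, i.e.\ one deep cusp excursion, jumps over many consecutive such windows at once. Given $F_j=1$, the excess of that jump beyond $2R_0$ has a law that does not degenerate as $R_0\to\infty$. Hence $\nu(F_{j+1}=1\mid F_j=1)$ is bounded below independently of $R_0$, and no one-step bound of size $C\epsilon(R_0)$ can hold once the past failures are included in the conditioning.

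In fact your displayed inequality is false for large $\lambda$. Letting $\lambda\to\infty$, it would give $\nu(F_1=\dotsb=F_n=1)\leq (C\epsilon(R_0))^n$. Now take $m=1$ and suitable $t\approx R_0+cn$. Then $n$ fixed ordinary letters followed by one letter with $\Ret\approx 2R_0+cn$ force $F_1=\dotsb=F_n=1$. Since cusp excursions have genuinely exponential tails, this event has probability at least $c(R_0)e^{-c'n}$ with $c'$ independent of $R_0$. That contradicts the previous bound once $C\epsilon(R_0)<e^{-c'}$, which is precisely the large-$R_0$ regime your Step~3 requires.

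\textbf{What is missing: a cluster/renewal argument.}
\begin{enumerate}
\item Group the indices $j$ by their first-passage index over the level $\Ret_{jm}(x)+t-R_0$. These first-passage indices are stopping times, and the levels are $jm$-measurable and spaced by at least $m\log(1/\theta)$.
\item Bound the number of failures in a cluster by $1+w/(m\log(1/\theta))$, where $w$ is the overshoot of the responsible jump. There are no failures in the cluster unless $w>2R_0-O(1)$.
\item Using \cref{lem:CylinderEstimate} and the exponential tail, show that, conditionally on the past at each stopping time, $w$ has tail $\lesssim e^{-\epsilon_0 s}$. This comes from summing over the geometrically spaced remaining gaps.
\item Run your Chernoff bound over the at most $n$ clusters, with $\lambda$ small compared to $\epsilon_0\log(1/\theta)$. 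This gives $\int e^{\lambda\sum_j F_j}\,d\nu\leq \exp\bigl(Ce^{\lambda}e^{-\epsilon_0R_0}n\bigr)$, and Markov's inequality then finishes as in your Step~3.
\end{enumerate}
Note that $\lambda=1$ is not admissible in general. The paper itself gives no proof of this statement; it imports it from \cite{LPS25}.
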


\section{Proof of \cref{thm:UniformSpectralBoundII} via Dolgopyat's method}
\label{sec:Dolgopyat'sMethod}
In this section we first reduce \cref{thm:UniformSpectralBoundII} to \cref{thm:FrameFlowUniformDolgopyat}. The latter is a \emph{congruence} version of \cite[Theorem 8.2]{LPS25}. The full proof of \cref{thm:FrameFlowUniformDolgopyat} is nearly a verbatim repetition of the proof in loc. cit. using an adaptation of Dolgopyat's method via a combination of the works of Stoyanov \cite{Sto11}, Sarkar--Winter \cite{SW21}, and Tsujii--Zhang \cite{TZ23}. Thus, we opt to remark on the proof of \cref{thm:FrameFlowUniformDolgopyat} and focus on the key differences.

We fix some notation. Let $(V, \|\bigcdot\|)$ be any normed vector space over $\R$ or $\C$. Let $d$ be any distance function on $\Delta_0$; in particular, $d = d_{\mathrm{E}}$ or $d = D$. Let $H: \Lambda_+\to V$ be any function. Following Dolgopyat \cite{Dol98}, define a family of equivalent norms
\begin{equation*}
\|H\|_{1, b}=\|H\|_{\infty}+\frac{1}{\max\{1,|b|\}} \Lip_d(H), \qquad b\in \R.
\end{equation*}
The Lipschitz norm is then simply $\|H\|_{\Lip(d)} = \|H\|_{1, 1}$. Recall the measure $\nu$ from \cref{subsec:expanding map}. If $H$ is measurable, denote
\begin{align*}
\|H\|_2 = \biggl(\int_{\Lambda_+} \|H(x)\|^2 \, d\nu(x)\biggr)^{\frac{1}{2}}.
\end{align*}
We also define the function $\|H\|: \Lambda_+ \to \R$ by
\begin{align*}
\|H\|(x) = \|H(x)\| \qquad \text{for all $x \in \Lambda_+$}.
\end{align*}

We have the following theorem which uses the sets $\Omega(\log\|\rho_b\|,R_0) \subset \Delta_0$ for all $(b, \rho) \in \widehat{M}_0(b_0)$ as defined in \cref{eqn: union of sandwiched diameter cylinders}.

\begin{theorem}
\label{thm:FrameFlowUniformDolgopyat}
There exist $\eta \in (0, 1)$, $\kappa \in (0, 1)$, $a_0 > 0$, $b_0>0$, $m \in \N$, and a continuous function $\zeta: [-a_0, a_0] \to \R$ with $\zeta(0) = 1$ such that the following holds. For all $\xi=a+ib \in \C$ with $|a| < a_0$, if $(b, \rho) \in \widehat{M}_0(b_0)$, then for all nontrivial ideals $\mathfrak{q} \subset \calO_\K$ and $H \in \Lip\bigl(\Lambda_+, L^2(F_{\mathfrak{q}}) \otimes V_\rho^{\oplus \dim(\rho)}\bigr)$, there exist a sequence of positive functions $\{h_n\}_{n = 0}^\infty \subset \Lip_D(\Lambda_+, \R)$ with $h_0 = \|H\|_{1, \|\rho_b\|}$ and a sequence of closed subsets $\{\Omega_n\}_{n = 1}^\infty$ of $\Omega(\log\|\rho_b\|,R_0)$ such that:
\begin{enumerate}
\item for all $n \in \N$,  we have
\begin{align*}
\bigl\|\mathcal{M}_{\xi, \mathfrak{q}, \rho}^{nm}(H)(x)\bigr\|_2 \leq h_n(x) \qquad \text{for all $x \in \Lambda_+$};
\end{align*}
\item for all $n \in \N$, we have
\begin{align*}
h_n^2(x) \leq
\begin{cases}
\eta \mathcal{L}_0^m(h_{n - 1}^2)(x), & x \in \Omega_n, \\
\zeta(a)\mathcal{L}_0^m(h_{n - 1}^2)(x), & x \in \Lambda_+ \smallsetminus \Omega_n;
\end{cases}
\end{align*}
\item for all $n \in \N$, we have
\begin{align*}
\nu\{x \in \Lambda_+: \#\{j \in \N: j \leq n, T^{jm}x \in \Omega_j\} < \kappa n\} < 2e^{-\kappa n}.
\end{align*}
\end{enumerate}
\end{theorem}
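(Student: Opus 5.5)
The plan is to follow the proof of \cite[Theorem 8.2]{LPS25} essentially verbatim, replacing $V_\rho^{\oplus \dim(\rho)}$ by $L^2(F_\mathfrak{q}) \otimes V_\rho^{\oplus \dim(\rho)}$ and $\rho_b$ by $\rho_{b,\mathfrak{q}}$. The main point to check is that the congruence cocycle does not spoil any estimate. It enters $\mathcal{M}_{\xi,\mathfrak{q},\rho}^{m}$ only through the unitary factor $\gamma^{-1} \otimes \Id$ for $\gamma \in \gen^{(m)}$, and this factor is constant on each cylinder $\gamma\Delta_0$. It therefore commutes with the operator $\Id \otimes \rho_b(\GHol^m(\gamma x)^{-1})$ and contributes nothing to Lipschitz constants. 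Since everything is unitary, the pointwise estimates $\|\mathcal{M}^m_{\xi,\mathfrak{q},\rho}(H)(x)\|_2 \le \mathcal{L}^m_{a}(\|H\|)(x)$ and the Lasota--Yorke type inequality for $\Lip_D$ (with constants from \cref{eqn:ConstantC1'C2}) hold with the same constants as in \cite{LPS25}, uniformly in $\mathfrak{q}$.

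I would carry out the construction in this order. First, I fix $m \ge m_0$ from \cref{prop:LNIC}, together with the branches $\{\alpha_j\}_{j=0}^{\jj}$. Then I fix $R_0,\kappa$ from \cref{prop:LDP}, which produces the cylinders of diameter about $\|\rho_b\|^{-1}$ inside $\Omega(\log\|\rho_b\|,R_0)$. Next, I define the Dolgopyat cone of pairs $(H,h)$ with $\|H(x)\|_2\le h(x)$ and $\Lip_D(h)\le E\|\rho_b\| h$, and the dense sets of cylinders $\Omega_n \subset \Omega(\log\|\rho_b\|,R_0)$ on which a dampening function $\beta$ (with $\beta = 1-\mu$ on subcylinders, $1$ elsewhere) is placed. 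Finally, I set $h_n = \bigl(\mathcal{L}^m_{0}(\beta h^2_{n-1}\cdot\ldots)\bigr)^{1/2}$ as in \cite{LPS25}, with Cauchy--Schwarz giving the bound $\zeta(a)\mathcal{L}_0^m(h_{n-1}^2)$ off $\Omega_n$. Property (3) then follows from \cref{prop:LDP} exactly as in loc. cit., since $\Omega_n$ captures a definite proportion of $\Omega(\log\|\rho_b\|,R_0)$ and $\nu$ does not see $\mathfrak{q}$.

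The key step, and the main obstacle, is the cancellation lemma. On a good cylinder of size about $\|\rho_b\|^{-1}$, for some $j$ one needs the two terms indexed by $\alpha_0$ and $\alpha_j$ to be non-aligned in $L^2(F_\mathfrak{q})\otimes V_\rho^{\oplus\dim(\rho)}$. The usual argument compares $\rho_{b}(\BP_j(x,y))\omega$ with $\omega$ and uses \cref{lem:ComparingExpWithBP}, \cref{prop:NCP} and \cref{prop:LNIC} to produce the oscillation $\|d\rho_b(Z)\omega\|\gtrsim\|\rho_b\|$. Here the two terms also carry different congruence factors $\alpha_0^{-1}$ and $\alpha_j^{-1}$. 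I would handle this by writing both terms as $(\alpha_0^{-1}\otimes\Id)$ applied to $\omega_0(x)$ and $(\alpha_j^{-1}\otimes\Id)$ applied to $\omega_j(x)$. Because these congruence factors are constant in $x$ on the cylinder, the variation in $x$ of the relative position of the two vectors is governed solely by $\Id\otimes\rho_b(\BP_j(x,\cdot))$, i.e.\ by $\rho_{b,\mathfrak{q}}(\BP_j)$ acting on $L^2(F_\mathfrak{q})\otimes V$. \Cref{lem:maActionLowerBound}, being uniform in $\mathfrak{q}$, supplies a direction $z$ with $\|d\rho_{b,\mathfrak{q}}(z)\omega\|\ge\varepsilon_1\|\rho_b\|$. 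Combined with LNIC and NCP, this lets me find a point in the cylinder where the two vectors are $\epsilon$-apart in angle. The resulting constant $\eta<1$ is therefore independent of $\mathfrak{q}$.
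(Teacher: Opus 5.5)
Your proposal is correct and follows the same route as the paper. The paper defers to the proof of \cite[Theorem 8.2]{LPS25} and makes four changes, all of which you identify:
\begin{enumerate}
\item the congruence cocycle drops out of the Lasota--Yorke estimate by unitarity;
\item the $\mathfrak{q}$-uniform \cref{lem:maActionLowerBound} replaces \cite[Lemma 4.2]{LPS25} in the partner-point lemma, combined with LNIC and NCP;
\item in the cancellation lemma, the fixed factors $\alpha_0^{-1}$ and $\alpha_j^{-1}$ commute with $\Id \otimes \rho_b$ and cancel, so the variation between $x_1$ and $x_2$ is governed by $\rho_{b,\mathfrak{q}}(\BP_j(x_1,x_2))$ acting on a unit vector of $L^2(F_\mathfrak{q}) \otimes V_\rho^{\oplus\dim(\rho)}$;
\item Property~(3) comes from \cref{prop:LDP} through a stochastic dominance argument that does not involve the cocycle.
\end{enumerate}
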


We include the following nonstandard proof as in \cite{LPS25}. It is inspired from \cite[Proposition 3.15]{TZ23} but uses the language of transfer operators.

\begin{proof}[Proof that \cref{thm:FrameFlowUniformDolgopyat} implies \cref{thm:UniformSpectralBoundII}]
Denote by $\tilde{\eta}$, $\kappa$, $\tilde{a}_0$, $b_0$, $m$, and $\zeta$ the constants and function provided by \cref{thm:FrameFlowUniformDolgopyat}. Fix $\eta = \frac{1}{4}\min\{\kappa, -\kappa\log(\tilde{\eta})\}$ and $a_0 \in (0, \tilde{a}_0)$ such that $\sup_{a \in [-a_0, a_0]} |\log(\zeta(a))| \leq \eta$. Fix
\begin{align*}
C_0 &= \sup_{|a| \leq a_0, \{0\} \subsetneq \mathfrak{q} \subset \calO_\K, \rho \in \widehat{M}} \bigl\|\mathcal{M}_{\xi, \mathfrak{q}, \rho}\bigr\|_{\mathrm{op}}^{2m} \leq \sup_{|a| \leq a_0} \bigl\|{\mathcal{L}}_\xi\bigr\|_{\mathrm{op}}^{2m},\\
C &= \sqrt{3C_0},
\end{align*}
viewing the transfer operators as operators on $L^2\bigl(\Lambda_+, L^2(F_\mathfrak{q}) \otimes V_\rho^{\oplus \dim(\rho)}\bigr)$ and $L^2(\Lambda_+, \R)$ respectively. Let $\xi=a+ib \in \C$ with $|a| < a_0$. Suppose $(b, \rho) \in \widehat{M}_0(b_0)$. Let $k \in \N$ and write $k = nm + l$ for some integers $n \in \Z_{\geq 0}$ and $0 \leq l < m$.
Let $H \in \Lip\bigl(\Lambda_+, L^2(F_{\mathfrak{q}}) \otimes V_\rho^{\oplus \dim(\rho)}\bigr)$. We then obtain corresponding sequences $\{h_n\}_{n = 0}^\infty$ and $\{\Omega_n\}_{n = 1}^\infty$ provided by \cref{thm:FrameFlowUniformDolgopyat}.

We define the sequence of
functions $\{G_j: \Lambda_+ \to \R\}_{j = 0}^\infty$ recursively by
\begin{align*}
G_0 &= h_0^2, \\
G_j(x) &=
\begin{cases}
\tilde{\eta} G_{j - 1}(x), & x \in T^{-jm}(\Omega_j), \\
\zeta(a)G_{j - 1}(x), & x \in \Lambda_+ \smallsetminus T^{-jm}(\Omega_j),
\end{cases}
\qquad
\text{for all $j \in \N$}.
\end{align*}
Let us show by induction that the above sequence of functions satisfy
\begin{align}
\label{eqn:TransferOperatorOnG_j}
\calL_0^{jm}(G_j) \geq h_j^2 \qquad \text{for all $j \in \Z_{\geq 0}$}.
\end{align}
The base case $j = 0$ is trivial. Now let $j \in \N$ and assume \cref{eqn:TransferOperatorOnG_j} holds for $j - 1$. From definitions, for all $\gamma \in \gen^{(m)}$, we have
\begin{align}
\label{eqn:TransferOperatorOnG_j*}
\mathcal{L}_0^{(j - 1)m}(G_j)(\gamma x) =
\begin{cases}
\tilde{\eta} \mathcal{L}_0^{(j - 1)m}(G_{j - 1})(\gamma x), & \text{for all $x \in \Omega_j$} \\
\zeta(a)\mathcal{L}_0^{(j - 1)m}(G_{j - 1})(\gamma x), & \text{for all $x \in \Lambda_+ \smallsetminus \Omega_j$}.
\end{cases}
\end{align}
For all $x \in \Omega_j$, we use \cref{eqn:TransferOperatorOnG_j*}, the induction hypothesis, and Property~(2) in \cref{thm:FrameFlowUniformDolgopyat}, to get
\begin{align*}
\mathcal{L}_0^{jm}(G_j)(x) &= \sum_{\gamma \in \gen^{(m)}} e^{\FRet_m^{(0)}(\gamma x)} \mathcal{L}_0^{(j - 1)m}(G_j)(\gamma x) \\
&= \tilde{\eta} \sum_{\gamma \in \gen^{(m)}}  e^{\FRet_m^{(0)}(\gamma x)} \mathcal{L}_0^{(j - 1)m}(G_{j - 1})(\gamma x) \\
&\geq \tilde{\eta} \sum_{\gamma \in \gen^{(m)}} e^{\FRet_m^{(0)}(\gamma x)} h_{j - 1}^2(\gamma x) = \tilde{\eta} \mathcal{L}_0^m(h_{j - 1}^2)(x)\geq h_j^2(x).
\end{align*}
For all $x \in \Lambda_+ - \Omega_j$, we do a similar calculation to get
\begin{align*}
\mathcal{L}_0^{jm}(G_j)(x) &= \sum_{\gamma \in \gen^{(m)}} e^{\FRet_m^{(0)}(\gamma x)} \mathcal{L}_0^{(j - 1)m}(G_j)(\gamma x) \\
&=\zeta(a) \sum_{\gamma \in \gen^{(m)}} e^{\FRet_m^{(0)}(\gamma x)} \mathcal{L}_0^{(j - 1)m}(G_{j - 1})(\gamma x) \\
&\geq \zeta(a)\sum_{\gamma \in \gen^{(m)}} e^{\FRet_m^{(0)}(\gamma x)} h_{j - 1}^2(\gamma x) =\zeta(a) \mathcal{L}_0^m(h_{j - 1}^2)(x)\geq h_j^2(x).
\end{align*}
This establishes \cref{eqn:TransferOperatorOnG_j}.

Now, we use Property~(1) in \cref{thm:FrameFlowUniformDolgopyat}, \cref{eqn:TransferOperatorOnG_j}, and $\mathcal{L}_0^*(\nu) = \nu$, to get
\begin{align*}
\bigl\|\mathcal{M}_{\xi, \mathfrak{q}, \rho}^k(H)\bigr\|_2^2 &\leq C_0\bigl\|\mathcal{M}_{\xi, \mathfrak{q}, \rho}^{nm}(H)\bigr\|_2^2 \leq C_0\|h_n\|_2^2 \\
&\leq C_0\int_{\Lambda_+}\mathcal{L}_0^{nm}(G_n) \, d\nu \\
&= C_0\int_{\Lambda_+} G_n \, d\nu.
\end{align*}
To finish the proof, we decompose the integral over $\Lambda_+$ into integrals over
\begin{align*}
(\Lambda_+)_{\mathrm{main}} &\coloneq \{y \in \Lambda_+: \#\{j \in \N: j \leq n, T^{jm}y \in \Omega_j\} \geq \kappa n\}, \\
(\Lambda_+)_{\mathrm{err}} &\coloneq \{y \in \Lambda_+: \#\{j \in \N: j \leq n, T^{jm}y \in \Omega_j\} < \kappa n\}.
\end{align*}
Using $\zeta(a) > \tilde{\eta}$, for all $x \in (\Lambda_+)_{\mathrm{main}}$, we have the bound
\begin{align*}
G_n(x)\leq \zeta(a)^{n - \lceil\kappa n\rceil}\tilde{\eta}^{\lceil\kappa n\rceil}G_0(x) \leq e^{\eta n}\tilde{\eta}^{\kappa n}h_0^2(x)
\end{align*}
while for all $x \in (\Lambda_+)_{\mathrm{err}}$, we have the trivial bound
\begin{align*}
G_n(x) \leq \zeta(a)^nG_0(x) \leq e^{\eta n} h_0^2(x).
\end{align*} 
Thus, using Property~(3) in \cref{thm:FrameFlowUniformDolgopyat}, we have
\begin{align*}
\bigl\|\mathcal{M}_{\xi, \mathfrak{q}, \rho}^k(H)\bigr\|_2^2 &\leq C_0\int_{\Lambda_+} G_n \, d\nu = C_0\int_{(\Lambda_+)_{\mathrm{main}}} G_n \, d\nu + C_0\int_{(\Lambda_+)_{\mathrm{err}}} G_n \, d\nu \\
&\leq C_0 e^{\eta n} ({\tilde{\eta}}^{\kappa n} \cdot (1 - 2e^{-\kappa n}) + 2e^{-\kappa n}) \|h_0\|_\infty^2 \\
&\leq C^2 e^{-2\eta n} \|H\|_{1, \|\rho_b\|}^2.
\end{align*}
\end{proof}

Let us remark on the proof of \cref{thm:FrameFlowUniformDolgopyat}. LDP from \cref{prop:LDP} is used together with a stochastic dominance argument to directly deduce Property~(3) in \cref{thm:FrameFlowUniformDolgopyat} (see \cite[\S 8.3]{LPS25}). Here, the statement and the argument does not involve the cocycle at all. LNIC from \cref{prop:LNIC} and NCP \cref{prop:NCP} are required to prove a generalization of \cite[Lemma 8.5]{LPS25}, provided below, which is eventually used to prove Property~(1) in \cref{thm:FrameFlowUniformDolgopyat}. Here, the cocycle is involved. Let us now focus on the key differences that arise when incorporating the congruence aspect. We first list the required changes and then provide more details in the subsequent subsections for the parts where the cocycle plays a critical role.

\begin{enumerate}
\item Almost all statements need to be made uniform with respect to nontrivial ideals $\mathfrak{q} \subset \calO_\K$.
\item The transfer operators with holonomy need to be replaced by \emph{congruence} transfer operators with holonomy. Consequently, terms in some some proofs which come from the definition of transfer operators need to include an action by the cocycle.
\item Many proofs are unaffected by including the cocycle due to unitarity of both the cocycle and the irreducible representations of $M$.
\end{enumerate}

\subsection{Changes required for \cite[Lemma 8.3]{LPS25}}
This is a Lasota--Yorke type lemma. It is required for the proof of Properties~(1) and (2) of \cref{thm:FrameFlowUniformDolgopyat}. There are differences only in the proof of Property~(2) of \cite[Lemma 8.3]{LPS25}. We use the same notation and begin with the calculation
\begin{align*}
&\bigl\|\mathcal{M}_{\xi, \mathfrak{q}, \rho}^k(H)(x) - \mathcal{M}_{\xi, \mathfrak{q}, \rho}^k(H)(x')\bigr\|_{\mathfrak{q}, \rho} \\
\leq{}&\sum_{\gamma \in \gen^{(k)}} \bigl\|e^{\FRet_k^{(a)}(\gamma x)} \bigl(\gamma^{-1} \otimes \rho_b(\GHol^k(\gamma x)^{-1})\bigr) H(\gamma x) \\
{}&- e^{\FRet_k^{(a)}(\gamma x')} \bigl(\gamma^{-1} \otimes \rho_b(\GHol^k(\gamma x')^{-1})\bigr) H(\gamma x')\bigr\|_{\mathfrak{q}, \rho}  \\
\leq{}&\sum_{\gamma \in \gen^{(k)}} \Bigl(\left|1 - e^{\FRet_k^{(a)}(\gamma x') - \FRet_k^{(a)}(\gamma x)}\right|e^{\FRet_k^{(a)}(\gamma x)} \bigl\|\bigl(\gamma^{-1} \otimes \rho_b(\GHol^k(\gamma x)^{-1})\bigr) H(\gamma x)\bigr\|_{\mathfrak{q}, \rho} \\
&{}+ e^{\FRet_k^{(a)}(\gamma x')} \bigl\|\bigl(\gamma^{-1} \otimes \rho_b(\GHol^k(\gamma x)^{-1}) - \gamma^{-1} \otimes \rho_b(\GHol^k(\gamma x')^{-1})\bigr) H(\gamma x)\bigr\|_{\mathfrak{q}, \rho} \\
&{}+ e^{\FRet_k^{(a)}(\gamma x')} \bigl\|\bigl(\gamma^{-1} \otimes \rho_b(\GHol^k(\gamma x')^{-1})\bigr) (H(\gamma x) - H(\gamma x'))\bigr\|_{\mathfrak{q}, \rho}\Bigr).
\end{align*}
Then, we use unitarity to obtain:
\begin{itemize}
\item $\bigl\|\bigl(\gamma^{-1} \otimes \rho_b(\GHol^k(\gamma x)^{-1})\bigr) H(\gamma x)\bigr\|_{\mathfrak{q}, \rho} = \|H(\gamma x)\|_{\mathfrak{q}, \rho}$,
\item $\bigl\|\bigl(\gamma^{-1} \otimes \rho_b(\GHol^k(\gamma x)^{-1}) - \gamma^{-1} \otimes \rho_b(\GHol^k(\gamma x')^{-1})\bigr) H(\gamma x)\bigr\|_{\mathfrak{q}, \rho} = \|(\rho_b(\GHol^k(\gamma x)^{-1}) - \rho_b(\GHol^k(\gamma x')^{-1})) H(\gamma x)\|_{\mathfrak{q}, \rho}$,
\item $\bigl\|\bigl(\gamma^{-1} \otimes \rho_b(\GHol^k(\gamma x')^{-1})\bigr) (H(\gamma x) - H(\gamma x'))\bigr\|_{\mathfrak{q}, \rho} = \|\rho_b(\GHol^k(\gamma x')^{-1}) (H(\gamma x) - H(\gamma x'))\|_{\mathfrak{q}, \rho}$.
\end{itemize}
Thus, the cocycle is removed, and now the rest of the proof is as in the proof of \cite[Lemma 8.3]{LPS25} (though with a different norm due to the tensor factor $L^2(F_\mathfrak{q})$).

\subsection{Changes required for \cite[Lemma 8.5]{LPS25}}
\label{subsec:ChangesRequiredForLPSLemma8.5}
This lemma originally combines LNIC, NCP, and \cite[Lemma 4.2]{LPS25} (which is \cite[Lemma 4.4]{SW21}) to find appropriate points where large derivatives are realized. Below is the adaptation to the congruence setting with proof. The only change required in the proof is to use \cref{lem:maActionLowerBound} (which is \cite[Lemma 4.4]{Sar22a}) instead of \cite[Lemma 4.2]{LPS25}.

\begin{lemma}
\label{lem:PartnerPointInZariskiDenseLimitSetForBPBound}
For all $(b, \rho) \in \widehat{M}_0(b_0)$, nontrivial ideals $\mathfrak{q} \subset \calO_\K$, $x \in \Lambda_\Gamma \cap \bigl(\Delta_0 \smallsetminus \overline{B_{s_1}^{\mathrm{E}}(\partial \Delta_0)}\bigr)$ with $u(x) a_{\log\|\rho_b\|} \in \Omega_{R_0}$, and $\omega \in L^2(F_{\mathfrak{q}}) \otimes V_\rho^{\oplus \dim(\rho)}$ with $\|\omega\|_{\mathfrak{q}, \rho} = 1$, there exist $1 \leq j \leq \jj$ and $y \in \Lambda_\Gamma \cap \bigl(B_{s_1}^{\mathrm{E}}(x) \smallsetminus B_{s_2}^{\mathrm{E}}(x)\bigr)$ such that
\begin{equation*}
\left\|d\rho_{b, \mathfrak{q}}\left((d\BP_{j, x})_x(z)\right)(\omega)\right\|_{\mathfrak{q}, \rho} \geq 14\delta_1\epsilon_1
\end{equation*}
where $s_1 = \frac{\epsilon_1}{\|\rho_b\|}$, $s_2 = \frac{\delta_1\epsilon_1}{\|\rho_b\|}$, and $z = (x, y - x) \in \T_x(\Delta_0)$.
\end{lemma}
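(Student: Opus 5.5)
The plan is to follow the structure of the proof of \cite[Lemma 8.5]{LPS25}, with \cref{lem:maActionLowerBound} in place of \cite[Lemma 4.2]{LPS25}. The point is that \cref{lem:maActionLowerBound} is uniform in $\mathfrak{q}$ and in the tensor factor $L^2(F_\mathfrak{q})$.

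\textbf{Step 1: choose a good direction in $\mathfrak{a}\oplus\mathfrak{m}$.} Fix $(b,\rho)\in\widehat{M}_0(b_0)$, a nontrivial ideal $\mathfrak{q}$, $x$ as in the statement, and a unit vector $\omega$. Apply \cref{lem:maActionLowerBound} to obtain $z_0\in\mathfrak{a}\oplus\mathfrak{m}$ with $\|z_0\|=1$ and $\|d\rho_{b,\mathfrak{q}}(z_0)(\omega)\|\geq\varepsilon_1\|\rho_b\|$. By LNIC (\cref{prop:LNIC}), applied at $x$ with the direction $\omega'=z_0$, we get some $1\leq j\leq \jj$ and a unit vector $Z\in\T_x(\Delta_0)$ with $|\langle(d\BP_{j,x})_x(Z),z_0\rangle|\geq\epsilon$.

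\textbf{Step 2: pick a partner point $y$ by NCP.} Set $\epsilon' = s_1=\epsilon_1/\|\rho_b\|$. The hypothesis $u(x)a_{\log\|\rho_b\|}\in\Omega_{R_0}$ is exactly the hypothesis $u(x)a_{-\log\epsilon'}\in\Omega_{R_0}$ of \cref{prop:NCP} up to the constant $\epsilon_1$; this is absorbed by adjusting $R_0$ or by shrinking $\epsilon_1$. The condition $x\notin\overline{B^{\mathrm{E}}_{s_1}(\partial\Delta_0)}$ is also satisfied. We then apply NCP with $w$ the unit vector dual to the linear functional $v\mapsto\langle(d\BP_{j,x})_x(v),z_0\rangle$. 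This produces $y\in\Lambda_\Gamma\cap B_{s_1}(x)$ with $|\langle y-x,w\rangle|\geq\eta s_1$. Taking $\delta_1$ small compared with $\eta$, and hence $s_2<\eta s_1$, forces $y\notin B^{\mathrm{E}}_{s_2}(x)$.

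\textbf{Step 3: convert to a lower bound on the representation.} Set $z=(x,y-x)$. The previous steps give
\[
|\langle(d\BP_{j,x})_x(z),z_0\rangle|\gtrsim\epsilon\eta s_1 .
\]
Decompose $Y=(d\BP_{j,x})_x(z)$ into its component along $z_0$ and an orthogonal component. By linearity of $d\rho_{b,\mathfrak{q}}$,
\[
\|d\rho_{b,\mathfrak{q}}(Y)\omega\|\geq |\langle Y,z_0\rangle|\,\varepsilon_1\|\rho_b\| - \|d\rho_{b,\mathfrak{q}}(Y^\perp)\omega\| .
\]
The first term is $\gtrsim\epsilon\eta\varepsilon_1^2$. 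The orthogonal error term is the step I expect to be the main obstacle.

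\textbf{Handling the orthogonal term.} In \cite{SW21,LPS25} this is dealt with by choosing $z_0$ as a maximizing direction for the quadratic form $\|d\rho_{b,\mathfrak{q}}(\cdot)\omega\|^2$ on the unit sphere of $\mathfrak{a}\oplus\mathfrak{m}$. One then uses that LNIC holds for every unit $\omega'$: take $\omega'$ to be the direction in which $\|d\rho_{b,\mathfrak{q}}(\cdot)\omega\|$ is large, so that the component of $Y$ in that direction dominates. Alternatively, one can argue by contradiction through a finite-dimensional compactness argument. None of this sees the cocycle factor, since $\rho_{b,\mathfrak{q}}=\Id\otimes\rho_b$ is unitary and \cref{lem:maActionLowerBound} is uniform. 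After renaming the constant $\delta_1$, we arrive at the bound $14\delta_1\epsilon_1$, where the number $14$ is just the normalization inherited from \cite{LPS25}.
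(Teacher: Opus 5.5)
Your Steps 1 and 2 are fine, up to a minor point noted at the end. Step 3 has a genuine gap, and it is exactly where the lemma has content. Write $S=d\rho_{b,\mathfrak{q}}(\bigcdot)(\omega)$ and $Y=(d\BP_{j,x})_x(z)$.

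LNIC and NCP give only $|\langle Y,z_0\rangle|\geq\epsilon\eta s_1$. Meanwhile $\|Y^\perp\|$ can be as large as $C_{\BP}s_1$, so the subtracted term $\|SY^\perp\|$ in your triangle inequality can swamp the main term. For an arbitrary $z_0$ with $\|Sz_0\|\geq\varepsilon_1\|\rho_b\|$ you can even get $SY=0$: take $Se_2=-Se_1$, $z_0=e_1$, $Y=e_1+e_2$.

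Choosing $z_0$ to maximize $\|S\bigcdot\|$ on the unit sphere does rescue the argument, but not because the $z_0$-component of $Y$ dominates; it need not. What works is that a maximizer is a top eigenvector of $S^*S$. Hence $\operatorname{Re}\langle SY^\perp,Sz_0\rangle=\langle Y^\perp,S^*Sz_0\rangle=0$. Projecting onto $Sz_0$, instead of using the triangle inequality, gives
\[
\|SY\|\geq\frac{|\operatorname{Re}\langle SY,Sz_0\rangle|}{\|Sz_0\|}=\|Sz_0\|\,|\langle Y,z_0\rangle|\geq\varepsilon_1\|\rho_b\|\cdot\epsilon\eta s_1=\varepsilon_1\epsilon\eta\epsilon_1,
\]
with no error term at all. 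This is $\geq 14\delta_1\epsilon_1$ once $\delta_1\leq\varepsilon_1\epsilon\eta/14$. Your ``$\varepsilon_1^2$'' conflates the constant $\varepsilon_1$ of \cref{lem:maActionLowerBound} with the scale $\epsilon_1$.

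The paper builds this in from the start via the adjoint. Since $\|S^*\|_{\mathrm{op}}=\|S\|_{\mathrm{op}}\geq\varepsilon_1\|\rho_b\|$, it picks a unit $\omega_0$ with $\|S^*\omega_0\|\geq\varepsilon_1\|\rho_b\|$. It then applies LNIC in the direction $S^*\omega_0/\|S^*\omega_0\|$, obtaining $j$ with $\|T_j^*S^*\omega_0\|\geq\epsilon\varepsilon_1\|\rho_b\|$, where $T_j=(d\BP_{j,x})_x$. It chooses $y$ by NCP in the direction of $T_j^*S^*\omega_0$ and finishes with $\langle z,T_j^*S^*\omega_0\rangle=\operatorname{Re}\langle ST_jz,\omega_0\rangle\leq\|ST_jz\|$. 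Your maximizer is the special case $\omega_0=Sz_0/\|Sz_0\|$, for which $S^*\omega_0=\|S\|_{\mathrm{op}}z_0$. The missing idea is that LNIC must be applied in a direction of the form $S^*\omega_0$, not just any direction where $S$ is large. The compactness alternative you mention is not needed.

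A smaller point in Step 2: do not adjust $R_0$, since it is fixed by the LDP. Instead apply \cref{prop:NCP} with $R=R_0+|\log\epsilon_1|$. This works because $u(x)a_{-\log s_1}$ lies within distance $|\log\epsilon_1|$ of $u(x)a_{\log\|\rho_b\|}\in\Omega_{R_0}$.
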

\begin{proof}
Let $(b, \rho)$, $\mathfrak{q}$, $x$, $\omega$, $s_1$, and $s_2$ be as in the lemma. Define the linear maps
\begin{align*}
T_j &= (d\BP_{j, x})_x: \T_x(\Delta_0) \to \LieA \oplus \LieM \qquad \text{for all $1 \leq j \leq j_0$}, \\
S &= d\rho_{b, \mathfrak{q}}(\bigcdot)(\omega): \LieA \oplus \LieM \to L^2(F_{\mathfrak{q}}) \otimes V_\rho^{\oplus \dim(\rho)}.
\end{align*}
\Cref{lem:maActionLowerBound} gives $\|S^*\|_{\mathrm{op}} = \|S\|_{\mathrm{op}} \geq \varepsilon_1\|\rho_b\|$. Thus, there exists $\omega_0 \in L^2(F_{\mathfrak{q}}) \otimes V_\rho^{\oplus \dim(\rho)}$ with $\|\omega_0\|_{\mathfrak{q}, \rho} = 1$ such that $\|S^*(\omega_0)\| \geq \varepsilon_1\|\rho_b\|$. By LNIC in \cref{prop:LNIC}, there exists $1 \leq j \leq j_0$ such that $\bigl\|T_j^*(S^*(\omega_0))\bigr\| \geq \varepsilon_2\varepsilon_1\|\rho_b\|$. Using NCP in \cref{prop:NCP} and $\delta_1 < \varepsilon_3$, there exists $y \in \Lambda_\Gamma \cap \bigl(B_{s_1}^{\mathrm{E}}(x) \smallsetminus B_{s_2}^{\mathrm{E}}(x)\bigr)$ such that writing $z = (x, y - x) \in \T_x(\Delta_0)$, we have
\begin{align*}
\bigl\langle z, T_j^*(S^*(\omega_0))\bigr\rangle \geq \frac{\epsilon_1}{\|\rho_b\|} \cdot \varepsilon_3 \cdot \bigl\|T_j^*(S^*(\omega_0))\bigr\| \geq \epsilon_1\varepsilon_1\varepsilon_2\varepsilon_3 \geq 14\delta_1\epsilon_1.
\end{align*}
Applying the Cauchy--Schwarz inequality finishes the proof.
\end{proof}

\Cref{lem:PartnerPointInZariskiDenseLimitSetForBPBound} can then be used to obtain the congruence version of \cite[Proposition 8.8]{LPS25}.

\subsection{Changes required for \cite[Lemma 8.19]{LPS25}}
This is a key lemma where sufficient cancellations are obtained from the oscillating summands of the congruence transfer operators with holonomy. The lemma is used to eventually prove Property~(1) of \cref{thm:FrameFlowUniformDolgopyat} which says that under iterations of the congruence transfer operators with holonomy, the resulting sequence of functions are dominated by a corresponding sequence of functions which is contracting on large subsets of its domain $\Lambda_+$ as the index goes to $+\infty$.

First, we make the following definitions in the congruence setting. For all $\xi=a+ib \in \mathbb C$ with $|a| < a_0'$, if $(b, \rho) \in \widehat{M}_0(b_0)$, then for all nontrivial ideals $\mathfrak{q} \subset \calO_\K$, $(H, h) \in \Lip\bigl(\Lambda_+, L^2(F_\mathfrak{q}) \otimes V_\rho^{\oplus \dim(\rho)}\bigr) \times \Lip_D(\Lambda_+, \R)$, and $1 \leq j \leq\jj$, we define the functions $\chi_{j, 1}^{[\xi, \rho, H, h]}, \chi_{j, 2}^{[\xi, \rho, H, h]}: \Lambda_+ \to \R$ by
\begin{align*}
\chi_{j, 1}^{[\xi, \rho, H, h]}(x) ={}& \bigl\|e^{\FRet_m^{(a)}(\alpha_0x)} \bigl(\alpha_0 \otimes \rho_b(\GHol^m(\alpha_0x)^{-1})\bigr) H(\alpha_0x) \\
{}&+ e^{\FRet_m^{(a)}(\alpha_jx)} \bigl(\alpha_j \otimes \rho_b(\GHol^m(\alpha_jx)^{-1})\bigr)H(\alpha_jx)\bigr\|_{\mathfrak{q}, \rho} \\
{}&\cdot \bigl((1 - \tau)e^{\FRet_m^{(a)}(\alpha_0x)}h(\alpha_0x) + e^{\FRet_m^{(a)}(\alpha_jx)}h(\alpha_jx)\bigr)^{-1}, \\
\chi_{j, 2}^{[\xi, \rho, H, h]}(x) ={}& \bigl\|e^{\FRet_m^{(a)}(\alpha_0x)} \bigl(\alpha_0 \otimes \rho_b(\GHol^m(\alpha_0x)^{-1})\bigr)H(\alpha_0x) \\
{}&+ e^{\FRet_m^{(a)}(\alpha_jx)} \bigl(\alpha_j \otimes \rho_b(\GHol^m(\alpha_jx)^{-1})\bigr)H(\alpha_jx)\bigr\|_{\mathfrak{q}, \rho} \\
{}&\cdot \bigl(e^{\FRet_m^{(a)}(\alpha_0x)}h(\alpha_0x) + (1 - \tau)e^{\FRet_m^{(a)}(\alpha_jx)}h(\alpha_jx)\bigr)^{-1}
\end{align*}
for all $x \in \Lambda_+$. These functions encode that when sufficient cancellations occur among the vectors in the numerator, then the resulting values are at most $1$. The second subscript indicates for which of the two inverse branches $\alpha_0$ or $\alpha_j$ the cancellations occur.

Now, keeping the same notation, we have the following changes to the proof of \cite[Lemma 8.19]{LPS25}. First, for all $x\in \Lambda_+$ and $\ell\in \{0,j\}$, we define
\begin{align*}
V_\ell(x) &\coloneq e^{\FRet_m^{(a)}(\alpha_\ell x)} \bigl(\alpha_\ell \otimes \rho_b(\phi_\ell(x)^{-1})\bigr)H(\alpha_\ell x), \\
\hat{V}_\ell(x) &\coloneq \frac{V_\ell(x)}{\|V_\ell(x)\|_{\mathfrak{q}, \rho}} = \bigl(\alpha_\ell \otimes \rho_b(\phi_\ell(x)^{-1})\bigr)\omega_\ell(x),
\end{align*}
where $\omega_\ell = \frac{H(\alpha_\ell \bigcdot)}{\|H(\alpha_\ell \bigcdot)\|_{\mathfrak{q}, \rho}}$. Using the fact that $\omega_\ell$ is Lipschitz on $(\mathtt{J}_1^H \sqcup \mathtt{J}_2^H)\cap\Lambda_+$ with Lipschitz constant $\frac{\delta_1\epsilon_1}{4}$, we have
\begin{align*}
&\big\|\hat{V}_0(x_2) - \hat{V}_j(x_2)\big\|_{\mathfrak{q}, \rho} \\
={}&\bigl\|\bigl(\alpha_0 \otimes \rho_b(\phi_0(x_2)^{-1})\bigr)\omega_0(x_2) - \bigl(\alpha_j \otimes \rho_b(\phi_j(x_2)^{-1})\bigr)\omega_j(x_2)\bigr\|_{\mathfrak{q}, \rho} \\
={}&\bigl\|\bigl(\alpha_0 \otimes \rho_b(\phi_j(x_2)\phi_0(x_2)^{-1})\bigr)\omega_0(x_2) - \bigl(\alpha_j \otimes \Id\bigr)\omega_j(x_2)\bigr\|_{\mathfrak{q}, \rho} \\
\geq{}&\bigl\|\bigl(\alpha_0 \otimes \rho_b(\phi_j(x_2)\phi_0(x_2)^{-1})\bigr)\omega_0(x_1) - \bigl(\alpha_j \otimes \Id\bigr)\omega_j(x_1)\bigr\|_{\mathfrak{q}, \rho} \\
{}&- \bigl\|\bigl(\alpha_0 \otimes \rho_b(\phi_j(x_2)\phi_0(x_2)^{-1})\bigr)\omega_0(x_2) - \bigl(\alpha_0 \otimes \rho_b(\phi_j(x_2)\phi_0(x_2)^{-1})\bigr)\omega_0(x_1)\bigr\|_{\mathfrak{q}, \rho} \\
{}&- \bigl\|\bigl(\alpha_j \otimes \Id\bigr)\omega_j(x_2) - \bigl(\alpha_j \otimes \Id\bigr)\omega_j(x_1)\bigr\|_{\mathfrak{q}, \rho} \\
={}&\bigl\|\bigl(\alpha_0 \otimes \rho_b(\phi_j(x_2)\phi_0(x_2)^{-1})\bigr)\omega_0(x_1) - \bigl(\alpha_j \otimes \Id\bigr)\omega_j(x_1)\bigr\|_{\mathfrak{q}, \rho} \\
{}&- \|\omega_0(x_2) - \omega_0(x_1)\|_{\mathfrak{q}, \rho} - \|\omega_j(x_2) - \omega_j(x_1)\|_{\mathfrak{q}, \rho} \\
\geq{}&\bigl\|\bigl(\alpha_0 \otimes \rho_b(\phi_j(x_2)\phi_0(x_2)^{-1})\bigr)\omega_0(x_1) - \bigl(\alpha_0 \otimes \rho_b(\phi_j(x_1)\phi_0(x_1)^{-1})\bigr)\omega_0(x_1)\bigr\|_{\mathfrak{q}, \rho} \\
{}&- \bigl\|\bigl(\alpha_0 \otimes \rho_b(\phi_j(x_1)\phi_0(x_1)^{-1})\bigr)\omega_0(x_1) - \bigl(\alpha_j \otimes \Id\bigr)\omega_j(x_1)\bigr\|_{\mathfrak{q}, \rho} - \delta_1\epsilon_1 \\
={}&\bigl\|\bigl(\Id \otimes \rho_b(\phi_0(x_1)^{-1})\bigr)\omega_0(x_1) \\
&{}- \bigl(\Id \otimes \rho_b(\phi_0(x_1)^{-1}\phi_0(x_2)\phi_j(x_2)^{-1}\phi_j(x_1)\phi_0(x_1)^{-1})\bigr)\omega_0(x_1)\bigr\|_{\mathfrak{q}, \rho} \\
{}&- \bigl\|\bigl(\alpha_0 \otimes \rho_b(\phi_0(x_1)^{-1})\bigr)\omega_0(x_1) - \bigl(\alpha_j \otimes \rho_b(\phi_j(x_1)^{-1})\bigr)\omega_j(x_1)\bigr\|_{\mathfrak{q}, \rho} - \delta_1\epsilon_1 \\
\geq{}&\bigl\|\rho_{b, \mathfrak{q}}(\phi_0(x_1)^{-1})\omega_0(x_1) - \rho_{b, \mathfrak{q}}(\BP_j(x_1, x_2))\rho_{b, \mathfrak{q}}(\phi_0(x_1)^{-1})\omega_0(x_1)\bigr\|_{\mathfrak{q}, \rho} \\
&{}- \big\|\hat{V}_0(x_1) - \hat{V}_j(x_1)\big\|_{\mathfrak{q}, \rho} - \delta_1\epsilon_1.
\end{align*}
By the congruence version of \cite[Proposition 8.8]{LPS25} (see \cref{subsec:ChangesRequiredForLPSLemma8.5}) for the unit vector $\omega = \rho_b(\phi_0(x_1)^{-1})\omega_0(x_1)$, we have
\begin{align*}
\|d\rho_{b, \mathfrak{q}}((d\BP_{j, x_1})_{x_1}(z))(\omega)\|_{\mathfrak{q}, \rho} \geq 7\delta_1\epsilon_1
\end{align*}
where $z = (x_1, x_2 - x_1) \in \T_{x_1}(\Delta_0)$. Let $Z = d(\BP_{j, x_1})_{x_1}(z)$. Then, we resume our calculation above and bound the first term of the last line to get
\begin{align*}
&\|\omega - \rho_{b, \mathfrak{q}}(\BP_j(x_1, x_2))(\omega)\|_{\mathfrak{q}, \rho} \\
\geq{}&\|\omega - \rho_{b, \mathfrak{q}}(\exp(Z))(\omega)\|_{\mathfrak{q}, \rho} - \|\rho_{b, \mathfrak{q}}(\exp(Z))(\omega) - \rho_{b, \mathfrak{q}}(\BP_j(x_1, x_2))(\omega)\|_{\mathfrak{q}, \rho} \\
\geq{}&\|\omega - \exp(d\rho_{b, \mathfrak{q}}(Z))(\omega)\|_{\mathfrak{q}, \rho} - \|\rho_{b, \mathfrak{q}}\| \cdot d_{AM}(\exp(Z), \BP_j(x_1, x_2)) \\
\geq{}&\|d\rho_{b, \mathfrak{q}}(Z)(\omega)\|_{\mathfrak{q}, \rho} - \|\rho_{b, \mathfrak{q}}\|^2 \|Z\|^2 - \|\rho_{b, \mathfrak{q}}\| \cdot d_{AM}(\exp(Z), \BP_j(x_1, x_2)) \\
\geq{}&\|d\rho_{b, \mathfrak{q}}(Z)(\omega)\|_{\mathfrak{q}, \rho} - \|\rho_b\|^2 C_{\BP}^2 d(x_1, x_2)^2 - C_{\exp, \BP} \cdot \|\rho_b\| \cdot d(x_1, x_2)^2 \quad\text{(\cref{lem:ComparingExpWithBP})} \\
\geq{}&7\delta_1\epsilon_1 - \delta_1\epsilon_1 - \delta_1\epsilon_1 \geq 5\delta_1\epsilon_1.
\end{align*}
Hence, we have
\begin{align*}
\big\|\hat{V}_0(x_1) - \hat{V}_j(x_1)\big\|_{\mathfrak{q}, \rho} + \big\|\hat{V}_0(x_2) - \hat{V}_j(x_2)\big\|_{\mathfrak{q}, \rho} \geq 4\delta_1\epsilon_1.
\end{align*}
The rest of the proof is as in \cite[Lemma 8.19]{LPS25} using unitarity of the cocycle.

%\nocite{*}
\bibliographystyle{amsalpha}
\bibliography{References}

\end{document}